\documentclass[a4paper]{amsart}
\usepackage[english]{babel}
\usepackage[utf8]{inputenc}
\usepackage{amssymb,graphicx,enumerate,hyperref,latexsym,float}

\newcommand{\TeXmacs}{T\kern-.1667em\lower.5ex\hbox{E}\kern-.125emX\kern-.1em\lower.5ex\hbox{\textsc{m\kern-.05ema\kern-.125emc\kern-.05ems}}}
\newcommand{\assign}{:=}
\newcommand{\barsuchthat}{|}
\newcommand{\citewebsite}{This document has been written using GNU {\TeXmacs}; see \url{https://www.texmacs.org}.}
\newcommand{\divides}{\mathrel{|}}
\newcommand{\exterior}{\wedge}
\newcommand{\longdownarrow}{{\mbox{\rotatebox[origin=c]{-90}{$\longrightarrow$}}}}
\newcommand{\longdownequal}{{\mbox{\rotatebox[origin=c]{-90}{$\longequal$}}}}
\newcommand{\longequal}{{=\!\!=}}
\newcommand{\longrightarrowlim}{\mathop{\longrightarrow}\limits}
\newcommand{\longuparrow}{{\mbox{\rotatebox[origin=c]{90}{$\longrightarrow$}}}}
\newcommand{\ndivides}{\mathrel{\nmid}}
\newcommand{\of}{:}
\newcommand{\tmdfn}[1]{\textit{#1}}
\newcommand{\tmem}[1]{{\em #1\/}}
\newcommand{\tmnote}[1]{\thanks{\textit{Note:} #1}}
\newcommand{\tmop}[1]{\ensuremath{\operatorname{#1}}}
\newcommand{\tmscriptoutput}[4]{#4}
\newcommand{\tmtextit}[1]{\text{{\itshape{#1}}}}
\newenvironment{enumeratealpha}{\begin{enumerate}[a{\textup{)}}] }{\end{enumerate}}
\newenvironment{enumeratenumeric}{\begin{enumerate}[1.] }{\end{enumerate}}
\newenvironment{enumerateroman}{\begin{enumerate}[i.] }{\end{enumerate}}
\newenvironment{itemizedot}{\begin{itemize} }{\end{itemize}}
\newenvironment{proof*}[1]{\noindent\textbf{#1\ }}{\hspace*{\fill}$\Box$\medskip}
\newtheorem{corollary}{Corollary}[section]
{\theoremstyle{definition}\newtheorem{definition}[corollary]{Definition}}
{\theoremstyle{remark}\newtheorem{example}[corollary]{Example}}
\newtheorem{lemma}[corollary]{Lemma}
\newtheorem{notation}[corollary]{Notation}
\newtheorem{proposition}[corollary]{Proposition}
{\theoremstyle{remark}\newtheorem{question}{Question}}
{\theoremstyle{remark}\newtheorem{remark}[corollary]{Remark}}
\newtheorem{theorem}[corollary]{Theorem}
{\theoremstyle{remark}\newtheorem{warning}[corollary]{Warning}}

%

\begin{document}
\sloppy

\title{Perfectoid rings as Thom spectra}
\tmnote{{\citewebsite}}

\author{Zhouhang Mao}

\date{May 30, 2021}

\begin{abstract}
  The Hopkins-Mahowald theorem realizes the Eilenberg-Maclane spectra
  $H\mathbb{F}_p$ as Thom spectra for all primes $p \in \mathbb{N}_{> 0}$. In
  this article, we record a known proof of a generalization of
  Hopkins-Mahowald theorem, realizing $H k$ as Thom spectra for perfect rings
  $k$, and we provide a further generalization by realizing $H R$ as Thom
  spectra for perfectoid rings $R$. We also discuss even further
  generalizations to prisms $(A, I)$ and indicate how to adapt our proofs to
  Breuil-Kisin case.
\end{abstract}

{\maketitle}

{\tableofcontents}

\section{Introduction}

In this article, since most of our results are $p$-typical, we fix a prime $p
\in \mathbb{N}_{> 0}$. We first describe the classical Hopkins-Mahowald
theorem. We know that $\mathbb{F}_p \cong \mathbb{Z}_p / p$, that is to say,
$\mathbb{F}_p$ is the free $\mathbb{Z}_p$-algebra in which $p = 0$. For some
reasons, we need to extend this kind of results to a category of ``less
linear'' algebras in which the addition is not commutative or even associative
on the nose, but only up to coherent homotopy. To be more precise, we need to
understand whether the (Eilenberg-Maclane) ring spectrum $H\mathbb{F}_p$ is
still the free object in the category of $\mathbb{S}_p^{\wedge}$-algebras
satisfying certain associativity and commutativity with $p = 0$? The classical
Hopkins-Mahowald theorem answers this affirmatively: they are the free object
in the category of $\mathbb{E}_2$-$\mathbb{S}_p^{\wedge}$-algebras with $p =
0$. There are two ways to describe ``free $\mathbb{E}_2$-algebras with $p =
0$''. In this article, we will mainly adopt the description via Thom spectra.
We will go to another, more direct and natural but technically more burdened
description in Section~\ref{sec:thom-free-E2-algs}. We start with formal
definitions of Thom spectra with informal illustrations and refer to
{\cite{Antolin-Camarena2014}} for further discussions.

\begin{definition}
  Given a ring spectrum $R$, we define the $\infty$-category $\tmop{BGL}_1
  (R)$ to be the full subcategory of $\tmop{LMod}_R^{\simeq}$ spanned by left
  $R$-module spectra equivalent to $R$, where we denote by
  $\mathcal{C}^{\simeq}$ the maximal groupoid associated to an
  $\infty$-category $\mathcal{C}$. 
\end{definition}

\begin{remark}
  \label{rem:bgl1mongrpd}The $\infty$-category $\tmop{BGL}_1 (R)$ is in fact
  an $\infty$-groupoid, and if we further suppose that $R$ is an
  $\mathbb{E}_{n + 1}$-ring spectrum, then $\tmop{BGL}_1 (R)$ inherits an
  $\mathbb{E}_n$-monoidal structure from $\tmop{LMod}_R$.
\end{remark}

We admit the following result, which could be understand as an analogue of the
fact that $\pi_1 (B G) = G$ for any discrete group $G$:

\begin{proposition}
  $\pi_1 (\tmop{BGL}_1 (R)) = \tmop{GL}_1 (\pi_0 R)$ for any ring spectra $R$.
  Concretely, an invertible element $a \in \pi_0 R$ corresponds to a
  multiplication map $m_a \of R \rightarrow R$ in $\tmop{BGL}_1 (R)$.
\end{proposition}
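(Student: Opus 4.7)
The plan is to identify $\pi_1(\operatorname{BGL}_1(R))$ (based at $R$ itself) with $\pi_0$ of the endomorphism space of $R$ in $\operatorname{LMod}_R$ restricted to equivalences, and then compute that space directly using the universal property of $R$ as a free left module over itself. Since every object of $\operatorname{BGL}_1(R)$ is by construction equivalent to $R$, the $\infty$-groupoid is connected with canonical basepoint $R$, so it suffices to work at this basepoint.

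First I would unpack the definition of $\pi_1$ of an $\infty$-groupoid: by Remark~\ref{rem:bgl1mongrpd} the category $\operatorname{BGL}_1(R)$ is an $\infty$-groupoid, so $\pi_1(\operatorname{BGL}_1(R), R) = \pi_0 \operatorname{Map}_{\operatorname{BGL}_1(R)}(R, R)$. Because $\operatorname{BGL}_1(R)$ is a full subcategory of $\operatorname{LMod}_R^{\simeq}$, the mapping space in $\operatorname{BGL}_1(R)$ is the union of connected components of $\operatorname{Map}_{\operatorname{LMod}_R}(R, R)$ consisting of equivalences $R \xrightarrow{\sim} R$; in other words $\operatorname{Map}_{\operatorname{BGL}_1(R)}(R, R) = \operatorname{Aut}_{\operatorname{LMod}_R}(R)$.

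Next I would compute the full endomorphism space. The left $R$-module $R$ is the free left module on one generator, so evaluation at $1$ provides an equivalence of spaces
\[ \operatorname{Map}_{\operatorname{LMod}_R}(R, R) \simeq \Omega^\infty R, \]
and under this equivalence an element $a \in \pi_0 R$ corresponds precisely to the right-multiplication map $m_a \colon R \to R$. In particular $\pi_0 \operatorname{Map}_{\operatorname{LMod}_R}(R, R) \cong \pi_0 R$ as a monoid, with composition of endomorphisms matching multiplication in the ring $\pi_0 R$.

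Finally I would identify the invertible components. A map $m_a$ is an equivalence of $R$-modules if and only if it induces an isomorphism on homotopy groups, and since $R$ is free this is equivalent to $a$ acting invertibly on $\pi_\ast R$, which by a standard argument (multiplication by $a$ on $\pi_\ast R$ is $\pi_\ast R$-linear) reduces to $a \in \pi_0 R$ being a unit. Hence $\pi_0 \operatorname{Aut}_{\operatorname{LMod}_R}(R) = \operatorname{GL}_1(\pi_0 R)$, giving the claim together with the explicit description in terms of $m_a$. The main subtlety here is just being careful about the identification of $\operatorname{Map}_{\operatorname{LMod}_R}(R, R)$ with $\Omega^\infty R$ as an $\mathbb{E}_1$-space (so that composition corresponds to multiplication in $\pi_0 R$), which requires choosing left vs.\ right action conventions coherently but is otherwise standard.
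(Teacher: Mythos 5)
Your argument is correct; note that the paper does not actually prove this proposition at all --- it is explicitly admitted without proof (as an analogue of $\pi_1(BG)=G$), so your write-up supplies the standard argument that the paper omits. The chain you use is the expected one: connectedness of $\tmop{BGL}_1(R)$ with basepoint $R$, the identification $\pi_1 = \pi_0$ of the based loop/automorphism space, the fact that mapping spaces in the maximal groupoid are the invertible components of the mapping spaces of $\tmop{LMod}_R$, and the free-module equivalence $\tmop{Map}_{\tmop{LMod}_R}(R,R)\simeq \Omega^{\infty}R$ sending $a\in\pi_0 R$ to $m_a$. Two small points. First, in the last step the cleanest way to see that $m_a$ being an equivalence forces $a\in\tmop{GL}_1(\pi_0 R)$ is to observe that any inverse of $m_a$ in the homotopy category of $\tmop{LMod}_R$ is again of the form $m_c$, and $m_c\circ m_a\simeq m_{ac}$, $m_a\circ m_c\simeq m_{ca}$, so $a$ is a two-sided unit; your route through invertibility of the action on all of $\pi_{\ast}R$ also works (bijectivity on $\pi_0$ already yields a two-sided inverse), but as stated it leans on a slightly vague ``standard argument.'' Second, as you note, composition identifies $\pi_0\tmop{End}_{\tmop{LMod}_R}(R)$ with $\pi_0 R$ or its opposite depending on conventions; since $\tmop{GL}_1$ of a ring and of its opposite coincide, this does not affect the statement.
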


\begin{remark}
  In fact, $\tmop{BGL}_1 (R)$ is a delooping of the group of invertible
  elements in $R$. 
\end{remark}

Now we recall the definition of Thom spectra:

\begin{definition}
  \label{def:thomsp}Given a ring spectrum $R$, a space $X$ and a map $f : X
  \rightarrow \tmop{BGL}_1 (R)$, {\tmdfn{the Thom spectrum $M f$ associated to
  $f$}} is the colimit of the composition
  \[ X \rightarrow \tmop{BGL}_1 (R) \rightarrow \tmop{LMod}_R \]
\end{definition}

We note that by definition of colimits, we can understand the colimit as a
kind of ``free objects satisfying several equations''. We will choose a
special space $X$ to encode the $\mathbb{E}_2$-commutativity (understood as a
generalized version of classical associativity, a collection of equations) and
a map $f \of X \rightarrow \tmop{BGL}_1 (R)$ to encode the ``equation'' $p =
0$.

\begin{remark}
  As a special case of {\cite[Proposition~4.1.2.6]{Lurie2009}}, any homotopy
  equivalence of Kan complexes is cofinal, therefore the formation of the
  colimit does not depend on the choice of models of the space $X$.
\end{remark}

\begin{remark}
  In this article, we only consider the case that $R$ is a connective
  $\mathbb{E}_{\infty}$-ring spectrum. As a consequence, we can replace
  $\tmop{LMod}_R$ by $\tmop{Mod}_R$ and the Thom spectrum $M f$ is connective.
\end{remark}

\begin{remark}
  In Definition~\ref{def:thomsp}, if $X$ is endowed with an
  {\tmem{$\mathbb{E}_n$}}-algebra structure, and $f$ is assumed to be
  $\mathbb{E}_n$-monoidal, then the Thom spectrum $M f$ naturally inherits an
  $\mathbb{E}_n$-$R$-algebra structure. In this case, we will call $M f$ the
  $\mathbb{E}_n$-Thom spectrum associated to $f$.
\end{remark}

In the classical Hopkins-Mahowald theorem, we will choose $X = \Omega^2 S^3$,
the free $\mathbb{E}_2$-group in the $\infty$-category $\mathcal{S}$ of
spaces.

\begin{remark}
  \label{cons:f_p}As a special case, $\pi_1 (\tmop{BGL}_1
  (\mathbb{S}_p^{\wedge})) = \tmop{GL}_1 (\mathbb{Z}_p) = \{ a \in
  \mathbb{Z}_p \barsuchthat a \tmop{mod} p \neq 0 \}$. The invertible element
  $1 - pu$ in $\mathbb{Z}_p$ gives rise to a map $S^1 \rightarrow \tmop{BGL}_1
  (\mathbb{S}_p^{\wedge})$ where $u \in \tmop{GL}_1 (\mathbb{Z}_p)$ is an
  invertible element in $\mathbb{Z}_p$. Since the $p$-adic sphere spectrum
  $\mathbb{S}_p^{\wedge}$ is an $\mathbb{E}_{\infty}$-ring spectrum, by
  Remark~\ref{rem:bgl1mongrpd} this map extends to a double loop map $\Omega^2
  S^3 \simeq \Omega^2 \Sigma^2 S^1 \rightarrow \tmop{BGL}_1
  (\mathbb{S}_p^{\wedge})$, which we denote by $f_{\mathbb{F}_p, pu}$.
\end{remark}

We note that the choice of $1 - pu$ essentially imposes an equation $1 - pu =
1$. This could be seen by the fact that taking the colimit along
$f_{\mathbb{F}_p, pu}$ is essentially taking the homotopy orbits of the
$\Omega^2 S^3$-action, which is somehow ``multiplying by'' $1 - pu$.

\begin{remark}
  In the first drafts of this article, we simply took $u = 1.$ Later, we
  realized that it might be easier to introduce $u$ to fix a gap in
  commutative algebra for technical reasons.
\end{remark}

Now we formulate the classical Hopkins-Mahowald theorem
(cf.~{\cite[Theorem~5.1]{Antolin-Camarena2014}}, where $u = 1$, but the proof
works for the general case. See also {\cite[Theorem~A.1]{Krause}}):

\begin{theorem}[Hopkins-Mahowald]
  \label{thm:hopkins-mahowald}The Eilenberg-Maclane spectrum $H\mathbb{F}_p$
  is the $\mathbb{E}_2$-Thom spectrum associated to the map $f_{\mathbb{F}_p,
  pu} \of \Omega^2 S^3 \rightarrow \tmop{BGL}_1 (\mathbb{S}_p^{\exterior})$.
\end{theorem}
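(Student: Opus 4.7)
The plan is to construct a canonical $\mathbb{E}_2$-$\mathbb{S}_p^{\wedge}$-algebra map $\varphi \colon Mf_{\mathbb{F}_p, pu} \to H\mathbb{F}_p$ via the universal property of Thom spectra, and then verify $\varphi$ is an equivalence by reducing modulo $p$. For the construction, $Mf_{\mathbb{F}_p, pu}$ is the initial $\mathbb{E}_2$-$\mathbb{S}_p^{\wedge}$-algebra $A$ equipped with an $\mathbb{E}_2$-nullhomotopy of the composite $\Omega^2 S^3 \to \mathrm{BGL}_1(\mathbb{S}_p^{\wedge}) \to \mathrm{BGL}_1(A)$. By the same $\mathbb{E}_2$-adjunction used to produce $f_{\mathbb{F}_p, pu}$ in Remark~\ref{cons:f_p}, such nullhomotopies correspond bijectively to trivializations of the image of $1 - pu$ under $\mathrm{GL}_1(\mathbb{Z}_p) \to \mathrm{GL}_1(\pi_0 A)$. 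For $A = H\mathbb{F}_p$ this image equals $1$ since $p = 0$, and the tautological trivialization produces the desired $\varphi$.

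To upgrade $\varphi$ to an equivalence, I would first observe that $p = 0$ in $\pi_0 Mf_{\mathbb{F}_p, pu}$: the relation $1 - pu = 1$ forces $pu = 0$, and $u$ remains a unit because $u \in \mathrm{GL}_1(\mathbb{Z}_p)$, so $\pi_0 Mf_{\mathbb{F}_p, pu}$ is a nonzero $\mathbb{F}_p$-algebra quotient of $\mathbb{F}_p$, hence $\mathbb{F}_p$, and $\pi_0 \varphi$ is an isomorphism. Both source and target being connective $p$-complete $\mathbb{S}_p^{\wedge}$-modules, a Nakayama-style argument reduces the problem to checking that $\varphi \wedge_{\mathbb{S}_p^{\wedge}} H\mathbb{F}_p$ is an equivalence of $H\mathbb{F}_p$-modules. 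Running the universal-property analysis with base ring $H\mathbb{F}_p$ in place of $\mathbb{S}_p^{\wedge}$ shows that the pushforward of $f_{\mathbb{F}_p, pu}$ along $\mathbb{S}_p^{\wedge} \to H\mathbb{F}_p$ is $\mathbb{E}_2$-nullhomotopic, so the Thom isomorphism identifies
\[
Mf_{\mathbb{F}_p, pu} \wedge_{\mathbb{S}_p^{\wedge}} H\mathbb{F}_p \;\simeq\; \Sigma^{\infty}_+ \Omega^2 S^3 \wedge H\mathbb{F}_p
\]
as $\mathbb{E}_2$-$H\mathbb{F}_p$-algebras, with underlying homotopy $H_*(\Omega^2 S^3; \mathbb{F}_p)$.

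The main obstacle is then the classical identification of this mod-$p$ reduction with the target $H\mathbb{F}_p \wedge_{\mathbb{S}_p^{\wedge}} H\mathbb{F}_p$, which is the core of the theorem. The Araki--Kudo--Dyer--Lashof calculation presents $H_*(\Omega^2 S^3;\mathbb{F}_p)$ as the free $\mathbb{E}_2$-$\mathbb{F}_p$-algebra on a degree-$1$ class --- concretely, a polynomial (tensor exterior, when $p$ is odd) algebra on iterated Dyer--Lashof operations applied to that class. I would verify that $\varphi \wedge_{\mathbb{S}_p^{\wedge}} H\mathbb{F}_p$ sends the degree-$1$ generator, coming from the bottom cell $S^1 \subset \Omega^2 S^3$, to a non-nilpotent generator of the target $\mathbb{E}_2$-algebra; the $\mathbb{E}_2$-multiplicativity of $\varphi \wedge_{\mathbb{S}_p^{\wedge}} H\mathbb{F}_p$ together with freeness of its source then forces the map to be an isomorphism on $\pi_*$. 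This bottom-cell matching and the Dyer--Lashof operation bookkeeping --- particularly clean for $p = 2$, but requiring additional care with exterior generators and operation indexing when $p$ is odd --- is where the principal technical difficulty lies.
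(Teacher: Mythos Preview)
The paper does not prove Theorem~\ref{thm:hopkins-mahowald} at all: it is quoted as a known input, with references to \cite[Theorem~5.1]{Antolin-Camarena2014} and \cite[Theorem~A.1]{Krause}, and is then used as a black box in the proof of Lemma~\ref{lem:perf-thom-special}. So there is no ``paper's own proof'' to compare against; your proposal is really a sketch of the classical argument underlying those references, and as such it is broadly on target.

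Two points deserve sharpening. First, you assert that $Mf_{\mathbb{F}_p,pu}$ is $p$-complete without justification; this is not formal, since $\Omega^2 S^3$ is an infinite colimit. The paper handles the analogous issue later (Lemma~\ref{lem:mf-aperf} and Lemma~\ref{lem:thom-p-compl}) by invoking a levelwise-finite Kan model for $\Omega^2 S^3$ to get almost perfectness, hence completeness over a complete base. You should flag the same input here. Second, your final step says that freeness of the source $H_\ast(\Omega^2 S^3;\mathbb{F}_p)$ ``forces the map to be an isomorphism''. Freeness of the source only tells you the map is determined by the image of the degree-$1$ generator; to conclude it is an isomorphism you need the nontrivial fact that the \emph{target} $\pi_\ast(H\mathbb{F}_p\otimes_{\mathbb{S}_p^\wedge}H\mathbb{F}_p)$ is itself free as an $\mathbb{E}_2$-$\mathbb{F}_p$-algebra on a degree-$1$ class. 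That is precisely Steinberger's computation of Dyer--Lashof operations in the dual Steenrod algebra, and it is the real content of the theorem. Your phrase ``Dyer--Lashof operation bookkeeping'' gestures at this, but the logical role of Steinberger's result should be made explicit rather than folded into a remark about the source.
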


This arouses a natural question: what other discrete rings are Thom spectra in
a similar fashion? The first guess will come from the observation that
$\mathbb{Z}_p \cong W (\mathbb{F}_p)$, so it would be natural to ask whether
we have similar results for perfect $\mathbb{F}_p$-algebras?

In this article, our main purpose is to show that this is the case for
perfectoid rings (which is inspired by computational results of topological
Hochschild homology of perfectoid rings in {\cite{Bhatt2018}}), and
consequently, for perfect $\mathbb{F}_p$-algebras. In order to do so, we need
the concept of spherical Witt vectors $W^+ (k)$ for perfect
$\mathbb{F}_p$-algebras $k$, which we will recall in section~\ref{sec:ha}. For
the moment, we will take advantage of the fact that $\pi_0 (W^+ (k)) = W (k)$
where $W (k)$ is the ring of (classical) Witt vectors. One example is that
$W^+ (\mathbb{F}_p) \simeq \mathbb{S}_p^{\wedge}$.

\begin{remark}
  \label{cons:f_R}Given a perfectoid ring $R$, denote by $\xi$ a generator of
  the kernel of Fontaine's pro-infinitesimal thickening $\theta \of W
  (R^{\flat}) \rightarrow R$, which we will review in section~\ref{sec:perfd}.
  As in Remark~\ref{cons:f_p}, the invertible element in $W (R^{\flat})$, $1 -
  \xi \in \tmop{GL}_1 (W (R^{\flat})) = \pi_1 (\tmop{BGL}_1 (W^+
  (R^{\flat})))$ gives rise to a map $S^1 \rightarrow \tmop{BGL}_1 (W^+
  (R^{\flat}))$ which extends to a double loop map $f_{R, \xi} \of \Omega^2
  S^3 \rightarrow \tmop{BGL}_1 (W^+ (R^{\flat}))$.
\end{remark}

\begin{theorem}[Main Theorem]
  \label{thm:main}The Eilenberg-Maclane spectrum $H R$ is the
  $\mathbb{E}_2$-Thom spectrum associated to the map $f_{R, \xi}$ for any
  perfectoid ring $R$.
\end{theorem}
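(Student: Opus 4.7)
\begin{proof*}{Proof proposal.}
The plan is a two-step reduction to the classical Hopkins--Mahowald theorem (Theorem~\ref{thm:hopkins-mahowald}), passing through the case of perfect $\mathbb{F}_p$-algebras as an intermediate step.

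\textbf{Step 1 (perfect case).} First I would establish the analogous statement for a perfect $\mathbb{F}_p$-algebra $k$: the $\mathbb{E}_2$-Thom spectrum $M f_{k, p}$ associated to the double loop map determined by $1 - p \in \tmop{GL}_1 (W (k))$ is $H k$. Since $1 - p$ already lies in $\tmop{GL}_1 (\mathbb{Z}_p)$, the map $f_{k, p}$ factors as $f_{\mathbb{F}_p, p}$ post-composed with $\tmop{BGL}_1 (\mathbb{S}_p^{\wedge}) \to \tmop{BGL}_1 (W^+ (k))$, and the base change formula for Thom spectra yields $M f_{k, p} \simeq H \mathbb{F}_p \otimes_{\mathbb{S}_p^{\wedge}} W^+ (k)$. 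Using the defining property $W^+ (k) \otimes_{\mathbb{S}_p^{\wedge}} H \mathbb{Z}_p \simeq H W (k)$ of spherical Witt vectors together with the $\mathbb{Z}_p$-flatness of $W (k)$, this simplifies to $H \mathbb{F}_p \otimes_{H \mathbb{Z}_p} H W (k) \simeq H (W (k) / p) \simeq H k$.

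\textbf{Step 2 (perfectoid case).} For perfectoid $R$ with tilt $R^{\flat}$ and distinguished $\xi \in W (R^{\flat})$, I would construct a comparison map $\phi \of M f_{R, \xi} \to H R$ of $\mathbb{E}_2$-$W^+ (R^{\flat})$-algebras via the universal property of Thom spectra: the unique $\mathbb{E}_{\infty}$-ring map $\theta^+ \of W^+ (R^{\flat}) \to H R$ lifting Fontaine's $\theta$ (furnished by the universal property of spherical Witt vectors) sends $1 - \xi$ to $1 \in R^{\times}$, canonically $\mathbb{E}_2$-trivializing the composite $\Omega^2 S^3 \to \tmop{BGL}_1 (W^+ (R^{\flat})) \to \tmop{BGL}_1 (H R)$. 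The general formula for $\pi_0$ of such Thom spectra gives $\pi_0 (M f_{R, \xi}) = W (R^{\flat}) / \xi = R$, so $\phi$ is an isomorphism on $\pi_0$. For vanishing of higher homotopy, I would base-change along the ring map $W^+ (R^{\flat}) \to H W (R^{\flat})$: the resulting $\mathbb{E}_2$-Thom spectrum over the discrete $\mathbb{E}_{\infty}$-ring $H W (R^{\flat})$ is computable by an argument parallel to Step~1 (with $\xi$ replacing $p$) to be $H (W (R^{\flat}) / \xi) = H R$, using that $\xi$ is a non-zero-divisor by distinguishedness. A Nakayama-type / $p$-completeness descent argument would then promote this equivalence back to one over $W^+ (R^{\flat})$.

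\textbf{Main obstacle.} I expect the hardest step to be the final descent: promoting the equivalence after base-change to $H W (R^{\flat})$ to one over $W^+ (R^{\flat})$. One must show that $M f_{R, \xi}$ carries no hidden higher homotopy invisible to Postnikov truncation, and this is precisely where the regularity of $(p, \xi)$ in $W (R^{\flat})$ — equivalently, the perfectoidness of $R$ — enters essentially. A secondary difficulty is the Koszul-type computation of the $\mathbb{E}_2$-Thom spectrum over $H W (R^{\flat})$ for the generator $1 - \xi$: unlike Step~1, one cannot reduce directly to a Thom spectrum over $\mathbb{S}_p^{\wedge}$, so the argument has to proceed intrinsically over $H W (R^{\flat})$, exploiting that the generator is an honest ring element.
\end{proof*}
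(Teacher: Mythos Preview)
Your Step~1 is fine and coincides with the paper's Lemma~\ref{lem:perf-thom-special}. The genuine gap is in Step~2, and it is precisely what you label a ``secondary difficulty'': the computation of the $\mathbb{E}_2$-Thom spectrum over $H W(R^{\flat})$ associated to $1-\xi$. You propose to do this ``by an argument parallel to Step~1 (with $\xi$ replacing $p$)'', but Step~1 only works because $1-p$ already lies in $\tmop{GL}_1(\mathbb{Z}_p)$, so the map factors through $\tmop{BGL}_1(\mathbb{S}_p^{\wedge})$ and the classical Hopkins--Mahowald theorem applies as a black box. For general $\xi \in W(R^{\flat})$ there is no such factorization, and there is no prior result identifying the free $\mathbb{E}_2$-$H W(R^{\flat})$-algebra on which $\xi=0$ with $H R$. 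The fact that $\xi$ is a non-zero-divisor only tells you that the \emph{derived} quotient $H W(R^{\flat})/\xi$ is discrete; the $\mathbb{E}_2$-quotient is a priori a much larger object, and showing it is discrete is exactly the content of the theorem you are trying to prove. So your proposed route through $H W(R^{\flat})$ is circular.

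The paper evades this by base-changing along a different map: not $W^+(R^{\flat}) \to H W(R^{\flat})$, but $W^+(R^{\flat}) \to W^+(\kappa)$, where $\kappa = (R/p)_{\tmop{perf}}$ is the perfect special fiber. The key commutative-algebra input (Corollary~\ref{cor:image-xi}, from the Tor-independent pushout of Proposition~\ref{prop:perfdPOfiber}) is that the image of $\xi$ in $W(\kappa)$ is $p u$ for some unit $u$, so after this base change one lands squarely in the perfect case and Step~1 applies to give $W^+(\kappa) \otimes_{W^+(R^{\flat})} M f_{R,\xi} \simeq H\kappa$. One then pushes further to $H\kappa$, notes that the composite $W^+(R^{\flat}) \to H\kappa$ also factors as $W^+(R^{\flat}) \to H R^{\flat} \to H\kappa$, and runs a Nakayama argument over $R^{\flat}$ (the kernel of $R^{\flat} \to \kappa$ lies in the Jacobson radical) to obtain an equivalence after base change to $H R^{\flat}$. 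Only then does the $p$-completeness descent you anticipated kick in. So your instincts about finiteness and completeness are correct, but the decisive geometric move---passing to the special fiber $\kappa$ so that $\xi$ becomes $p$ times a unit---is missing from your outline.
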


Fontaine's pro-infinitesimal thickening $\theta$ is in fact surjective. Note
that $R \cong W (R^{\flat}) / \xi$, and our result is amount to say that the
ring spectrum $H R$ is a free $\mathbb{E}_2$-$W^+ (R^{\flat})$-algebra with
$\xi = 0$.

\begin{remark}
  When $R$ is a perfect $\mathbb{F}_p$-algebra, we can take $\xi = pu$ where
  $u \in \tmop{GL}_1 (R)$ is an invertible element in $R$, and we note that
  $R^{\flat} = R$. Especially, when $R =\mathbb{F}_p$, $f_{R, pu}$ coincides
  with $f_{\mathbb{F}_p, pu}$, hence Theorem~\ref{thm:main} generalizes
  Theorem~\ref{thm:hopkins-mahowald}.
\end{remark}

\begin{remark}
  The composite map $W^+ (R^{\flat}) \xrightarrow{\tau_{\leq 0}} H W
  (R^{\flat}) \xrightarrow{H \theta} H R$ should be understood as a spherical
  analogue of Fontaine's map $\theta \of W (R^{\flat}) \rightarrow R$. We will
  establish a universal property,
  Proposition~\ref{prop:sph-Fontaine-inf-thickening}, similar to Fontaine's,
  Proposition~\ref{prop:Fontaine-inf-thickening}, which might be of
  independent interest.
\end{remark}

The motivation to realize $H\mathbb{F}_p$ as a free $\mathbb{E}_2$-algebra
with $p = 0$ is that it describes a direct ``generation-relation'' like
description with respect to the (p-completed) sphere spectrum
$\mathbb{S}_p^{\wedge}$. Similarly, realization of $H R$ as a free
$\mathbb{E}_2$-$W^+ (R)$-algebra with $\xi = 0$ enables us to relate $H R$
more directly to the ring $W^+ (R^{\flat})$ of spherical Witt vectors, which
allows us to deduce ``topological'' results about these rings. For example, as
a consequence, we can compute the topological Hochschild homology $\tmop{THH}
(H R)$ (of a perfectoid ring $R$) as an $\mathbb{E}_1$-ring spectrum and
deduce Bökstedt's periodicity. By {\cite[Proposition~4.7]{Krause}}, as in the
proof of Theorem~4.1 there, we have

\begin{proposition}
  \label{prop:thh-over-sphwitt}The (relative) topological Hochschild homology
  $\tmop{THH} (H R / W^+ (R^{\flat})) \simeq H R \otimes \Omega S^3$ as
  $\mathbb{E}_1$-$W^+ (R^{\flat})$-algebras for any perfectoid ring $R$.
\end{proposition}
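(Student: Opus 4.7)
The plan is to apply the Thom-spectrum formula for topological Hochschild homology exactly as Krause does for $H\mathbb{F}_p$. By Theorem~\ref{thm:main}, $HR$ is the $\mathbb{E}_2$-Thom spectrum of the $\mathbb{E}_2$-map $f_{R,\xi}\of \Omega^2 S^3 \to \tmop{BGL}_1(W^+(R^\flat))$, so in particular $HR$ is an $\mathbb{E}_2$-$W^+(R^\flat)$-algebra. I would then invoke the cited Proposition~4.7 of \cite{Krause}, which expresses the relative $\tmop{THH}$ of an $\mathbb{E}_2$-Thom spectrum $Mf$ associated with an $\mathbb{E}_2$-map $f\of X \to \tmop{BGL}_1(R)$ as the smash product $Mf \otimes BX$, where $BX$ denotes the bar construction of $X$ viewed as an $\mathbb{E}_1$-space via the forgetful map $\mathbb{E}_2 \to \mathbb{E}_1$.

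With this formula in hand, the only remaining computation is to identify the relevant bar construction. Taking $X = \Omega^2 S^3$ and regarding it as an $\mathbb{E}_1$-space through the canonical inclusion $\mathbb{E}_1 \hookrightarrow \mathbb{E}_2$, the bar construction $B(\Omega^2 S^3)$ is naturally equivalent to $\Omega S^3$, since $\Omega^2 S^3 \simeq \Omega(\Omega S^3)$ and the bar construction of an $\mathbb{E}_1$-loop space recovers the base. Substituting into the formula yields $\tmop{THH}(HR/W^+(R^\flat)) \simeq HR \otimes \Omega S^3$ as $\mathbb{E}_1$-$W^+(R^\flat)$-algebras, which is the claim.

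The main technical point to verify is that the cited proposition does apply with the $\mathbb{E}_1$-algebra structure we want, rather than only as an equivalence of module spectra. Here one uses that the Thom-spectrum formula of \cite{Krause} is formulated precisely at the level of $\mathbb{E}_1$-$R$-algebras and that the equivalence $B(\Omega^2 S^3) \simeq \Omega S^3$ is one of $\mathbb{E}_1$-spaces (it comes from the identification $\Omega^2 S^3 \simeq \Omega(\Omega S^3)$ as $\mathbb{E}_1$-groups). Modulo these bookkeeping checks, which are routine once one is confident that Theorem~\ref{thm:main} provides the required $\mathbb{E}_2$-Thom description, the proof reduces to a one-line citation, exactly parallel to the argument in the proof of Theorem~4.1 of \cite{Krause}.
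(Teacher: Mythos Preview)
Your proposal is correct and matches the paper's own approach essentially verbatim: the paper states the proposition immediately after invoking \cite[Proposition~4.7]{Krause} and notes that the argument is ``as in the proof of Theorem~4.1 there,'' which is precisely the Thom-spectrum THH formula $\tmop{THH}(Mf/R)\simeq Mf\otimes BX$ applied to $f_{R,\xi}\of \Omega^2 S^3\to \tmop{BGL}_1(W^+(R^\flat))$ together with $B(\Omega^2 S^3)\simeq \Omega S^3$. Your bookkeeping remarks about the $\mathbb{E}_1$-algebra level of the identification are appropriate and in line with how \cite{Krause} formulates the result.
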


The proof is somehow technical, but essentially it is similar to the classical
computation of the Hochschild homology $\tmop{HH} (R / W (R^{\flat}))$, via
resolving $R$ by $W (R^{\flat})$-CDGAs. We refer to first paragraphs of the
proof of {\cite[Theorem~1.3.2]{Hesselholt2019}} for this classical case. As a
consequence of Proposition~\ref{prop:thh-over-sphwitt}, we have (see
subsection~\ref{subsec:THH}):

\begin{proposition}
  \label{prop:compl-thh}The (absolute) topological Hochschild homology
  $\tmop{THH} (H R)_p^{\wedge} \simeq H R \otimes \Omega S^3$ as
  $\mathbb{E}_1$-ring spectra.
\end{proposition}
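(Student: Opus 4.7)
The plan is to deduce the absolute statement from the relative one, Proposition~\ref{prop:thh-over-sphwitt}, by base-changing along the unit map $W^+ (R^{\flat}) \to H R$. Recall that for any map $A \to B$ of $\mathbb{E}_\infty$-ring spectra there is a natural equivalence
$$\tmop{THH} (B / A) \simeq \tmop{THH} (B) \otimes_{\tmop{THH} (A)} A,$$
where $A$ is viewed as a $\tmop{THH} (A)$-algebra via the augmentation $\tmop{THH} (A) \to A$ coming from the collapse $S^1 \to \tmop{pt}$. I would apply this with $A = W^+ (R^{\flat})$ and $B = H R$.

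The additional input needed is that the spherical Witt vectors of a perfect $\mathbb{F}_p$-algebra are $p$-completely étale over the $p$-complete sphere: for any perfect $\mathbb{F}_p$-algebra $k$, the unit map $W^+ (k) \to \tmop{THH} (W^+ (k))$ is an equivalence after $p$-completion. Since the tilt $R^{\flat}$ of a perfectoid ring is, by construction, a perfect $\mathbb{F}_p$-algebra, this applies to $W^+ (R^{\flat})$ and yields $\tmop{THH} (W^+ (R^{\flat}))_p^{\wedge} \simeq W^+ (R^{\flat})$. Plugging this into the base-change formula and $p$-completing gives
$$\tmop{THH} (H R / W^+ (R^{\flat}))_p^{\wedge} \simeq \tmop{THH} (H R)_p^{\wedge} \otimes_{W^+ (R^{\flat})} W^+ (R^{\flat}) \simeq \tmop{THH} (H R)_p^{\wedge},$$
and combining with Proposition~\ref{prop:thh-over-sphwitt} yields the desired identification, provided one checks that the resulting $\mathbb{E}_1$-$W^+ (R^{\flat})$-algebra structure on $H R \otimes \Omega S^3$ refines to a compatible $\mathbb{E}_1$-ring-spectrum structure, which follows from the fact that the forgetful functor from $\mathbb{E}_1$-$W^+ (R^{\flat})$-algebras to $\mathbb{E}_1$-ring spectra preserves the relevant colimits.

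The main obstacle is the $p$-complete étaleness of $W^+ (R^{\flat})$ over $\mathbb{S}_p^{\wedge}$. I would establish this from the universal property characterizing $W^+ (-)$ as a left adjoint to the $\pi_0 (-) / p$-type functor (to be recalled in Section~\ref{sec:ha}): this universal property forces the $p$-completed cotangent complex $\mathbb{L}_{W^+ (R^{\flat}) /\mathbb{S}_p^{\wedge}}$ to vanish, whence the augmentation $\tmop{THH} (W^+ (R^{\flat})) \to W^+ (R^{\flat})$ is a $p$-completion equivalence; this is essentially the spherical analogue of the well-known fact that Witt vectors of a perfect $\mathbb{F}_p$-algebra are formally étale over $\mathbb{Z}_p$.
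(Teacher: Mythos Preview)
Your overall strategy coincides with the paper's: both reduce Proposition~\ref{prop:compl-thh} to Proposition~\ref{prop:thh-over-sphwitt} via the base-change identification $\tmop{THH}(HR/W^+(R^\flat)) \simeq \tmop{THH}(HR)\otimes_{\tmop{THH}(W^+(R^\flat))} W^+(R^\flat)$, and both then isolate the key sublemma that the augmentation $\tmop{THH}(W^+(R^\flat))\to W^+(R^\flat)$ is a $p$-adic equivalence. The difference lies only in how that sublemma is checked.

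The paper argues directly after tensoring with $H\mathbb{F}_p$: using Corollary~\ref{cor:compl-mod-p} it suffices to check $H\mathbb{F}_p\otimes\tmop{THH}(W^+(R^\flat))\to H\mathbb{F}_p\otimes W^+(R^\flat)$ is an equivalence, and by the pushout square of Example~\ref{ex:sphwitt} this map is identified with $\tmop{THH}(HR^\flat/H\mathbb{F}_p)\to HR^\flat$, which is the classical fact that a perfect $\mathbb{F}_p$-algebra has trivial relative Hochschild homology over $\mathbb{F}_p$. Your route instead extracts from the thickening universal property (Definition~\ref{def:thickening}) that the $p$-completed cotangent complex $\mathbb{L}_{W^+(R^\flat)/\mathbb{S}_p^\wedge}$ vanishes, and then concludes. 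That deduction of $p$-complete vanishing of the cotangent complex from the universal property is clean and correct. However, your ``whence'' --- passing from $\bigl(\mathbb{L}_{W^+(R^\flat)/\mathbb{S}_p^\wedge}\bigr)_p^\wedge\simeq 0$ to $\tmop{THH}(W^+(R^\flat))_p^\wedge\simeq W^+(R^\flat)$ --- is a genuine step you have not justified: formally-\'etale does not by itself force $B\otimes_A B\to B$ to be an equivalence, so one must invoke something like the HKR-type filtration on $\tmop{THH}$ (with associated graded built from exterior powers of the cotangent complex) together with connectivity to get convergence. This is standard, but it is precisely here that the paper's mod-$p$ reduction buys something: it sidesteps any filtration or convergence argument and lands directly on a well-known discrete statement. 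If you keep your route, you should spell out that filtration argument; alternatively, note that after tensoring with $H\mathbb{F}_p$ your cotangent-complex vanishing becomes the classical $\mathbb{L}_{R^\flat/\mathbb{F}_p}\simeq 0$, and you recover the paper's endgame.
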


By known results on the homology of $\Omega S^3$ (a classical reference is
{\cite{Bott1982}}), we deduce Bökstedt's periodicity for perfectoid rings
(cf.~{\cite[Theorem~6.1]{Bhatt2018}}).

\begin{corollary}[Bökstedt's periodicity]
  $\pi_{\ast} (\tmop{THH} (H R)_p^{\wedge}) \cong R [u]$ where $u$ is any
  generator of $\pi_2 (\tmop{THH} (H R)_p^{\wedge})$ as a $\pi_0 (\tmop{THH}
  (H R)_p^{\wedge})$-module.
\end{corollary}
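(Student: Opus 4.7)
The plan is to reduce the computation to the homology of $\Omega S^3$ via Proposition~\ref{prop:compl-thh} and then apply a classical calculation.

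First, invoking Proposition~\ref{prop:compl-thh} identifies $\tmop{THH} (H R)_p^{\wedge}$ with $H R \otimes \Sigma^{\infty}_+ \Omega S^3$ as $\mathbb{E}_1$-ring spectra. Passing to homotopy groups, and using the standard identification $\pi_* (H R \otimes \Sigma^{\infty}_+ X) \cong H_* (X; R)$ valid for any space $X$ (since $HR$ is an Eilenberg-Maclane spectrum), the target becomes the singular homology ring $H_* (\Omega S^3; R)$.

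Next, one invokes the classical calculation $H_* (\Omega S^3; \mathbb{Z}) \cong \mathbb{Z} [v]$ with $|v| = 2$, which can be derived either from the James splitting $\Sigma \Omega \Sigma S^2 \simeq \bigvee_{n \geq 1} S^{2 n + 1}$ together with the identification of the Pontryagin product, or from the Serre spectral sequence of the path-loop fibration $\Omega S^3 \to P S^3 \to S^3$ (a convenient reference is~\cite{Bott1982}). Since this integral homology is free abelian in each degree, the universal coefficient theorem gives $H_* (\Omega S^3; R) \cong R [v]$ as graded rings, with $|v| = 2$.

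Finally, to match the statement, note that $\pi_2 (\tmop{THH} (H R)_p^{\wedge})$ is free of rank one over $\pi_0 = R$, generated by the image of $v$; any other generator $u$ differs from $v$ by a unit of $R$, so $\pi_* (\tmop{THH} (H R)_p^{\wedge}) \cong R [u]$ as well. There is no real obstacle beyond these standard inputs; the only point requiring care is that the equivalence in Proposition~\ref{prop:compl-thh} is as $\mathbb{E}_1$-ring spectra, so that the Pontryagin product on $H_* (\Omega S^3)$ indeed transports to the multiplicative structure on $\pi_* (\tmop{THH} (H R)_p^{\wedge})$ and yields the polynomial ring claimed.
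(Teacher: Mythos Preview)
Your argument is correct and follows exactly the route the paper indicates: the paper simply states that the corollary follows from Proposition~\ref{prop:compl-thh} together with the known computation of $H_*(\Omega S^3)$ (citing \cite{Bott1982}), and you have spelled out precisely those details. The only thing the paper leaves implicit that you make explicit is the passage through the Pontryagin ring structure and the universal coefficient argument, both of which are handled correctly.
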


In fact, our question was motivated by Bökstedt's periodicity for perfectoid
rings: we wanted to understand why Bökstedt's periodicity holds.

Further generalizations of Theorem~\ref{thm:main} to prisms, the concept
introduced in {\cite{Bhatt2019}}, seem plausible. However, we are only capable
to reach another special case of prisms motivated by Breuil-Kisin cohomology,
parallel to the perfectoid case, proposed by Matthew Morrow:

\begin{theorem}
  Let $A$ be complete discrete valuation ring of mixed characteristic with
  residue field $k$ being perfect of characteristic $p$. Then the
  Eilenberg-Maclane spectrum $H A$ is the $\mathbb{E}_2$-Thom spectrum
  associated to a map $f_E \of \Omega^2 S^3 \rightarrow \tmop{BGL}_1 (W^+ (k)
  [[u]])$.
\end{theorem}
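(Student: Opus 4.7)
The plan is to mimic the proof of Theorem~\ref{thm:main} using the classical structure theorem for complete DVRs of mixed characteristic with perfect residue field: one has $A \cong W (k) [[u]] / E (u)$ for some Eisenstein polynomial $E (u) \in W (k) [u]$, meaning $E (u) \equiv u^e \pmod{p}$ and $E (0) \in p W (k)^{\times}$. The pair $(W (k) [[u]], E (u))$ is the Breuil-Kisin prism associated to $A$, and the task is to construct its spherical enhancement and run the analogue of the perfectoid argument.

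First I would define $W^+ (k) [[u]]$ as an $\mathbb{E}_{\infty}$-$W^+ (k)$-algebra by $(p, u)$-adically completing the free $\mathbb{E}_{\infty}$-$W^+ (k)$-algebra on one generator in degree $0$, and verify $\pi_0 W^+ (k) [[u]] \cong W (k) [[u]]$. Since $E (u)$ is Eisenstein and $u$ is topologically nilpotent in the completion, $1 - E (u) \in \tmop{GL}_1 (W (k) [[u]]) = \pi_1 \tmop{BGL}_1 (W^+ (k) [[u]])$ gives a pointed map $S^1 \to \tmop{BGL}_1 (W^+ (k) [[u]])$, which by Remark~\ref{rem:bgl1mongrpd} extends along the $\mathbb{E}_{\infty}$-structure to a double loop map $f_E \of \Omega^2 S^3 \to \tmop{BGL}_1 (W^+ (k) [[u]])$, directly parallel to $f_{R, \xi}$ in Remark~\ref{cons:f_R}.

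Next, $H A$ carries an $\mathbb{E}_{\infty}$-$W^+ (k) [[u]]$-algebra structure via $W^+ (k) [[u]] \xrightarrow{\tau_{\leq 0}} H W (k) [[u]] \to H A$, and in this structure $1 - E (u)$ acts as the identity (since $E (u) = 0$ in $A$), so the universal property of the Thom spectrum yields an $\mathbb{E}_2$-$W^+ (k) [[u]]$-algebra map $M f_E \to H A$. To show this is an equivalence, I would argue that the entire proof of Theorem~\ref{thm:main} depends on $(W^+ (R^{\flat}), \xi)$ only through formal features shared by $(W^+ (k) [[u]], E (u))$: both base rings are connective $\mathbb{E}_{\infty}$-rings whose $\pi_0$'s are complete with respect to a regular pair containing the distinguished element, and the quotient of $\pi_0$ by that element is the intended target. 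Alternatively, one could base-change along a perfectoid cover of $A$ such as $W (k) [[u^{1 / p^{\infty}}]]_{(p, u)}^{\wedge}$ and descend the equivalence from Theorem~\ref{thm:main}.

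The hardest step will be checking that the deeper structural ingredients genuinely transfer. In particular, the spherical Fontaine universal property Proposition~\ref{prop:sph-Fontaine-inf-thickening} is used to control maps out of $W^+ (R^{\flat})$; in the Breuil-Kisin case one needs its analogue that $\mathbb{E}_{\infty}$-ring maps $W^+ (k) [[u]] \to S$ (into suitable connective completed $S$) correspond to compatible data of a map $W^+ (k) \to S$ together with an element of $\pi_0 S$ topologically nilpotent modulo $p$. This should follow from the definition of $W^+ (k) [[u]]$ as a completed free $\mathbb{E}_{\infty}$-algebra over $W^+ (k)$ together with the Fontaine universal property for $W^+ (k)$ inherited from the perfect case, but verifying compatibility with the $(p, u)$-adic completion—and checking that the descent machinery behaves well along $W^+ (k) [[u]] \to W^+ (R^{\flat})$ if one goes that route—is where I expect the most technical care will be required.
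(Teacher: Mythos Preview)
Your setup is correct and matches the paper: the structure theorem gives $A \cong W(k)[[u]]/(E(u))$ with $E$ Eisenstein, one builds $W^+(k)[[u]]$ as a completed free $\mathbb{E}_\infty$-algebra over $W^+(k)$ with $\pi_0 \cong W(k)[[u]]$, and the unit $1-E(u)$ produces the double loop map $f_E$. The $\pi_0$ computation and the truncation map $t_E \of M f_E \rightarrow H A$ also go exactly as you say.

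However, you misidentify the crucial technical input. Proposition~\ref{prop:sph-Fontaine-inf-thickening} is explicitly flagged in the paper as not used anywhere in the proofs; it appears only for independent interest. No analogue of it is needed here, and your concern about controlling $\mathbb{E}_\infty$-maps out of $W^+(k)[[u]]$ via a Fontaine-type universal property is a red herring. Likewise, the descent-from-a-perfectoid-cover route is not pursued and would require genuinely new work.

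The paper's actual argument is simpler and more direct than either of your proposed routes. One base-changes $t_E$ along the $\mathbb{E}_\infty$-map $W^+(k)[[u]] \rightarrow W^+(k)$ given by $u \mapsto 0$. Under this, $E(u)$ maps to its constant term $a_0 = p b_0$ with $b_0 \in \tmop{GL}_1(W(k))$ by the Eisenstein condition, so the base-changed map is identified with $t_{k,a_0} \of M f_{k,a_0} \rightarrow H k$, an equivalence by Theorem~\ref{thm:perf-thom} for perfect rings. One then checks that both $M f_E$ (almost perfect over the $(u)$-complete base, via Proposition~\ref{prop:finite-Kan-double-loop}) and $H A$ (since $A$ is $\varpi$-adically complete) are $(u)$-complete $W^+(k)[[u]]$-modules, and concludes via the cofiber-sequence argument $W^+(k)[[u]] \xrightarrow{u} W^+(k)[[u]] \rightarrow W^+(k)$ that an equivalence after base change to $W^+(k)$ forces $\tmop{cofib}(t_E) \simeq 0$. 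This bypasses the Nakayama-type step from the perfectoid proof entirely: the reduction goes in one move to the perfect residue field rather than through a tilt and a special fiber.
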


Inspired by {\cite[Section~9]{Krause2019}}, we will also provide a version of
Hopkins-Mahowald theorem for complete regular local rings:

\begin{theorem}
  Let $(A, \mathfrak{m})$ be a complete regular local ring of mixed
  characteristic with residue field $k$ being perfect of characteristic $p$.
  Let $(a_1, \ldots, a_n) \subseteq \mathfrak{m}$ be a regular sequence which
  generates the maximal ideal $\mathfrak{m}$. Then the Eilenberg-Maclane
  spectrum $H A$ is the $\mathbb{E}_2$-Thom spectrum associated to a map $f_A
  \of \Omega^2 S^3 \rightarrow \tmop{BGL}_1 (W^+ (k) [[u_1, \ldots, u_n]])$.
\end{theorem}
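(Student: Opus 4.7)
\begin{proof*}{Proof plan.}
The approach is to mimic the proof strategy of Theorem~\ref{thm:main}, with the pair $(W^+ (k) [[u_1, \ldots, u_n]], \tilde{\xi})$ playing the role of $(W^+ (R^{\flat}), \xi)$ as a spherical lift of a prism-like pair.

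First, by Cohen's structure theorem one obtains a surjection
\[ \phi \of W (k) [[u_1, \ldots, u_n]] \twoheadrightarrow A, \qquad u_i \mapsto a_i. \]
Surjectivity follows because the image contains $W (k)$ and the ideal $(a_1, \ldots, a_n) = \mathfrak{m}$, while the map is continuous for the adic topologies with $A$ complete. Since $\dim W (k) [[u_1, \ldots, u_n]] = n + 1$ and $\dim A = n$, $\ker \phi$ has height $1$; since $W (k) [[u_1, \ldots, u_n]]$ is a regular local ring, hence a UFD, we have $\ker \phi = (\tilde{\xi})$ for some $\tilde{\xi} \in (p, u_1, \ldots, u_n)$. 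In particular $1 - \tilde{\xi} \in \tmop{GL}_1 (W (k) [[u_1, \ldots, u_n]])$.

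Second, construct $f_A$ exactly as in Remark~\ref{cons:f_R}: define $W^+ (k) [[u_1, \ldots, u_n]]$ as the $(p, u_1, \ldots, u_n)$-adic completion of the polynomial $\mathbb{E}_{\infty}$-algebra $W^+ (k) [u_1, \ldots, u_n]$, which is an $\mathbb{E}_{\infty}$-ring with $\pi_0 = W (k) [[u_1, \ldots, u_n]]$. The invertible class $1 - \tilde{\xi}$ gives an element of $\pi_1 \tmop{BGL}_1 W^+ (k) [[u_1, \ldots, u_n]]$, and Remark~\ref{rem:bgl1mongrpd} promotes the associated $S^1$-map to a double-loop map $f_A \of \Omega^2 S^3 \to \tmop{BGL}_1 W^+ (k) [[u_1, \ldots, u_n]]$.

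Third, to identify $M f_A \simeq H A$, transplant the argument of Theorem~\ref{thm:main}. A canonical $\mathbb{E}_2$-ring map $M f_A \to H A$ arises because the composite $W^+ (k) [[u_1, \ldots, u_n]] \xrightarrow{\tau_{\leq 0}} H W (k) [[u_1, \ldots, u_n]] \xrightarrow{H \phi} H A$ is an $\mathbb{E}_{\infty}$-map under which $1 - \tilde{\xi}$ becomes $1$, nullifying $f_A$; the $\mathbb{E}_2$-universal property of the Thom spectrum then produces the map. Its effect on $\pi_0$ is the coequaliser imposing $\tilde{\xi} = 0$, which yields $A$. The remaining task, and the main obstacle, is verifying $\pi_{> 0} M f_A = 0$.

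This higher-homotopy vanishing is the technical core of Theorem~\ref{thm:main} as well. In the perfectoid case one exploits that the generator $\xi$ of $\ker (\theta)$ is a non-zero-divisor on $W (R^{\flat})$ in order to collapse the relevant bar/Thom spectral sequence. Here the analogous input is free: since $\tilde{\xi}$ generates the kernel of a surjection between regular local rings differing in dimension by exactly $1$, it is automatically a non-zero-divisor on $W (k) [[u_1, \ldots, u_n]]$. Combined with the $(p, u_1, \ldots, u_n)$-completeness and connectivity of $M f_A$, the proof of Theorem~\ref{thm:main} then applies essentially verbatim, with $W^+ (R^{\flat})$ replaced by $W^+ (k) [[u_1, \ldots, u_n]]$ and $\xi$ by $\tilde{\xi}$.
\end{proof*}
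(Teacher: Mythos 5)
Your preliminary steps are fine, and your argument for principality of the kernel (it is a height-one prime of the UFD $W (k) [[u_1, \ldots, u_n]]$ because $A$ is a domain of dimension $n$) is a legitimate alternative to the paper's Lemma~\ref{lem:struct-thm}, which instead runs a $\tmop{Tor}$/Nakayama argument using Koszul-regularity of $(a_1, \ldots, a_n)$ and, importantly, normalizes the generator so that $\phi (0, \ldots, 0) = p$. The genuine gap is in the heart of the matter: the claim that $\pi_{> 0} (M f_A) = 0$, i.e. that the truncation $M f_A \rightarrow H \pi_0 (M f_A) \simeq H A$ is an equivalence. You assert that the proof of Theorem~\ref{thm:main} applies ``essentially verbatim'' once $\tilde{\xi}$ is known to be a non-zero-divisor, and you describe that proof as a bar/Thom spectral sequence collapse driven by the non-zero-divisor property. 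Neither is accurate: the proof of Theorem~\ref{thm:main} proceeds by base change to the special fiber $W^+ (\kappa)$ and to $H R^{\flat}$ (using the perfectoid-specific pushout square of Proposition~\ref{prop:perfdPOfiber} and the identification $H R^{\flat} \simeq H\mathbb{F}_p \otimes_{\mathbb{S}_p^{\wedge}} W^+ (R^{\flat})$), a Nakayama-like lemma (Lemma~\ref{lem:nakayama-like}), and finally detection of the vanishing of the $p$-complete cofiber by $H\mathbb{F}_p \otimes -$ (Corollary~\ref{cor:compl-mod-p}). None of these steps transports verbatim to $W^+ (k) [[u_1, \ldots, u_n]]$, and the regularity of $\tilde{\xi}$ is not what powers them; so the main step of your proposal is unsupported.

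The working adaptation (Theorem~\ref{thm:perfd-regular-thom}, modelled on the Breuil--Kisin case Theorem~\ref{thm:cdvr-thom}) reduces in the other direction, along $W^+ (k) [[u_1, \ldots, u_n]] \rightarrow W^+ (k)$, $u_i \mapsto 0$, and needs ingredients your sketch does not supply: (i) construct $W^+ (k) [[u_1, \ldots, u_n]]$ as the $(u_1, \ldots, u_n)$-completion of $W^+ (k) [u_1, \ldots, u_n]$ and verify $\pi_0 \cong W (k) [[u_1, \ldots, u_n]]$ (Propositions~\ref{prop:mult-var-poly-homotopy-grps} and \ref{prop:mult-var-power-series-homotopy-grps}), which you only assert; (ii) note that the image of $\tilde{\xi}$ under $u_i \mapsto 0$ generates $p W (k)$, so that the base-changed Thom spectrum is $H k$ by Theorem~\ref{thm:perf-thom} --- since your $\tilde{\xi}$ is not normalized, this genuinely requires the arbitrary-unit version of Hopkins--Mahowald; (iii) identify the base-changed truncation with $t_{k, \cdot}$ via a $\pi_0$-check as in Lemma~\ref{lem:eq-base-change-sphwitt-kappa}; and (iv) conclude with the multivariable analogue of Lemma~\ref{lem:compl-mod-u}: if $M$ is $(u_1, \ldots, u_n)$-complete and $W^+ (k) \otimes_{W^+ (k) [[u_1, \ldots, u_n]]} M \simeq 0$, then $M \simeq 0$, applied to $\tmop{cofib} (t)$, whose completeness comes from almost perfectness via the finite Kan model of $\Omega^2 S^3$ (Proposition~\ref{prop:finite-Kan-double-loop}). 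This detection lemma, not the non-zero-divisor property of $\tilde{\xi}$, is the key input, and your proposal neither states it nor offers a substitute.
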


In this article, we will first review spherical Witt vectors. We then record a
known proof of perfect rings being Thom spectrum, the special case of
Theorem~\ref{thm:main} for perfect rings, which we learn from Sanath
Devalapurkar, but the proof is also well-known to experts such as Achim Krause
and Thomas Nikolaus, see {\cite{Krause2019}}. This result is needed in the
proof of the general case of Theorem~\ref{thm:main}. Then we start with
recalling the definition and some basic properties of perfectoid rings, and
prove Theorem~\ref{thm:main}. As far as we know, although this is known to
several experts, the proof is not found in the literature. We will finally
discuss further generalizations to prisms in Section~\ref{sec:analog}, and
especially Hopkins-Mahowald theorem for Breuil-Kisin cases, which seems also
to be known by experts (see {\cite[Remark~3.4]{Krause2019}}). We take an
opportunity to write down those proofs. The author thanks Matthew Morrow for
various suggestions during the construction of this article.

\begin{warning}
  For spectra $M, N$, we will denote the smash product of $M, N$ by $M \otimes
  N$. Let $R$ be an $\mathbb{E}_1$-ring (spectrum), $M$ a right $R$-module
  (spectrum) and $N$ a left $R$-module (spectrum), we will denote the relative
  tensor product by $M \otimes_R N$. In order to avoid possible ambiguities,
  for discrete rings $A$, right $A$-modules $P$ and left $A$-modules $Q$, we
  will denote the ordinary (algebraic) tensor product by $\tmop{Tor}_0^A (P,
  Q)$ (instead of $P \otimes_A Q$). It is important that in general the
  Eilenberg-Maclane spectrum $H \tmop{Tor}_0^A (P, Q)$ do not coincide with
  the relative tensor product $H P \otimes_{H A} H Q$ of spectra. Rather, the
  relative tensor $H P \otimes_{H A} H Q$ coincides with the Eilenberg-Maclane
  spectrum $H (P \otimes_A^{\mathbb{L}} Q)$ of the derived tensor product.
  Since the concept of the derived tensor product does not play a great role
  in this article, we will not use the notation $\otimes_A^{\mathbb{L}}$, and
  we will uniformly preserve the notation $\otimes$ for smash products and
  relative tensor products of spectra.
\end{warning}

\begin{notation}
  In this article, we mainly adopt notations in {\cite{Lurie2017}},
  {\cite{Lurie2018a}} and {\cite{Lurie2018}}. In particular, we will let
  $\tmop{LMod}_R$ denote the $\infty$-category of an $\mathbb{E}_1$-ring $R$,
  let $\tmop{Mod}_R$ denote the symmetric monoidal $\infty$-category of an
  $\mathbb{E}_{\infty}$-ring $R$ and let $\tmop{Alg}_R^{\mathbb{E}_n}$ denote
  the $\infty$-category of $\mathbb{E}_n$-$R$-algebras for an
  $\mathbb{E}_{\infty}$-ring $R$ and a positive integer $n \in \mathbb{N}_{>
  0}$. In particular, we will denote $\tmop{Alg}_R^{\mathbb{E}_{\infty}}$ by
  $\tmop{CAlg}_R$, referred to as the $\infty$-category of commutative
  $R$-algebras. On the other hand, we will denote $\tmop{Mod}_R^{\heartsuit}$
  the $\infty$-category of discrete $R$-modules, and
  $\tmop{CAlg}_R^{\heartsuit}$ the $\infty$-category of discrete commutative
  $R$-algebras.
\end{notation}

\section{Recollection of spherical Witt vectors}\label{sec:ha}

In this section, we will review the definition and some basic properties of
spherical Witt vectors. We quote some definitions and propositions from
{\cite[Section~5.2]{Lurie2018a}}.

\begin{definition}[{\cite[Definition~5.2.1]{Lurie2018a}}]
  \label{def:thickening}Let $A$ be a connective $\mathbb{E}_{\infty}$-ring,
  let $I \subseteq \pi_0 A$ be a finitely generated ideal, and set $A_0 =
  \pi_0 (A) / I$. Suppose that we are given a commutative diagram of
  connective $\mathbb{E}_{\infty}$-rings
  \[ \begin{array}{ccc}
       A & \xrightarrow{f} & B\\
       \longdownarrow & \sigma & \longdownarrow\\
       H A_0 & \xrightarrow{f_0} & H B_0
     \end{array} \]
  where $B_0$ is a discrete commutative ring. We will say that
  {\tmdfn{$\sigma$ exhibits $f$ as an $A$-thickening of $f_0$}} if the
  following conditions are satisfied:
  \begin{enumeratealpha}
    \item The $\mathbb{E}_{\infty}$-ring $B$ is $I$-complete as an $A$-module;
    
    \item The diagram $\sigma$ induces an isomorphism of commutative rings
    $\pi_0 (B) / I \pi_0 (B) \rightarrow B_0$;
    
    \item Let $R$ be any connective $\mathbb{E}_{\infty}$-algebra over $A$
    which is $I$-complete. Then the canonical map
    \[ \tmop{Map}_{\tmop{CAlg}_A} (B, R) \rightarrow
       \tmop{Hom}_{\tmop{CAlg}_{A_0}^{\heartsuit}} (B_0, \pi_0 (R) / I \pi_0
       (R)) \]
    is a homotopy equivalence. In particular, the mapping space
    $\tmop{Map}_{\tmop{CAlg}_A} (B, R)$ is discrete up to homotopy
    equivalence, that is, each connected component is contractible.
  \end{enumeratealpha}
\end{definition}

\begin{remark}[Uniqueness, {\cite[Remark~5.2.2]{Lurie2018a}}]
  \label{rem:thickening-uniq}Let $A$ be a connective
  $\mathbb{E}_{\infty}$-ring, let $I \subseteq \pi_0 A$ be a finitely
  generated ideal, and set $A_0 = \pi_0 (A) / I$. Suppose that we are given a
  homomorphism of commutative rings $f_0 \of A_0 \rightarrow B_0$. It follows
  immediately from the definition that if there exists a diagram $\sigma$:
  \[ \begin{array}{ccc}
       A & \xrightarrow{f} & B\\
       \longdownarrow & \sigma & \longdownarrow\\
       H A_0 & \xrightarrow{f_0} & H B_0
     \end{array} \]
  which exhibits $f$ as an $A$-thickening of $f_0$, then the morphism $f$ (and
  the diagram $\sigma$) is uniquely determined up to equivalence.
\end{remark}

\begin{remark}[{\cite[Remark~5.2.4]{Lurie2018a}}]
  \label{rem:thickeningsqs}Suppose that we are given a commutative diagram of
  commutative $\mathbb{E}_{\infty}$-rings
  \[ \begin{array}{ccc}
       A & \xrightarrow{f} & B\\
       \longdownarrow &  & \longdownarrow\\
       A' & \xrightarrow{f'} & B'\\
       \longdownarrow &  & \longdownarrow\\
       H A_0 & \xrightarrow{f_0} & H B_0
     \end{array} \]
  Assume that $A_0, B_0$ are discrete rings and the left vertical maps induce
  surjective ring morphisms $\pi_0 A \rightarrow \pi_0 A' \rightarrow A_0$
  whose composition has kernel $I \subseteq \pi_0 A$. Suppose that the outer
  rectangle exhibits $f$ as an $A$-thickening of $f_0$ and that the upper
  square exhibits $B'$ as an $I$-completion of $B \otimes_A A'$. Then the
  lower square exhibits $f'$ as an $A'$-thickening of $f_0$.
\end{remark}

\begin{theorem}[{\cite[Theorem~5.2.5]{Lurie2018a}}]
  \label{thm:exist_thickening}Let $A$ be a connective
  $\mathbb{E}_{\infty}$-ring, let $I \subseteq \pi_0 A$ be a finitely
  generated ideal, and set $A_0 = \pi_0 (A) / I$. Suppose that $A_0$ is an
  $\mathbb{F}_p$-algebra such that $H A_0$ is almost perfect as an $A$-module
  and that the Frobenius map $\varphi_{A_0} \of A_0 \rightarrow A_0$ is flat.
  Let $f \of A_0 \rightarrow B_0$ be a morphism of commutative
  $\mathbb{F}_p$-algebras which is relatively perfect, then there exists a
  diagram
  \[ \begin{array}{ccc}
       A & \xrightarrow{f} & B\\
       \longdownarrow & \sigma & \longdownarrow\\
       H A_0 & \xrightarrow{f_0} & H B_0
     \end{array} \]
  which exhibits $f$ as an $A$-thickening of $f_0$. Moreover, $\sigma$ is a
  pushout square.
\end{theorem}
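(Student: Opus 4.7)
The plan is to construct $B$ as an $I$-adic limit of stepwise deformations of $H B_0$ along the Postnikov tower of $A$, with the vanishing of obstructions guaranteed by relative perfection of $f_0$. When specialized to $A = \mathbb{S}_p^{\wedge}$, $A_0 = \mathbb{F}_p$, this recovers the existence of spherical Witt vectors $W^+ (B_0)$; the general case proceeds by the same strategy while tracking the additional data of $A$ and $I$.

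The key algebraic input is that the relative cotangent complex $L_{H B_0 / H A_0}$ vanishes. I would derive this as follows: relative perfection of $f_0$ means the relative Frobenius $B_0 \otimes_{A_0, \varphi_{A_0}} A_0 \to B_0$ is an isomorphism, and combined with flatness of $\varphi_{A_0}$, a standard derived-algebra argument gives $L_{B_0 / A_0} \simeq 0$, whence $L_{H B_0 / H A_0} \simeq 0$ as an $H B_0$-module. This vanishing is what makes both the existence of $B$ and the universal property of Definition~\ref{def:thickening}(c) tractable, since without it genuine infinitesimal obstructions and nonuniqueness of lifts would appear.

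With this in hand, I would inductively build lifts $B^{(n, m)}$ of $H B_0$ along the tower of square-zero extensions $\tau_{\leq n + 1} (A) / I^{m + 1} \twoheadrightarrow \tau_{\leq n} (A) / I^m$. At each stage, the obstruction to extending lives in a group of the form $\tmop{Ext}^\ast_{H B_0} (L_{H B_0 / H A_0}, M)$ for an $H B_0$-module $M$ built from the kernel; these vanish by the above, so the lifts exist and are essentially unique. Setting $B := \varprojlim_{n, m} B^{(n, m)}$ yields a connective $I$-complete $\mathbb{E}_\infty$-$A$-algebra with $\pi_0 (B) / I \pi_0 (B) \cong B_0$, handling conditions (a) and (b). The universal property (c) would be verified by the dual obstruction-theoretic argument: for any $I$-complete connective $A$-algebra $R$, one decomposes $R$ along its own Postnikov and $I$-adic filtrations and uses the same Ext-vanishing to promote a ring homomorphism $B_0 \to \pi_0 (R) / I \pi_0 (R)$ to a map $B \to R$ in $\tmop{CAlg}_A$ that is unique up to a contractible space of choices. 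The pushout assertion amounts to $B \otimes_A H A_0 \simeq H B_0$ with higher Tor vanishing, which one reduces to a Tor-vanishing statement controlled by flatness of $\varphi_{A_0}$.

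The main obstacle I anticipate is bookkeeping the convergence of the Postnikov/$I$-adic tower and the interaction of the cotangent complex with limits; the almost perfectness hypothesis on $H A_0$ over $A$ is what makes this well-behaved, ensuring the cotangent complex stays within a manageable class and that the relevant Ext groups commute appropriately with the limit. A secondary but nontrivial point is the derivation $L_{B_0 / A_0} \simeq 0$ itself from relative perfection plus flat Frobenius, which is really the algebraic fact driving the whole homotopical construction.
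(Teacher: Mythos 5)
The paper does not actually prove this statement: it is quoted verbatim from Lurie's Elliptic Cohomology II (Theorem~5.2.5) and used as a black box, so the only comparison available is with the proof in the cited source. Your outline does capture the mechanism that drives that proof: relative perfection plus flatness of $\varphi_{A_0}$ forces the relative cotangent complex $L_{B_0 / A_0}$ to vanish (relative Frobenius is simultaneously an isomorphism and, on cotangent complexes, zero, and flatness of $\varphi_{A_0}$ is exactly what lets you identify the Frobenius twist with an honest base change), and the thickening is then produced and characterized by deformation theory, with almost perfectness of $H A_0$ controlling convergence. In spirit this is the same route as the cited argument.

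There is, however, a concrete defect in the way you set up the induction: the objects $\tau_{\leq n} (A) / I^m$ do not exist. For a general finitely generated ideal $I \subseteq \pi_0 A$ one cannot form the quotient of a connective $\mathbb{E}_{\infty}$-ring by (a power of) $I$ as an $\mathbb{E}_{\infty}$-ring, so your doubly indexed tower of ``square-zero extensions'' is ill-defined except at the discrete level $n = 0$, where $\pi_0 (A) / I^m$ does make sense. The repair is to separate the two directions: run the obstruction-theoretic induction only along the Postnikov tower of $A$ (whose stages genuinely are square-zero extensions), do any $I$-adic induction at the level of the discrete ring $\pi_0 (A)$, and impose $I$-completeness by derived completion, verifying condition (c) directly against arbitrary $I$-complete connective $A$-algebras $R$ rather than against a tower of ring quotients. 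Relatedly, two points you treat as automatic need arguments: that the limit you form is connective with $\pi_0 (B) / I \pi_0 (B) \cong B_0$ (derived $I$-completion does not interact naively with $\pi_0$; this is where almost perfectness of $H A_0$ is genuinely used, not just for ``bookkeeping''), and the pushout assertion $B \otimes_A H A_0 \simeq H B_0$, which requires showing the higher homotopy of the base change vanishes rather than being a formal consequence of flat Frobenius. With those repairs your sketch aligns with the cited proof; as written, the construction of $B$ does not get off the ground.
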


\begin{example}[Classical Witt vectors, {\cite[Example~5.2.6]{Lurie2018a}}]
  \label{ex:witt}In the statement of Theorem~\ref{thm:exist_thickening} take
  $A = H\mathbb{Z}_p$ and $I = p\mathbb{Z}_p$. Then $A_0 = \pi_0 (A) / I$ is
  the finite field $\mathbb{F}_p$ and a map $f_0 \of A_0 \rightarrow B_0$ of
  discrete rings is relative perfect if and only if $B_0$ is a perfect
  $\mathbb{F}_p$-algebra. If this condition is satisfied, then
  Theorem~\ref{thm:exist_thickening} allows us to lift $B_0$ to an
  $\mathbb{E}_{\infty}$-$H\mathbb{Z}_p$-algebra which is complete with respect
  to the ideal $p\mathbb{Z}_p$ and for which the quotient $\pi_0 (B) / p \pi_0
  (B)$ is isomorphic to $B_0$. This $\mathbb{Z}_p$-algebra is in fact the
  Eilenberg-Maclane spectrum of the ring of Witt vectors $W (B_0)$. See also
  {\cite[Section~II.5, Proposition~10]{Serre1979}} for a classical description
  of this universal property.
\end{example}

\begin{example}[Spherical Witt vectors, {\cite[Example~5.2.7]{Lurie2018a}}]
  \label{ex:sphwitt}In the statement of Theorem~\ref{thm:exist_thickening}
  take $A =\mathbb{S}_p^{\wedge}$ and $I = (p)$. Then $A_0 = \pi_0 (A) / I$ is
  the finite field $\mathbb{F}_p$ and a morphism $f_0 \of A_0 \rightarrow B_0$
  is relative perfect if and only if $B_0$ is a perfect
  $\mathbb{F}_p$-algebra. If this condition is satisfied, Theorem
  \ref{thm:exist_thickening} allows us to lift $B_0$ to an
  $\mathbb{E}_{\infty}$-$\mathbb{S}_p^{\wedge}$-algebra which is complete with
  respect to the ideal $(p)$ and the tensor product $H\mathbb{F}_p
  \otimes_{\mathbb{S}_p^{\wedge}} B \simeq \pi_0 (B) / p \pi_0 (B)$ is
  isomorphic to $B_0$. This is the $\mathbb{E}_{\infty}$-ring $W^+ (B_0)$ of
  ``spherical'' Witt vectors.
\end{example}

\begin{proposition}
  \label{prop:sphwitt}$\pi_0 (W^+ (k))$ is isomorphic to $W (k)$, the ring of
  Witt vectors, and $H W (k) \simeq W^+ (k) \otimes_{\mathbb{S}_p^{\wedge}}
  H\mathbb{Z}_p$ for any perfect $\mathbb{F}_p$-algebra $k$.
\end{proposition}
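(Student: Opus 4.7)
The plan is to obtain both assertions by using Remark~\ref{rem:thickeningsqs} to interpolate between Example~\ref{ex:sphwitt} (spherical Witt vectors over $\mathbb{S}_p^{\wedge}$) and Example~\ref{ex:witt} (classical Witt vectors over $H\mathbb{Z}_p$). The key observation is that both $W^+(k)$ and $HW(k)$ are, by construction, thickenings of the same morphism $\mathbb{F}_p \to k$ — the former over $\mathbb{S}_p^{\wedge}$, the latter over $H\mathbb{Z}_p$ — so one can pass between them by base change along the canonical map $\mathbb{S}_p^{\wedge} \to H\mathbb{Z}_p$.

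Concretely, I would consider the stacked diagram
\[
\begin{array}{ccc}
\mathbb{S}_p^{\wedge} & \rightarrow & W^+(k)\\
\longdownarrow &  & \longdownarrow\\
H\mathbb{Z}_p & \rightarrow & B'\\
\longdownarrow &  & \longdownarrow\\
H\mathbb{F}_p & \rightarrow & Hk
\end{array}
\]
where the left column is the canonical tower and $B'$ denotes the $(p)$-completion of $W^+(k) \otimes_{\mathbb{S}_p^{\wedge}} H\mathbb{Z}_p$. By Example~\ref{ex:sphwitt}, the outer rectangle exhibits $W^+(k)$ as an $\mathbb{S}_p^{\wedge}$-thickening of $\mathbb{F}_p \to k$, and $\pi_0\mathbb{S}_p^{\wedge} \to \pi_0 H\mathbb{Z}_p \to \mathbb{F}_p$ are surjective with overall kernel $I = (p) \subseteq \mathbb{Z}_p$, so Remark~\ref{rem:thickeningsqs} applies and exhibits the lower square as an $H\mathbb{Z}_p$-thickening of the same map. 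By Example~\ref{ex:witt} and the uniqueness statement in Remark~\ref{rem:thickening-uniq}, we conclude $B' \simeq HW(k)$.

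To upgrade this to the equivalence $HW(k) \simeq W^+(k) \otimes_{\mathbb{S}_p^{\wedge}} H\mathbb{Z}_p$ without the auxiliary $(p)$-completion, I would verify that the pushout $W^+(k) \otimes_{\mathbb{S}_p^{\wedge}} H\mathbb{Z}_p$ is itself already $(p)$-complete: its mod-$p$ reduction is $W^+(k) \otimes_{\mathbb{S}_p^{\wedge}} H\mathbb{F}_p \simeq Hk$ (the latter equivalence being the pushout conclusion of Theorem~\ref{thm:exist_thickening} applied in Example~\ref{ex:sphwitt}), which is discrete and hence trivially $p$-complete, and both tensor factors are $p$-complete and connective. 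For the first claim $\pi_0(W^+(k)) \cong W(k)$, I would then compute $\pi_0$ of this tensor product via the Tor spectral sequence $\mathrm{Tor}^{\pi_\ast \mathbb{S}_p^{\wedge}}_{s,t}(\pi_\ast W^+(k), \mathbb{Z}_p) \Rightarrow \pi_{s+t}(W^+(k) \otimes_{\mathbb{S}_p^{\wedge}} H\mathbb{Z}_p)$; flatness of $\pi_\ast W^+(k)$ over $\pi_\ast \mathbb{S}_p^{\wedge}$, which follows from the relative perfectness of $\mathbb{F}_p \to k$ via the structural content of Theorem~\ref{thm:exist_thickening}, collapses the spectral sequence onto the $s=0$ column and yields $\pi_0(W^+(k)) \cong \pi_0(HW(k)) = W(k)$.

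The principal obstacle is the last step: the flatness and $(p)$-completeness claims are not visible from the axiomatic Definition~\ref{def:thickening} alone, but depend on finer structural properties of the thickening established in the proof of Theorem~\ref{thm:exist_thickening}. In practice I would cite the corresponding results from~\cite{Lurie2018a} rather than reprove them here, since this proposition is recorded as a preparatory input for later sections rather than as a principal result of the article.
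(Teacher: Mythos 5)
Your outline coincides with the paper's: stack the two thickening squares, apply Remark~\ref{rem:thickeningsqs} with $B'$ the $p$-completion of $W^+(k)\otimes_{\mathbb{S}_p^{\wedge}}H\mathbb{Z}_p$, identify $B'$ with $HW(k)$ via Remark~\ref{rem:thickening-uniq} and Example~\ref{ex:witt}, and then argue that the completion is superfluous. The genuine gap is at exactly that last point: the justification you offer for $p$-completeness of the uncompleted pushout does not work. Neither the fact that both tensor factors are $p$-complete and connective (relative smash products do not preserve $p$-completeness in general), nor the observation that the mod-$p$ reduction is the discrete spectrum $Hk$ (that constrains the completion of the pushout, not the pushout itself), gives the claim; and it is not some unspecified ``finer structural property'' buried in the proof of Theorem~\ref{thm:exist_thickening} that has to be imported. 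The paper closes this step with ingredients already in its appendix: $\mathbb{S}_p^{\wedge}$ is coherent, so $H\mathbb{Z}_p\simeq H\pi_0(\mathbb{S}_p^{\wedge})$ is almost perfect as an $\mathbb{S}_p^{\wedge}$-module (Corollary~\ref{cor:coherent_aperf}); almost perfectness is stable under base change (Proposition~\ref{prop:aperf-base-change}), so $W^+(k)\otimes_{\mathbb{S}_p^{\wedge}}H\mathbb{Z}_p$ is almost perfect over the $p$-complete ring $W^+(k)$ and hence $p$-complete by Proposition~\ref{prop:aperf-complete}. You need to supply this (or an equivalent) argument rather than a citation placeholder.

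The flatness and Tor-spectral-sequence detour at the end is also both unnecessary and, in the logical order of this article, problematic. For connective inputs, $\pi_0$ of a relative tensor product is always the ordinary $\tmop{Tor}_0$ of the $\pi_0$'s by right-exactness, so once the pushout is identified with $HW(k)$ you immediately get $W(k)\cong\tmop{Tor}_0^{\mathbb{Z}_p}(\pi_0(W^+(k)),\mathbb{Z}_p)\cong\pi_0(W^+(k))$ with no flatness hypothesis and no spectral sequence collapse. Moreover, the paper's own proof that $W^+(k)$ is flat over $\mathbb{S}_p^{\wedge}$ (given later, in the section on the spherical analogue of Fontaine's map) uses Proposition~\ref{prop:sphwitt} as an input, so deducing the $\pi_0$ statement from flatness would be circular here unless you rely on Lurie's announcement, which is stated without proof.
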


\begin{proof}
  First, we have a commutative diagram
  
  \begin{figure}[H]
    \tmscriptoutput{xypic}{XYpic}{\
    
    \textbackslash xymatrix\{
    
    \textbackslash mathbb S\_p\^{}\textbackslash wedge
    
    \textbackslash ar[r]\textbackslash ar[d]\&
    
    W\^{}+(k)\textbackslash ar[dd]\textbackslash\textbackslash
    
    H\textbackslash mathbb Z\_p\textbackslash
    ar[d]\textbackslash\textbackslash
    
    H\textbackslash mathbb F\_p\textbackslash ar[r]\&Hk
    
    \}}{\raisebox{-0.00244300451383256\height}{\includegraphics[width=3.06021251475797cm,height=3.43660632296996cm]{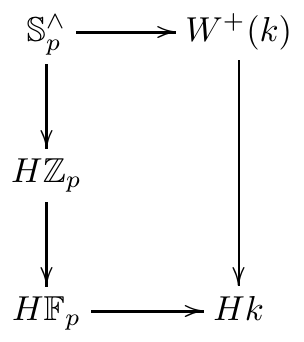}}}
    \caption{}
  \end{figure}
  
  {\noindent}where the outer square is given by
  Theorem~\ref{thm:exist_thickening}. The right vertical map $W^+ (k)
  \rightarrow H k$ factors through the pushout $W^+ (k)
  \otimes_{\mathbb{S}_p^{\wedge}} H\mathbb{Z}_p$ in the category of
  $\mathbb{E}_{\infty}$-rings. Note that $\mathbb{S}_p^{\wedge}$ is a coherent
  ring as in Definition~\ref{def:coherentE1}, and $H\mathbb{Z}_p \simeq H
  \pi_0 (\mathbb{S}_p^{\wedge})$ is an almost perfect
  $\mathbb{S}_p^{\wedge}$-module by Corollary~\ref{cor:coherent_aperf}, which
  implies that $W^+ (k) \otimes_{\mathbb{S}_p^{\wedge}} H\mathbb{Z}_p$ is an
  almost perfect $W^+ (k)$-module by Proposition~\ref{prop:aperf-base-change}.
  By Definition~\ref{def:thickening}, $W^+ (k)$ is a $p$-complete
  $\mathbb{E}_{\infty}$-$\mathbb{S}_p^{\wedge}$-algebra, therefore by
  Proposition~\ref{prop:aperf-complete}, the spectrum $W^+ (k)
  \otimes_{\mathbb{S}_p^{\wedge}} H\mathbb{Z}_p$ is $p$-complete. Now we take
  $A =\mathbb{S}_p^{\wedge}$, $A' = H\mathbb{Z}_p$, $A_0 = H\mathbb{F}_p$, $B
  = W^+ (k)$, $B' = W^+ (k) \otimes_{\mathbb{S}_p^{\wedge}} H\mathbb{Z}_p$ and
  $B_0 = H k$ in Remark~\ref{rem:thickeningsqs}, we deduce that the lower
  square
  \[ \begin{array}{ccc}
       H\mathbb{Z}_p & \longrightarrow & W^+ (k)
       \otimes_{\mathbb{S}_p^{\wedge}} H\mathbb{Z}_p\\
       \longdownarrow &  & \longdownarrow\\
       H\mathbb{F}_p & \longrightarrow & H k
     \end{array} \]
  constitutes a commutative diagram of thickening as in
  Definition~\ref{def:thickening}. Then it follows from
  Remark~\ref{rem:thickening-uniq} and Example~\ref{ex:witt} that $W^+ (k)
  \otimes_{\mathbb{S}_p^{\wedge}} H\mathbb{Z}_p$ is equivalent to $H W (k)$ as
  $\mathbb{E}_{\infty}$-$H\mathbb{Z}_p$-algebras, which implies that $W (k)
  \cong \pi_0 (H W (k)) \cong \tmop{Tor}_0^{\pi_0 (\mathbb{S}_p^{\wedge})}
  (\pi_0 (W^+ (k)), \pi_0 (H\mathbb{Z}_p)) \cong \tmop{Tor}_0^{\mathbb{Z}_p}
  (\pi_0 (W^+ (k)), \mathbb{Z}_p) \cong \pi_0 (W^+ (k))$.
\end{proof}

\begin{proposition}[Recognition of Thickenings,
{\cite[Proposition~5.2.9]{Lurie2018a}}]
  Let $A$ be a connective $\mathbb{E}_{\infty}$-ring, let $I \subseteq \pi_0
  A$ be a finitely generated ideal, and set $A_0 = \pi_0 (A) / I$. Suppose
  that $A_0$ is an $\mathbb{F}_p$-algebra which is almost perfect as an
  $A$-module and that the Frobenius map $\varphi_{A_0} \of A_0 \rightarrow
  A_0$ is flat. Suppose we are given a commutative diagram of connective
  $\mathbb{E}_{\infty}$-rings $\sigma$:
  \[ \begin{array}{ccc}
       A & \xrightarrow{f} & B\\
       \longdownarrow & \sigma & \longdownarrow\\
       A_0 & \xrightarrow{f_0} & B_0
     \end{array} \]
  where $f_0$ is a relative perfect morphism of commutative
  $\mathbb{F}_p$-algebras. Then $\sigma$ exhibits $f$ as an $A$-thickening of
  $f_0$ if and only if the following conditions are satisfied:
  \begin{enumerateroman}
    \item The $\mathbb{E}_{\infty}$-ring $B$ is $I$-complete as an $A$-module;
    
    \item The diagram $\sigma$ is a pushout square.
  \end{enumerateroman}
\end{proposition}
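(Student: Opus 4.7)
The plan is to handle the two directions separately. For the ``only if'' direction, since $f_0$ is relatively perfect, Theorem~\ref{thm:exist_thickening} already produces an $A$-thickening $\sigma^\dagger$ of $f_0$ that is moreover a pushout square. Remark~\ref{rem:thickening-uniq} then tells us that any thickening of $f_0$ is equivalent to $\sigma^\dagger$, so any such $\sigma$ must itself be a pushout; condition (i) is immediate from Definition~\ref{def:thickening}(a).

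For the ``if'' direction, I would verify conditions (a)--(c) of Definition~\ref{def:thickening} for the given $\sigma$. Condition (a) is exactly (i). For (b), since $\sigma$ is a pushout of connective $\mathbb{E}_\infty$-rings, the $\pi_0$ of $HA_0 \otimes_A B$ is computed as the ordinary tensor product $A_0 \otimes_{\pi_0 A} \pi_0 (B) = \pi_0 (B) / I \pi_0 (B)$, which identifies with $B_0$ by assumption (ii).

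To verify the universal property (c), the idea is to compare $B$ to the reference thickening $B^\dagger$ produced by Theorem~\ref{thm:exist_thickening}. Applying its universal property to the $I$-complete $A$-algebra $B$, one extracts a canonical morphism $\phi \of B^\dagger \to B$ of connective $\mathbb{E}_\infty$-$A$-algebras realizing the identity $B_0 \to \pi_0 (B) / I \pi_0 (B) = B_0$. Both $B^\dagger$ and $B$ satisfy $- \otimes_A HA_0 \simeq HB_0$ (by the theorem and by (ii), respectively), and $\phi$ is compatible with these identifications, so $\phi \otimes_A HA_0$ is the identity of $HB_0$, in particular an equivalence.

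The hard part will be upgrading this to an equivalence $\phi$ itself. The fiber $F$ of $\phi$ is $I$-complete (as a limit of $I$-complete modules) and satisfies $F \otimes_A HA_0 \simeq 0$. A derived Nakayama-type argument, using the almost-perfectness of $HA_0$ over $A$ and the finite generation of $I$ via a Koszul complex on a set of generators, should force $F \simeq 0$. Once $B^\dagger \simeq B$ is established, the universal property of $B^\dagger$ transfers to $B$ and yields condition (c), completing the proof.
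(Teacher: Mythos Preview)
The paper does not supply its own proof of this proposition; it is quoted verbatim from Lurie (\cite[Proposition~5.2.9]{Lurie2018a}) as a black-box input, so there is no argument in the paper to compare yours against.

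That said, your outline is sound and is essentially how one proves the result. The only-if direction is exactly right. For the if direction, your reduction to showing that the comparison map $\phi \of B^\dagger \to B$ is an equivalence is the standard move, and a derived Nakayama argument does finish it. One small correction: the almost-perfectness of $HA_0$ is not the operative hypothesis in the Nakayama step (it is needed earlier, to invoke Theorem~\ref{thm:exist_thickening} and produce $B^\dagger$ in the first place). What you actually use is that the fiber $F$ of $\phi$ is bounded below (it is $(-1)$-connective, being the fiber of a map of connective spectra) and $I$-complete. From $F \otimes_A HA_0 \simeq 0$ and connectivity one reads off $(\pi_n F)/I(\pi_n F) = 0$ for the lowest nonvanishing $\pi_n F$; since each $\pi_n F$ is derived $I$-complete by Theorem~\ref{thm:cplhomot}, the discrete derived Nakayama lemma forces $\pi_n F = 0$, and one iterates. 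The Koszul-complex phrasing you allude to is an equivalent packaging of the same idea.
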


\section{Perfect rings being Thom spectra}

We first admit a (superficially) slightly stronger Hopkins-Mahowald's theorem
for sake of convenience. Given a perfect $\mathbb{F}_p$-algebra $k$ and an
invertible element $u \in \tmop{GL}_1 (W (k))$, as a special case of
Remark~\ref{cons:f_R}, we have a map $f_{k, pu} \of \Omega^2 S^3 \rightarrow
\tmop{BGL}_1 (W^+ (k))$.

\begin{theorem}[Hopkins-Mahowald for $k$]
  \label{thm:perf-thom}The Eilenberg-Maclane spectrum $H k$ is the
  $\mathbb{E}_2$-Thom spectrum associated to the map $f_{k, pu}$.
\end{theorem}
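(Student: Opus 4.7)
The plan is to deduce Theorem~\ref{thm:perf-thom} from the classical Hopkins--Mahowald theorem (Theorem~\ref{thm:hopkins-mahowald}) by base change along the unit morphism $\eta \of \mathbb{S}_p^{\wedge} \simeq W^+ (\mathbb{F}_p) \to W^+ (k)$ of $\mathbb{E}_{\infty}$-ring spectra, combined with the spherical Witt vector identification $H \mathbb{F}_p \otimes_{\mathbb{S}_p^{\wedge}} W^+ (k) \simeq H k$ recorded in Example~\ref{ex:sphwitt}.

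Suppose first that $u$ is the image under $\eta$ of some $u_0 \in \tmop{GL}_1 (\mathbb{Z}_p)$. Then by construction (Remarks~\ref{cons:f_p} and~\ref{cons:f_R}), the double loop map $f_{k, pu}$ factors as the composition
\[ \Omega^2 S^3 \xrightarrow{f_{\mathbb{F}_p, pu_0}} \tmop{BGL}_1 (\mathbb{S}_p^{\wedge}) \xrightarrow{\tmop{BGL}_1 (\eta)} \tmop{BGL}_1 (W^+ (k)). \]
Applying the base change property of the $\mathbb{E}_2$-Thom spectrum construction, followed by Theorem~\ref{thm:hopkins-mahowald} and Example~\ref{ex:sphwitt}, yields
\[ M f_{k, pu} \simeq M f_{\mathbb{F}_p, pu_0} \otimes_{\mathbb{S}_p^{\wedge}} W^+ (k) \simeq H \mathbb{F}_p \otimes_{\mathbb{S}_p^{\wedge}} W^+ (k) \simeq H k \]
as $\mathbb{E}_2$-$W^+ (k)$-algebras.

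For a general $u \in \tmop{GL}_1 (W (k))$ not lying in the image of $\mathbb{Z}_p^{\times}$, the factorization above fails, and one must argue independently that $M f_{k, pu}$ does not depend on $u$ up to $\mathbb{E}_2$-$W^+ (k)$-algebra equivalence. At the level of $\pi_0$ this is transparent: the Thom colimit imposes the relation $1 - pu = 1$, i.e., $pu = 0$, which since $u \in W (k)^{\times}$ is equivalent to $p = 0$, independent of $u$. Promoting this independence to the $\mathbb{E}_2$-level can be approached by comparing the homotopy classes of $f_{k, pu}$ and $f_{k, p}$ as double loop maps into $\tmop{BGL}_1 (W^+ (k))$, or by verifying directly that $M f_{k, pu}$ satisfies the universal property of the ``free $\mathbb{E}_2$-$W^+ (k)$-algebra with $p = 0$'' for any $u$.

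The main obstacle is precisely this last step of reducing general $u$ to $u \in \mathbb{Z}_p^{\times}$: the underlying algebraic content is plainly independent of $u$, but matching the $\mathbb{E}_2$-coherences requires genuine homotopical input. Granting the reduction, the remainder of the proof is a clean formal consequence of classical Hopkins--Mahowald and the thickening theory developed in Section~\ref{sec:ha}.
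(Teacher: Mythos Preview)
Your overall strategy matches the paper's exactly: first handle $u \in \tmop{GL}_1(\mathbb{Z}_p)$ by base change along $\mathbb{S}_p^{\wedge} \to W^+(k)$ (this is the paper's Lemma~\ref{lem:perf-thom-special}, and your argument for it is complete and correct), then reduce an arbitrary $u \in \tmop{GL}_1(W(k))$ to this case.

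The gap is that you have not actually carried out the reduction step; you only list two possible approaches and explicitly flag it as ``the main obstacle.'' The paper does fill this gap, and it does so via the second of your two suggestions, made precise as follows. By Proposition~4.9 of Antol\'in-Camarena--Barthel (recorded in the paper as Lemma~\ref{lem:thom-universal}), for any $\mathbb{E}_2$-$W^+(k)$-algebra $A$ the mapping space $\tmop{Map}_{\tmop{Alg}^{\mathbb{E}_2}_{W^+(k)}}(Mf_{k,pu}, A)$ is naturally identified with the space of null-homotopies of the composite $W^+(k) \xrightarrow{m_{pu}} W^+(k) \xrightarrow{\eta} A$ in $\tmop{Mod}_{W^+(k)}$. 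Since $u$ is a unit, $m_u \of W^+(k) \to W^+(k)$ is an equivalence of $W^+(k)$-modules, and precomposition with it carries null-homotopies of $\eta \circ m_p$ bijectively to null-homotopies of $\eta \circ m_{pu}$. This gives a natural equivalence
\[
\tmop{Map}_{\tmop{Alg}^{\mathbb{E}_2}_{W^+(k)}}(Mf_{k,p}, A) \xrightarrow{\ \simeq\ } \tmop{Map}_{\tmop{Alg}^{\mathbb{E}_2}_{W^+(k)}}(Mf_{k,pu}, A),
\]
and Yoneda then yields $Mf_{k,pu} \simeq Mf_{k,p}$ as $\mathbb{E}_2$-$W^+(k)$-algebras. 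That is the missing homotopical input you allude to; once you invoke this universal property the argument is a two-line Yoneda trick, not a serious obstruction.
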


For technical reasons, we start with the special case that $u \in \tmop{GL}_1
(\mathbb{Z}_p) \subseteq \tmop{GL}_1 (W (k))$. In this case, it is a direct
consequence of that for $\mathbb{F}_p .$

\begin{lemma}
  \label{lem:perf-thom-special}Theorem~\ref{thm:perf-thom} is true when $u \in
  \tmop{GL}_1 (\mathbb{Z}_p) \subseteq \tmop{GL}_1 (W (k))$.
\end{lemma}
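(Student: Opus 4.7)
The plan is to exploit the factorization of $f_{k,pu}$ through $\tmop{BGL}_1(\mathbb{S}_p^\wedge)$ and then reduce the claim to the classical Hopkins--Mahowald theorem plus the computation in Proposition~\ref{prop:sphwitt}.

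First, I would observe that since $u \in \tmop{GL}_1(\mathbb{Z}_p)$, the invertible element $1-pu \in \tmop{GL}_1(W(k))$ lies in the image of the ring map $\mathbb{Z}_p \to W(k)$; equivalently, it comes from $\pi_1(\tmop{BGL}_1(\mathbb{S}_p^\wedge))$ under the $\mathbb{E}_\infty$-map induced by the unit $\mathbb{S}_p^\wedge \to W^+(k)$. Extending to double loops using the $\mathbb{E}_\infty$-structure, and by the uniqueness of $\mathbb{E}_2$-extensions of pointed maps out of $\Omega^2S^3 \simeq \Omega^2\Sigma^2 S^1$ (the free $\mathbb{E}_2$-group on $S^1$), this gives an identification of $\mathbb{E}_2$-maps
\[
f_{k,pu} \;\simeq\; \bigl(\tmop{BGL}_1(\mathbb{S}_p^\wedge) \to \tmop{BGL}_1(W^+(k))\bigr) \circ f_{\mathbb{F}_p,pu}.
\]

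Next, I would invoke the base change property of Thom spectra along a map of $\mathbb{E}_\infty$-rings $\mathbb{S}_p^\wedge \to W^+(k)$: for any $\mathbb{E}_2$-map $g \colon X \to \tmop{BGL}_1(\mathbb{S}_p^\wedge)$, the $\mathbb{E}_2$-Thom spectrum of the composite $X \to \tmop{BGL}_1(\mathbb{S}_p^\wedge) \to \tmop{BGL}_1(W^+(k))$ is equivalent to $W^+(k) \otimes_{\mathbb{S}_p^\wedge} Mg$ as $\mathbb{E}_2$-$W^+(k)$-algebras. Applied to $g = f_{\mathbb{F}_p,pu}$ and combined with the classical Hopkins--Mahowald theorem (Theorem~\ref{thm:hopkins-mahowald}), this yields
\[
Mf_{k,pu} \;\simeq\; W^+(k) \otimes_{\mathbb{S}_p^\wedge} H\mathbb{F}_p
\]
as $\mathbb{E}_2$-$W^+(k)$-algebras.

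Finally, I would identify the right-hand side with $Hk$. Using Proposition~\ref{prop:sphwitt}, we have $HW(k) \simeq W^+(k) \otimes_{\mathbb{S}_p^\wedge} H\mathbb{Z}_p$, so
\[
W^+(k) \otimes_{\mathbb{S}_p^\wedge} H\mathbb{F}_p \;\simeq\; HW(k) \otimes_{H\mathbb{Z}_p} H\mathbb{F}_p.
\]
Since $k$ is a perfect $\mathbb{F}_p$-algebra, $W(k)$ is $p$-torsion free, so the derived tensor product $W(k) \otimes^{\mathbb L}_{\mathbb{Z}_p} \mathbb{F}_p$ is discrete and equal to $W(k)/p \cong k$. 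This produces the desired equivalence $Mf_{k,pu} \simeq Hk$.

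The main subtlety is in the first paragraph: one must verify that the $\mathbb{E}_2$-double-loop extension of $1-pu$ really factors through $\tmop{BGL}_1(\mathbb{S}_p^\wedge)$ compatibly with the $\mathbb{E}_2$-structure, rather than merely as a map of spaces. This is where the hypothesis $u \in \tmop{GL}_1(\mathbb{Z}_p)$ is essential, and it is precisely the obstruction that will need to be circumvented (presumably via a different argument) to treat the general case $u \in \tmop{GL}_1(W(k))$ in the subsequent lemma.
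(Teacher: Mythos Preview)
Your proof is correct and follows essentially the same strategy as the paper: factor $f_{k,pu}$ through $\tmop{BGL}_1(\mathbb{S}_p^\wedge)$, use that base change commutes with the Thom spectrum colimit, and apply the classical Hopkins--Mahowald theorem. The only cosmetic difference is in the final identification $W^+(k)\otimes_{\mathbb{S}_p^\wedge} H\mathbb{F}_p \simeq Hk$: the paper cites this directly as the pushout claim in Theorem~\ref{thm:exist_thickening} (applied as in Example~\ref{ex:sphwitt}), whereas you route through Proposition~\ref{prop:sphwitt} and the $p$-torsion-freeness of $W(k)$, which amounts to the same thing.
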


\begin{proof}
  We note that the image of the multiplication map $m_{1 - pu} \of
  \mathbb{S}_p^{\wedge} \rightarrow \mathbb{S}_p^{\wedge}$ given by $1 - pu
  \in \pi_0 (\mathbb{S}_p^{\wedge}) \cong \mathbb{Z}_p$ under the canonical
  (symmetric monoidal) functor $W^+ (k) \otimes_{\mathbb{S}_p^{\wedge}} - \of
  \tmop{Mod}_{\mathbb{S}_p^{\wedge}} \rightarrow \tmop{Mod}_{W^+ (k)}$ is
  still a multiplication map $m_{1 - pu} \of W^+ (k) \rightarrow W^+ (k)$
  given by $1 - pu \in \pi_0 (W^+ (k)) \cong W (k)$, and therefore the map
  $f_{k, pu}$ coincides with the composition map
  \[ \Omega^2 S^3 \longrightarrowlim^{f_{\mathbb{F}_p, pu}} \tmop{BGL}_1
     (\mathbb{S}_p^{\exterior}) \xrightarrow{W^+ (k)
     \otimes_{\mathbb{S}_p^{\wedge}} -} \tmop{BGL}_1 (W^+ (k)) \]
  Since $M f_{\mathbb{F}_p, pu} \simeq H\mathbb{F}_p$ as $\mathbb{E}_2$-ring
  spectra by Theorem~\ref{thm:hopkins-mahowald},
  \[ M f_{k, pu} \simeq W^+ (k) \otimes_{\mathbb{S}_p^{\wedge}} M
     f_{\mathbb{F}_p, pu} \simeq W^+ (k) \otimes_{\mathbb{S}_p^{\wedge}}
     H\mathbb{F}_p \simeq H k \]
  as $\mathbb{E}_2$-ring spectra, where the first equivalence follows from the
  fact that the functor $W^+ (k) \otimes_{\mathbb{S}_p^{\wedge}} -$ is a left
  adjoint therefore commutes with colimits and the last equivalence is given
  by the last claim in Theorem~\ref{thm:exist_thickening}.
\end{proof}

In order to prove Theorem~\ref{thm:perf-thom}, it suffices to show that $M
f_{k, pu} \simeq M f_{k, p}$ holds for all $u \in \tmop{GL}_1 (W (k))$,
therefore $M f_{k, pu} \simeq M f_{k, p} \simeq H k$ by
Lemma~\ref{lem:perf-thom-special}. We will base the proof on a universal
property of Thom spectra which we will not use elsewhere, and the author looks
forward to an alternative proof which does not depend on this universal
property.

\begin{lemma}[Proposition 4.9 in {\cite{Antolin-Camarena2014}} along with the
discussions after Lemma 4.6]
  \label{lem:thom-universal}The $\mathbb{E}_2$-Thom spectrum $M f_{k, pu}$
  satisfies the following universal property: For all $\mathbb{E}_2$-$W^+
  (k)$-algebras $A$, the mapping space $\tmop{Map}_{\tmop{Alg}_{W^+
  (k)}^{\mathbb{E}_2}} (M f_{k, pu}, A)$ could be naturally identified with
  the space of null-homotopies of the composite map $W^+ (k)
  \xrightarrow{m_{pu}} W^+ (k) \xrightarrow{\eta} A$ in the category of $W^+
  (k)$-modules where $\eta \of W^+ (k) \rightarrow A$ is the canonical map
  given by the $\mathbb{E}_2$-$W^+ (k)$-algebra structure, and $m_{pu} \of W^+
  (k) \rightarrow W^+ (k)$ is the multiplication map given by $pu \in W (k) =
  \pi_0 (W^+ (k))$.
\end{lemma}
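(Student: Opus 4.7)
The plan is to invoke the general universal property of $\mathbb{E}_2$-Thom spectra from \cite{Antolin-Camarena2014} and unpack the identifications in our setting. By that universal property, for any $\mathbb{E}_2$-$W^+(k)$-algebra $A$ with structure map $\eta \of W^+(k) \to A$, the mapping space $\tmop{Map}_{\tmop{Alg}_{W^+(k)}^{\mathbb{E}_2}}(M f_{k,pu}, A)$ is naturally equivalent to the space of $\mathbb{E}_2$-nullhomotopies of the composite
\[ \Omega^2 S^3 \xrightarrow{f_{k,pu}} \tmop{BGL}_1(W^+(k)) \xrightarrow{\eta_{\ast}} \tmop{BGL}_1(A), \]
where $\eta_{\ast}$ is the $\mathbb{E}_2$-monoidal functor induced by base change $-\otimes_{W^+(k)} A$.

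Since $\Omega^2 S^3 \simeq \Omega^2 \Sigma^2 S^1$ is the free grouplike $\mathbb{E}_2$-space on the pointed space $S^1$ and $\tmop{BGL}_1(A)$ is grouplike, the free-forget adjunction identifies $\mathbb{E}_2$-maps $\Omega^2 S^3 \to \tmop{BGL}_1(A)$ with pointed maps $S^1 \to \tmop{BGL}_1(A)$, and $\mathbb{E}_2$-nullhomotopies with pointed nullhomotopies. For the pointed map classified by $1-pu \in \tmop{GL}_1(W(k)) = \pi_1\tmop{BGL}_1(W^+(k))$, the latter space is, by definition, the space of pointed paths from $1-\eta(pu)$ to the basepoint $1$ in $\Omega\tmop{BGL}_1(A) = \tmop{GL}_1(A)$.

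The last step is to translate such paths into nullhomotopies of $m_{pu}$. Since $\tmop{GL}_1(A)$ is a union of connected components of $\Omega^{\infty} A$, the path space in $\tmop{GL}_1(A)$ coincides with the corresponding path space in $\Omega^{\infty} A$; and paths in this infinite loop space from $a$ to $b$ correspond, via the spectrum structure, to nullhomotopies of $a-b$. Our path space thus identifies with nullhomotopies of $-\eta(pu)$, equivalently of $\eta(pu) \in \Omega^{\infty} A$. Under the canonical equivalence $\Omega^{\infty} A \simeq \tmop{Map}_{\tmop{LMod}_{W^+(k)}}(W^+(k), A)$, the element $\eta(pu)$ corresponds to the composite $W^+(k) \xrightarrow{m_{pu}} W^+(k) \xrightarrow{\eta} A$, giving precisely the asserted identification. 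The main subtlety is ensuring the $\mathbb{E}_2$-coherence of the free-forget adjunction and of the resulting natural equivalences, which is exactly the content of \cite[Proposition~4.9]{Antolin-Camarena2014} together with the discussion following Lemma~4.6 there; no additional input beyond those is needed.
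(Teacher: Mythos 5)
Your proposal is consistent with how the paper treats this lemma: the paper gives no independent argument and simply cites \cite{Antolin-Camarena2014} (Proposition~4.9 plus the discussion after Lemma~4.6), and your sketch is, in essence, a correct unpacking of that reference. The route differs from the one the citation points to (and which the paper makes explicit later, cf.\ Theorem~\ref{thm:versal-alg} in Section~\ref{sec:thom-free-E2-algs}): there, $M f_{k, pu}$ is identified with the versal $\mathbb{E}_2$-algebra, i.e.\ the pushout of $W^+ (k) \leftarrow W^+ (k) [\Omega^2 S^2] \rightarrow W^+ (k)$ along the evaluation maps at $0$ and at $pu$, so that $\tmop{Map}_{\tmop{Alg}_{W^+ (k)}^{\mathbb{E}_2}} (M f_{k, pu}, A)$ becomes the fiber product $\{ 0 \} \times_{\Omega^{\infty} A} \{ \eta (pu) \}$, which is literally the space of null-homotopies of $\eta \circ m_{pu}$; you instead start from the orientation-style universal property, use that $\Omega^2 S^3 \simeq \Omega^2 \Sigma^2 S^1$ is the free grouplike $\mathbb{E}_2$-space on $S^1$, and then translate path spaces inside $\Omega^{\infty} A$ (paths from $1 - \eta (pu)$ to $1$, hence null-homotopies of $\eta (pu)$, hence of $\eta \circ m_{pu}$). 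Both routes are available in \cite{Antolin-Camarena2014} and land on the same identification, with the same naturality in $A$.

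One point to tighten: for $A$ merely an $\mathbb{E}_2$-$W^+ (k)$-algebra, the base-change functor $- \otimes_{W^+ (k)} A$ is only $\mathbb{E}_1$-monoidal and $\tmop{BGL}_1 (A)$ carries only an $\mathbb{E}_1$-structure (it is a single delooping of the grouplike $\mathbb{E}_2$-space $\tmop{GL}_1 (A)$), so ``the space of $\mathbb{E}_2$-null-homotopies of the composite $\Omega^2 S^3 \rightarrow \tmop{BGL}_1 (A)$'' does not typecheck as written, and likewise your free--forget step needs the target to be grouplike $\mathbb{E}_2$. This is exactly the wrinkle that Antolín-Camarena--Barthel's actual formulation is designed to absorb (maps over $\tmop{BGL}_1 (W^+ (k))$ into the fibration of module maps attached to $A$, or equivalently the versal-algebra/pushout description), so your argument goes through once the invoked universal property is stated in their form rather than naively through $\tmop{BGL}_1 (A)$; also note that the coherence you defer to ``Proposition~4.9 and the discussion after Lemma~4.6'' really lives in their main theorem, those two items being the versal-algebra translation. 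With that adjustment, your derivation is precisely the content of the cited results.
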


\begin{proof*}{Proof of Theorem~\ref{thm:perf-thom}}
  Note that the multiplication map $m_u \of W^+ (k) \rightarrow W^+ (k)$ is an
  equivalence of $W^+ (k)$-modules since $u \in W (k) = \pi_0 (W^+ (k))$ is
  invertible. Hence by Lemma~\ref{lem:thom-universal}, the map $m_u$ induces
  an equivalence of spaces $\tmop{Map}_{\tmop{Alg}_{W^+ (k)}^{\mathbb{E}_2}}
  (M f_{k, p}, A) \rightarrow \tmop{Map}_{\tmop{Alg}_{W^+ (k)}^{\mathbb{E}_2}}
  (M f_{k, pu}, A)$ which is natural in $A$. By the Yoneda lemma, we deduce
  that $M f_{k, pu} \simeq M f_{k, p}$ as $\mathbb{E}_2$-$W^+ (k)$-algebras.
\end{proof*}

\section{Recollection of perfectoid rings}\label{sec:perfd}

In this section, we will review basic definitions and properties of perfectoid
rings.

\subsection{Basic definitions and properties}

\begin{definition}
  Let $A$ be a ring and $I \subseteq A$ be an ideal. Then the ring $A$ is
  called {\tmdfn{$I$-adically complete}} if the canonical map from $A$ to the
  (inverse) limit of the tower
  \[ \cdots \rightarrow A / I^n \rightarrow \cdots \rightarrow A / I^2
     \rightarrow A / I \]
  is an isomorphism. The ring $A$ is called $I$-adically separated if the
  intersection $\bigcap_{n = 0}^{\infty} I^n = 0$.
\end{definition}

\begin{warning}
  In the literature, sometimes authors call a ring $A$ is $I$-adically
  complete when the canonical map $A \rightarrow \lim_{n \in (\mathbb{N}, >)}
  (A / I^n)$ is only supposed to be surjective, and our $I$-adic completeness
  is equivalent to their $I$-adic completeness plus $I$-adic separateness.
\end{warning}

\begin{definition}
  Let $A$ be an $\mathbb{F}_p$-algebra. The {\tmdfn{direct limit perfection}}
  $A_{\tmop{perf}}$ of $A$ is the direct limit of the telescope $A
  \xrightarrow{\varphi} A \xrightarrow{\varphi} A \xrightarrow{\varphi}
  \cdots$.
\end{definition}

\begin{definition}
  An $\mathbb{F}_p$-algebra $A$ is called {\tmdfn{semiperfect}} if the
  Frobenius map $\varphi \of A \rightarrow A$ is surjective.
\end{definition}

\begin{remark}
  For a semiperfect $\mathbb{F}_p$-algebra $A$, the direct limit perfection
  $A_{\tmop{perf}}$ coincides with $A_{\tmop{red}} = A / \sqrt{0}$, by
  checking that $A_{\tmop{red}}$ satisfies the universal property of
  $A_{\tmop{perf}}$.
\end{remark}

\begin{remark}
  The canonical map $R \rightarrow R_{\tmop{perf}}$ is initial among all
  $\mathbb{F}_p$-algebra morphisms $R \rightarrow S$ such that $S$ is a
  perfect $\mathbb{F}_p$-algebra. This follows directly from the universal
  property of direct limits in the definition of direct limit perfections.
\end{remark}

\begin{definition}
  Let $R$ be a commutative ring which is $p$-adically complete. The
  {\tmdfn{tilt}} of $R$, denoted by $R^{\flat}$, is a perfect
  $\mathbb{F}_p$-algebra defined by the limit of the tower
  \[ \cdots \xrightarrow{\varphi} R / p \xrightarrow{\varphi} R / p
     \xrightarrow{\varphi} R / p \]
  where $\varphi \of R / p \rightarrow R / p$ is the Frobenius map. In
  particular, if $R$ is an $\mathbb{F}_p$-algebra, then $R^{\flat}$ is the
  inverse limit perfection of $R$, and if furthermore $R$ is semiperfect, then
  the canonical map $R^{\flat} \rightarrow R$ is a surjection.
\end{definition}

We need the following classical proposition to define the Fontaine's
pro-infinitesimal thickening map. We omit the proof which is routine. One can
find a proof in, say, {\cite[Section~1.3]{Hesselholt2019}}.

\begin{proposition}
  Let $R$ be a $p$-adically complete commutative ring. Then there exists a
  multiplicative map (that is to say, a morphism of multiplicative monoids)
  $R^{\flat} \xrightarrow{(-)^{\sharp}} R$ that sends $a = (x_n)_{n \in
  \mathbb{N}} \in R^{\flat}$ to $a^{\sharp} \assign \lim_{n \rightarrow
  \infty} y_n^{p^n}$ where $(x_n)_{n \in \mathbb{N}}$ satisfies $\varphi (x_{n
  + 1}) = x_n$ for all $n \in \mathbb{N}$, and $(y_n)_{n \in \mathbb{N}} \in
  R^{\mathbb{N}}$ is any sequence such that for each $n \in \mathbb{N}$, $y_n$
  is a lift of $x_n \in R / p$ in $R$. We note that $a^{\sharp}$ does not
  depend on choice of $(y_n)_{n \in \mathbb{N}}$.
\end{proposition}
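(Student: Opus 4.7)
The proof rests on one elementary lemma in commutative algebra: if $u, v \in R$ satisfy $u \equiv v \pmod{p^k}$, then $u^p \equiv v^p \pmod{p^{k+1}}$. This follows at once from the factorization
\[ u^p - v^p = (u - v) \sum_{i = 0}^{p - 1} u^{p - 1 - i} v^i, \]
since $u - v \in p^k R$, and the reduction $u \equiv v \pmod{p}$ makes each summand $u^{p-1-i} v^i$ congruent to $u^{p-1}$ modulo $p$, so the total sum lies in $pR$.

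Granting this lemma, the plan is to verify in order: (a) the sequence $(y_n^{p^n})_{n \in \mathbb{N}}$ is $p$-adically Cauchy; (b) its limit is independent of the choice of lifts; (c) the resulting assignment $a \mapsto a^\sharp$ is multiplicative. First I would fix $(x_n) \in R^\flat$ and observe that if $(y_n), (y'_n)$ are two lift sequences, then $y_n \equiv y'_n \pmod{p}$, and iterating the key lemma $n$ times gives $y_n^{p^n} \equiv (y'_n)^{p^n} \pmod{p^{n+1}}$. Next, since $y_{n+1}^p$ lifts $\varphi(x_{n+1}) = x_n$, applying the previous observation to the two lifts $y_n$ and $y_{n+1}^p$ of $x_n$ yields
\[ y_n^{p^n} \equiv (y_{n+1}^p)^{p^n} = y_{n+1}^{p^{n+1}} \pmod{p^{n+1}}, \]
so the sequence is Cauchy; by $p$-adic completeness of $R$ it has a well-defined limit, and the first observation shows that this limit does not depend on the chosen lift sequence.

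Finally, multiplicativity is formal: if $(y_n)$ lifts $(x_n)$ and $(w_n)$ lifts $(z_n)$, then $(y_n w_n)$ lifts $(x_n z_n) = ab$, and $(y_n w_n)^{p^n} = y_n^{p^n} w_n^{p^n}$; passing to the limit gives $(ab)^\sharp = a^\sharp b^\sharp$. Compatibility with the unit is trivial by taking the constant lift $y_n = 1$.

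The main obstacle is really only the key lemma; everything else is bookkeeping of how many powers of $p$ are produced at each stage. There is no deep difficulty, provided one is careful to exploit the exponent $p^n$ (not merely $p$) when comparing terms at level $n$, which is exactly what converts the modest gain of one $p$-adic order per Frobenius into convergence of the whole sequence.
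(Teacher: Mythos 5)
Your proof is correct and is exactly the standard argument that the paper deliberately omits (it cites Hesselholt--Nikolaus, Section~1.3, for this routine verification): the one-step lemma $u \equiv v \pmod{p^k} \Rightarrow u^p \equiv v^p \pmod{p^{k+1}}$ (valid for $k \geq 1$, which is all you use), applied to the two lifts $y_n$ and $y_{n+1}^p$ of $x_n$, gives the Cauchy property and independence of lifts, and multiplicativity follows from continuity of multiplication in the $p$-adic topology. No gaps; this matches the intended proof.
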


\begin{definition}
  {\tmdfn{Fontaine's map}} $\theta \of W (R^{\flat}) \rightarrow R$ is given
  by $\theta \left( \sum_{i = 0}^{\infty} [a_i] p^i \right) = \sum_{i =
  0}^{\infty} a_i^{\sharp} p^i$, where $[-] \of R^{\flat} \rightarrow W
  (R^{\flat})$ is the Teichmüller representative.
\end{definition}

\begin{definition}[{\cite[Definition 3.5]{Bhatt2016}}]
  \label{def:perfd}A commutative ring $R$ is {\tmdfn{perfectoid}} if there
  exists $\pi \in R$ such that $p \in \pi^p R$, such that the ring $R$ is
  $(\pi)$-adically complete, such that the $\mathbb{F}_p$-algebra $R / p$ is
  semiperfect, and such that the kernel of $\theta \of W (R^{\flat})
  \rightarrow R$ is a principal ideal.
\end{definition}

\begin{definition}
  Let $R$ be a perfectoid ring. The {\tmdfn{special fiber}}, denoted by
  $\kappa$, is the direct limit perfection of $R / p$, that is to say $\kappa
  \assign (R / p)_{\tmop{perf}} = R / \sqrt{pR}$ since $R / p$ is semiperfect.
\end{definition}

\begin{notation}
  Let $R$ be a perfectoid ring. We denote by $\xi$ a generator of Fontaine's
  map $\theta \of W (R^{\flat}) \rightarrow R$.
\end{notation}

\begin{proposition}[{\cite[Lemma~3.13]{Bhatt2016}}]
  \label{prop:perfdPOfiber}Let $R$ be a perfectoid ring. Then the commutative
  diagram
  \[ \begin{array}{ccc}
       W (R^{\flat}) & \xrightarrow{\theta} & R\\
       \longdownarrow &  & \longdownarrow\\
       W (\kappa) & \xrightarrow{\tmop{mod} p} & \kappa
     \end{array} \]
  is a $\tmop{Tor}$-independent pushout square.
\end{proposition}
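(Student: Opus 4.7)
The plan is to split the proposition into two claims: that the square is a pushout of discrete commutative rings, and that the higher Tors vanish. Both reduce to an analysis of the image $\bar\xi$ of $\xi$ under $W(R^\flat) \to W(\kappa)$.

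First, I would identify both horizontal maps as quotients by principal ideals. The top row is $W(R^\flat) \twoheadrightarrow W(R^\flat)/\xi \cong R$ by Definition~\ref{def:perfd}, and the bottom row is $W(\kappa) \twoheadrightarrow W(\kappa)/p \cong \kappa$ since $\kappa$ is a perfect $\mathbb{F}_p$-algebra. The ordinary pushout $R \otimes_{W(R^\flat)} W(\kappa)$ is then $W(\kappa)/\bar\xi$, so the pushout assertion reduces to proving the equality of ideals $(\bar\xi) = (p)$ in $W(\kappa)$.

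To establish this equality, I would write $\xi = \sum_{i \geq 0} [a_i] p^i$ with $a_i \in R^\flat$. The condition $\theta(\xi) = 0$ gives $a_0^\sharp = -\sum_{i \geq 1} p^i a_i^\sharp \in pR \subseteq \sqrt{pR}$. Since $\kappa = R/\sqrt{pR}$ and the sharp map is the identity on the perfect $\mathbb{F}_p$-algebra $\kappa$, the image $\bar a_0$ of $a_0$ in $\kappa$ vanishes, so $[\bar a_0] = 0$ in $W(\kappa)$, and therefore $\bar\xi = p(\bar a_1 + p \bar a_2 + \cdots) \in (p) W(\kappa)$. For the reverse inclusion, I would invoke the fact that any generator of $\ker\theta$ is \emph{distinguished} in the perfectoid setting, meaning $a_1 \in R^\flat$ maps to a unit in $\kappa$; this is where the full definition of perfectoid is used (in particular the existence of the pseudo-uniformizer $\pi$ with $p \in \pi^p R$). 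Given $\bar a_1 \in \kappa^\times$, the factor $\bar a_1 + p(\cdots)$ is a unit in the $p$-adically complete ring $W(\kappa)$, yielding $(\bar\xi) = (p)$.

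For Tor-independence, the rings $W(R^\flat)$ and $W(\kappa)$ are $p$-torsion-free, being Witt rings of perfect $\mathbb{F}_p$-algebras. Distinguishedness of $\xi$ forces it to be a non-zerodivisor in $W(R^\flat)$, so $R$ is computed by the length-one free resolution $W(R^\flat) \xrightarrow{\xi} W(R^\flat) \twoheadrightarrow R$. Base-changing along $W(R^\flat) \to W(\kappa)$ gives the two-term complex $W(\kappa) \xrightarrow{\bar\xi} W(\kappa)$, whose only nonzero homology is $H_0 = W(\kappa)/\bar\xi = \kappa$, because $\bar\xi = p \cdot \mathrm{unit}$ is again a non-zerodivisor. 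This simultaneously proves the pushout property and the vanishing of higher Tors. The main obstacle is the distinguishedness of $\xi$---specifically, that the Teichmüller coefficient $a_1$ has unit image in $\kappa$---as this is the structural statement that genuinely encodes the definition of a perfectoid ring in the Witt-vector framework; once it is in hand, everything else is formal manipulation.
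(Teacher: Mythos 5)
Your outline is correct, but there is no internal proof to compare it with: the paper imports this statement wholesale as a citation of \cite[Lemma~3.13]{Bhatt2016}. What you sketch is essentially the standard argument underlying that lemma, and it is sound: both horizontal maps are quotients by the principal ideals $(\xi)$ and $(p)$, your verification that $\theta(\xi)=0$ forces $a_0^{\sharp}\in pR$ and hence $\bar{a}_0=0$ in $\kappa$ (so $\bar{\xi}\in pW(\kappa)$) is correct, and once one knows $\bar{\xi}=p\cdot(\mathrm{unit})$ and that $\xi$ is a non-zerodivisor, the length-one free resolution $0\to W(R^{\flat})\xrightarrow{\xi}W(R^{\flat})\to R\to 0$ together with $p$-torsion-freeness of $W(\kappa)$ gives both the pushout identification $\mathrm{Tor}_0^{W(R^{\flat})}(R,W(\kappa))\cong W(\kappa)/p\cong\kappa$ and the vanishing of the higher Tor groups. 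Two cautions, though. First, the two facts you invoke --- that any generator of $\ker\theta$ is distinguished (its Teichm\"uller/Witt coordinate $a_1$ becomes a unit) and that it is a non-zerodivisor in $W(R^{\flat})$ --- are exactly the substantive content; they are the lemmas in \cite{Bhatt2016} immediately preceding Lemma~3.13, proved there using the pseudo-uniformizer $\pi$ with $p\in\pi^{p}R$. So your argument is a correct reduction to the standard input rather than a self-contained proof, which is a perfectly reasonable standard here (the paper itself proves nothing), but those inputs must be cited or established. Second, beware of circularity within this paper: Corollary~\ref{cor:image-xi}, which says precisely that the image of $\xi$ in $W(\kappa)$ is $pu$ with $u$ a unit, is deduced \emph{from} Proposition~\ref{prop:perfdPOfiber}, so the distinguishedness you use must come from the external source, as you indicate, and not from that corollary.
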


\begin{corollary}
  \label{cor:image-xi}Let $R$ be a perfectoid ring. For any generator $\xi \in
  \ker \theta$, there exists an invertible element $u \in \tmop{GL}_1 (W
  (\kappa))$ such that the image of $\xi \in W (R^{\flat})$ in $W (\kappa)$ is
  $pu$.
\end{corollary}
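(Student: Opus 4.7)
My plan is to read off the result from the Tor-independent pushout square in Proposition~\ref{prop:perfdPOfiber}. Since that square is a pushout of commutative rings, the canonical map identifies
\[ \kappa \;\cong\; W(\kappa) \otimes_{W(R^{\flat})} R. \]
By hypothesis $\xi$ generates $\ker\theta$, so $R \cong W(R^{\flat})/(\xi)$. Base changing along $W(R^{\flat}) \to W(\kappa)$ shows that the pushout is $W(\kappa)/(\bar\xi)$, where $\bar\xi$ denotes the image of $\xi$ in $W(\kappa)$. On the other hand, $\kappa$ is identified with $W(\kappa)/p$ via the bottom row of the square. Comparing, we obtain the equality of ideals $(\bar\xi) = (p)$ in $W(\kappa)$.

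To promote this ideal equality to an equation $\bar\xi = pu$ with $u$ a unit, I would invoke the standard fact that $\kappa$, being reduced (as $\kappa = R/\sqrt{pR}$) and semiperfect (as a quotient of the semiperfect ring $R/p$), is a perfect $\mathbb{F}_p$-algebra, so its Witt vectors $W(\kappa)$ are $p$-torsion free. Thus $p$ is a non-zero-divisor in $W(\kappa)$: from $(\bar\xi)=(p)$ we may write $\bar\xi = pu$ and $p = \bar\xi v$ for some $u,v \in W(\kappa)$, which gives $p = puv$, hence $uv = 1$, so $u \in \mathrm{GL}_1(W(\kappa))$, as desired.

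The only delicate point is the Tor-independence used to pass freely between the pushout of rings and the ordinary quotient; this is exactly what Proposition~\ref{prop:perfdPOfiber} supplies, and without it one would a priori only obtain a surjection from $W(\kappa)/(\bar\xi)$ onto $\kappa$. Beyond that, the argument is essentially formal: a principal ideal generated by a non-zero-divisor determines its generator up to a unit. I would therefore expect the write-up to be a few lines, with the main conceptual content already carried by the pushout square.
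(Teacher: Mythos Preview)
Your argument is correct and matches the paper's: both deduce from the pushout square that the image $\bar\xi$ generates the ideal $pW(\kappa)$, and then use that $p$ is a non-zero-divisor in $W(\kappa)$ to conclude $\bar\xi = pu$ with $u$ a unit. One small remark: the Tor-independence is not actually needed here, since an \emph{ordinary} pushout of commutative rings along a quotient $A \to A/I$ is always $B/IB$ on the nose (Tor-independence only matters if you want this to agree with the \emph{derived} tensor product, which is irrelevant for the present ideal computation).
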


\begin{proof}
  By Proposition~\ref{prop:perfdPOfiber}, the image of $u \in W (R^{\flat})$
  in $W (\kappa)$ is a generator of the ideal $pW (\kappa)$. Since $p \in W
  (\kappa)$ is not a zero divisor, we deduce the result that we need. 
\end{proof}

\begin{proposition}
  \label{prop:ker-fiber}Let $R$ be a perfectoid ring. Then the kernel of the
  composition $R^{\flat} \rightarrow R / p \rightarrow \kappa$ is $\sqrt{\xi
  R^{\flat}}$.
\end{proposition}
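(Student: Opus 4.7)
The plan is to factor the problem through an explicit description of $R/p$ as a quotient of $R^{\flat}$. By Definition~\ref{def:perfd}, Fontaine's map yields an isomorphism $R \cong W(R^{\flat})/\xi$, and reducing modulo $p$ gives
\[ R/p \;\cong\; W(R^{\flat})/(p, \xi) \;\cong\; R^{\flat}/\bar{\xi} R^{\flat}, \]
where $\bar{\xi} \in R^{\flat}$ denotes the image of $\xi$ under the identification $W(R^{\flat})/p \cong R^{\flat}$ (valid since $R^{\flat}$ is a perfect $\mathbb{F}_p$-algebra). So once the natural projection $R^{\flat} = \lim_{\varphi} R/p \to R/p$ is identified with the quotient map $R^{\flat} \twoheadrightarrow R^{\flat}/\bar{\xi} R^{\flat}$, its kernel will be $\bar{\xi} R^{\flat}$.

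To carry out this identification I would unwind the Teichm\"uller formula defining $\theta$: writing $a \in R^{\flat}$ for the class corresponding to $[a] \in W(R^{\flat})$, one has $\theta([a]) = a^{\sharp}$, and modulo $p$ the element $a^{\sharp}$ equals $a^{(0)}$, which is exactly the image of $a$ under the projection $R^{\flat} = \lim_{\varphi} R/p \to R/p$ at index $0$. Hence the two candidate maps $R^{\flat} \to R/p$ agree, and the kernel is indeed $\bar{\xi} R^{\flat}$.

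For the second step, I would use that $\kappa = (R/p)_{\tmop{perf}} = (R/p)/\sqrt{0}$, so the kernel of $R/p \to \kappa$ is the nilradical of $R/p$. Thus the kernel of the composition $R^{\flat} \to R/p \to \kappa$ is the preimage in $R^{\flat}$ of $\sqrt{0_{R/p}}$ under the surjection $R^{\flat} \twoheadrightarrow R^{\flat}/\bar{\xi} R^{\flat}$. Invoking the elementary fact that for any surjection $\pi \of A \twoheadrightarrow B$ with kernel $I$ one has $\pi^{-1}(\sqrt{0_B}) = \sqrt{I}$ (since $\pi(x)^n = 0 \iff x^n \in I$), we conclude that the kernel equals $\sqrt{\bar{\xi} R^{\flat}}$, which is precisely $\sqrt{\xi R^{\flat}}$ in the notation of the statement.

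The main obstacle is the compatibility check at the very beginning: ensuring that the projection $R^{\flat} \to R/p$ coming from the definition of the tilt coincides with the map obtained by reducing $\theta$ modulo $p$. This is essentially bookkeeping with Witt vectors and Teichm\"uller representatives, but it is the only nontrivial ingredient; once settled, both the kernel computation and the passage from $\bar{\xi} R^{\flat}$ to its radical are formal.
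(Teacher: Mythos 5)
Your argument is correct and is essentially the paper's proof in slightly different packaging: the paper computes the kernel of $W(R^{\flat}) \to R/p \to \kappa$ as $\sqrt{(p,\xi)}$ and pushes it down along $W(R^{\flat}) \to R^{\flat}$, while you first reduce mod $p$ to identify $R/p \cong R^{\flat}/\bar{\xi}R^{\flat}$ and then pull back the nilradical of $R/p$, which is the same computation resting on the same inputs (surjectivity of $\theta$ with kernel $(\xi)$, $\kappa = (R/p)_{\mathrm{red}}$, and the compatibility of the canonical projection $R^{\flat} \to R/p$ with $\theta$ mod $p$). Your explicit verification of that compatibility via Teichm\"uller representatives is a point the paper leaves implicit, and it is correctly handled.
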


\begin{proof}
  The kernel of the composition $W (R^{\flat}) \rightarrow R / p \rightarrow
  \kappa$ is $\sqrt{pW (R^{\flat}) + \xi W (R^{\flat})}$ whose image under the
  canonical map $W (R^{\flat}) \rightarrow R^{\flat}$ is $\sqrt{\xi
  R^{\flat}}$.
\end{proof}

\subsection{Universal properties of Fontaine's map (and a spherical analogue)}

The results of this subsection will not be used later. However, we find it
better to understand that Fontaine's map $\theta \of W (R^{\flat}) \rightarrow
R$ and its spherical analogue $W^+ (R^{\flat}) \rightarrow \tau_{\leq 0} (W^+
(R^{\flat})) \simeq H W (R^{\flat}) \xrightarrow{H \theta} H R$ satisfy a
universal property, which is related to the thickening defined in
Definition~\ref{def:thickening}. Roughly speaking, they are mixed
characteristic ``absolute'' versions of thickenings in
Definition~\ref{def:thickening}. The following proposition is essentially due
to Fontaine (see {\cite{Fontaine1994}}, Theorem~1.2.1), rephrased in the
modern language:

\begin{proposition}[{\cite[Proposition~1.3.4]{Hesselholt2019}}]
  \label{prop:Fontaine-inf-thickening}Let $R$ be a perfectoid ring. Then
  Fontaine's map $\theta \of W (R^{\flat}) \rightarrow R$ is initial among
  surjections $\theta_D \of D \rightarrow R$ of rings such that the ring $D$
  is both $p$-adically complete and $\ker \theta_D$-adically complete.
\end{proposition}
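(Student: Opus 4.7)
The plan is to adapt the classical construction of Fontaine's map itself, invoking the universal property of $W(R^\flat)$ as a strict $p$-ring (the discrete analogue of Example~\ref{ex:witt}), with $D$ taking the place previously played by $W(R^\flat)$. Given a surjection $\theta_D \of D \rightarrow R$ with $D$ both $p$-adically complete and $\ker \theta_D$-adically complete, the goal is to produce a unique ring morphism $\phi \of W(R^\flat) \rightarrow D$ satisfying $\theta_D \circ \phi = \theta$.

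First I would construct a multiplicative lift $(-)^{\sharp_D} \of R^\flat \rightarrow D$ of the tilt. For $x = (x_n)_{n \in \mathbb{N}} \in R^\flat$, choose any lifts $y_n \in D$ of $x_n \in R/p$ along the composite surjection $D \rightarrow R \rightarrow R/p$, and set $x^{\sharp_D} \assign \lim_{n \rightarrow \infty} y_n^{p^n}$. The kernel of $D \rightarrow R/p$ equals $pD + \ker \theta_D$, and a standard binomial expansion shows that if $y \equiv y' \pmod{pD + \ker \theta_D}$ then $y^p \equiv y'^p$ modulo $(pD + \ker \theta_D)^2$. Iterating, the sequence $y_n^{p^n}$ is Cauchy in the $(p, \ker \theta_D)$-adic topology, which converges under the joint completeness hypothesis to a limit independent of all choices. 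Multiplicativity of $(-)^{\sharp_D}$ then follows by the usual argument.

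Next I would invoke the universal property of the strict $p$-ring $W(R^\flat)$: since $R^\flat$ is a perfect $\mathbb{F}_p$-algebra and $D$ is $p$-adically complete, any multiplicative map $R^\flat \rightarrow D$ whose reduction modulo $p$ is a ring morphism extends uniquely to a ring morphism $W(R^\flat) \rightarrow D$ sending each Teichmüller representative $[x]$ to $x^{\sharp_D}$. The requisite mod-$p$ reduction of $(-)^{\sharp_D}$ is the composite $R^\flat \xrightarrow{\sim} D^\flat \rightarrow D/p$, whose first arrow is an isomorphism via the same limit construction. This yields the map $\phi$, and compatibility $\theta_D \circ \phi = \theta$ holds on Teichmüller representatives by construction and hence on all of $W(R^\flat)$. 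Uniqueness of $\phi$ follows because any other candidate $\psi$ with $\theta_D \circ \psi = \theta$ must satisfy $\psi([x]) = \psi([x^{1/p^n}])^{p^n}$ for all $n$ by multiplicativity of the Teichmüller representative, and the right-hand side converges to $x^{\sharp_D}$ by the same Cauchy argument, forcing $\psi([x]) = \phi([x])$.

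The main obstacle will be the convergence step in the construction of $(-)^{\sharp_D}$: tracking that for $a \in pD + \ker \theta_D$, successive $p$-th powers $a^{p^n}$ land in ideals shrinking in both adic topologies simultaneously. This is the only point where both completeness hypotheses are used jointly, and it explains why the proposition demands both conditions rather than $p$-adic completeness alone. The remaining steps follow the familiar blueprint of the Fontaine map construction and are largely formal.
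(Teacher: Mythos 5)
Your proposal is correct in substance, but it executes the argument differently from the paper. The paper's proof first invokes the thickening universal property of Witt vectors (Example~\ref{ex:witt}) to identify $\tmop{Hom}_{\tmop{CAlg}_{\mathbb{Z}_p}^{\heartsuit}}(W(R^{\flat}),D)$ with $\tmop{Hom}_{\tmop{CAlg}_{\mathbb{F}_p}^{\heartsuit}}(R^{\flat},D/p)$ (and likewise for $R$), thereby reducing the whole statement to a purely characteristic-$p$ problem: the canonical map $\sigma\colon R^{\flat}\to R/p$ lifts uniquely along $\sigma_D\colon D/p\to R/p$, which is then solved by the Frobenius-twisted limit argument $b=\lim_n\varphi_{D/p}^n(b_n)$ carried out in $D/p$ with the $\ker\sigma_D$-adic topology; with this reduction, compatibility over $R$ and uniqueness come for free from the bijection of Hom-sets. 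You instead run the limit argument upstairs in $D$ in the $(p,\ker\theta_D)$-adic topology to build the multiplicative section $(-)^{\sharp_D}\colon R^{\flat}\to D$ (the analogue of the classical $(-)^{\sharp}$), identify its mod-$p$ reduction as a ring map via $R^{\flat}\simeq D^{\flat}$, and only then feed this into the same Witt universal property, checking $\theta_D\circ\phi=\theta$ and uniqueness on Teichm\"uller representatives and extending by $p$-adic continuity of Teichm\"uller expansions. Your route is closer to the construction of $\theta$ itself and makes the compatibility over $R$ very explicit, at the price of the extra identification $D^{\flat}\simeq R^{\flat}$ (needed to see additivity of the reduction of $(-)^{\sharp_D}$) and the continuity bookkeeping for the series $\sum[a_i]p^i$; the paper's reduction avoids both and confines all completeness issues to $D/p$. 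Note that both arguments share the same implicit step of passing from the two separate hypotheses ($p$-adic and $\ker\theta_D$-adic completeness) to $(p,\ker\theta_D)$-adic completeness of $D$ (resp.\ $\ker\sigma_D$-adic completeness of $D/p$); the paper asserts this without proof as well, so your gloss there is at the same level of detail as the sketch being compared against.
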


We will sketch the proof in {\cite{Hesselholt2019}} for the universal
property, that is, assume that the $p$-adic completeness and the $\xi$-adic
completeness of $W (R^{\flat})$ is already given, we show that Fontaine's map
$\theta \of W (R^{\flat}) \rightarrow R$ is initial as claimed.

\begin{proof}
  Let $\theta_D \of D \rightarrow R$ be a map of rings such that $D$ is both
  $p$-adically complete and $\ker \theta_D$-adically complete. We need to show
  that $\theta_D$ factors uniquely through $\theta \of W (R^{\flat})
  \rightarrow R$. In view of Example~\ref{ex:witt} and
  Definition~\ref{def:thickening}, we have a bijection
  \[ \tmop{Hom}_{\tmop{CAlg}_{\mathbb{Z}_p}^{\heartsuit}} (W (R^{\flat}), D)
     \xrightarrow{\cong} \tmop{Hom}_{\tmop{CAlg}_{\mathbb{F}_p}^{\heartsuit}}
     (R^{\flat}, D / p) \]
  (here everything is discrete therefore classical, but in order to avoid
  conflicts of notations with other parts of the article, we retain the
  cumbersome notations $\tmop{CAlg}_{\mathbb{Z}_p}^{\heartsuit}$ and
  $\tmop{CAlg}_{\mathbb{F}_p}^{\heartsuit}$) which is given as follows: for
  any map $W (R^{\flat}) \rightarrow D$ of discrete $\mathbb{Z}_p$-algebras,
  we compose it with the canonical map $D \rightarrow D / p$ to get the map $W
  (R^{\flat}) \rightarrow D / p$, which factors uniquely through $W
  (R^{\flat}) \rightarrow W (R^{\flat}) / p \cong R^{\flat}$ therefore gives
  rise to a map $R^{\flat} \rightarrow D / p$. Note that $\theta_R =
  \tmop{id}_R \of R \rightarrow R$ serves as a special choice of $\theta_D$
  since the perfectoid ring $R$ is $p$-adically complete by
  Definition~\ref{def:perfd} and tautologically $\ker (\theta_R) =
  (0)$-adically complete. That is to say, we also have a bijection
  \[ \tmop{Hom}_{\tmop{CAlg}_{\mathbb{Z}_p}^{\heartsuit}} (W (R^{\flat}), R)
     \xrightarrow{\cong} \tmop{Hom}_{\tmop{CAlg}_{\mathbb{F}_p}^{\heartsuit}}
     (R^{\flat}, R / p) \]
  The map $\theta_D \of D \rightarrow R$ gives rise to a commutative diagram
  \[ \begin{array}{ccc}
       \tmop{Hom}_{\tmop{CAlg}_{\mathbb{Z}_p}^{\heartsuit}} (W (R^{\flat}), D)
       & \xrightarrow{\cong} &
       \tmop{Hom}_{\tmop{CAlg}_{\mathbb{F}_p}^{\heartsuit}} (R^{\flat}, D /
       p)\\
       \longdownarrow &  & \longdownarrow\\
       \tmop{Hom}_{\tmop{CAlg}_{\mathbb{Z}_p}^{\heartsuit}} (W (R^{\flat}), R)
       & \xrightarrow{\cong} &
       \tmop{Hom}_{\tmop{CAlg}_{\mathbb{F}_p}^{\heartsuit}} (R^{\flat}, R / p)
     \end{array} \]
  So in order to show that the map $\theta_D \of D \rightarrow R$ factors
  through the canonical map $\theta$, or equivalently put, the preimage of the
  element $\theta \in \tmop{Hom}_{\tmop{CAlg}_{\mathbb{Z}_p}^{\heartsuit}} (W
  (R^{\flat}), R)$ under the induced map
  $\tmop{Hom}_{\tmop{CAlg}_{\mathbb{Z}_p}^{\heartsuit}} (W (R^{\flat}), D)
  \rightarrow \tmop{Hom}_{\tmop{CAlg}_{\mathbb{Z}_p}^{\heartsuit}} (W
  (R^{\flat}), R)$ is a singleton, it suffices to show that the preimage of
  the element $(R^{\flat} \rightarrow R / p) \in
  \tmop{Hom}_{\tmop{CAlg}_{\mathbb{F}_p}^{\heartsuit}} (R^{\flat}, R / p)$
  under the map $\tmop{Hom}_{\tmop{CAlg}_{\mathbb{F}_p}^{\heartsuit}}
  (R^{\flat}, D / p) \rightarrow
  \tmop{Hom}_{\tmop{CAlg}_{\mathbb{F}_p}^{\heartsuit}} (R^{\flat}, R / p)$ is
  a singleton, or equivalently put, the canonical map $\sigma \of R^{\flat}
  \rightarrow R / p$ lifts uniquely through the map $\sigma_D \of D / p
  \rightarrow R / p$ induced by the map $\theta_D \of D \rightarrow R$. Note
  that the surjectivity of $\theta_D$ implies that of $\sigma_D$. Since the
  ring $D$ is $(p, \ker \theta_D)$-adically complete, the ring $D / p$ is
  $\ker \sigma_D$-adically complete.
  
  We can conclude the existence and the uniqueness of lift of the map $\sigma
  \of R^{\flat} \rightarrow R / p$ along the surjection $\sigma_D \of D / p
  \rightarrow R / p$ simply by the fact that the $\mathbb{F}_p$-algebra
  $R^{\flat}$ is perfect and thus the cotangent complex $\mathbb{L}_{R^{\flat}
  /\mathbb{F}_p}$ is contractible, which implies the existence and the
  uniqueness of such lift.
  
  However, we prefer to give a direct argument: We set the
  $\mathbb{F}_p$-algebra $A \assign R^{\flat}$ to stress that we only depend
  on the fact that $A$ is a perfect $\mathbb{F}_p$-algebra, but not on the
  properties of the map $\sigma \of A \rightarrow R / p$. Denote by $\varphi_B
  \of B \rightarrow B, x \mapsto x^p$ the Frobenius map on any
  $\mathbb{F}_p$-algebra $A$. Then the Frobenius map $\varphi_A$ is an
  isomorphism by assumption.
  
  For each $a \in A$, we choose a sequence $(b_n)_{n = 0}^{\infty} \in (D /
  p)^{\mathbb{N}}$ such that for each $n \in \mathbb{N}$, we have $\sigma_D
  (b_n) = \sigma (\varphi_A^{- n} (a))$.
  
  Note that the sequence $(\varphi_{D / p}^n (b_n))_{n = 0}^{\infty}$
  converges $\ker \sigma_D$-adically: $\sigma_D (b_n - b_{n + 1}^p) = \sigma_D
  (b_n) - \sigma_D (b_{n + 1})^p = \sigma (\varphi_A^{- n} (a)) - \sigma
  (\varphi_A^{- (n + 1)} (a))^p = \sigma (\varphi_A^{- n} (a)) - \sigma
  (\varphi_A (\varphi_A^{- n} (a))) = 0$ and therefore $\varphi_{D / p}^n
  (b_n) - \varphi_{D / p}^{n + 1} (b_{n + 1}) = \varphi_{D / p}^n (b_n - b_{n
  + 1}^p) \in \varphi_{D / p}^n (\ker \sigma_D) \subseteq (\ker
  \sigma_D)^{p^n}$. Let $b \assign \lim_{n \rightarrow \infty} \varphi_{D /
  p}^n (b_n)$.
  
  We first note that $\sigma_D (b) = \sigma (a)$, since $\sigma (\varphi_{D /
  p}^n (b_n)) = \sigma (b_n^{p^n}) = \sigma (b_n)^{p^n} = \sigma (\varphi_A^{-
  n} (a))^{p^n} = \sigma (\varphi_A^n (\varphi_A^{- n} (a))) = \sigma (a)$ for
  all $n \in \mathbb{N}$.
  
  Now the value $b \in D / p$ does not depend on the choice of $(b_n)$, since
  for any other choice $(c_n)$, we have $c_n - b_n \in \ker \sigma_D$, thus
  $\varphi_{D / p}^n (b_n) - \varphi_{D / p}^n (c_n) = \varphi_{D / p}^n (b_n
  - c_n) \in \varphi_{D / p}^n (\ker \sigma_D) \subseteq (\ker
  \sigma_D)^{p^n}$ which implies that $\lim_{n \rightarrow \infty} \varphi_{D
  / p}^n (c_n) = b$.
  
  Combining the preceding discussions, we have shown that for each $a \in A$,
  we can associate a $b \in D / p$ such that $\sigma_D (b) = \sigma (a)$. It
  is routine to check that $a \mapsto b$ defines a map $A \rightarrow D / p$
  of rings which serves as a lift of $\sigma : A \rightarrow R / p$.
  Furthermore, the uniqueness essentially follows from the above argument that
  the value $b \in D / p$ does not depend on the choice of $(b_n)$.
\end{proof}

\begin{remark}
  \label{rem:Fontaine-deriv-cpl}In fact, we can weaken our assumption on $D$
  to be derived $p$-complete and that the map $D \rightarrow R$ is Adams
  complete (due to {\cite{Carlsson2008}} while the terminology is coined in
  {\cite{Bhatt2012}}) by using some basic facts about Adams complete
  surjective maps of animated rings.
\end{remark}

Now we give a spherical version of Fontaine's universal property:

\begin{proposition}
  \label{prop:sph-Fontaine-inf-thickening}Let $R$ be a perfectoid ring. We
  compose Fontaine's map $\theta \of W (R^{\flat}) \rightarrow R$ with the 0th
  Postnikov section $W^+ (R^{\flat}) \rightarrow \tau_{\leq 0} (W^+
  (R^{\flat})) = H W (R^{\flat})$, obtaining the map $\eta \of W^+ (R^{\flat})
  \rightarrow H R$. Then we have
  \begin{enumeratenumeric}
    \item The $\mathbb{E}_{\infty}$-$\mathbb{S}_p^{\wedge}$-algebra $W^+
    (R^{\flat})$ of spherical Witt vectors is $(p, \ker \theta)$-complete.
    Furthermore, the discrete ring $\pi_0 (W^+ (R^{\flat})) / p$ is $\ker
    \theta / (p \pi_0 (W^+ (R^{\flat})) + \ker \theta)$-adically separated.
    
    \item The map $\eta \of W^+ (R^{\flat}) \rightarrow H R$ is initial among
    all maps $\eta_D \of D \rightarrow H R$ surjective on $\pi_0$ where $D$ is
    an $\mathbb{E}_{\infty}$-$\mathbb{S}_p^{\wedge}$-algebra such that $D$ is
    $(p, \ker \eta_D)$-complete and the discrete ring $\pi_0 (D) / p$ is $\ker
    \theta_D / (p \pi_0 (D) + \ker \theta_D)$-adically separated, where we
    denote the map $\pi_0 (\eta_D) \of \pi_0 (D) \rightarrow R$ by $\theta_D$.
  \end{enumeratenumeric}
\end{proposition}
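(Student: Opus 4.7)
The plan is to parallel the proof of Proposition~\ref{prop:Fontaine-inf-thickening} on the spherical level: after verifying the completeness and separatedness assertions in part~(1), the universal property in part~(2) will be deduced by leveraging the universal property of $W^+ (R^{\flat})$ from Example~\ref{ex:sphwitt} to reduce the spectrum-level mapping problem to a discrete mod-$p$ lifting problem, where the Teichmüller-style construction from the proof of Proposition~\ref{prop:Fontaine-inf-thickening} can then be re-run essentially verbatim.

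For part~(1), the $p$-completeness of $W^+ (R^{\flat})$ is built into its defining universal property in Example~\ref{ex:sphwitt}. For $\xi$-completeness (which together with $p$-completeness yields $(p, \ker \theta)$-completeness, since derived completeness with respect to a finitely generated ideal can be tested on generators), I would reduce to checking that each $\pi_i (W^+ (R^{\flat}))$ is derived $\xi$-complete as a $W (R^{\flat})$-module; using the flatness of $W^+ (R^{\flat})$ over $\mathbb{S}_p^{\wedge}$ suggested by the pushout square in the proof of Proposition~\ref{prop:sphwitt}, these groups take the form $\pi_i (\mathbb{S}_p^{\wedge}) \otimes_{\mathbb{Z}_p} W (R^{\flat})$ and hence inherit $(p, \xi)$-adic completeness from the classical $(p, \xi)$-adic completeness of $W (R^{\flat})$. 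For the separatedness claim, Proposition~\ref{prop:sphwitt} identifies $\pi_0 (W^+ (R^{\flat})) / p$ with $R^{\flat}$, and Proposition~\ref{prop:ker-fiber} shows that the image of $\ker \theta$ is $\xi R^{\flat}$, so the assertion reduces to $\bigcap_n \xi^n R^{\flat} = 0$, a standard fact for perfectoid tilts.

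For part~(2), Example~\ref{ex:sphwitt} supplies a natural bijection
\[ \tmop{Map}_{\tmop{CAlg}_{\mathbb{S}_p^{\wedge}}} (W^+ (R^{\flat}), D) \xrightarrow{\cong} \tmop{Hom}_{\tmop{CAlg}_{\mathbb{F}_p}^{\heartsuit}} (R^{\flat}, \pi_0 (D) / p) \]
for every $p$-complete $\mathbb{E}_{\infty}$-$\mathbb{S}_p^{\wedge}$-algebra $D$, and similarly with $D$ replaced by $H R$. Under these bijections, postcomposition with $\eta_D$ corresponds to postcomposition with the induced ring map $\pi_0 (D) / p \to R / p$. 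Thus the initiality of $\eta$ reduces to showing that the canonical surjection $\sigma \of R^{\flat} \to R / p$ lifts uniquely along the induced surjection $\sigma_D \of \pi_0 (D) / p \to R / p$, after which the Teichmüller-style construction from the proof of Proposition~\ref{prop:Fontaine-inf-thickening} applies: for each $a \in R^{\flat}$ one picks lifts $b_n \in \pi_0 (D) / p$ of $\sigma (\varphi^{- n} (a))$ and sets $b \assign \lim_n \varphi_{\pi_0 (D) / p}^n (b_n)$.

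The main technical obstacle is justifying convergence and uniqueness of this limit under the weaker hypotheses: the classical proof invoked classical $\ker \sigma_D$-adic completeness of $D / p$, whereas here we have only derived $(p, \ker \eta_D)$-completeness of $D$ together with $\ker \theta_D / (p \pi_0 (D) + \ker \theta_D)$-adic separatedness of $\pi_0 (D) / p$. The key observation will be that derived completeness with respect to a finitely generated ideal, combined with topological separatedness, upgrades to classical adic completeness; hence $\pi_0 (D) / p$ is in fact classically $\ker \sigma_D$-adically complete, and the classical convergence argument applies verbatim.
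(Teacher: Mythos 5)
Your proposal follows essentially the same route as the paper's proof: part~(1) by reducing, via the homotopy-group criterion for completeness, to the structure of $\pi_{\ast} (W^+ (R^{\flat}))$ coming from flatness over $\mathbb{S}_p^{\wedge}$, and part~(2) by using the thickening universal property of $W^+ (R^{\flat})$ to reduce to the unique lifting of $\sigma \of R^{\flat} \rightarrow R / p$ along $\sigma_D$, solved by the Frobenius-limit argument. In fact you are slightly more explicit than the paper about why the weaker hypotheses on $D$ suffice (the upgrade from derived completeness plus separatedness to adic completeness of $\pi_0 (D) / p$), a step the paper leaves implicit; only note that for the homotopy groups of $W^+ (R^{\flat})$ all that is needed, and all that immediately follows, is derived $(p, \xi)$-completeness of the quotients $W (R^{\flat}) / p^r$, not classical adic completeness.
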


\begin{remark}
  In Proposition~\ref{prop:sph-Fontaine-inf-thickening}, the technical
  conditions imposed on the
  $\mathbb{E}_{\infty}$-$\mathbb{S}_p^{\wedge}$-algebra $D$ are somewhat
  complicated. However, we can restrict to the full subcategory of $\eta_D$
  such that $\pi_0 (D)$ is $(p, \ker \eta_D)$-adically complete, where $\eta
  \of W^+ (R^{\flat}) \rightarrow H R$ lives (see the proof of
  Proposition~\ref{prop:sph-Fontaine-inf-thickening}) and hence $\eta$ is
  still an initial object in this full subcategory.
\end{remark}

\begin{remark}
  Using Remark~\ref{rem:Fontaine-deriv-cpl}, we can drop the adic completeness
  of $\pi_0 (D) / p$ in Proposition~\ref{prop:sph-Fontaine-inf-thickening}.
\end{remark}

Now we want to establish some computational results about homotopy groups of
the ring $W^+ (k)$ of spherical Witt vectors of a perfect
$\mathbb{F}_p$-algebra $k$. First, we need the following proposition, which
follows from Serre's computations of homotopy groups of spheres:

\begin{proposition}
  The sphere spectrum $\mathbb{S}$ is connective, $\pi_0 (\mathbb{S})
  =\mathbb{Z}$, and for all $n \in \mathbb{N}_{> 0}$, the $n$th (stable)
  homotopy group $\pi_n (\mathbb{S})$ is finite.
\end{proposition}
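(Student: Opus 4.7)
The plan is to reduce all three claims to statements about the unstable homotopy groups $\pi_{n+k}(S^k)$ via the identification $\pi_n(\mathbb{S}) = \operatorname{colim}_k \pi_{n+k}(S^k)$, which stabilizes by the Freudenthal suspension theorem for $k > n+1$. The sphere spectrum $\mathbb{S}$ is the suspension spectrum of $S^0$, whose $k$th space is $S^k$; connectivity of $\mathbb{S}$ then follows immediately from the fact that $S^k$ is $(k-1)$-connected, so that $\pi_{n+k}(S^k) = 0$ whenever $n < 0$ and $k$ is sufficiently large.

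For the computation $\pi_0(\mathbb{S}) = \mathbb{Z}$, I would invoke the Freudenthal suspension theorem together with the classical computation $\pi_k(S^k) = \mathbb{Z}$ (which itself follows from the Hurewicz theorem, since $S^k$ is $(k-1)$-connected and $H_k(S^k;\mathbb{Z}) = \mathbb{Z}$). Freudenthal ensures the suspension maps $\pi_k(S^k) \to \pi_{k+1}(S^{k+1})$ are isomorphisms for $k \geq 1$, so the colimit is $\mathbb{Z}$.

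For finiteness of $\pi_n(\mathbb{S})$ when $n > 0$, the substantive input is Serre's finiteness theorem: for each fixed $k$, the groups $\pi_{n+k}(S^k)$ are finitely generated abelian, and they are finite except in degrees $n+k = k$ and (when $k$ is even) $n+k = 2k-1$. The finite-generation statement follows from Serre's mod-$\mathcal{C}$ Hurewicz theorem applied to Postnikov towers of $S^k$, with $\mathcal{C}$ the class of finitely generated abelian groups. The rational computation uses the rational Serre spectral sequence of the path-loop fibration (or, equivalently, the Serre/Cartan approach via Eilenberg-Maclane spaces) to show $\pi_*(S^k) \otimes \mathbb{Q}$ has the stated concentration. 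One then observes that in the stable range $k > n+1$, neither of the two exceptional unstable degrees occurs (since $n+k = k$ forces $n = 0$, and $n+k = 2k-1$ forces $k = n+1$, outside the stable range), so $\pi_n(\mathbb{S}) \otimes \mathbb{Q} = 0$ while $\pi_n(\mathbb{S})$ is finitely generated, hence finite.

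The main obstacle is Serre's finiteness theorem itself, which is a substantial classical result; here it is used as a black box. The connectivity and $\pi_0$ claims are essentially formal consequences of Freudenthal and Hurewicz and require no real work beyond unwinding definitions.
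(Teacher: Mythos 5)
Your proposal is correct and matches the paper's intent: the paper offers no written proof, simply asserting that the proposition follows from Serre's computations of homotopy groups of spheres, which is exactly the black box you invoke. Your additional details (Freudenthal stabilization, Hurewicz for $\pi_0$, and the check that the exceptional unstable degrees fall outside the stable range) are the standard unwinding of that citation and are accurate.
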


Thus for each $n \in \mathbb{N}$, the homotopy group $\pi_n (\mathbb{S})$ has
bounded $p$-torsion. Combined with Milnor sequence of homotopy groups, we have

\begin{corollary}
  \label{cor:homotopy-grps-sph-spec}The $p$-adic sphere spectrum
  $\mathbb{S}_p^{\wedge}$ is connective, $\pi_0 (\mathbb{S}_p^{\wedge})
  =\mathbb{Z}_p$ and for all $n \in \mathbb{N}_{> 0}$, the $n$th (stable)
  homotopy group $\pi_n (\mathbb{S}_p^{\wedge})$ is a finite direct sum of
  finite abelian groups of form $\mathbb{Z}/ p^r \cong \mathbb{Z}_p / p^r$ for
  some positive integer $r \in \mathbb{N}_{> 0}$.
\end{corollary}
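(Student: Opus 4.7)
The plan is to realize $\mathbb{S}_p^{\wedge}$ as the homotopy limit $\lim_k \mathbb{S}/p^k$ of the tower of mod-$p^k$ Moore spectra, and to apply the Milnor short exact sequence
\[
0 \to {\lim}^1_k \pi_{n+1}(\mathbb{S}/p^k) \to \pi_n(\mathbb{S}_p^{\wedge}) \to \lim_k \pi_n(\mathbb{S}/p^k) \to 0,
\]
reducing the computation to one involving towers of finite abelian groups. I will analyze the groups $\pi_n(\mathbb{S}/p^k)$ themselves via the defining cofiber sequence $\mathbb{S} \xrightarrow{p^k} \mathbb{S} \to \mathbb{S}/p^k$, whose long exact sequence of stable homotopy groups yields the natural short exact sequence
\[
0 \to \pi_n(\mathbb{S})/p^k \pi_n(\mathbb{S}) \to \pi_n(\mathbb{S}/p^k) \to \pi_{n-1}(\mathbb{S})[p^k] \to 0.
\]

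Invoking the preceding proposition ($\pi_0(\mathbb{S}) = \mathbb{Z}$, $\pi_n(\mathbb{S})$ finite for $n > 0$, and $\pi_n(\mathbb{S}) = 0$ for $n < 0$), each $\pi_n(\mathbb{S}/p^k)$ is then a finite abelian $p$-group for $n \geq 0$ and zero for $n < 0$. Finiteness of the tower forces the Mittag-Leffler condition, so the ${\lim}^1$ term in the Milnor sequence vanishes and we obtain $\pi_n(\mathbb{S}_p^{\wedge}) \cong \lim_k \pi_n(\mathbb{S}/p^k)$. In particular, the vanishing for $n < 0$ gives the claimed connectivity, and the case $n = 0$ reduces to $\lim_k \mathbb{Z}/p^k = \mathbb{Z}_p$.

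For $n > 0$, it remains to take the inverse limit of the displayed short exact sequence. The bounded $p$-torsion of the finite groups $\pi_n(\mathbb{S})$ and $\pi_{n-1}(\mathbb{S})$ makes both outer towers eventually constant at their $p$-primary parts, which are finite abelian $p$-groups; both towers therefore satisfy Mittag-Leffler, so $\lim$ is exact on this sequence and $\pi_n(\mathbb{S}_p^{\wedge})$ sits in an extension of finite abelian $p$-groups. Such an extension is itself a finite abelian $p$-group, and the structure theorem for finitely generated abelian groups then presents it as a finite direct sum of cyclic factors $\mathbb{Z}/p^r$, as required. No serious obstacle arises in this strategy; the only point demanding care is ensuring that the inverse limit preserves the short exact sequence of towers, which is immediate from Mittag-Leffler for towers of finite groups.
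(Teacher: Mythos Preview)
Your proposal is correct and follows essentially the same approach as the paper: the paper's argument is the one-line hint that each $\pi_n(\mathbb{S})$ has bounded $p$-torsion, so the Milnor sequence for the tower computing $\mathbb{S}_p^{\wedge}$ degenerates, and your write-up is a careful expansion of exactly this reasoning.
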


We need a result announced in {\cite[Example~5.2.7]{Lurie2018a}} the argument
of which we learn from Matthew Morrow:

\begin{proposition}
  Let $k$ be a perfect $\mathbb{F}_p$-algebra. Then the ring of spherical Witt
  vectors $W^+ (k)$ is a flat $\mathbb{S}_p^{\wedge}$-module.
\end{proposition}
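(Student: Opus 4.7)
The plan is to invoke the standard characterization of flatness over a connective $\mathbb{E}_1$-ring (see \cite[Proposition~7.2.2.13]{Lurie2017}): a connective $\mathbb{S}_p^{\wedge}$-module $M$ is flat if and only if $\pi_0 M$ is flat over $\pi_0 \mathbb{S}_p^{\wedge} = \mathbb{Z}_p$, and $H N \otimes_{\mathbb{S}_p^{\wedge}} M$ is discrete for every discrete $\mathbb{Z}_p$-module $N$.

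For the $\pi_0$ condition, Proposition~\ref{prop:sphwitt} identifies $\pi_0 W^+ (k) \cong W (k)$, and the classical fact that $W(k)$ is $p$-torsion free (since $k$ has characteristic $p$) combined with $\mathbb{Z}_p$ being a discrete valuation ring ensures the required $\mathbb{Z}_p$-flatness.

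For the tensor-product condition, Proposition~\ref{prop:sphwitt} also provides $W^+ (k) \otimes_{\mathbb{S}_p^{\wedge}} H \mathbb{Z}_p \simeq H W (k)$, which handles $N = \mathbb{Z}_p$. To propagate this to an arbitrary discrete $\mathbb{Z}_p$-module $N$, I would exploit that $\mathbb{Z}_p$ is a principal ideal domain, so $N$ admits a length-one free resolution $0 \rightarrow P_1 \rightarrow P_0 \rightarrow N \rightarrow 0$, inducing a cofiber sequence $H P_1 \rightarrow H P_0 \rightarrow H N$ of spectra. Since smash products commute with colimits, each $W^+ (k) \otimes_{\mathbb{S}_p^{\wedge}} H P_i \simeq H(W (k) \otimes_{\mathbb{Z}_p} P_i)$ is discrete; the induced map on $\pi_0$ is $W(k) \otimes_{\mathbb{Z}_p} (P_1 \hookrightarrow P_0)$, which remains injective by $\mathbb{Z}_p$-flatness of $W(k)$. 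Hence the cofiber $W^+ (k) \otimes_{\mathbb{S}_p^{\wedge}} H N$ is discrete as required.

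The main conceptual input is Proposition~\ref{prop:sphwitt}, which already encodes the crucial compatibility with $H \mathbb{Z}_p$; everything else is a routine reduction exploiting the global projective dimension of $\mathbb{Z}_p$. The only mild subtlety is accommodating free $\mathbb{Z}_p$-modules of possibly infinite rank, but this is absorbed by the fact that direct sums in spectra are computed on homotopy groups, so infinite sums of copies of $HW(k)$ remain discrete.
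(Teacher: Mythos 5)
Your argument is correct, but it proves flatness by a genuinely different route than the paper. The paper verifies the definition of flatness directly (that $\pi_0$ is flat and $\pi_n (W^+ (k)) \cong \tmop{Tor}_0^{\mathbb{Z}_p} (\pi_n (\mathbb{S}_p^{\wedge}), W (k))$): it base changes the Postnikov tower of $\mathbb{S}_p^{\wedge}$ along $\mathbb{S}_p^{\wedge} \rightarrow W^+ (k)$, uses Serre finiteness to write each $H \pi_n (\mathbb{S}_p^{\wedge})$ as a finite sum of cofibers of $p^r$ on $H\mathbb{Z}_p$, and then feeds in the single computation $W^+ (k) \otimes_{\mathbb{S}_p^{\wedge}} H\mathbb{Z}_p \simeq H W (k)$ together with $p$-torsion-freeness of $W (k)$ to identify the associated graded. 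You instead invoke the criterion that a connective module is flat once its tensor with every discrete module is discrete, and reduce to the same two inputs (Proposition~\ref{prop:sphwitt} and flatness of $W (k)$ over the DVR $\mathbb{Z}_p$) via a two-term free resolution of an arbitrary discrete $\mathbb{Z}_p$-module; equivalently, one could shorten this further by writing $H N \otimes_{\mathbb{S}_p^{\wedge}} W^+ (k) \simeq H N \otimes_{H\mathbb{Z}_p} H W (k)$, the Eilenberg--Maclane spectrum of the derived tensor product, which is discrete by flatness of $W (k)$. What each approach buys: yours avoids the Postnikov tower and the finiteness of stable homotopy groups of spheres entirely and is shorter for flatness alone; the paper's computation has the advantage of producing the explicit formula for $\pi_{\ast} (W^+ (k))$ (Corollary~\ref{cor:homot-grps-sph-witt}) that is needed later, though of course that formula also follows from flatness once established your way. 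Two small points: the correct reason $W (k)$ is $p$-torsion-free is that $k$ is \emph{perfect} (for a general $\mathbb{F}_p$-algebra of characteristic $p$ the Witt vectors can have $p$-torsion), not merely that $k$ has characteristic $p$; and the $\pi_0$-flatness clause in your quoted criterion is redundant but harmless for the direction you use.
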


\begin{proof}
  First, by Proposition~\ref{prop:sphwitt}, $\pi_0 (W^+ (k)) \cong W (k)$
  which is a torsion-free $\mathbb{Z}_p$-module. Since $\mathbb{Z}_p$ is a
  valuation ring, we deduce that $W (k)$ is a flat $\mathbb{Z}_p$-module (see
  {\cite[\href{https://stacks.math.columbia.edu/tag/0539}{Tag
  0539}]{stacks-project}}). Now we consider the Postnikov tower $(\tau_{\geq
  n} \mathbb{S}_p^{\wedge})_{n \in \mathbb{N}}$ of the $p$-adic sphere
  spectrum $\mathbb{S}_p^{\wedge}$, which induces a tower $X_n \assign
  (\tau_{\geq n} \mathbb{S}_p^{\wedge}) \otimes_{\mathbb{S}_p^{\wedge}} W^+
  (k)$. Note that $X_n / X_{n - 1} \cong \Sigma^n H \pi_n
  (\mathbb{S}_p^{\wedge}) \otimes_{\mathbb{S}_p^{\wedge}} W^+ (k)$. We have
  shown in Corollary~\ref{cor:homotopy-grps-sph-spec} that $\pi_n
  (\mathbb{S}_p^{\wedge})$ is a direct sum of finite abelian groups of form
  $\mathbb{Z}_p / p^r$, which allows us to realize $H \pi_n
  (\mathbb{S}_p^{\wedge})$ as a direct sum of spectra of form $\tmop{cofib}
  \left( H\mathbb{Z}_p \xrightarrow{p^r} H\mathbb{Z}_p \right)$. Note that the
  smash product $- \otimes_{\mathbb{S}_p^{\wedge}} W^+ (k)$ commutes with
  taking cofibers, we deduce that $H \pi_n (\mathbb{S}_p^{\wedge})
  \otimes_{\mathbb{S}_p^{\wedge}} W^+ (k) \cong H \tmop{Tor}_0^{\mathbb{Z}_p}
  (\pi_n (\mathbb{S}_p^{\wedge}), W (k))$. Thus the tower $(X_n)_{n \in
  \mathbb{N}}$ constitutes the Postnikov tower of the spectrum $W^+ (k)$,
  therefore $\pi_n (W^+ (k)) \cong \tmop{Tor}_0^{\pi_0
  (\mathbb{S}_p^{\wedge})} (\pi_n (\mathbb{S}_p^{\wedge}), W (k))$.
\end{proof}

\begin{corollary}
  \label{cor:homot-grps-sph-witt}Let $k$ be a perfect $\mathbb{F}_p$-algebra.
  Then the ring of spherical Witt vectors $W^+ (k)$ is connective, $\pi_0 (W^+
  (k)) = W (k)$, and for all $n \in \mathbb{N}_{> 0}$, the $n$th (stable)
  homotopy group $\pi_n (W^+ (k))$ is a finite direct sum of $W (k)$-modules
  of form $W (k) / p^r$.
\end{corollary}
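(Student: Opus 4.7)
The plan is to assemble the corollary as a direct consequence of the preceding flatness proposition together with Serre's finiteness theorem in the form already recorded as Corollary~\ref{cor:homotopy-grps-sph-spec}. Since the three assertions (connectivity, value of $\pi_0$, structure of higher $\pi_n$) are of different natures, I would dispatch them separately.

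First, connectivity of $W^+(k)$ is essentially built into the definition: by Example~\ref{ex:sphwitt} and Theorem~\ref{thm:exist_thickening}, $W^+(k)$ is produced as a connective $\mathbb{E}_\infty$-$\mathbb{S}_p^\wedge$-algebra, so there is nothing to prove beyond quoting the construction. The identification $\pi_0(W^+(k)) \cong W(k)$ is also not new work: it is precisely the content of Proposition~\ref{prop:sphwitt}.

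The only remaining task is to compute $\pi_n(W^+(k))$ for $n>0$. Here I would read off from the proof of the preceding proposition the isomorphism
\[
  \pi_n(W^+(k)) \;\cong\; \operatorname{Tor}_0^{\mathbb{Z}_p}\!\bigl(\pi_n(\mathbb{S}_p^\wedge),\,W(k)\bigr),
\]
which was obtained by smashing the Postnikov tower of $\mathbb{S}_p^\wedge$ with $W^+(k)$ and using flatness of $W(k)$ over $\mathbb{Z}_p$ to see that the resulting tower is the Postnikov tower of $W^+(k)$. By Corollary~\ref{cor:homotopy-grps-sph-spec}, $\pi_n(\mathbb{S}_p^\wedge)$ is a finite direct sum of cyclic groups of the form $\mathbb{Z}_p/p^r$. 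Tensoring over $\mathbb{Z}_p$ with $W(k)$ commutes with finite direct sums and sends $\mathbb{Z}_p/p^r$ to $W(k)/p^r$, yielding the desired description of $\pi_n(W^+(k))$.

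There is essentially no obstacle to overcome: the corollary is a bookkeeping consequence of results already established. The only mild point worth checking carefully is that the Postnikov-tower argument in the proof of the flatness proposition does indeed identify $\pi_n(W^+(k))$ with the untwisted $\operatorname{Tor}_0$ (rather than some higher $\operatorname{Tor}$), which is exactly where flatness of $W(k)$ over $\mathbb{Z}_p$ enters; once that is granted, the corollary is immediate.
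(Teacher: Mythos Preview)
Your proposal is correct and matches the paper's approach: the corollary is stated without proof immediately after the flatness proposition, and the intended argument is exactly the one you give---connectivity and $\pi_0$ from the construction and Proposition~\ref{prop:sphwitt}, and the higher $\pi_n$ from the identification $\pi_n(W^+(k)) \cong \operatorname{Tor}_0^{\mathbb{Z}_p}(\pi_n(\mathbb{S}_p^\wedge), W(k))$ established in that proof together with Corollary~\ref{cor:homotopy-grps-sph-spec}.
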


We are now ready to prove Proposition~\ref{prop:sph-Fontaine-inf-thickening}:

\begin{proof*}{Proof of Proposition~\ref{prop:sph-Fontaine-inf-thickening}}
  We check two statements one by one:
  \begin{enumeratenumeric}
    \item Proposition~\ref{prop:Fontaine-inf-thickening} tells us that the
    discrete ring $\pi_0 (W^+ (R^{\flat})) \cong W (R^{\flat})$ is $(p, \ker
    \theta)$-adically complete, therefore by Proposition~\ref{prop:I-adic}, it
    is $(p, \ker \theta)$-complete. Furthermore, we deduce from $(p, \ker
    \theta)$-adic completeness that $\pi_0 (W^+ (R^{\flat}))$ is $\ker \theta
    / (p \pi_0 (W^+ (R^{\flat})) + \ker \theta)$-adically separated. In view
    of Theorem~\ref{thm:cplhomot}, it remains to show that for each $n \in
    \mathbb{N}_{> 0}$, the homotopy group $\pi_n (W^+ (R^{\flat}))$ is
    (derived) $(p, \ker \theta)$-complete as a discrete $W
    (R^{\flat})$-module. However, by Corollary~\ref{cor:homot-grps-sph-witt},
    we have realized $\pi_n (W^+ (R^{\flat}))$ as a direct sum of cofibers of
    $(p, \ker \theta)$-complete modules, therefore it is $(p, \ker
    \theta)$-complete.
    
    \item This part is parallel to the proof of
    Proposition~\ref{prop:Fontaine-inf-thickening}. We start with the
    following commutative diagram induced by the map $\eta_D \of D \rightarrow
    H R$:
    \[ \begin{array}{ccc}
         \tmop{Map}_{\tmop{CAlg}_{\mathbb{S}_p^{\wedge}}} (W^+ (R^{\flat}), D)
         & \xrightarrow{\simeq} &
         \tmop{Hom}_{\tmop{CAlg}_{\mathbb{F}_p}^{\heartsuit}} (R^{\flat},
         \pi_0 (D) / p)\\
         \longdownarrow &  & \longdownarrow\\
         \tmop{Map}_{\tmop{CAlg}_{\mathbb{S}_p^{\wedge}}} (W^+ (R^{\flat}), H
         R) & \xrightarrow{\simeq} &
         \tmop{Hom}_{\tmop{CAlg}_{\mathbb{F}_p}^{\heartsuit}} (R^{\flat}, R /
         p)
       \end{array} \]
    as in the proof of Proposition~\ref{prop:Fontaine-inf-thickening}. It
    follows from Definition~\ref{def:thickening} and Example~\ref{ex:sphwitt}
    that the horizontal maps are homotopy equivalences, which implies that the
    connected components of each space on the left are all contractible. We
    pick the connected component of
    $\tmop{Map}_{\tmop{CAlg}_{\mathbb{S}_p^{\wedge}}} (W^+ (R^{\flat}), H R)$
    corresponds to the map $\eta \of W^+ (R^{\flat}) \rightarrow H R$. In
    order to show that $\eta$ is an initial object, it suffices to show that
    there exists one and only one connected component in
    $\tmop{Map}_{\tmop{CAlg}_{\mathbb{S}_p^{\wedge}}} (W^+ (R^{\flat}), D)$
    which maps to the connect component corresponding to $\eta$. Note that the
    image of $\eta$ in $\tmop{Hom}_{\tmop{CAlg}_{\mathbb{F}_p}^{\heartsuit}}
    (R^{\flat}, R / p)$ along the bottom horizontal map coincides with $\sigma
    \in \tmop{Hom}_{\tmop{CAlg}_{\mathbb{F}_p}^{\heartsuit}} (R^{\flat}, R /
    p)$ defined in the proof of
    Proposition~\ref{prop:Fontaine-inf-thickening}. In view of the commutative
    diagram, it remains to show that the preimage of $\sigma \in
    \tmop{Hom}_{\tmop{CAlg}_{\mathbb{F}_p}^{\heartsuit}} (R^{\flat}, R / p)$
    in $\tmop{Hom}_{\tmop{CAlg}_{\mathbb{F}_p}^{\heartsuit}} (R^{\flat}, \pi_0
    (D) / p)$. The rest of the proof is identical to that of
    Proposition~\ref{prop:Fontaine-inf-thickening}.
  \end{enumeratenumeric}
\end{proof*}

\section{Proof of the main theorem}\label{sec:main}

Fix a perfectoid ring $R$ and a generator $\xi$ of Fontaine's map $\theta \of
W (R^{\flat}) \rightarrow R$, the goal of this section is to prove
Theorem~\ref{thm:main}. We first need a much weaker version which says that
the 0th homotopy group of the $\mathbb{E}_2$-Thom spectrum in question, as a
ring, is isomorphic to $R$:

\begin{lemma}
  \label{lem:pi0Mf}The 0th homotopy group \ $\pi_0 (M f_{R, \xi})$ of the Thom
  spectrum associated to $f_{R, \xi}$ is isomorphic to $R$ for any perfectoid
  ring $R$.
\end{lemma}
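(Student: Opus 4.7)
The plan is to determine $\pi_0 (M f_{R, \xi})$ by applying the universal property of the $\mathbb{E}_2$-Thom spectrum, in the spirit of Lemma~\ref{lem:thom-universal}, to corepresent it via Yoneda among discrete commutative $W (R^{\flat})$-algebras. The construction of $f_{R, \xi}$ in Remark~\ref{cons:f_R} is formally identical to that of $f_{k, p u}$ in the perfect case, using the unit $1 - \xi \in \tmop{GL}_1 (W (R^{\flat}))$ in place of $1 - p u$; consequently, the same general result on $\mathbb{E}_2$-Thom spectra from \cite{Antolin-Camarena2014} that underlies Lemma~\ref{lem:thom-universal} furnishes, for any $\mathbb{E}_2$-$W^+ (R^{\flat})$-algebra $A$, a natural equivalence between $\tmop{Map}_{\tmop{Alg}^{\mathbb{E}_2}_{W^+ (R^{\flat})}} (M f_{R, \xi}, A)$ and the space of nullhomotopies of $W^+ (R^{\flat}) \xrightarrow{m_{\xi}} W^+ (R^{\flat}) \to A$ in $W^+ (R^{\flat})$-modules.

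I would then evaluate this universal property at $A = H C$ for an arbitrary discrete commutative $W (R^{\flat})$-algebra $C$. Since $H C$ is $0$-truncated as a spectrum, the mapping space $\tmop{Map}_{W^+ (R^{\flat})} (W^+ (R^{\flat}), H C) \simeq H C$ is the discrete set $C$, and hence the space of nullhomotopies of $\xi \cdot 1_C \in C$ is a single point when $\xi = 0$ in $C$ and is empty otherwise. Combined with the standard fact that maps of $\mathbb{E}_2$-algebras into discrete targets factor through $\pi_0$, this yields a natural bijection
\[
  \tmop{Hom}_{\tmop{CAlg}^{\heartsuit}_{W (R^{\flat})}} (\pi_0 (M f_{R, \xi}), C) \cong \{ \varphi \of W (R^{\flat}) \to C \barsuchthat \varphi (\xi) = 0 \} \cong \tmop{Hom}_{\tmop{CAlg}^{\heartsuit}_{W (R^{\flat})}} (R, C),
\]
where the last identification uses that Fontaine's map $\theta$ is surjective with kernel $(\xi)$, so that $W (R^{\flat}) / \xi \cong R$. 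The Yoneda lemma then yields $\pi_0 (M f_{R, \xi}) \cong R$ as commutative $W (R^{\flat})$-algebras.

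The principal obstacle is merely justifying the universal property in this perfectoid setting; once granted, the remaining argument is formal. A reader preferring to bypass Lemma~\ref{lem:thom-universal} could instead invoke the colimit spectral sequence $E^2_{s, t} = H_s (\Omega^2 S^3; \pi_t W^+ (R^{\flat})) \Rightarrow \pi_{s + t} (M f_{R, \xi})$ with local coefficients: the connectivity of $W^+ (R^{\flat})$ leaves only $E^2_{0, 0} = W (R^{\flat})_{\pi_1 (\Omega^2 S^3)}$ on the $\pi_0$ line, no differential can land in or leave this bidegree for dimensional reasons, and the monodromy of $\pi_1 (\Omega^2 S^3) = \mathbb{Z}$ on $W (R^{\flat})$ is multiplication by $1 - \xi$ (by construction of $f_{R, \xi}$), whence the coinvariants are $W (R^{\flat}) / \xi \cong R$.
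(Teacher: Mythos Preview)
Both of your approaches are correct. The paper's own argument is essentially your alternative: it identifies the Thom spectrum with the homotopy orbits $W^+(R^{\flat})_{h\Omega^3 S^3}$ (using $\Omega^2 S^3 \simeq B\Omega^3 S^3$), invokes connectivity to reduce $\pi_0$ of the orbits to the coinvariants $\pi_0(W^+(R^{\flat}))_{\pi_0(\Omega^3 S^3)} = W(R^{\flat})_{\mathbb{Z}}$ for the action by multiplication by $1-\xi$, and reads off $W(R^{\flat})/(1-(1-\xi)) \cong R$. This is precisely the $E^2_{0,0}$ computation in your spectral-sequence sketch, only without naming the spectral sequence. Your primary route via the universal property and Yoneda is genuinely different: it trades the direct homotopy-orbit calculation for the heavier input of the versal-algebra description from \cite{Antolin-Camarena2014} together with the identification of the heart of connective $\mathbb{E}_2$-$W^+(R^{\flat})$-algebras with discrete commutative $W(R^{\flat})$-algebras. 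The payoff is that you obtain $\pi_0(Mf_{R,\xi}) \cong R$ as commutative $W(R^{\flat})$-algebras rather than merely as abelian groups; the cost is a longer argument for a statement that the paper dispatches in two lines.
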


\begin{proof}
  We mimic a segment of the proof of Theorem~A.1 in {\cite{Krause}}:
  
  We note that $M f_{R, \xi}$ is connective, so we have
  \[ \pi_0 (M f_{R, \xi}) \cong \pi_0 (W^+ (R^{\flat})_{h \Omega^3 S^3}) \cong
     \pi_0 (W^+ (R^{\flat}))_{\pi_0 (\Omega^3 S^3)} \]
  where the $\pi_0 (\Omega^3 S^3) \cong \mathbb{Z}$-action on $\pi_0 (W^+
  (R^{\flat})) \cong W (R^{\flat})$ is given by multiplication by $1 - \xi$,
  hence
  \[ \pi_0 (W^+ (R^{\flat}))_{\pi_0 (\Omega^3 S^3)} \cong W (R^{\flat}) / (1 -
     (1 - \xi)) \cong R \]
\end{proof}

In view of Lemma~\ref{lem:pi0Mf}, in order to prove Theorem~\ref{thm:main},
it suffices to show that

\begin{proposition}
  \label{prop:main}The 0th Postnikov section $t_{R, \xi} \of M f_{R, \xi}
  \rightarrow \tau_{\leq 0} M f_{R, \xi} \simeq H R$, being an
  $\mathbb{E}_2$-map a priori, is an equivalence of spectra.
\end{proposition}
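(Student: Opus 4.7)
My plan is to reduce Proposition~\ref{prop:main} to the already-established perfect case (Theorem~\ref{thm:perf-thom}) by base-changing from $W^+ (R^\flat)$ to $W^+ (\kappa)$, where $\kappa$ is the special fiber of $R$, and then to promote the resulting equivalence via a derived-Nakayama argument on a suitable subcategory of complete $W^+ (R^\flat)$-modules.

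For the first step, since Thom spectra commute with symmetric monoidal base change, the map $W^+ (R^\flat) \to W^+ (\kappa)$ yields
\[ W^+ (\kappa) \otimes_{W^+ (R^\flat)} M f_{R, \xi} \;\simeq\; M f_{\kappa, pu}, \]
where $pu \in W (\kappa)$ is the image of $\xi$ with $u \in \tmop{GL}_1 (W (\kappa))$ by Corollary~\ref{cor:image-xi}; by Theorem~\ref{thm:perf-thom} this is equivalent to $H \kappa$. For the target, combining the $\tmop{Tor}$-independent pushout square of Proposition~\ref{prop:perfdPOfiber} with Proposition~\ref{prop:sphwitt} (which identifies $H W (R^\flat) \simeq W^+ (R^\flat) \otimes_{\mathbb{S}_p^\wedge} H \mathbb{Z}_p$) gives $W^+ (\kappa) \otimes_{W^+ (R^\flat)} H R \simeq H \kappa$. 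Hence $W^+ (\kappa) \otimes_{W^+ (R^\flat)} t_{R, \xi}$ is an equivalence, and the cofiber $C \assign \tmop{cofib} (t_{R, \xi})$ vanishes after this base change.

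For the second step, I would show that $C$ is derived $(p, \xi)$-complete as a $W^+ (R^\flat)$-module and then conclude via conservativity of the base change to $W^+ (\kappa)$ on this subcategory. The completeness of $H R$ is immediate ($R$ is $p$-adically complete and $\xi$ acts by zero), and the completeness of $W^+ (R^\flat)$ is Proposition~\ref{prop:sph-Fontaine-inf-thickening}; the remaining and most delicate task is the completeness of $M f_{R, \xi}$ itself. The natural tool is a perfectoid analogue of Lemma~\ref{lem:thom-universal} realizing $M f_{R, \xi}$ as the free $\mathbb{E}_2$-$W^+ (R^\flat)$-algebra with $\xi = 0$, so that the mapping-space functor out of $M f_{R, \xi}$ can be tested on complete $\mathbb{E}_2$-algebra targets.

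The main obstacle I anticipate lies in the conservativity step, both because colimits over $\Omega^2 S^3$ do not automatically preserve $(p, \xi)$-completeness and because $W^+ (\kappa)$ is not literally $W^+ (R^\flat) / (p, \xi)$, but rather the spherical thickening of $\kappa = R^\flat / \sqrt{\bar{\xi} R^\flat}$ (using Proposition~\ref{prop:ker-fiber}). A natural way to organize the argument is to factor the base change as first modding out by $p$ (landing in $H R^\flat$), then along the surjection $R^\flat \twoheadrightarrow \kappa$ with kernel $\sqrt{\bar{\xi} R^\flat}$, and to exploit the perfectness of $R^\flat$ to bridge the gap between the ideals $\bar{\xi} R^\flat$ and $\sqrt{\bar{\xi} R^\flat}$.
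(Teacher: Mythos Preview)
Your overall strategy matches the paper's: base-change to $W^+(\kappa)$ to reduce to Theorem~\ref{thm:perf-thom}, then factor the passage to $H\kappa$ through $HR^\flat$ and run a Nakayama argument. The factorization you describe in your last paragraph is exactly what the paper does (Lemma~\ref{lem:nakayama-like} and Corollary~\ref{cor:eq-base-change-hrflat}).

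The genuine gap is in how you propose to establish completeness of $Mf_{R,\xi}$. Invoking the universal property of Lemma~\ref{lem:thom-universal} to ``test on complete $\mathbb{E}_2$-algebra targets'' does not yield completeness of $Mf_{R,\xi}$ as a module: the free $\mathbb{E}_2$-algebra is a colimit, and as you yourself note, colimits over $\Omega^2 S^3$ do not preserve completeness. The paper sidesteps this entirely by a finiteness argument: $\Omega^2 S^3$ admits a Kan model with finite simplices in each degree (Proposition~\ref{prop:finite-Kan-double-loop}, a nontrivial topological input going back to Wall and Serre), so by Bousfield--Kan $Mf_{R,\xi}$ is a geometric realization of finite free $W^+(R^\flat)$-modules and is therefore \emph{almost perfect} (Lemma~\ref{lem:mf-aperf}). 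Almost perfect modules over a $p$-complete ring are $p$-complete (Proposition~\ref{prop:aperf-complete}), and that is all that is needed.

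This brings up a second point: you aim for $(p,\xi)$-completeness of $C$, but the paper only uses $p$-completeness. The $\xi$-direction is handled not by completeness but by torsion: after base-changing to $HR^\flat$, both source and target of $t_{R,\xi}$ have $\pi_0 \cong R^\flat/\xi$, so all homotopy groups of the cofiber are $\xi^2$-torsion, and the Nakayama step (Lemma~\ref{lem:nakayama-like}) uses this torsion hypothesis together with almost-perfectness to conclude vanishing from vanishing over $\kappa$. The final step then identifies $HR^\flat \otimes_{W^+(R^\flat)} C$ with $H\mathbb{F}_p \otimes_{\mathbb{S}_p^\wedge} C$ via the defining pushout for spherical Witt vectors, and $p$-completeness alone (Corollary~\ref{cor:compl-mod-p}) finishes the proof.
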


To begin with, we first note that the special case when $R$ is a perfect
$\mathbb{F}_p$-algebra is already covered by previous considerations:

\begin{lemma}
  \label{lem:eq-postnikov-perf}The $t_{R, \xi}$ in question is an equivalence
  of spectra when $R$ is a perfect $\mathbb{F}_p$-algebra.
\end{lemma}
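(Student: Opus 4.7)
The plan is to reduce the statement directly to Theorem~\ref{thm:perf-thom} (Hopkins-Mahowald for perfect $\mathbb{F}_p$-algebras). The idea is that when $R$ is perfect of characteristic $p$, the map $f_{R,\xi}$ defined via the perfectoid construction coincides with the map $f_{R,pu}$ from the perfect-ring construction, so the Thom spectrum is already equivalent to $H R$ (in particular discrete) and there is nothing extra to prove for the Postnikov truncation.

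First, I would identify the relevant structure. Since $R$ is a perfect $\mathbb{F}_p$-algebra, we have $R/p = R$, and hence the tilt $R^{\flat}$ and the special fiber $\kappa = (R/p)_{\tmop{perf}}$ both coincide with $R$ itself. Thus $W^+(R^{\flat}) = W^+(R)$ and $W(R^{\flat}) = W(\kappa) = W(R)$, and the comparison map $W(R^{\flat}) \to W(\kappa)$ is the identity. Applying Corollary~\ref{cor:image-xi}, the generator $\xi$ of $\ker\theta$ equals $pu$ for some invertible element $u \in \tmop{GL}_1(W(R))$.

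Next I would observe that with this identification, the map $f_{R,\xi} \of \Omega^2 S^3 \to \tmop{BGL}_1(W^+(R^{\flat}))$ of Remark~\ref{cons:f_R} is literally the map $f_{R,pu} \of \Omega^2 S^3 \to \tmop{BGL}_1(W^+(R))$ of Theorem~\ref{thm:perf-thom}: both are the $\mathbb{E}_2$-extension of the loop $S^1 \to \tmop{BGL}_1(W^+(R))$ classified by $1 - pu \in \tmop{GL}_1(W(R)) = \pi_1(\tmop{BGL}_1(W^+(R)))$. Consequently $M f_{R,\xi} \simeq M f_{R,pu}$ as $\mathbb{E}_2$-$W^+(R)$-algebras.

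Finally, Theorem~\ref{thm:perf-thom} gives $M f_{R,pu} \simeq H R$ as $\mathbb{E}_2$-ring spectra. In particular $M f_{R,\xi}$ is already concentrated in degree $0$, so the $0$th Postnikov section $t_{R,\xi} \of M f_{R,\xi} \to \tau_{\leq 0} M f_{R,\xi}$ is an equivalence. There is no real obstacle here; the only thing to verify carefully is the bookkeeping identifying $R^{\flat}$, $\kappa$, and the image of $\xi$, which is immediate in the perfect characteristic-$p$ case.
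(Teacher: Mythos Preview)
Your proposal is correct and follows essentially the same approach as the paper: invoke Theorem~\ref{thm:perf-thom} to see that $M f_{R,\xi}\simeq H R$ is already discrete, and conclude that the $0$th Postnikov section $t_{R,\xi}$ is an equivalence. The paper's proof is terser (it simply cites Theorem~\ref{thm:perf-thom} and the essential uniqueness of the Postnikov section), while you spell out the bookkeeping identifying $R^{\flat}=\kappa=R$ and $\xi=pu$, but the argument is the same.
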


\begin{proof}
  Theorem~\ref{thm:perf-thom} tells us that there is an equivalence $M f_{R,
  \xi} \rightarrow H R$. The lemma follows from the fact that $H R$ lives in
  $(\tmop{Mod}_{W^+ (R)})_{\leq 0}$ and that the 0th Postnikov section is
  essentially unique.
\end{proof}

We first note that both $M f_{R, \xi}$ and $H R$ admit canonical $W^+
(R^{\flat})$-module structures. Our strategy breaks up into several steps:
\begin{enumeratenumeric}
  \item Prove some finiteness and completeness results of $M f_{R, \xi}$ and
  $H R$ as $W^+ (R^{\flat})$-modules;
  
  \item Show that $t_{R, \xi}$ is an equivalence after the base change along
  $W^+ (R^{\flat}) \rightarrow W^+ (\kappa)$, and hence an equivalence after a
  further base change along $W^+ (\kappa) \rightarrow H \kappa$ to the special
  fiber $H \kappa$;
  
  \item The composition $W^+ (R^{\flat}) \rightarrow W^+ (\kappa) \rightarrow
  H \kappa$ coincides with the composition $W^+ (R^{\flat}) \rightarrow H
  R^{\flat} \rightarrow H \kappa$, and a Nakayama-like argument shows that
  $t_{R, \xi}$ is an equivalence after base change along $W^+ (R^{\flat})
  \rightarrow H R^{\flat}$;
  
  \item Deduce that $t_{R, \xi}$ is an equivalence by completeness.
\end{enumeratenumeric}
To proceed, by Corollary~\ref{cor:image-xi}, we also fix an invertible element
$u \in \tmop{GL}_1 (W (\kappa))$ associated to $\xi$ so that the image of
$\xi$ in $W (\kappa)$ is $pu$.

\subsection{Finiteness and completeness of $M f_{R, \xi}$ and $H R$ as $W^+
(R^{\flat})$-modules}

\begin{lemma}
  \label{lem:witt-aperf}$H W (k)$ is an almost perfect $W^+ (k)$-module for
  any perfect $\mathbb{F}_p$-algebra $k$.
\end{lemma}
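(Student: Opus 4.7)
The plan is to reduce to a statement about $H\mathbb{Z}_p$ over $\mathbb{S}_p^{\wedge}$ via base change, exactly as in the proof of Proposition~\ref{prop:sphwitt}. Recall that in that proof we identified
\[ HW(k) \simeq W^+(k) \otimes_{\mathbb{S}_p^{\wedge}} H\mathbb{Z}_p \]
as $\mathbb{E}_\infty$-$W^+(k)$-algebras (via Remark~\ref{rem:thickening-uniq} applied to the thickening square). So the first step is simply to invoke this identification and reframe the question as: show that the base change of $H\mathbb{Z}_p$ from $\mathbb{S}_p^{\wedge}$ to $W^+(k)$ is almost perfect.

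Next, I would verify that $H\mathbb{Z}_p$ is almost perfect as a module over $\mathbb{S}_p^{\wedge}$. This is already recorded in the proof of Proposition~\ref{prop:sphwitt}: the $p$-adic sphere spectrum $\mathbb{S}_p^{\wedge}$ is coherent in the sense of Definition~\ref{def:coherentE1}, and since $H\mathbb{Z}_p \simeq H\pi_0(\mathbb{S}_p^{\wedge})$, Corollary~\ref{cor:coherent_aperf} gives that $H\mathbb{Z}_p$ is an almost perfect $\mathbb{S}_p^{\wedge}$-module.

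Finally, I would apply the base change stability of almost perfect modules (Proposition~\ref{prop:aperf-base-change}) along the ring map $\mathbb{S}_p^{\wedge} \to W^+(k)$ to conclude that $W^+(k) \otimes_{\mathbb{S}_p^{\wedge}} H\mathbb{Z}_p \simeq HW(k)$ is almost perfect as a $W^+(k)$-module, which is exactly the claim.

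I don't expect any step to be a real obstacle: the entire proof is essentially an application of results already assembled in Section~\ref{sec:ha}, and in fact these ingredients were all exercised in the proof of Proposition~\ref{prop:sphwitt}. The only thing to be careful about is to make sure we are using the almost perfection as a $W^+(k)$-module (not merely as a $\mathbb{Z}_p$-module or a spectrum), which is why the base change stability result is cited in its $\mathbb{E}_\infty$-form.
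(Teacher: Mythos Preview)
Your proposal is correct and follows essentially the same approach as the paper's proof: first treat the base case $k=\mathbb{F}_p$ by noting $\mathbb{S}_p^\wedge$ is coherent so that $H\mathbb{Z}_p \simeq H\pi_0(\mathbb{S}_p^\wedge)$ is almost perfect via Corollary~\ref{cor:coherent_aperf}, then invoke the identification $HW(k)\simeq W^+(k)\otimes_{\mathbb{S}_p^\wedge}H\mathbb{Z}_p$ from Proposition~\ref{prop:sphwitt} and conclude by base-change stability of almost perfect modules (Proposition~\ref{prop:aperf-base-change}).
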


\begin{proof}
  If $k =\mathbb{F}_p$, then $W^+ (\mathbb{F}_p) \simeq \mathbb{S}_p^{\wedge}$
  is a coherent ring as in Definition~\ref{def:coherentE1}, and $H W
  (\mathbb{F}_p) \simeq H\mathbb{Z}_p \simeq H \pi_0 (W^+ (\mathbb{F}_p))$ is
  an almost perfect $\mathbb{S}_p^{\wedge}$-module by
  Corollary~\ref{cor:coherent_aperf}.
  
  In general, by Proposition~\ref{prop:sphwitt}, we have $H W (k) \simeq W^+
  (k) \otimes_{\mathbb{S}_p^{\wedge}} H\mathbb{Z}_p$, hence $H W (k)$ is
  almost perfect by Proposition~\ref{prop:aperf-base-change}.
\end{proof}

\begin{corollary}
  \label{cor:hr-aperf}$H R$ is an almost perfect $W^+ (R^{\flat})$-module.
\end{corollary}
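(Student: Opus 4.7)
The plan is to realize $HR$ as the cofiber of a self-map of $HW(R^\flat)$ and invoke the previous lemma together with the closure of almost perfect modules under cofibers.

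First, I would apply Lemma~\ref{lem:witt-aperf} to the perfect $\mathbb{F}_p$-algebra $k = R^\flat$ (which is perfect by construction of the tilt), obtaining that $HW(R^\flat)$ is almost perfect as a $W^+(R^\flat)$-module. Second, I would use the fact that $\xi \in W(R^\flat)$ is a non-zero-divisor (a standard fact for perfectoid rings: $W(R^\flat)$ is $p$-torsion-free and $\ker\theta$ is principal by Definition~\ref{def:perfd}, so any generator must be a non-zero-divisor since $W(R^\flat)/\xi \cong R$ is nonzero). Consequently the short exact sequence
\[ 0 \to W(R^\flat) \xrightarrow{\xi} W(R^\flat) \to R \to 0 \]
of discrete $W(R^\flat)$-modules lifts, after applying the Eilenberg--Maclane functor, to a cofiber sequence
\[ HW(R^\flat) \xrightarrow{\xi} HW(R^\flat) \to HR \]
in $\mathrm{Mod}_{W^+(R^\flat)}$, where the module structure on both sides is induced by the $0$th Postnikov section $W^+(R^\flat) \to HW(R^\flat)$.

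Finally, since almost perfect modules form a thick subcategory of $\mathrm{Mod}_{W^+(R^\flat)}$ (in particular closed under finite colimits and, more specifically, under cofibers of maps between almost perfect modules), the cofiber $HR$ is itself almost perfect over $W^+(R^\flat)$. This part is essentially formal and borrows nothing beyond what is already available; the content is concentrated in Lemma~\ref{lem:witt-aperf} plus the perfectoid fact about $\xi$.

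The only step that requires any real attention is verifying that $\xi$ is a non-zero-divisor so that the cofiber sequence above genuinely computes $HR$; this is not an obstacle but is the one place where the hypothesis that $R$ is perfectoid (rather than just $R^\flat$ being perfect) enters the argument.
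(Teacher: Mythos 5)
Your proposal is correct and follows essentially the same route as the paper: it exhibits $H R$ as the cofiber of the multiplication map $m_{\xi} \of H W (R^{\flat}) \rightarrow H W (R^{\flat})$, applies Lemma~\ref{lem:witt-aperf} with $k = R^{\flat}$, and concludes by closure of almost perfect modules under cofibers (Proposition~\ref{prop:aperf}). The one caveat is that your parenthetical justification that a generator $\xi$ of $\ker \theta$ is a non-zero-divisor (namely that $W (R^{\flat}) / \xi \cong R$ is nonzero) does not by itself prove this; the fact is nonetheless standard for perfectoid rings (cf.\ {\cite[Lemma~3.10]{Bhatt2016}}) and is used implicitly in the paper's own proof, so making it explicit is a reasonable addition rather than a flaw.
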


\begin{proof}
  $H R$ is the cofiber of the multiplication map $m_{\xi} \of H W (R^{\flat})
  \rightarrow H W (R^{\flat})$ where the domain and the codomain are almost
  perfect (Lemma~\ref{lem:witt-aperf}), hence $H R$ is also almost perfect
  (Proposition~\ref{prop:aperf}).
\end{proof}

We need a nontrivial input from algebraic topology:

\begin{proposition}
  \label{prop:finite-Kan-double-loop}There exists a Kan complex $X_{\bullet}$
  which is homotopy equivalent to the double loop space $\Omega^2 S^3$ of the
  3-sphere such that $X_n$ is a finite set for each $[n] \in
  \Delta^{\tmop{op}}$.
\end{proposition}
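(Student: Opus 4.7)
The plan is to use the finiteness of the homotopy groups of $\Omega^2 S^3$ in each degree. By Serre's finiteness theorem, $\pi_n(\Omega^2 S^3) = \pi_{n+2}(S^3)$ is finitely generated for every $n \geq 0$ (finite for $n \geq 2$, and equal to $\mathbb{Z}$ for $n = 1$), and consequently so is $H_n(\Omega^2 S^3;\mathbb{Z})$ in each degree.

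Exploiting this, I would first construct a CW complex $W$ weakly equivalent to $\Omega^2 S^3$ with finitely many cells in each dimension. This is achieved by the standard inductive cell-attachment procedure: at stage $n$, attach finitely many $n$-cells to realize a chosen finite generating set of $\pi_n$, then attach finitely many $(n+1)$-cells to kill the resulting relations, and iterate. Triangulating $W$ (via simplicial approximation) produces a simplicial complex whose associated simplicial set $Y_\bullet$ has finitely many non-degenerate simplices in each dimension. Since every $n$-simplex is either non-degenerate or factors uniquely through a non-degenerate simplex of dimension $< n$ composed with a degeneracy operator, and there are only finitely many of each, each $Y_n$ is a finite set.

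To upgrade $Y_\bullet$ to a genuine Kan complex $X_\bullet$ with the same level-wise finiteness, I would assemble $X_\bullet$ stage by stage from the Postnikov tower of $\Omega^2 S^3$: inductively model $\tau_{\leq k}\Omega^2 S^3$ by a Kan complex $X_\bullet^{(k)}$ obtained as the total space of a principal Kan fibration $X_\bullet^{(k)} \to X_\bullet^{(k-1)}$ whose fiber is a small Kan model of $K(\pi_k,k)$, and take a compatible presentation of the tower. Finite generation of each $\pi_k$ allows each fibration stage to be chosen combinatorially small enough that the resulting $X_\bullet^{(k)}$ still has finite simplex sets in each dimension.

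The main obstacle is the last step: reconciling the horn-filling condition with level-wise finiteness. Naive fibrant replacement via Kan's $\operatorname{Ex}^\infty$ or the small-object argument enlarges simplex sets substantially, and the $\pi_1 = \mathbb{Z}$ stage is particularly delicate since $\pi_n$ of a Kan complex is a subquotient of its $n$-simplices. Controlling horn-filling so that only finitely many new simplices are introduced at each dimension---by carefully prescribing the minimal Kan models of the Postnikov fibers and their gluings---is where the bulk of the technical work lies, and is what distinguishes the desired finite Kan model from the generic singular complex $\operatorname{Sing}(\Omega^2 S^3)$, whose simplex sets are uncountable.
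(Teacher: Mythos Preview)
Your first two paragraphs essentially reproduce the paper's argument: the paper notes that $\Omega^2 S^3$ is simple (being a loop space), uses Serre's fibration argument to show each $H_i(\Omega^2 S^3;\mathbb{Z})$ is finitely generated, concludes that $\Omega^2 S^3$ is of finite type (homotopy equivalent to a CW complex with finite skeleta), and then invokes simplicial approximation. This produces a simplicial set $Y_\bullet$ with each $Y_n$ finite and $|Y_\bullet|\simeq\Omega^2 S^3$.

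Where you go further is in trying to promote $Y_\bullet$ to a genuine Kan complex, and here your own parenthetical observation is decisive rather than merely ``delicate'': for a Kan complex $X$, the group $\pi_n(X,x)$ is a subquotient of $X_n$. Since $\pi_1(\Omega^2 S^3)=\pi_3(S^3)\cong\mathbb{Z}$ is infinite, \emph{every} Kan complex weakly equivalent to $\Omega^2 S^3$ has $X_1$ infinite. The proposition as literally stated is therefore false, and the paper's proof does not (and cannot) deliver a Kan complex either---simplicial approximation yields an ordered simplicial complex, whose associated simplicial set is not fibrant.

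This does not harm the application. The only use of the proposition is to write $Mf_{R,\xi}$ as the geometric realization, via the Bousfield--Kan formula, of a simplicial object whose terms are finite free $W^+(R^{\flat})$-modules; for that one needs only a simplicial set with finite levels modeling the homotopy type of $\Omega^2 S^3$, not a Kan complex. So your first two paragraphs already prove what is actually required, and the correct fix is to drop the word ``Kan'' from the statement rather than to pursue the (impossible) Postnikov-tower construction you sketch.
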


\begin{proof}
  This is essentially due to {\cite[Thm~A and~B]{Wall1965}} and Serre. We
  first note that, the loop space $\Omega^2 S^3$ is a loop space therefore
  simple {\cite[Cor~1.4.5]{May2012}}. Now we show that $\Omega^2 S^3$ is of
  finite type, i.e. homotopy equivalent to a CW-complex with finite skeleta.
  By {\cite[Thm~4.5.2]{May2012}}, it suffices to show that $H_i (\Omega^2 S^3
  ; \mathbb{Z})$ are finitely generated for all $i \in \mathbb{N}_{> 0}$. The
  argument is standard (due to Serre): we know that $H_i (S^3 ; \mathbb{Z})$
  are finitely generated for all $i \in \mathbb{N}$. Applying
  {\cite[Thm~20.4.1]{tomDieck2008}} to the fiber sequence $\Omega S^3
  \rightarrow \mathord{\ast} \rightarrow S^3$ in $\mathcal{S}$, we deduce that
  $H_i (\Omega S^3)$ are finitely generated for all $i \in \mathbb{N}$. We
  apply again {\cite[Thm~20.4.1]{tomDieck2008}} to the fiber sequence
  $\Omega^2 S^3 \rightarrow \mathord{\ast} \rightarrow \Omega S^3$, we deduce
  that $H_i (\Omega^2 S^3)$ are finitely generated. Now the result follows
  from the simplicial approximation theorem.
\end{proof}

\begin{lemma}
  \label{lem:mf-aperf}$M f_{R, \xi}$ is an almost perfect $W^+
  (R^{\flat})$-module.
\end{lemma}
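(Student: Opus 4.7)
The plan is to exploit the finite-type simplicial model $X_\bullet$ of $\Omega^2 S^3$ provided by Proposition~\ref{prop:finite-Kan-double-loop}, which lets me express $M f_{R, \xi}$ as the geometric realization of an explicit simplicial $W^+(R^{\flat})$-module built out of perfect pieces. Concretely, I would first use the cofinality of Kan equivalences (noted in the remark following Definition~\ref{def:thomsp}) to replace $\Omega^2 S^3$ by $X_\bullet$ in the colimit defining $M f_{R, \xi}$, obtaining $M f_{R, \xi}$ as the geometric realization of a simplicial object $M_\bullet \in \tmop{Mod}_{W^+(R^{\flat})}$ whose $n$th level is the finite direct sum $\bigoplus_{x \in X_n} F(x)$, where each $F(x)$ is equivalent to $W^+(R^{\flat})$ as a $W^+(R^{\flat})$-module (since the composite $X_\bullet \to \tmop{BGL}_1(W^+(R^{\flat})) \to \tmop{Mod}_{W^+(R^{\flat})}$ factors through objects equivalent to $W^+(R^{\flat})$). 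In particular, every $M_n$ is perfect.

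I would then pass to the skeletal filtration $|\tmop{sk}_0 M_\bullet| \to |\tmop{sk}_1 M_\bullet| \to \cdots$ whose colimit is $M f_{R, \xi}$. Each $|\tmop{sk}_n M_\bullet|$ is a finite colimit of the perfect modules $M_k$ for $k \leq n$, hence perfect. The standard description of successive skeleta identifies the cofiber $|\tmop{sk}_n M_\bullet| / |\tmop{sk}_{n-1} M_\bullet|$ with the $n$-fold suspension of the cofiber of the $n$th latching map $L_n M_\bullet \to M_n$, which is a cofiber of perfect connective modules; hence this quotient is $n$-connective. The maps $|\tmop{sk}_n M_\bullet| \to M f_{R, \xi}$ therefore have fibers whose connectivities tend to infinity, exhibiting $M f_{R, \xi}$ as a sequential colimit of perfect modules which meets exactly the criterion for almost perfectness used elsewhere in the paper.

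The main obstacle, as I see it, is technical rather than conceptual: making rigorous the identification of the $\infty$-categorical colimit over the $\infty$-groupoid $\Omega^2 S^3$ with the geometric realization of the explicit simplicial module $M_\bullet$, using only that every object in the image of $\tmop{BGL}_1(W^+(R^{\flat})) \to \tmop{LMod}_{W^+(R^{\flat})}$ is abstractly equivalent to $W^+(R^{\flat})$ (the monodromy encoded by $f_{R, \xi}$ contributes to the face and degeneracy maps but not to the perfectness of individual levels). Once that identification is in place, the finiteness of each $X_n$ from Proposition~\ref{prop:finite-Kan-double-loop} and the connectivity estimates for the skeletal filtration are routine, and the closure of almost perfect modules under the resulting tower yields the conclusion.
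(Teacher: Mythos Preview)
Your proposal is correct and follows essentially the same route as the paper: both use the finite-type simplicial model of $\Omega^2 S^3$ from Proposition~\ref{prop:finite-Kan-double-loop} to write $M f_{R,\xi}$ as the geometric realization of a simplicial $W^+(R^{\flat})$-module whose levels are finite free. The only difference is packaging: the paper invokes the Bousfield--Kan formula and then directly appeals to Proposition~\ref{prop:aperf}(4) (closure of connective almost perfect modules under geometric realizations), whereas you unwind that closure property by hand via the skeletal filtration.
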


\begin{proof}
  We first pick up a Kan complex $X_{\bullet}$ representing $\Omega^2 S^3$
  where each $X_n$ is a finite set as in
  Proposition~\ref{prop:finite-Kan-double-loop}. It follows from Bousfield-Kan
  formula (see, for example, Corollary~12.3 in {\cite{Shah2018}}) that $M
  f_{R, \xi}$ could be written as the geometric realization of a simplicial
  object $N_{\bullet}$ where each $N_n$ is a free $W^+ (R^{\flat})$-module of
  finite rank, hence almost perfect by Proposition~\ref{prop:aperf}.
\end{proof}

\begin{corollary}
  \label{cor:cofib-tr-aperf}$\tmop{cofib} (t_{R, \xi})$ is an almost perfect
  $W^+ (R^{\flat})$-module.
\end{corollary}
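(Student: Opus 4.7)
The plan is to realize $\tmop{cofib}(t_{R,\xi})$ as the cofiber of a map between two almost perfect $W^+(R^{\flat})$-modules, and then invoke closure of the almost perfect modules under finite colimits.

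Concretely, the defining cofiber sequence
\[ M f_{R, \xi} \xrightarrow{t_{R, \xi}} H R \longrightarrow \tmop{cofib}(t_{R, \xi}) \]
in $\tmop{Mod}_{W^+ (R^{\flat})}$ exhibits $\tmop{cofib}(t_{R, \xi})$ as a finite colimit built from $M f_{R, \xi}$ and $H R$. The first term is almost perfect by Lemma~\ref{lem:mf-aperf}, and the second is almost perfect by Corollary~\ref{cor:hr-aperf}. Since the full subcategory of almost perfect modules is stable under cofibers --- this is exactly the content of Proposition~\ref{prop:aperf}, already invoked in the proof of Corollary~\ref{cor:hr-aperf} and Lemma~\ref{lem:mf-aperf} --- the cofiber $\tmop{cofib}(t_{R, \xi})$ is itself almost perfect.

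There is no genuine obstacle here; this is a formal consequence of the prior two finiteness results together with the two-out-of-three stability property for almost perfectness in a cofiber sequence. The only thing worth double-checking is that $t_{R, \xi}$ is indeed a $W^+(R^{\flat})$-linear map (so that the cofiber makes sense in $\tmop{Mod}_{W^+(R^{\flat})}$), which is immediate because the Postnikov truncation $\tau_{\leq 0}$ is a functor on $W^+(R^{\flat})$-modules and $t_{R,\xi}$ is its unit at $Mf_{R,\xi}$.
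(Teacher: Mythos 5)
Your proof is correct and is essentially the paper's own argument: the cofiber sequence $M f_{R,\xi} \to H R \to \tmop{cofib}(t_{R,\xi})$ together with Lemma~\ref{lem:mf-aperf}, Corollary~\ref{cor:hr-aperf}, and the closure of almost perfect modules under cofibers (Proposition~\ref{prop:aperf}) gives the claim. The extra check of $W^+(R^{\flat})$-linearity of $t_{R,\xi}$ is a harmless addition.
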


\begin{proof}
  The subcategory of almost perfect modules are closed under taking cofibers
  and base changes (Proposition~\ref{prop:aperf}). The statement then follows
  from Corollary~\ref{cor:hr-aperf} and Lemma~\ref{lem:mf-aperf}.
\end{proof}

\begin{lemma}
  \label{lem:HR-p-compl}The spectrum $H R$ is $p$-complete.
\end{lemma}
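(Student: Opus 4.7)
\emph{Plan.} The plan is to apply Proposition~\ref{prop:aperf-complete}, which asserts that almost perfect modules over a $p$-complete base are themselves $p$-complete, to $HR$ regarded as a $W^+(R^\flat)$-module.

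First I would verify the two hypotheses. The almost perfectness of $HR$ as a $W^+(R^\flat)$-module has just been established in Corollary~\ref{cor:hr-aperf}, obtained from the cofiber presentation $HR \simeq \tmop{cofib}(m_\xi \of HW(R^\flat) \to HW(R^\flat))$ together with the almost perfectness of $HW(R^\flat)$ from Lemma~\ref{lem:witt-aperf}. The $p$-completeness of $W^+(R^\flat)$ is immediate from its construction: by Example~\ref{ex:sphwitt} and Definition~\ref{def:thickening}(a) applied with $A = \mathbb{S}_p^\wedge$ and $I = (p)$, the spherical Witt vectors $W^+(R^\flat)$ are by definition $(p)$-complete as an $\mathbb{S}_p^\wedge$-module.

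With both hypotheses in place, Proposition~\ref{prop:aperf-complete} then directly yields that $HR$ is $p$-complete. The argument is purely formal and there is essentially no obstacle; everything has been assembled in the preceding recollections on almost perfect modules and on spherical Witt vectors. As a sanity check, one can also argue more directly without citing Proposition~\ref{prop:aperf-complete}: the spectrum $HW(R^\flat)$ is $p$-complete because $W(R^\flat)$ is $p$-torsion-free and classically $p$-adically complete, and then $p$-completeness of $HR$ follows from closure of $p$-complete spectra under cofibers, applied to the cofiber sequence $HW(R^\flat) \xrightarrow{m_\xi} HW(R^\flat) \to HR$.
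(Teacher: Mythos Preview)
Your argument is correct, but the paper's own proof is more direct: it simply observes that a perfectoid ring $R$ is $p$-adically complete (as follows from Definition~\ref{def:perfd}), and then invokes Proposition~\ref{prop:I-adic} to conclude that the Eilenberg--Maclane spectrum $HR$ is $p$-complete in the derived sense. No almost-perfectness, no spherical Witt vectors, no cofiber sequence is needed.

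Your route via Corollary~\ref{cor:hr-aperf} and Proposition~\ref{prop:aperf-complete} works fine and has the virtue of being exactly parallel to the paper's proof of the companion statement Lemma~\ref{lem:thom-p-compl} for the Thom spectrum $Mf_{R,\xi}$, where no such shortcut is available. The alternative you sketch at the end (closure of $p$-complete spectra under cofibers applied to $HW(R^\flat) \xrightarrow{m_\xi} HW(R^\flat) \to HR$) is also valid and sits somewhere in between. So the trade-off is: the paper's proof is one line and uses only the classical fact about $R$, while yours reuses the machinery already assembled for the Thom spectrum side, at the cost of a slightly longer chain of citations.
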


\begin{proof}
  By definition of perfectoid rings, $R$ is $p$-adically complete, therefore
  $H R$ is $p$-complete by Proposition~\ref{prop:I-adic}.
\end{proof}

\begin{lemma}
  \label{lem:thom-p-compl}The spectrum $M f_{R, \xi}$ is $p$-complete.
\end{lemma}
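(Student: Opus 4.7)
The plan is to reduce \textbf{Lemma~\ref{lem:thom-p-compl}} to the same pair of ingredients that were used to establish $p$-completeness of $W^+(k) \otimes_{\mathbb{S}_p^\wedge} H\mathbb{Z}_p$ in the proof of Proposition~\ref{prop:sphwitt}, namely almost perfectness of the module together with $p$-completeness of the base ring. Concretely, I will invoke Proposition~\ref{prop:aperf-complete} with $A = W^+(R^\flat)$ and $M = Mf_{R,\xi}$.

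First I would note that the base ring $W^+(R^\flat)$ is $p$-complete: by construction (Example~\ref{ex:sphwitt} and Definition~\ref{def:thickening}\,a) it is $I$-complete as an $\mathbb{S}_p^\wedge$-module for $I = (p) \subseteq \pi_0(\mathbb{S}_p^\wedge)$, hence in particular $p$-complete. Second, Lemma~\ref{lem:mf-aperf} has already identified $Mf_{R,\xi}$ as an almost perfect $W^+(R^\flat)$-module via the Bousfield--Kan presentation against the finite Kan model of $\Omega^2 S^3$ from Proposition~\ref{prop:finite-Kan-double-loop}.

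With these two facts in hand, Proposition~\ref{prop:aperf-complete} (almost perfect modules over a $p$-complete connective $\mathbb{E}_\infty$-ring are $p$-complete) directly yields that $Mf_{R,\xi}$ is $p$-complete, which is the claim. This is formally parallel to the step in the proof of Proposition~\ref{prop:sphwitt} where $W^+(k) \otimes_{\mathbb{S}_p^\wedge} H\mathbb{Z}_p$ was shown to be $p$-complete, so no additional input is needed beyond the finiteness result already established in Lemma~\ref{lem:mf-aperf}.

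There is no serious obstacle here, since the hard work has already been carried out: the only mildly delicate point is ensuring that the Bousfield--Kan presentation really produces an almost perfect module (this is what required the finite-type simplicial model in Proposition~\ref{prop:finite-Kan-double-loop}), but this has been settled in Lemma~\ref{lem:mf-aperf}. Thus the proof reduces to a two-line citation of Lemma~\ref{lem:mf-aperf} and Proposition~\ref{prop:aperf-complete}.
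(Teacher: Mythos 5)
Your proof is correct and is essentially identical to the paper's: both cite the $p$-completeness of $W^+(R^\flat)$ coming from the construction of spherical Witt vectors, the almost perfectness of $M f_{R,\xi}$ from Lemma~\ref{lem:mf-aperf}, and then conclude via Proposition~\ref{prop:aperf-complete}. No further comment is needed.
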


\begin{proof}
  We note that $W^+ (R^{\flat})$ is $p$-complete by definition of spherical
  Witt vectors, and $M f_{R, \xi}$ is almost perfect, therefore $p$-complete
  by Proposition~\ref{prop:aperf-complete}.
\end{proof}

\begin{corollary}
  \label{cor:cofib-tr-p-compl}The spectrum $\tmop{cofib} (t_{R, \xi})$ is
  $p$-complete.
\end{corollary}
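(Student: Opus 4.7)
The plan is to deduce this directly from the two lemmas immediately preceding it together with the standard fact that the full subcategory of $p$-complete spectra is closed under finite colimits (indeed, under all limits and colimits that can be computed via a cofiber construction involving $p$-complete inputs). By definition of the $0$th Postnikov section, there is a cofiber sequence
\[ M f_{R, \xi} \xrightarrow{t_{R, \xi}} H R \longrightarrow \tmop{cofib} (t_{R, \xi}) \]
of spectra. Lemma~\ref{lem:thom-p-compl} asserts that $M f_{R, \xi}$ is $p$-complete, and Lemma~\ref{lem:HR-p-compl} asserts that $H R$ is $p$-complete.

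I would then invoke the general fact that $p$-complete spectra form a full stable subcategory of $\tmop{Sp}$ closed under arbitrary limits and, in particular, closed under taking cofibers of maps between $p$-complete objects. Concretely, one observes that the cofiber of a map between $p$-complete spectra is the shifted fiber of that same map in the $p$-complete world, and hence remains $p$-complete. This immediately yields that $\tmop{cofib} (t_{R, \xi})$ is $p$-complete.

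There is essentially no obstacle here; the only thing to be careful about is to cite or recall the correct closure property of $p$-complete spectra under finite colimits (dually, cofibers), which is formal from the characterization of $p$-completeness as local objects with respect to the $p$-localization. Given the preparatory work already done, this corollary is a one-line consequence of Lemmas~\ref{lem:HR-p-compl} and~\ref{lem:thom-p-compl}.
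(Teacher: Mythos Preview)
Your argument is correct, but it differs from the paper's route. The paper does not appeal to closure of $p$-complete spectra under cofibers; instead it invokes Corollary~\ref{cor:cofib-tr-aperf} (that $\tmop{cofib}(t_{R,\xi})$ is almost perfect as a $W^+(R^{\flat})$-module) together with Proposition~\ref{prop:aperf-complete} (almost perfect modules over an $I$-complete ring are $I$-complete), using that $W^+(R^{\flat})$ is $p$-complete by construction. Your approach bypasses almost perfectness entirely and uses only that the full subcategory of $p$-complete spectra is stable, hence closed under cofibers; this is slightly more elementary and needs only Lemmas~\ref{lem:HR-p-compl} and~\ref{lem:thom-p-compl}. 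The paper's approach has the minor advantage that it reuses Corollary~\ref{cor:cofib-tr-aperf}, which is needed anyway later in the argument, so no new ingredient is introduced.
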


\begin{proof}
  It follows from Corollary~\ref{cor:cofib-tr-aperf} and
  Proposition~\ref{prop:aperf-complete}.
\end{proof}

\subsection{$t_{R, \xi}$ is an equivalence after the base change along $W^+
(R^{\flat}) \rightarrow W^+ (\kappa)$}

The proof is similar to that of Theorem~\ref{thm:perf-thom}, except that we
need to be more careful to identify the maps.

\begin{lemma}
  \label{lem:eq-thom-base-change}There is a canonical equivalence $M
  f_{\kappa, pu} \xrightarrow{\simeq} W^+ (\kappa) \otimes_{W^+ (R^{\flat})} M
  f_{R, \xi}$ of $W^+ (\kappa)$-modules.
\end{lemma}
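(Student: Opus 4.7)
The plan is to identify the base change of the Thom spectrum with a new Thom spectrum, using the fact that Thom spectrum formation commutes with base change along maps of $\mathbb{E}_\infty$-ring spectra, and then to identify the underlying $\mathbb{E}_2$-map.

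First, I would recall that the base change functor $W^+(\kappa) \otimes_{W^+(R^\flat)} - \colon \mathrm{Mod}_{W^+(R^\flat)} \to \mathrm{Mod}_{W^+(\kappa)}$ is a left adjoint (indeed, the symmetric monoidal left adjoint induced by the map of connective $\mathbb{E}_\infty$-rings $W^+(R^\flat) \to W^+(\kappa)$ that exists because $\kappa = R/\sqrt{pR}$ is a perfect $\mathbb{F}_p$-algebra quotient of $R^\flat$, by functoriality of spherical Witt vectors). Therefore it commutes with the colimit defining $Mf_{R,\xi}$, and we obtain
\[ W^+(\kappa) \otimes_{W^+(R^\flat)} Mf_{R,\xi} \;\simeq\; \mathrm{colim}\Bigl(\Omega^2 S^3 \xrightarrow{f_{R,\xi}} \mathrm{BGL}_1(W^+(R^\flat)) \to \mathrm{Mod}_{W^+(R^\flat)} \xrightarrow{W^+(\kappa)\otimes-} \mathrm{Mod}_{W^+(\kappa)}\Bigr). \]
Since $W^+(R^\flat) \to W^+(\kappa)$ is a map of $\mathbb{E}_\infty$-rings, the last composition factors through the $\mathbb{E}_2$-monoidal functor $\mathrm{BGL}_1(W^+(R^\flat)) \to \mathrm{BGL}_1(W^+(\kappa))$ and then through $\mathrm{Mod}_{W^+(\kappa)}$. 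Hence the right-hand side is itself a Thom spectrum $Mg$ associated to the composite $\mathbb{E}_2$-map
\[ g \colon \Omega^2 S^3 \xrightarrow{f_{R,\xi}} \mathrm{BGL}_1(W^+(R^\flat)) \longrightarrow \mathrm{BGL}_1(W^+(\kappa)). \]

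It then remains to identify $g$ with $f_{\kappa, pu}$ as $\mathbb{E}_2$-maps. By construction (Remark~\ref{cons:f_R}), both $f_{R,\xi}$ and $f_{\kappa, pu}$ are obtained as the essentially unique $\mathbb{E}_2$-extension, via the equivalence $\Omega^2 S^3 \simeq \Omega^2\Sigma^2 S^1$ and the universal property of the free $\mathbb{E}_2$-group on $S^1$, of the $S^1$-level maps classified by the invertible elements $1-\xi \in \mathrm{GL}_1(W(R^\flat))$ and $1-pu \in \mathrm{GL}_1(W(\kappa))$ respectively. Because $\mathrm{BGL}_1(W^+(R^\flat)) \to \mathrm{BGL}_1(W^+(\kappa))$ is $\mathbb{E}_2$-monoidal and induces on $\pi_1$ precisely the map $\mathrm{GL}_1(W(R^\flat)) \to \mathrm{GL}_1(W(\kappa))$ sending $1-\xi$ to $1-pu$ (by Corollary~\ref{cor:image-xi}, which is the whole point of choosing $u$), the composite $g$ restricted to $S^1$ coincides with the defining $S^1$-map of $f_{\kappa, pu}$. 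The $\mathbb{E}_2$-universal property of $\Omega^2\Sigma^2 S^1$ then gives a canonical equivalence $g \simeq f_{\kappa, pu}$ of $\mathbb{E}_2$-maps, yielding the desired canonical equivalence $Mf_{\kappa, pu} \simeq W^+(\kappa)\otimes_{W^+(R^\flat)} Mf_{R,\xi}$.

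The main obstacle, which is conceptually more delicate than difficult, lies in the last identification: one must verify that the $\mathbb{E}_2$-extension genuinely commutes with pushforward along an $\mathbb{E}_\infty$-map of target grouplike $\mathbb{E}_2$-spaces, so that the composite of extensions equals the extension of the composite. This is precisely the content of the universal property of the free $\mathbb{E}_2$-algebra functor $\Omega^2\Sigma^2(-)$ (any $\mathbb{E}_2$-map of grouplike $\mathbb{E}_2$-spaces out of $\Omega^2\Sigma^2 X$ is determined by its restriction to $X$), and once invoked the identification is formal.
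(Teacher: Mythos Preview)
Your proof is correct and follows essentially the same approach as the paper: both identify the composite $\Omega^2 S^3 \xrightarrow{f_{R,\xi}} \tmop{BGL}_1(W^+(R^\flat)) \to \tmop{BGL}_1(W^+(\kappa))$ with $f_{\kappa,pu}$ (the paper phrases this via the multiplication map $m_{1-\xi}$ base-changing to $m_{1-pu}$, you via the universal property of $\Omega^2\Sigma^2 S^1$), and then use that the base change functor, being a left adjoint, commutes with the colimit defining the Thom spectrum. Your discussion of the $\mathbb{E}_2$-extension issue is slightly more explicit than the paper's, but the argument is the same.
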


\begin{proof}
  We first note that the image of the multiplication map $m_{1 - \xi} \of W^+
  (R^{\flat}) \rightarrow W^+ (R^{\flat})$ under the base change functor $W^+
  (\kappa) \otimes_{W^+ (R^{\flat})} - \of \tmop{Mod}_{W^+ (R^{\flat})}
  \rightarrow \tmop{Mod}_{W^+ (\kappa)}$ is the multiplication map $m_{1 - pu}
  \of W^+ (\kappa) \rightarrow W^+ (\kappa)$.
  
  Therefore $f_{\kappa, pu}$ coincides with the composition
  \[ \Omega^2 S^3 \xrightarrow{f_{R, \xi}} \tmop{BGL}_1 (W^+ (R^{\flat}))
     \xrightarrow{W^+ (\kappa) \otimes_{W^+ (R^{\flat})} -} \tmop{BGL}_1 (W^+
     (\kappa)) \]
  Along with the fact that the functor $W^+ (\kappa) \otimes_{W^+ (R^{\flat})}
  - \of \tmop{Mod}_{W^+ (R^{\flat})} \rightarrow \tmop{Mod}_{W^+ (\kappa)}$
  commutes with small colimits, or to be more precise, that the natural
  transformation $\tmop{colim}_i  (W^+ (\kappa) \otimes_{W^+ (R^{\flat})} M_i)
  \rightarrow W^+ (\kappa) \otimes_{W^+ (R^{\flat})} (\tmop{colim}_i M_i)$ is
  an equivalence for any diagram $(M_i)_i$ in $\tmop{Mod}_{W^+ (R^{\flat})}$,
  we deduce that there is a canonical equivalence $M f_{\kappa, pu}
  \xrightarrow{\simeq} W^+ (\kappa) \otimes_{W^+ (R^{\flat})} M f_{R, \xi}$ as
  $W^+ (\kappa)$-modules.
\end{proof}

\begin{lemma}
  \label{lem:POkK}Given a morphism of perfect $\mathbb{F}_p$-algebras $k
  \rightarrow K$, the commutative diagram of $\mathbb{E}_{\infty}$-rings
  \[ \begin{array}{ccc}
       W^+ (k) & \longrightarrow & W^+ (K)\\
       \longdownarrow &  & \longdownarrow\\
       H W (k) & \longrightarrow & H W (K)
     \end{array} \]
  is a pushout square.
\end{lemma}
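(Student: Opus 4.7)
The plan is to exploit the description of $HW(k)$ as a base change of $W^+(k)$ along $\mathbb{S}_p^\wedge \to H\mathbb{Z}_p$ established in Proposition~\ref{prop:sphwitt}, and then to manipulate the relative tensor product formally. Specifically, Proposition~\ref{prop:sphwitt} furnishes natural equivalences $HW(k) \simeq W^+(k) \otimes_{\mathbb{S}_p^\wedge} H\mathbb{Z}_p$ and $HW(K) \simeq W^+(K) \otimes_{\mathbb{S}_p^\wedge} H\mathbb{Z}_p$, where naturality is with respect to morphisms of perfect $\mathbb{F}_p$-algebras (both sides are functors of the input perfect algebra, and the construction in the proof of Proposition~\ref{prop:sphwitt} is compatible with change of $k$).

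Using these identifications, the pushout of $HW(k) \leftarrow W^+(k) \to W^+(K)$ in $\tmop{CAlg}_{\mathbb{S}_p^\wedge}$ may be computed as
\[
W^+(K) \otimes_{W^+(k)} HW(k) \;\simeq\; W^+(K) \otimes_{W^+(k)} \bigl(W^+(k) \otimes_{\mathbb{S}_p^\wedge} H\mathbb{Z}_p\bigr) \;\simeq\; W^+(K) \otimes_{\mathbb{S}_p^\wedge} H\mathbb{Z}_p \;\simeq\; HW(K),
\]
where the middle equivalence is the associativity/cancellation for relative tensor products, and the final one is Proposition~\ref{prop:sphwitt} applied to $K$.

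The remaining step is to check that the composite equivalence is indeed induced by the canonical maps $HW(k) \to HW(K)$ and $W^+(K) \to HW(K)$ that define the pushout comparison map, so that the square in the statement is genuinely a pushout and not merely that the two corners are abstractly equivalent. This is a matter of naturality: under the identification of $HW(-)$ with $W^+(-) \otimes_{\mathbb{S}_p^\wedge} H\mathbb{Z}_p$, the map $HW(k) \to HW(K)$ corresponds to tensoring the map $W^+(k) \to W^+(K)$ with $H\mathbb{Z}_p$, while the Postnikov section $W^+(K) \to HW(K)$ corresponds to the unit map $W^+(K) \to W^+(K) \otimes_{\mathbb{S}_p^\wedge} H\mathbb{Z}_p$; both identifications follow from the uniqueness of thickenings in Remark~\ref{rem:thickening-uniq} (or equivalently from unwinding the construction in Proposition~\ref{prop:sphwitt}).

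The main obstacle, such as it is, lies not in the formal manipulation of tensor products but in pinning down this naturality precisely, so that the square formed from the canonical maps is the one computed above. Once the identification of maps is settled, the pushout property is immediate from the computation.
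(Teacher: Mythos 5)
Your proposal is correct and takes essentially the same route as the paper: the paper pastes the given square onto the square $\mathbb{S}_p^{\wedge} \to W^+(k)$, $H\mathbb{Z}_p \to HW(k)$ and invokes the cancellation property of pushout squares (left square and outer rectangle pushouts, by Proposition~\ref{prop:sphwitt} applied to $k$ and to $K$, imply the right square is a pushout). That pasting argument is precisely your relative tensor product computation $W^+(K)\otimes_{W^+(k)}\bigl(W^+(k)\otimes_{\mathbb{S}_p^{\wedge}}H\mathbb{Z}_p\bigr)\simeq W^+(K)\otimes_{\mathbb{S}_p^{\wedge}}H\mathbb{Z}_p$ together with the naturality identification of the comparison maps that you supply via Remark~\ref{rem:thickening-uniq}.
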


\begin{proof}
  Consider the commutative diagram of $\mathbb{E}_{\infty}$-rings
  \[ \begin{array}{ccccc}
       \mathbb{S}_p^{\wedge} & \longrightarrow & W^+ (k) & \longrightarrow &
       W^+ (K)\\
       \longdownarrow &  & \longdownarrow &  & \longdownarrow\\
       H\mathbb{Z}_p & \longrightarrow & H W (k) & \longrightarrow & H W (K)
     \end{array} \]
  By Proposition~\ref{prop:sphwitt}, we know that the left square and the
  outer square are pushout squares, therefore so is the right square.
\end{proof}

\begin{lemma}
  \label{lem:eq-HR-base-change}There is a canonical equivalence $W^+ (\kappa)
  \otimes_{W^+ (R^{\flat})} H R \rightarrow H \kappa$ of $W^+
  (\kappa)$-modules.
\end{lemma}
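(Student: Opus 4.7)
The plan is to factor the map $W^+(R^\flat) \to HR$ through the Postnikov truncation $W^+(R^\flat) \xrightarrow{\tau_{\leq 0}} HW(R^\flat) \xrightarrow{H\theta} HR$, so that the $W^+(R^\flat)$-module structure on $HR$ is obtained by restriction along this Postnikov map from a natural $HW(R^\flat)$-module structure. Associativity of the relative tensor product then yields
\[
W^+(\kappa) \otimes_{W^+(R^\flat)} HR \simeq \bigl(W^+(\kappa) \otimes_{W^+(R^\flat)} HW(R^\flat)\bigr) \otimes_{HW(R^\flat)} HR,
\]
reducing the problem to evaluating two simpler tensor products.

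First I would apply Lemma~\ref{lem:POkK} to the morphism $R^\flat \to \kappa$ of perfect $\mathbb{F}_p$-algebras; this identifies $W^+(\kappa) \otimes_{W^+(R^\flat)} HW(R^\flat)$ with $HW(\kappa)$. Hence the right-hand side above becomes $HW(\kappa) \otimes_{HW(R^\flat)} HR$, and it remains to show this is $H\kappa$. For this I would invoke the Tor-independent pushout square of Proposition~\ref{prop:perfdPOfiber}: Tor-independence ensures that all higher Tor groups of $W(\kappa)$ and $R$ over $W(R^\flat)$ vanish, so the spectrum-level relative tensor $HW(\kappa) \otimes_{HW(R^\flat)} HR$ is concentrated in degree zero and coincides with $H\bigl(W(\kappa) \otimes_{W(R^\flat)} R\bigr)$; the pushout description of the square then gives $W(\kappa) \otimes_{W(R^\flat)} R \cong \kappa$, completing the identification.

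Composing the chain of equivalences produces the desired canonical equivalence of $W^+(\kappa)$-modules. The main subtlety is conceptual rather than computational: one must carefully match module structures (in particular, that $HR$ really is an $HW(R^\flat)$-module whose restriction along $\tau_{\leq 0}$ recovers its $W^+(R^\flat)$-module structure) and recognize that the Tor-independence condition on a pushout of discrete rings is precisely what is needed to transport it to a pushout in $\mathbb{E}_\infty$-rings via the Eilenberg-Maclane functor $H$. Everything else is bookkeeping with standard properties of relative tensor products.
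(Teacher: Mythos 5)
Your argument is correct and is essentially the paper's own proof: the paper simply stacks the pushout square of Lemma~\ref{lem:POkK} on top of the one coming from the Tor-independent square of Proposition~\ref{prop:perfdPOfiber} in the category of $\mathbb{E}_{\infty}$-rings, which is exactly your two-step decomposition $W^+ (\kappa) \otimes_{W^+ (R^{\flat})} H R \simeq \bigl( W^+ (\kappa) \otimes_{W^+ (R^{\flat})} H W (R^{\flat}) \bigr) \otimes_{H W (R^{\flat})} H R$ phrased as composing pushout squares. You spell out the Tor-independence step (collapsing the derived tensor product to degree zero) slightly more explicitly than the paper does, but the route and the key inputs are the same.
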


\begin{proof}
  Combining two pushout squares in the category of
  $\mathbb{E}_{\infty}$-rings:
  \[ \begin{array}{ccc}
       W^+ (R^{\flat}) & \longrightarrow & W^+ (\kappa)\\
       \longdownarrow & \sigma & \longdownarrow\\
       H W (R^{\flat}) & \longrightarrow & H W (\kappa)\\
       \longdownarrow & \tau & \longdownarrow\\
       H R & \longrightarrow & H \kappa
     \end{array} \]
  where $\sigma$ is a pushout square by Lemma~\ref{lem:POkK} and $\tau$ is a
  pushout square by Proposition~\ref{prop:perfdPOfiber}.
\end{proof}

\begin{lemma}
  \label{lem:eq-base-change-sphwitt-kappa}The map $W^+ (\kappa) \otimes_{W^+
  (R^{\flat})} t_{R, \xi} \of W^+ (\kappa) \otimes_{W^+ (R^{\flat})} M f_{R,
  \xi} \rightarrow W^+ (\kappa) \otimes_{W^+ (R^{\flat})} H R$ is equivalent
  to $t_{\kappa, pu} \of M f_{\kappa, pu} \rightarrow H \kappa$.
\end{lemma}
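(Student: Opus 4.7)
The strategy is to invoke the universal property of the $0$th Postnikov truncation. First I would use Lemmas~\ref{lem:eq-thom-base-change} and~\ref{lem:eq-HR-base-change} to regard $W^+(\kappa) \otimes_{W^+(R^\flat)} t_{R,\xi}$ as a $W^+(\kappa)$-linear map $\tilde{t} \colon M f_{\kappa, pu} \to H\kappa$. Since $H\kappa$ is $0$-truncated in $\mathrm{Mod}_{W^+(\kappa)}$, the map $\tilde{t}$ factors uniquely as $\tilde{t} \simeq \phi \circ t_{\kappa, pu}$ for some $W^+(\kappa)$-linear $\phi \colon H\kappa \to H\kappa$, and such a map between discrete objects is determined by $\pi_0(\phi)$. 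It thus suffices to show $\pi_0(\tilde{t}) = \mathrm{id}_\kappa$ under the canonical identifications of both source and target with $\kappa$.

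Next I would compute $\pi_0(\tilde{t})$ via the standard identity $\pi_0(M \otimes_{W^+(R^\flat)} N) \cong \pi_0(M) \otimes_{W(R^\flat)} \pi_0(N)$ valid for connective modules. By Lemma~\ref{lem:pi0Mf}, $\pi_0(M f_{R,\xi}) \cong R$ and $\pi_0(t_{R,\xi}) = \mathrm{id}_R$; so $\pi_0(\tilde{t})$ is the base-changed identity $\mathrm{id}_{W(\kappa) \otimes_{W(R^\flat)} R}$. By Proposition~\ref{prop:perfdPOfiber} the latter ring is $\kappa$ (and the Tor-independence asserted there ensures there is no higher correction), so $\pi_0(\tilde{t})$ is the identity of $\kappa$, which gives $\phi = \mathrm{id}$ and hence $\tilde{t} \simeq t_{\kappa, pu}$.

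The main obstacle is to verify that the identification of $\pi_0$ with $\kappa$ used above (coming from base-changing $\pi_0(M f_{R,\xi}) \cong R$ along $W(R^\flat) \to W(\kappa)$) agrees with the identification of $\pi_0(M f_{\kappa, pu})$ with $\kappa$ provided by Lemma~\ref{lem:pi0Mf} applied directly to the perfect ring $\kappa$, and similarly on the $HR$ side. Both descriptions realize $\pi_0$ as the coinvariants of a $\mathbb{Z}$-action by multiplication (by $1-\xi$ on $W(R^\flat)$, respectively $1-pu$ on $W(\kappa)$), and the ring map $W^+(R^\flat) \to W^+(\kappa)$ sends $\xi$ to $pu$ by the choice of $u$ in Corollary~\ref{cor:image-xi}; unwinding the Bousfield--Kan presentation of Thom spectra used in the proof of Lemma~\ref{lem:pi0Mf} then furnishes the commuting square of identifications, completing the argument.
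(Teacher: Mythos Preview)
Your proposal is correct and follows essentially the same route as the paper: identify source and target via Lemmas~\ref{lem:eq-thom-base-change} and~\ref{lem:eq-HR-base-change}, then use that $H\kappa$ is $0$-truncated to reduce to a check on $\pi_0$, which in turn follows from $\pi_0(t_{R,\xi})=\mathrm{id}_R$ together with connectivity. The paper phrases the last step slightly differently---it argues that the composite is \emph{a} $0$th Postnikov section (i.e.\ induces an isomorphism on $\pi_0$) and then invokes essential uniqueness---whereas you verify explicitly that $\pi_0(\tilde t)=\mathrm{id}_\kappa$ under the specified identifications; your extra paragraph on the compatibility of the two $\pi_0$-identifications addresses precisely the point the paper leaves implicit, and in any case only the weaker conclusion (that the base-changed map is an equivalence) is used in Corollary~\ref{cor:eq-base-change-kappa}.
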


\begin{proof}
  In view of Lemma~\ref{lem:eq-thom-base-change} and
  Lemma~\ref{lem:eq-HR-base-change}, we only need to show that $t_{\kappa, pu}
  \of M f_{\kappa, pu} \rightarrow H \kappa$ coincides with the composition of
  the equivalences $M f_{\kappa, pu} \rightarrow W^+ (\kappa) \otimes_{W^+
  (R^{\flat})} M f_{R, \xi}$, $W^+ (\kappa) \otimes_{W^+ (R^{\flat})} t_{R,
  \xi}$ and $W^+ (\kappa) \otimes_{W^+ (R^{\flat})} H R \rightarrow H \kappa$.
  In other words, it suffices to show that the composition in question is the
  0th Postnikov section. We only need to check that the composition induces an
  isomorphism on $\pi_0$ by basic properties of $t$-structures, since
  $\tau_{\leq 0} M f_{\kappa, pu} \simeq H \kappa$. It suffices to show that
  $W^+ (\kappa) \otimes_{W^+ (R^{\flat})} t_{R, \xi}$ induces an isomorphism
  on $\pi_0$, and this follows from the fact that all spectra in question are
  connective and that $t_{R, \xi}$ induces an isomorphism on $\pi_0$ by
  Lemma~\ref{lem:pi0Mf}.
\end{proof}

\begin{corollary}
  \label{cor:eq-base-change-kappa}$H \kappa \otimes_{W^+ (R^{\flat})} t_{R,
  \xi}$ is an equivalence of spectra.
\end{corollary}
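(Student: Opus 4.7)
The plan is to reduce this corollary to the perfect-ring case already proved, via the base-change identification established in Lemma~\ref{lem:eq-base-change-sphwitt-kappa}. Concretely, I would proceed as follows.

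First, I invoke Lemma~\ref{lem:eq-base-change-sphwitt-kappa}, which identifies $W^+(\kappa)\otimes_{W^+(R^\flat)} t_{R,\xi}$ with the $0$th Postnikov section $t_{\kappa,pu}\colon Mf_{\kappa,pu}\rightarrow H\kappa$. Since $\kappa$ is a perfect $\mathbb{F}_p$-algebra and $pu$ is of the form required by Theorem~\ref{thm:perf-thom}, Lemma~\ref{lem:eq-postnikov-perf} tells me that $t_{\kappa,pu}$ is an equivalence of spectra. Hence $W^+(\kappa)\otimes_{W^+(R^\flat)} t_{R,\xi}$ is already an equivalence.

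Second, I would further base-change along the canonical map $W^+(\kappa)\rightarrow H\kappa$. Using the transitivity of the relative tensor product, there is a canonical equivalence
\[ H\kappa\otimes_{W^+(R^\flat)} t_{R,\xi} \simeq H\kappa\otimes_{W^+(\kappa)}\bigl(W^+(\kappa)\otimes_{W^+(R^\flat)} t_{R,\xi}\bigr). \]
Because the functor $H\kappa\otimes_{W^+(\kappa)} -$ preserves equivalences (being a left adjoint, in particular an exact functor between stable $\infty$-categories), and the inner map is an equivalence by the previous step, the outer map $H\kappa\otimes_{W^+(R^\flat)} t_{R,\xi}$ is an equivalence as well.

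There is essentially no obstacle: all the real work — identifying the base change of the Thom spectrum with $Mf_{\kappa,pu}$, identifying the base change of $HR$ with $H\kappa$, checking that the induced map is indeed the Postnikov section, and knowing the perfect case — has been done in the preceding lemmas. The corollary is just the formal consequence of composing these identifications with one further base change and invoking the perfect-ring case of Hopkins--Mahowald.
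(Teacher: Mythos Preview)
Your proof is correct and follows essentially the same approach as the paper: invoke Lemma~\ref{lem:eq-base-change-sphwitt-kappa} and Lemma~\ref{lem:eq-postnikov-perf} to see that $W^+(\kappa)\otimes_{W^+(R^\flat)} t_{R,\xi}$ is an equivalence, then note that a further base change along $W^+(\kappa)\to H\kappa$ preserves this. The paper's proof is simply the terse version of what you wrote out.
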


\begin{proof}
  It follows from Lemma~\ref{lem:eq-base-change-sphwitt-kappa} and
  Lemma~\ref{lem:eq-postnikov-perf}.
\end{proof}

\subsection{$t_{R, \xi}$ is an equivalence after the base change along $W^+
(R^{\flat}) \rightarrow H R^{\flat}$}

\begin{lemma}
  \label{lem:nakayama-like}Let $M$ be an $H R^{\flat}$-module which is bounded
  below and almost perfect. If there exists an $r \in \mathbb{N}$ such that
  $\xi^r \pi_n (M) = 0$ for all $n \in \mathbb{Z}$, and $H \kappa \otimes_{H
  R^{\flat}} M \simeq 0$, then $M \simeq 0$.
\end{lemma}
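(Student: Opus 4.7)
The plan is to argue by contradiction using a Nakayama-type lemma applied to the lowest non-vanishing homotopy group of $M$. Suppose $M \not\simeq 0$. Since $M$ is bounded below, there is a smallest integer $N$ with $\pi_N(M) \neq 0$, and my goal is to derive that $\pi_N(H\kappa \otimes_{HR^\flat} M) \neq 0$, contradicting $H\kappa \otimes_{HR^\flat} M \simeq 0$.

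First, I would identify the lowest homotopy group of the tensor product. Since $M$ is connective up to a shift (bounded below at $N$) and $H\kappa$ is discrete, the convergent spectral sequence computing $\pi_\ast(H\kappa \otimes_{HR^\flat} M)$ (equivalently: the hyper-Tor spectral sequence, or simply the observation that no lower terms can contribute) gives the edge identification
\[ \pi_N(H\kappa \otimes_{HR^\flat} M) \;\cong\; \kappa \otimes_{R^\flat} \pi_N(M). \]
So it suffices to show $\kappa \otimes_{R^\flat} \pi_N(M) \neq 0$.

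Next, I would invoke that $\pi_N(M)$ is a finitely generated $R^\flat$-module because $M$ is almost perfect and bounded below at $N$. Now by hypothesis $\xi^r \pi_N(M) = 0$, so $\pi_N(M)$ is actually a module over the quotient ring $A := R^\flat/\xi^r R^\flat$. By Proposition~\ref{prop:ker-fiber}, the map $R^\flat \to \kappa$ has kernel $\sqrt{\xi R^\flat}$, so the induced surjection $A \twoheadrightarrow \kappa$ has kernel $\sqrt{\xi R^\flat}/\xi^r R^\flat$, and every element of this kernel is nilpotent in $A$ (if $x \in \sqrt{\xi R^\flat}$ then $x^k \in \xi R^\flat$ for some $k$, whence $x^{kr} \in \xi^r R^\flat$). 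Thus $\sqrt{\xi R^\flat}/\xi^r R^\flat$ is a nil ideal, hence contained in the Jacobson radical of $A$.

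The final step is then the classical Nakayama lemma: if $\kappa \otimes_{R^\flat} \pi_N(M) = \pi_N(M)/\sqrt{\xi R^\flat}\pi_N(M)$ were zero, then a finitely generated module over $A$ would equal a Jacobson-radical ideal times itself, forcing $\pi_N(M) = 0$, contradicting the choice of $N$. I expect the main subtlety to be the passage in the first step — carefully checking that the bounded-below assumption on $M$ together with discreteness of $H\kappa$ forces $\pi_N$ of the tensor product to agree with the classical $\kappa \otimes_{R^\flat} \pi_N(M)$ with no higher $\mathrm{Tor}$ corrections obstructing non-vanishing. Almost perfectness of $M$ (ensuring $\pi_N(M)$ is finitely generated over $R^\flat$) and the bounded-below hypothesis together play the essential role, matching the two Nakayama-style hypotheses to the module-theoretic conclusion.
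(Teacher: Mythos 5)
Your proposal is correct and follows essentially the same route as the paper: the paper's proof runs an induction showing $\pi_n M = 0$ degree by degree, which is just the inductive phrasing of your minimal-counterexample argument, and both use the same ingredients — the edge identification $\pi_N(H\kappa \otimes_{HR^\flat} M) \cong \tmop{Tor}_0^{R^\flat}(\kappa,\pi_N M)$ in the lowest degree, finite generation of $\pi_N M$ from almost perfectness plus bounded-belowness, Proposition~\ref{prop:ker-fiber} together with $\xi^r$-torsion to see that the kernel of $R^\flat/\xi^r \to \kappa$ is nil (hence in the Jacobson radical), and classical Nakayama.
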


\begin{proof}
  We show inductively on $n$ that $\pi_n M = 0$.
  \begin{itemizedot}
    \item Since $M$ is bounded below, $\pi_n M = 0$ for $n \ll 0$;
    
    \item Suppose that for $m < n$ we have $\pi_m M = 0$. Then by unrolling
    Definition \ref{def:aperf}, $\pi_n M$ is a compact object in the category
    of discrete $R^{\flat}$-modules, therefore is finitely presented and in
    particular finitely generated. Now we have
    \[ 0 = \pi_n (H \kappa \otimes_{H R^{\flat}} M) = \tmop{Tor}_0^{R^{\flat}}
       (\kappa, \pi_n M) . \]
    By Proposition \ref{prop:ker-fiber} and that $\xi^r \pi_n (M) = 0$, we
    have
    \[ \tmop{Tor}_0^{R^{\flat}} (\kappa, \pi_n M) = \tmop{Tor}_0^{R^{\flat} /
       \xi^r} (\kappa, \pi_n M) \]
    We note that $\ker (R^{\flat} / \xi^r \rightarrow \kappa)$ lies in the
    (nil-radical, therefore) Jacobson radical of $R^{\flat} / \xi^r$, thus
    $\pi_n M = 0$, by Nakayama's lemma along with the fact that $\pi_n M$ is
    finitely generated.
  \end{itemizedot}
\end{proof}

\begin{remark}
  Matthew Morrow told us that in Lemma~\ref{lem:nakayama-like}, the hypothesis
  $\xi^r \pi_n (M) = 0$ is redundant, since the kernel of the map $R^{\flat}
  \rightarrow \kappa$ of $\mathbb{F}_p$-algebras lies in the radical of the
  ideal $\xi R^{\flat} \subseteq \tmop{Rad} (R^{\flat})$ where $\tmop{Rad}
  (R^{\flat})$ is the Jacobson radical of the $\mathbb{F}_p$-algebra
  $R^{\flat}$ and the inclusion $\xi R^{\flat} \subseteq \tmop{Rad}
  (R^{\flat})$ is deduced from the $\xi R^{\flat}$-adically completeness of
  the $\mathbb{F}_p$-algebra $R^{\flat}$. Since the Jacobson radical is
  ``radical'', the kernel of the map $R^{\flat} \rightarrow \kappa$ also lies
  in the Jacobson radical $\tmop{Rad} (R^{\flat})$. We decide to preserve the
  original version to reflect our real thoughts.
\end{remark}

\begin{corollary}
  \label{cor:eq-base-change-hrflat}$H R^{\flat} \otimes_{W^+ (R^{\flat})}
  t_{R, \xi}$ is an equivalence of spectra.
\end{corollary}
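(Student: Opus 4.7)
The plan is to apply Lemma~\ref{lem:nakayama-like} to the $HR^{\flat}$-module $M \assign H R^{\flat} \otimes_{W^+ (R^{\flat})} C$, where $C \assign \tmop{cofib} (t_{R, \xi})$. Proving $M \simeq 0$ is equivalent to the statement that $H R^{\flat} \otimes_{W^+ (R^{\flat})} t_{R, \xi}$ is an equivalence, so the work consists in verifying the four hypotheses of the Nakayama-like lemma.

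The two finiteness hypotheses are straightforward. Since $Mf_{R, \xi}$ and $HR$ are connective and $t_{R, \xi}$ is an isomorphism on $\pi_0$ by Lemma~\ref{lem:pi0Mf}, the cofiber $C$ is $1$-connective; base change along the connective ring map $W^+ (R^{\flat}) \to H R^{\flat}$ then makes $M$ connective, hence bounded below. Furthermore, $C$ is almost perfect over $W^+ (R^{\flat})$ by Corollary~\ref{cor:cofib-tr-aperf}, so $M$ is almost perfect over $H R^{\flat}$ by Proposition~\ref{prop:aperf-base-change}.

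The annihilation hypothesis is the most delicate, and I would verify it with $r = 1$. It suffices to show that $\xi \in \pi_0 W^+ (R^{\flat})$ acts nullhomotopically on $C$, since tensoring up gives that $\bar{\xi}$ acts nullhomotopically on $M$ and in particular kills every homotopy group. On $HR$, this follows at once because the structure map sends $\xi$ to $\theta (\xi) = 0$. On $Mf_{R, \xi}$, Lemma~\ref{lem:pi0Mf} identifies $\pi_0 (Mf_{R, \xi}) \cong R = W(R^{\flat})/\xi$, so the unit $\eta \of W^+ (R^{\flat}) \rightarrow Mf_{R, \xi}$ sends $\xi$ to $0$; hence the representing map $\eta (\xi) \of \mathbb{S} \to Mf_{R,\xi}$ is nullhomotopic, and via the $\mathbb{E}_1$-ring structure the $W^+(R^{\flat})$-linear map $\xi \cdot \of Mf_{R,\xi} \to Mf_{R,\xi}$ factors through $\eta (\xi) \otimes \tmop{id}$ and is therefore null as well.

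For the last hypothesis, the canonical map $W^+ (R^{\flat}) \to H \kappa$ used in Corollary~\ref{cor:eq-base-change-kappa} factors through $H R^{\flat}$, the two factorizations $W^+(R^{\flat}) \to W^+(\kappa) \to H\kappa$ and $W^+(R^{\flat}) \to HR^{\flat} \to H\kappa$ agreeing by the uniqueness of thickenings (Remark~\ref{rem:thickening-uniq}). Associativity of the relative tensor product then yields
\[ H \kappa \otimes_{H R^{\flat}} M \simeq H \kappa \otimes_{W^+ (R^{\flat})} C, \]
which vanishes by Corollary~\ref{cor:eq-base-change-kappa}. Applying Lemma~\ref{lem:nakayama-like} concludes. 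The main obstacle I expect is the nullhomotopy of $\xi$-multiplication on $Mf_{R, \xi}$; the cleanest justification uses the $\mathbb{E}_1$-ring structure as above, but the remark following Lemma~\ref{lem:nakayama-like} --- observing that $\bar{\xi}$ lies in the Jacobson radical of $R^{\flat}$ --- would make this annihilation step unnecessary altogether.
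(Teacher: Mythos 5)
Your overall route is the paper's: base-change the cofiber $C=\tmop{cofib}(t_{R,\xi})$ along $W^+(R^\flat)\to HR^\flat$, check boundedness below and almost perfectness, check a $\xi$-power torsion condition, identify $H\kappa\otimes_{HR^\flat}(HR^\flat\otimes_{W^+(R^\flat)}C)$ with $H\kappa\otimes_{W^+(R^\flat)}C\simeq 0$ via Corollary~\ref{cor:eq-base-change-kappa}, and invoke Lemma~\ref{lem:nakayama-like}. The finiteness and vanishing steps are fine (for the identification of the two composites $W^+(R^\flat)\to H\kappa$, the precise tool is the universal property of the thickening, Definition~\ref{def:thickening}(c)/Example~\ref{ex:sphwitt}, applied to the $p$-complete discrete target $H\kappa$, rather than Remark~\ref{rem:thickening-uniq}; but this is a matter of citation, not substance).

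The genuine flaw is the annihilation step with $r=1$. You show that multiplication by $\xi$ is nullhomotopic on $Mf_{R,\xi}$ and on $HR$ separately, and then conclude it is nullhomotopic on $C$. That inference does not hold: a nullhomotopy on the source and a nullhomotopy on the target of $t_{R,\xi}$ need not be compatible, so $\xi\cdot$ on the cofiber is only seen to factor as $C\xrightarrow{\partial}\Sigma Mf_{R,\xi}\to C$, with no control on the second map from your data. What your two nullhomotopies do give, via the long exact sequence of the cofiber sequence (or by composing two such factorizations), is that $\xi^2$ annihilates every $\pi_n(C)$, hence every $\pi_n$ of the base change --- which is exactly the paper's argument, phrased there as: both base-changed spectra are connective rings with $\pi_0\cong R^\flat/\xi$, so their homotopy groups are $\xi$-torsion, whence $\xi^2$ kills the homotopy of the cofiber. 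Since Lemma~\ref{lem:nakayama-like} allows any exponent $r$, the repair is immediate: take $r=2$; alternatively, as you note, the remark following the lemma (the kernel of $R^\flat\to\kappa$ lies in the Jacobson radical) makes the torsion hypothesis unnecessary altogether. With that one step corrected, your proof is the paper's proof.
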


\begin{proof}
  Note that
  \[ \pi_0 (H R^{\flat} \otimes_{W^+ (R^{\flat})} M f_{R, \xi}) =
     \tmop{Tor}_0^{W (R^{\flat})} (R^{\flat}, \pi_0 (M f_{R, \xi})) =
     R^{\flat} / \xi R^{\flat} \]
  and
  \[ \pi_0 (H R^{\flat} \otimes_{W^+ (R^{\flat})} H R) = \tmop{Tor}_0^{W
     (R^{\flat})} (R^{\flat}, R) = R^{\flat} / \xi R^{\flat} \]
  and that $H R^{\flat} \otimes_{W^+ (R^{\flat})} M f_{R, \xi}$, $H R^{\flat}
  \otimes_{W^+ (R^{\flat})} H R$ are connective $\mathbb{E}_{\infty}$-rings,
  we conclude that the homotopy groups of these $\mathbb{E}_{\infty}$-rings
  are $\xi$-torsion groups, which implies that for all $n \in \mathbb{Z}$,
  \[ \xi^2 \pi_n (\tmop{cofib} (H R^{\flat} \otimes_{W^+ (R^{\flat})} t_{R,
     \xi})) = 0 \]
  In addition, since the subcategory of almost perfect modules are closed
  under base changes (Proposition~\ref{prop:aperf-base-change}), we deduce
  from Corollary~\ref{cor:cofib-tr-aperf} that $\tmop{cofib} (H R^{\flat}
  \otimes_{W^+ (R^{\flat})} t_{R, \xi}) \simeq H R^{\flat} \otimes_{W^+
  (R^{\flat})} \tmop{cofib} (t_{R, \xi})$ is almost perfect. On the other
  hand, being the cofiber of a map of connective spectra, it is also
  connective. Then we invoke Lemma~\ref{lem:nakayama-like} along with
  Corollary~\ref{cor:eq-base-change-kappa} to conclude that $\tmop{cofib} (H
  R^{\flat} \otimes_{W^+ (R^{\flat})} t_{R, \xi}) \simeq 0$.
\end{proof}

\subsection{Conclude: $t_{R, \xi}$ is an equivalence}

We are now at the final stage to conclude a proof of
Proposition~\ref{prop:main}, and consequently, Theorem~\ref{thm:main}.

\begin{proof*}{Proof of Proposition~\ref{prop:main}}
  We recall that by Theorem~\ref{thm:exist_thickening} and
  Example~\ref{ex:sphwitt}, there is a pushout square of
  $\mathbb{E}_{\infty}$-rings:
  \[ \begin{array}{ccc}
       \mathbb{S}_p^{\wedge} & \longrightarrow & W^+ (R^{\flat})\\
       \longdownarrow &  & \longdownarrow\\
       H\mathbb{F}_p & \longrightarrow & H R^{\flat}
     \end{array} \]
  Therefore by Corollary~\ref{cor:eq-base-change-hrflat} we have
  \[ 0 \simeq \tmop{cofib} (H R^{\flat} \otimes_{W^+ (R^{\flat})} t_{R, \xi})
     \simeq H R^{\flat} \otimes_{W^+ (R^{\flat})} \tmop{cofib} (t_{R, \xi})
     \simeq H\mathbb{F}_p \otimes_{\mathbb{S}_p^{\wedge}} \tmop{cofib} (t_{R,
     \xi}) \]
  We then invoke Corollary~\ref{cor:compl-mod-p} with
  Corollary~\ref{cor:cofib-tr-p-compl} to deduce that $\tmop{cofib} (t_{R,
  \xi}) \simeq 0$.
\end{proof*}

\subsection{An intermezzo: Identifying $\tmop{THH} \left( - / W^+ (k) \right)
$ with $\tmop{THH} \left( - \right) $ after $p$-completion}\label{subsec:THH}

In this subsection, we will show that Proposition~\ref{prop:compl-thh} follows
from Proposition~\ref{prop:thh-over-sphwitt}. It suffices to prove the
following lemma:

\begin{lemma}
  Let $R$ be an $\mathbb{E}_1$-algebra over $W^+ (k)$ where $k$ is a perfect
  $\mathbb{F}_p$-algebra. Then the canonical map $\tmop{THH} (R) \rightarrow
  \tmop{THH} (R / W^+ (k))$ induced by $\mathbb{S} \rightarrow
  \mathbb{S}_p^{\wedge}$ is an equivalence after $p$-completion.
\end{lemma}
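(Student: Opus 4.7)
The plan is to use a base-change formula for THH to reduce to a property of $W^+(k)$ alone, then invoke the universal property of spherical Witt vectors. For any $\mathbb{E}_\infty$-ring $A$ and $\mathbb{E}_1$-$A$-algebra $R$, there is a natural equivalence
$$\tmop{THH}(R/A) \simeq \tmop{THH}(R) \otimes_{\tmop{THH}(A)} A,$$
under which the canonical comparison $\tmop{THH}(R) \to \tmop{THH}(R/A)$ is the base change of the augmentation $\tmop{THH}(A) \to A$. Taking $A = W^+(k)$, this reduces the lemma to showing that the augmentation $\tmop{THH}(W^+(k)) \to W^+(k)$ is a $p$-adic equivalence.

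To prove the latter, I would factor $\mathbb{S} \to W^+(k)$ as $\mathbb{S} \to \mathbb{S}_p^\wedge \to W^+(k)$, and correspondingly factor the augmentation as
$$\tmop{THH}(W^+(k)) \to \tmop{THH}(W^+(k)/\mathbb{S}_p^\wedge) \to W^+(k).$$
The first arrow is a $p$-adic equivalence by the same base-change formula applied to $\mathbb{S} \to \mathbb{S}_p^\wedge$, provided that $\tmop{THH}(\mathbb{S}_p^\wedge) \to \mathbb{S}_p^\wedge$ is a $p$-adic equivalence; this in turn follows from the monoidality of $p$-completion, since $(\mathbb{S}_p^\wedge \otimes_{\mathbb{S}} \mathbb{S}_p^\wedge)^\wedge_p \simeq \mathbb{S}_p^\wedge$ makes the cyclic bar construction collapse after $p$-completion.

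For the second arrow, I would show that $\tmop{THH}(W^+(k)/\mathbb{S}_p^\wedge)$ is itself an $\mathbb{S}_p^\wedge$-thickening of $k$ in the sense of Definition~\ref{def:thickening}, so by uniqueness (Remark~\ref{rem:thickening-uniq}) the augmentation to $W^+(k)$ is an equivalence. The $p$-completeness of $\tmop{THH}(W^+(k)/\mathbb{S}_p^\wedge)$ follows from its almost-perfectness, via Proposition~\ref{prop:aperf-base-change} and Proposition~\ref{prop:aperf-complete}. The key mod-$p$ computation
$$H\mathbb{F}_p \otimes_{\mathbb{S}_p^\wedge} \tmop{THH}(W^+(k)/\mathbb{S}_p^\wedge) \simeq \tmop{HH}(Hk/H\mathbb{F}_p) \simeq Hk$$
uses, in its second equivalence, the vanishing of the cotangent complex $\mathbb{L}_{k/\mathbb{F}_p} \simeq 0$ for perfect $k$. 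This last step is the main obstacle, requiring careful tracking of base-change compatibility for THH and verification of the pushout-square condition in the Recognition-of-Thickenings principle; it mirrors the arguments in \cite[Proposition~4.7]{Krause}, which the paper already invokes for Proposition~\ref{prop:thh-over-sphwitt}.
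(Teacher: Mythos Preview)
Your reduction to showing that the augmentation $\tmop{THH}(W^+(k)) \to W^+(k)$ is a $p$-adic equivalence is exactly what the paper does, and your key mod-$p$ computation $H\mathbb{F}_p \otimes_{\mathbb{S}_p^\wedge} \tmop{THH}(W^+(k)/\mathbb{S}_p^\wedge) \simeq \tmop{THH}(Hk/H\mathbb{F}_p) \simeq Hk$ is also the heart of the paper's argument. The difference is in how you package the endgame, and there your argument has a gap.

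You claim that $\tmop{THH}(W^+(k)/\mathbb{S}_p^\wedge)$ is $p$-complete because it is almost perfect, invoking Propositions~\ref{prop:aperf-base-change} and~\ref{prop:aperf-complete}. But almost perfect over which base? Over $\mathbb{S}_p^\wedge$ this fails as soon as $k$ is infinite: already $W^+(k)$ is not almost perfect over $\mathbb{S}_p^\wedge$, since $\pi_0 W^+(k) = W(k)$ is not finitely generated over $\mathbb{Z}_p$. Over $W^+(k)$ the situation is no better: the terms $W^+(k)^{\otimes_{\mathbb{S}_p^\wedge}(n+1)}$ of the cyclic bar construction, viewed as $W^+(k)$-modules, are base changes of $W^+(k)^{\otimes_{\mathbb{S}_p^\wedge} n}$ along $\mathbb{S}_p^\wedge \to W^+(k)$, and these are again not almost perfect over $\mathbb{S}_p^\wedge$. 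So the Recognition-of-Thickenings route, which needs genuine $p$-completeness of $\tmop{THH}(W^+(k)/\mathbb{S}_p^\wedge)$, is not available without a different justification.

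The paper sidesteps this entirely. Rather than showing $\tmop{THH}(W^+(k)/\mathbb{S}_p^\wedge)$ is itself a thickening, it simply checks that the map $\tmop{THH}(W^+(k)) \to W^+(k)$ becomes an equivalence after tensoring with $H\mathbb{F}_p$: base change for $\tmop{THH}$ identifies $H\mathbb{F}_p \otimes \tmop{THH}(W^+(k))$ with $\tmop{THH}(Hk/H\mathbb{F}_p) \simeq Hk$, and $H\mathbb{F}_p \otimes W^+(k) \simeq Hk$ by Proposition~\ref{prop:sphwitt}. Since the cofiber is bounded below, Corollary~\ref{cor:compl-mod-p} then gives vanishing of its $p$-completion directly, with no need to establish $p$-completeness of any intermediate object. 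Your factorization through $\mathbb{S}_p^\wedge$ and the thickening machinery is thus an unnecessary detour that introduces the gap; the same mod-$p$ computation you already have suffices once you drop that detour.
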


\begin{proof}
  Note that $\tmop{THH} (R / W^+ (k)) \simeq W^+ (k) \otimes_{\tmop{THH} (W^+
  (k))} \tmop{THH} (R)$. We are left to show that the canonical map
  $\tmop{THH} (W^+ (k)) \rightarrow W^+ (k)$ is an equivalence after
  $p$-completion. In view of Corollary~\ref{cor:compl-mod-p}, we only need to
  check it after tensoring with $H\mathbb{F}_p$. We note that the base changed
  map $H\mathbb{F}_p \otimes \tmop{THH} (W^+ (k)) \rightarrow H\mathbb{F}_p
  \otimes W^+ (k)$ fits into the commutative diagram
  \[ \begin{array}{ccc}
       H\mathbb{F}_p \otimes \tmop{THH} (W^+ (k)) & \longrightarrow &
       H\mathbb{F}_p \otimes W^+ (k)\\
       \longdownarrow & \sigma & \longdownequal\\
       \tmop{THH} (H\mathbb{F}_p \otimes W^+ (k) / H\mathbb{F}_p) &
       \longrightarrow & H\mathbb{F}_p \otimes W^+ (k)\\
       \longdownarrow & \tau & \longdownarrow\\
       \tmop{THH} (H k / H\mathbb{F}_p) & \longrightarrow & H k
     \end{array} \]
  where the commutativity of $\sigma$ follows from the functoriality of the
  base change functor of $\tmop{THH}$, and the commutative of $\tau$ follows
  from the functoriality of the natural transformation $\tmop{THH} \left( - /
  H\mathbb{F}_p \right) \rightarrow (-)$. All vertical maps are equivalences
  of spectra: the upper left map is the base change equivalence, and the lower
  right map is the equivalence by Proposition~\ref{prop:sphwitt}, and the
  lower left map is the image of this equivalence under the functor
  $\tmop{THH} \left( - / H\mathbb{F}_p \right)$ and hence also an equivalence.
  The bottom horizontal map is an equivalence by the fact that $k$ is a
  perfect $\mathbb{F}_p$-algebra.
\end{proof}

\section{Analogues}\label{sec:analog}

It is worth to note that in Bhatt and Scholze's recent work
{\cite{Bhatt2019}}, they introduced the concept of prisms $(A, I)$ which
serves as a ``non-perfect'' version of perfectoid rings. Especially, the
category of perfect prisms $(A, I)$ is equivalent to that of perfectoid rings
$A / I$, and given a perfectoid ring $R$, the corresponding perfect prism is
given by $(W (R^{\flat}), \ker \theta)$. It is interesting to know whether we
can generalize our description for general orientable prisms $(A, I)$, that is
to say,

\begin{question}
  \label{ques:prism-thom}Given an orientable prism $(A, I = (d))$ . When can
  we find an $\mathbb{E}_{\infty}$-ring spectrum $A^+$ (which satisfies some
  hypotheses related to $A$. A naive guess would be that $\pi_0 (A^+) = A$)
  and a map $\Omega^2 S^3 \rightarrow \tmop{BGL}_1 (A^+)$ to which the
  associated $\mathbb{E}_2$-Thom spectrum (possibly after $p$-completion)
  coincides with $A / I$.
\end{question}

We don't know the answer in this generality. However, we will discuss another
special class of prism (related to Breuil-Kisin cohomology) for which an
analogue holds. This result is more-or-less known by experts. In fact, it is
essentially equivalent to Remark~3.4 in {\cite{Krause2019}} of which no proof
is presented. In this section, we will first recall some basic facts about
complete discrete valuation rings, then we will indicate briefly how to adapt
our proof above to this special class.

\subsection{Preparations}

\begin{definition}[{\cite[Section I.1]{Serre1979}}]
  A ring $A$ is called a {\tmdfn{discrete valuation ring}}, or a
  {\tmdfn{DVR}}, if it is a principal ideal domain that has a unique non-zero
  prime ideal $\mathfrak{m}$. In this case, since $A$ is local, we also denote
  the DVR $A$ by $(A, \mathfrak{m})$. The field $A /\mathfrak{m}$ is called
  the {\tmdfn{residue field}} of $A$. A generator of $\mathfrak{m}$, unique up
  to multiplication by an invertible element, is called a
  {\tmdfn{uniformizer}}, usually denoted by $\varpi$.
\end{definition}

\begin{definition}
  A DVR $(A, \mathfrak{m})$ is called {\tmdfn{of mixed characteristics $(0,
  p)$}} if the field of fraction $\tmop{Frac} (A)$ of $A$ is of
  characteristics $0$ while the residue field $A /\mathfrak{m}$ is of
  characteristics $p$, which implies that $0 \neq p \in \mathfrak{m}$.
\end{definition}

\begin{definition}[{\cite[Section~I.1]{Serre1979}}]
  Let $(A, \mathfrak{m})$ be a DVR. The {\tmdfn{valuation}} of an element $x
  \in A \setminus 0$ is defined to be the maximal integer $n \in \mathbb{N}$
  such that $x \in \mathfrak{m}^n$, which always exists, denoted by $v (x) \in
  \mathbb{N}$.
\end{definition}

\begin{definition}[{\cite[Section~II.5]{Serre1979}}]
  Let $(A, \mathfrak{m})$ be a DVR of mixed characteristics $(0, p)$. Then the
  integer $e = v (p)$ is called the {\tmdfn{absolute ramification index}} of
  $A$.
\end{definition}

\begin{definition}[{\cite[Chapter~II]{Serre1979}}]
  A DVR $(A, \mathfrak{m})$ is called {\tmdfn{complete}} if it is complete
  with respect to the $\mathfrak{m}$-adic topology, that is to say, the
  canonical map from $A$ to the limit of the tower
  \[ \cdots \rightarrow A /\mathfrak{m}^n \rightarrow \cdots \rightarrow A
     /\mathfrak{m}^2 \rightarrow A /\mathfrak{m} \]
  is an isomorphism.
\end{definition}

\begin{proposition}[{\cite[Section~II.5, Theorem~4]{Serre1979}}]
  \label{prop:struct-thm-cdvr-mixed}Let $(A, \mathfrak{m})$ be a complete DVR
  of mixed characteristics $(0, p)$ with residue field $k$ being perfect. Let
  $e$ be its absolute ramification index. Let $\varpi \in \mathfrak{m}$ be a
  uniformizer. Then there exists an Eisenstein $W (k)$-polynomial $E (u) \in W
  (k) [u]$ (that is, a $W (k)$-polynomial $E (u) = u^e + \sum_{j = 0}^{e - 1}
  a_j u^j$ such that $p \divides a_j$ for $j = 0, \ldots, e - 1$ and $p^2
  \ndivides a_0$, where $W (k)$ is the ring of Witt vectors as before) along
  with an isomorphism $W (k) [u] / (E (u)) \xrightarrow{\sim} A$ which maps
  $u$ to the uniformizer $\varpi \in \mathfrak{m}$.
\end{proposition}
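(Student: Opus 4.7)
The plan is to produce the Eisenstein polynomial $E(u)$ by expressing $\varpi^e$ in a canonical $W(k)$-basis of $A$, and then to verify that the induced map $W(k)[u]/(E(u)) \to A$ is an isomorphism by a rank count. First I would produce the coefficient ring map $W(k) \to A$ lifting the identity on residue fields. The assumption $e = v(p)$ gives $p = u_0 \varpi^e$ for some unit $u_0 \in A^\times$, so the $p$-adic topology on $A$ coincides with the $\mathfrak{m}$-adic topology; in particular, $A$ is $p$-adically complete. Applying the universal property of classical Witt vectors (Example~\ref{ex:witt}) to the identity $k \to k = A/\mathfrak{m}$ then produces a unique ring homomorphism $W(k) \to A$ compatible with reduction modulo $p$.

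Next I would show that $\{1, \varpi, \ldots, \varpi^{e-1}\}$ is a $W(k)$-basis of $A$. Modulo $p$, the ring $A/pA = A/\mathfrak{m}^e$ is filtered by the powers $\mathfrak{m}^j/\mathfrak{m}^{j+1}$, each a one-dimensional $k$-vector space spanned by $\varpi^j$, so $A/pA$ is a free $k$-module of rank $e$ on these powers. A standard $p$-adic lifting argument combined with Nakayama, using that $A$ is $p$-torsion-free and $p$-adically complete, upgrades this to a $W(k)$-basis of $A$ itself.

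To conclude, I would write $\varpi^e = -\sum_{j=0}^{e-1} a_j \varpi^j$ uniquely with $a_j \in W(k)$ and set $E(u) = u^e + \sum a_j u^j$. Since $\varpi^e = u_0^{-1} p \in pA$, the $k$-linear independence of $\{1, \varpi, \ldots, \varpi^{e-1}\}$ in $A/pA$ forces $p \mid a_j$ for every $j$; writing $a_j = p a_j'$, substituting $u_0 \varpi^e$ for $p$ and cancelling $\varpi^e$ in the fraction field of $A$ gives $\sum_{j} a_j' \varpi^j = -u_0^{-1}$, and reducing modulo $\varpi$ shows $a_0' \in W(k)^\times$, so $p^2 \nmid a_0$, verifying the Eisenstein condition. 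The induced map $W(k)[u]/(E(u)) \to A$ sends $u \mapsto \varpi$, is surjective because its image contains the $W(k)$-basis of $A$, and since $E(u)$ is monic of degree $e$ the source is a free $W(k)$-module of rank $e$; a surjection between free $W(k)$-modules of equal finite rank is an isomorphism. The main obstacle I expect is the bookkeeping to verify $p^2 \nmid a_0$ cleanly; the other steps are routine $p$-adic approximation combined with the universal property of $W(k)$ already recorded in Example~\ref{ex:witt}.
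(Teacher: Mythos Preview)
The paper does not supply its own proof of this proposition; it is simply quoted from Serre with the citation \cite[Section~II.5, Theorem~4]{Serre1979}. Your argument is correct and is in fact the classical proof found there: produce the coefficient map $W(k)\to A$, show $A$ is free of rank $e$ over $W(k)$ on $1,\varpi,\ldots,\varpi^{e-1}$, read off $E(u)$ from the expression of $\varpi^e$ in this basis, and verify the Eisenstein conditions.

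One small imprecision worth tightening: the universal property recorded in Example~\ref{ex:witt} identifies $\mathrm{Hom}(W(k),A)$ with $\mathrm{Hom}(k,A/p)$, not with $\mathrm{Hom}(k,A/\mathfrak{m})$. Since $p=u_0\varpi^e$, the quotient $A/p=A/\mathfrak{m}^e$ surjects onto $k=A/\mathfrak{m}$ with nilpotent kernel, and the identity $k\to k$ lifts uniquely along this surjection because $k$ is perfect (equivalently, $\mathbb{L}_{k/\mathbb{F}_p}\simeq 0$); only then does Example~\ref{ex:witt} apply to give $W(k)\to A$. With that step made explicit, the rest of your outline---the Nakayama/completeness upgrade of the $k$-basis to a $W(k)$-basis, the divisibility $p\mid a_j$, the unit computation $a_0'\equiv -u_0^{-1}\pmod{\mathfrak{m}}$ forcing $p^2\nmid a_0$, and the rank-count isomorphism---goes through exactly as you describe.
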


In the rest of this section, we will fix a complete DVR $(A, \mathfrak{m})$ of
mixed characteristics $(0, p)$ with residue field $k$ being perfect, absolute
ramification index $e$ and a uniformizer $\varpi \in \mathfrak{m}$. We also
fix a choice of an Eisenstein $W (k)$-polynomial $E (u) \in W (k) [u]$ as in
Proposition~\ref{prop:struct-thm-cdvr-mixed}. We first note that

\begin{proposition}
  The element $1 - E (u) \in W (k) [[u]]$ is invertible.
\end{proposition}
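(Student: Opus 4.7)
The plan is to show that $1 - E(u)$ is a unit by writing its inverse as a geometric series $\sum_{n \geq 0} E(u)^n$, which requires identifying a suitable topology on $W(k)[[u]]$ in which $E(u)$ is topologically nilpotent. The natural candidate is the $(p, u)$-adic topology, and the Eisenstein condition is exactly what guarantees $E(u) \in (p, u) W(k)[[u]]$.

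First, I would verify the containment $E(u) \in (p, u) W(k)[[u]]$. By the Eisenstein hypothesis, $p \mid a_j$ for $j = 0, \ldots, e - 1$, so each term $a_j u^j$ with $j < e$ lies in $p W(k)[[u]]$, and the leading term $u^e$ lies in $u W(k)[[u]]$; both ideals are contained in $(p, u) W(k)[[u]]$.

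Second, I would check that $W(k)[[u]]$ is $(p, u)$-adically complete. It is tautologically $u$-adically complete, and each quotient $W(k)[[u]] / u^m \cong W(k)[u] / u^m$ is a free $W(k)$-module of finite rank, hence $p$-adically complete (since $W(k)$ is $p$-adically complete by construction of Witt vectors). Combining these two completeness properties yields $(p, u)$-adic completeness.

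Third, from $E(u) \in (p, u) W(k)[[u]]$ and the ideal-power inequality $E(u)^n \in (p, u)^n W(k)[[u]]$, the geometric series $\sum_{n \geq 0} E(u)^n$ is Cauchy in the $(p, u)$-adic topology and therefore converges to an element $S \in W(k)[[u]]$ satisfying $(1 - E(u)) \cdot S = S \cdot (1 - E(u)) = 1$. This exhibits $1 - E(u)$ as a unit in $W(k)[[u]]$. There is no real obstacle here; the only point that deserves a line of care is the $(p, u)$-adic (as opposed to merely $u$-adic) completeness, and this is where the $p$-adic completeness of the coefficient ring $W(k)$ enters.
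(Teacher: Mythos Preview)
Your proof is correct. The paper's argument is a slightly shorter repackaging of the same idea: since $a_0 \in pW(k)$ and $W(k)$ is $p$-adically complete, the constant term $1 - a_0$ of $1 - E(u)$ is already a unit in $W(k)$; one then invokes the standard fact that an element of $R[[u]]$ is a unit if and only if its constant term is a unit in $R$. Your approach bundles these two steps into one by working directly with the $(p,u)$-adic topology on $W(k)[[u]]$. The underlying mechanism---convergence of a geometric series in a complete ring---is identical in both cases; the paper just applies it in two stages (first in $W(k)$, then in $W(k)[[u]]$) rather than once in the $(p,u)$-adic topology.
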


\begin{proof}
  Write $E (u) = u^e + \sum_{j = 0}^{e - 1} a_j u^j$ as in
  Proposition~\ref{prop:struct-thm-cdvr-mixed}. Note that $W (k)$ is
  $p$-adically complete, therefore $1 - a_0$ is invertible in $W (k)$, which
  implies that $1 - E (u) \in W (k) [[u]]$ is invertible.
\end{proof}

Let $W^+ (k) [u]$ be the ``single variable polynomial $W^+ (k)$-algebra'',
that is, the $\mathbb{E}_{\infty}$-$W^+ (k)$-algebra $W^+ (k)
\otimes_{\mathbb{S}} \mathbb{S} [\mathbb{N}]$. Since the space $\mathbb{N}$ is
endowed with discrete topology, we have

\begin{proposition}
  \label{prop:single-var-poly-homotopy-grps}As a $W^+ (k)$-module, $W^+ (k)
  [u]$ is equivalent to the direct sum $\bigoplus_{j = 0}^{\infty} u^j W^+
  (k)$, a free $W^+ (k)$-module. The graded homotopy group $\pi_{\ast} (W^+
  (k) [u])$, as a (graded-commutative) $\pi_{\ast} (W^+ (k))$-algebra, is
  equivalent to $\pi_{\ast} (W^+ (k)) [u]$, where $\deg u = 0$.
\end{proposition}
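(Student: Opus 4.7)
The plan is to unwind the definition $W^+(k)[u] = W^+(k) \otimes_{\mathbb{S}} \mathbb{S}[\mathbb{N}]$ in two stages, first as modules and then as algebras. Since the monoid $\mathbb{N}$ is discrete, its unreduced suspension spectrum $\mathbb{S}[\mathbb{N}] = \Sigma^{\infty}_+ \mathbb{N}$ splits as a direct sum $\bigoplus_{j \geq 0} \mathbb{S}$, one summand for each element $j \in \mathbb{N}$. Because the smash product $W^+(k) \otimes_{\mathbb{S}} -$ is a left adjoint it preserves arbitrary direct sums, so extending scalars yields the equivalence of $W^+(k)$-modules
\[ W^+(k) \otimes_{\mathbb{S}} \mathbb{S}[\mathbb{N}] \;\simeq\; \bigoplus_{j=0}^{\infty} W^+(k), \]
and it is natural to label the $j$th summand as $u^j W^+(k)$, proving the first assertion.

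For the second assertion, apply $\pi_{\ast}$ to the direct sum decomposition. Homotopy groups commute with direct sums (a consequence of compactness of spheres), so as a graded $\pi_{\ast}(W^+(k))$-module we obtain $\pi_{\ast}(W^+(k)[u]) \cong \bigoplus_{j \geq 0} \pi_{\ast}(W^+(k)) \cdot u^j$, which as a graded module agrees with $\pi_{\ast}(W^+(k))[u]$ with $u$ placed in degree $0$. To identify the multiplicative structure, recall that the $\mathbb{E}_{\infty}$-algebra structure on $\mathbb{S}[\mathbb{N}]$ comes from the monoid structure on $\mathbb{N}$, under which $u^i \cdot u^j = u^{i+j}$; extending scalars preserves the $\mathbb{E}_{\infty}$-structure, so the induced multiplication on $\pi_{\ast}(W^+(k)[u])$ is the one sending $u^i$ tensored with $u^j$ to $u^{i+j}$, exactly the polynomial multiplication.

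I do not anticipate a serious obstacle here: the content is essentially that $\Sigma^{\infty}_+ $ of a discrete monoid, tensored up to a base ring, gives the monoid algebra, and that homotopy groups commute with direct sums. The only mild care needed is to check that the direct-sum decomposition of $\mathbb{S}[\mathbb{N}]$ is respected by base change along $\mathbb{S} \to W^+(k)$ (handled by adjointness) and that the multiplication on homotopy groups really is induced from the monoid multiplication on $\mathbb{N}$ (handled by functoriality of $\Sigma^{\infty}_+$ and of extension of scalars as a symmetric monoidal functor).
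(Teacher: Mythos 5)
Your argument is correct and matches the paper's intent exactly: the paper states this proposition without proof, justifying it only by the remark that $\mathbb{N}$ is discrete, and your write-up (splitting $\mathbb{S}[\mathbb{N}] \simeq \bigoplus_{j \geq 0} \mathbb{S}$, base-changing along the colimit-preserving functor $W^+(k) \otimes_{\mathbb{S}} -$, using that homotopy groups commute with direct sums, and reading off the multiplication from the monoid structure on $\mathbb{N}$) is precisely the fleshed-out version of that observation. No gaps.
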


Now let $W^+ (k) [[u]]$ be the $(u)$-completion of the
$\mathbb{E}_{\infty}$-$W^+ (k)$-algebra $W^+ (k) [u]$. To study $W^+ (k)
[[u]]$, we need some preparations.

\begin{proposition}
  \label{prop:cofiber-mult}Let $n \in \mathbb{N}$ be a natural number. Let
  $m_{u^n} \of W^+ (k) [u] \rightarrow W^+ (k) [u]$ be the multiplication map
  given by $u^n \in \pi_0 (W^+ (k) [u]) = W (k) [u]$. Then the $W^+ (k)
  [u]$-module $\tmop{cofib} (m_{u^n})$ as a $W^+ (k)$-module is a free $W^+
  (k)$-module $\bigoplus_{j = 0}^{n - 1} u^j W^+ (k)$ of rank $n$, which
  admits an $\mathbb{E}_{\infty}$-$W^+ (k) [u]$-algebra structure. In
  particular, we have the cofiber sequence
  \[ W^+ (k) [u] \xrightarrow{m_u} W^+ (k) [u] \rightarrow W^+ (k) \]
  of $W^+ (k) [u]$-modules.
\end{proposition}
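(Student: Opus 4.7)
The plan is to compute the cofiber first as a $W^+(k)$-module, then realize the $\mathbb{E}_2$-$W^+(k)[u]$-algebra structure (in fact $\mathbb{E}_\infty$) via a pushout of commutative algebras, and finally identify the two constructions.

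By Proposition~\ref{prop:single-var-poly-homotopy-grps}, one identifies $W^+(k)[u] \simeq \bigoplus_{j \geq 0} u^j W^+(k)$ as a $W^+(k)$-module, under which $m_{u^n}$ becomes the shift sending the summand $u^j W^+(k)$ identically onto $u^{j+n} W^+(k)$. The cofiber of this shift is transparently the complementary direct sum $\bigoplus_{j = 0}^{n-1} u^j W^+(k)$, a free $W^+(k)$-module of rank $n$. This takes care of the module-level claim.

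For the $\mathbb{E}_\infty$-$W^+(k)[u]$-algebra structure, I would construct the cofiber as the pushout $C$ in $\tmop{CAlg}_{W^+(k)}$ of the diagram $W^+(k) \xleftarrow{t \mapsto 0} W^+(k)[t] \xrightarrow{t \mapsto u^n} W^+(k)[u]$. Then $C$ is naturally an $\mathbb{E}_\infty$-$W^+(k)[u]$-algebra whose underlying spectrum equals $W^+(k) \otimes_{W^+(k)[t]} W^+(k)[u]$. To identify $C$ with $\tmop{cofib}(m_{u^n})$, I would apply the module-level argument with $u$ replaced by $t$ and $n = 1$ to obtain a cofiber sequence of $W^+(k)[t]$-modules $W^+(k)[t] \xrightarrow{m_t} W^+(k)[t] \to W^+(k)$, then base-change along $W^+(k)[t] \to W^+(k)[u]$, using that relative tensor products preserve cofibers, to obtain $W^+(k)[u] \xrightarrow{m_{u^n}} W^+(k)[u] \to C$. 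The last cofiber sequence in the statement is then the special case $n = 1$.

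The only subtle point I anticipate is the upgrade of the auxiliary cofiber sequence $W^+(k)[t] \xrightarrow{m_t} W^+(k)[t] \to W^+(k)$ from a $W^+(k)$-module statement to a $W^+(k)[t]$-module statement; this amounts to recognizing that the fiber of the augmentation $W^+(k)[t] \to W^+(k)$ is equivalent to $W^+(k)[t]$ via $m_t$, which follows from the explicit free module decomposition and the $\mathbb{E}_\infty$-commutativity of $W^+(k)[t]$, but deserves to be spelled out.
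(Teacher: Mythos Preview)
Your proposal is correct. At the module level it is essentially the paper's argument in different clothing: the paper realizes the cofiber by taking the pushout of pointed discrete spaces
\[
\begin{array}{ccc}
\mathbb{N}_+ & \xrightarrow{\alpha_n} & \mathbb{N}_+\\
\downarrow & & \downarrow\\
\{\ast\} & \longrightarrow & (\mathbb{N}_{<n})_+
\end{array}
\]
(where $\alpha_n(m) = n+m$), applying $\Sigma^\infty$, and then base-changing along $\mathbb{S} \to W^+(k)$. Your direct computation from the free-module decomposition of Proposition~\ref{prop:single-var-poly-homotopy-grps} is the same calculation unwound, since recognizing $m_{u^n}$ as the summand shift already uses that the product on $\mathbb{S}[\mathbb{N}]$ is induced by addition on $\mathbb{N}$.

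Where you genuinely diverge is in producing the $\mathbb{E}_\infty$-$W^+(k)[u]$-algebra structure. The paper only remarks that ``the addition map $\mathbb{N} \to \mathbb{N}$, $m \mapsto n+m$ in fact defines a monoidal action,'' which as written secures the $W^+(k)[u]$-\emph{module} structure on the cofiber but leaves the algebra structure implicit. Your pushout in $\tmop{CAlg}_{W^+(k)}$ of $W^+(k) \leftarrow W^+(k)[t] \rightarrow W^+(k)[u]$ is a cleaner and more explicit route: it hands you the $\mathbb{E}_\infty$-algebra structure for free, and the identification with $\tmop{cofib}(m_{u^n})$ follows by base-changing the auxiliary sequence $W^+(k)[t] \xrightarrow{m_t} W^+(k)[t] \to W^+(k)$ along $t \mapsto u^n$. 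The point you flag about upgrading that auxiliary sequence to $W^+(k)[t]$-modules is indeed the only thing to check; since the augmentation $W^+(k)[t] \to W^+(k)$ is split by the unit, $t$ lifts to $\pi_0$ of the augmentation ideal, giving a $W^+(k)[t]$-linear map $W^+(k)[t] \to \tmop{fib}(\epsilon)$ which one verifies is an equivalence on underlying $W^+(k)$-modules.
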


\begin{proof}
  For any space $X \in \mathcal{S}$, we let $X_+ \in \mathcal{S}_{\ast}$
  denote the pointed discrete space $\{ \ast \} \cup X$. Especially,
  $\mathbb{N}_+ = \{ \ast \} \cup \mathbb{N}$ and $(\mathbb{N}_{< n})_+ = \{
  \ast \} \cup \mathbb{N}_{< n}$. The addition map $\mathbb{N} \rightarrow
  \mathbb{N}, m \mapsto n + m$ induces a map of pointed spaces $\alpha_n \of
  \mathbb{N}_+ \rightarrow \mathbb{N}_+$. Note that in the $\infty$-category
  $\mathcal{S}$ of spaces, we have a pushout diagram
  \[ \begin{array}{ccc}
       \mathbb{N}_+ & \xrightarrow{\alpha_n} & \mathbb{N}_+\\
       \longdownarrow &  & \longdownarrow\\
       \{ \ast \} & \longrightarrow & (\mathbb{N}_{< n})_+
     \end{array} \]
  to which we apply the functor $\Sigma^{\infty} \of \mathcal{S}_{\ast}
  \rightarrow \tmop{Sp}$, left adjoint of the functor $\Omega_{\ast}^{\infty}
  \of \tmop{Sp} \rightarrow S_{\ast}$ therefore preserving colimits, we get a
  cofiber sequence $\mathbb{S} [u] \xrightarrow{u^n} \mathbb{S} [u]
  \rightarrow \bigoplus_{j = 0}^{n - 1} u^j \mathbb{S}$. A further base change
  to $W^+ (k)$ gives rise to the result. In addition, the multiplication
  structure could be seen from the fact that the addition map $\mathbb{N}
  \rightarrow \mathbb{N}, m \mapsto n + m$ in fact defines a monoidal action.
\end{proof}

\begin{corollary}
  \label{cor:homotopy-grp-cofiber-mult}Let $n \in \mathbb{N}$ be a natural
  number. Let $m_{u^n} \of W^+ (k) [u] \rightarrow W^+ (k) [u]$ be the
  multiplication map. Then homotopy groups $\pi_{\ast} (\tmop{cofib}
  (m_{u^n}))$ of the cofiber as $\pi_{\ast} (W^+ (k))$ could be identified
  with $\pi_{\ast} (W^+ (k)) [u] / (u^n)$, and the long exact sequence of
  homotopy groups associated to the cofiber sequence $W^+ (k) [u]
  \xrightarrow{m_{u^n}} W^+ (k) [u] \rightarrow \tmop{cofib} (m_{u^n})$
  decomposes as short exact sequences, which assemble to a short exact
  sequence of graded $\pi_{\ast} (W^+ (k)) [u]$-modules:
  \[ 0 \rightarrow \pi_{\ast} (W^+ (k)) [u] \xrightarrow{u^n} \pi_{\ast} (W^+
     (k)) [u] \rightarrow \pi_{\ast} (W^+ (k)) [u] / (u^n) \rightarrow 0 \]
  Furthermore, this sequence is functorial in $n \in (\mathbb{N}, >)$.
\end{corollary}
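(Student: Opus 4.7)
The plan is to turn Proposition~\ref{prop:cofiber-mult} into a concrete computation of homotopy groups by running the long exact sequence, and then use the fact that the cofiber carries an $\mathbb{E}_\infty$-$W^+(k)[u]$-algebra structure to promote the answer to one of graded $\pi_\ast(W^+(k))[u]$-modules.

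First I would apply $\pi_\ast(-)$ to the cofiber sequence $W^+(k)[u] \xrightarrow{m_{u^n}} W^+(k)[u] \to \tmop{cofib}(m_{u^n})$ of Proposition~\ref{prop:cofiber-mult}. Using Proposition~\ref{prop:single-var-poly-homotopy-grps} to identify $\pi_\ast(W^+(k)[u]) \cong \pi_\ast(W^+(k))[u]$ as graded $\pi_\ast(W^+(k))$-algebras, the induced map $\pi_\ast(m_{u^n})$ becomes literal multiplication by $u^n$ in the polynomial ring $\pi_\ast(W^+(k))[u]$. Since $u$ is a polynomial indeterminate, it is a non-zero-divisor, so $\pi_\ast(m_{u^n})$ is injective in every degree. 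This forces the connecting homomorphisms in the long exact sequence to vanish, producing the short exact sequence of graded abelian groups
\[ 0 \to \pi_\ast(W^+(k))[u] \xrightarrow{u^n} \pi_\ast(W^+(k))[u] \to \pi_\ast(\tmop{cofib}(m_{u^n})) \to 0. \]

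Next I would upgrade this to a sequence of graded $\pi_\ast(W^+(k))[u]$-modules. The first two terms are $\pi_\ast(W^+(k))[u]$-modules in the obvious way, and the outer maps in the cofiber sequence are $W^+(k)[u]$-linear by construction, so their induced maps on homotopy are $\pi_\ast(W^+(k))[u]$-linear. The action of $u$ on $\pi_\ast(\tmop{cofib}(m_{u^n}))$ comes from the $\mathbb{E}_\infty$-$W^+(k)[u]$-algebra structure supplied by Proposition~\ref{prop:cofiber-mult}, and under the identification of the cofiber with $\bigoplus_{j=0}^{n-1} u^j W^+(k)$ as $W^+(k)$-modules it acts by the shift $u^j \mapsto u^{j+1}$ for $j<n-1$ and $u^{n-1}\mapsto 0$. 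Matching this against the right-hand term in the short exact sequence yields the canonical graded isomorphism $\pi_\ast(\tmop{cofib}(m_{u^n})) \cong \pi_\ast(W^+(k))[u]/(u^n)$ of $\pi_\ast(W^+(k))[u]$-modules.

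Finally, for functoriality in $n\in(\mathbb{N},>)$ I would exhibit, for each $n' \geq n$, the commutative square
\[ \begin{array}{ccc} W^+(k)[u] & \xrightarrow{m_{u^{n'}}} & W^+(k)[u] \\ \downarrow m_{u^{n'-n}} & & \parallel \\ W^+(k)[u] & \xrightarrow{m_{u^n}} & W^+(k)[u] \end{array} \]
which induces a $W^+(k)[u]$-linear map of cofibers $\tmop{cofib}(m_{u^{n'}}) \to \tmop{cofib}(m_{u^n})$ corresponding under the above identification to the canonical projection $\pi_\ast(W^+(k))[u]/(u^{n'}) \twoheadrightarrow \pi_\ast(W^+(k))[u]/(u^n)$. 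Taking homotopy groups produces a morphism of short exact sequences, giving the claimed functoriality.

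The main obstacle is not the exactness — injectivity of $m_{u^n}$ on polynomial rings is immediate — but rather checking that the $u$-action on the cofiber which one reads off from the geometric model $\bigoplus_{j=0}^{n-1} u^j W^+(k)$ agrees with the action dictated by the algebra structure of Proposition~\ref{prop:cofiber-mult}. This is where one must be careful, but it reduces to tracking the pointed-space pushout square producing the cofiber and using that the monoid action $\mathbb{N}\to\mathbb{N}$, $m\mapsto 1+m$, descends compatibly to the truncation.
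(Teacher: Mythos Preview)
Your proposal is correct and matches the paper's intended argument, which leaves the corollary unproven as an immediate consequence of Proposition~\ref{prop:cofiber-mult} and Proposition~\ref{prop:single-var-poly-homotopy-grps}. Your concern about verifying the $u$-action on the cofiber is unnecessary: the cofiber sequence lives in $\tmop{Mod}_{W^+(k)[u]}$, so the surjection $W^+(k)[u]\to\tmop{cofib}(m_{u^n})$ is already $W^+(k)[u]$-linear, and this alone forces $\pi_\ast(\tmop{cofib}(m_{u^n}))\cong\pi_\ast(W^+(k))[u]/(u^n)$ as $\pi_\ast(W^+(k))[u]$-modules without any appeal to the geometric model or the algebra structure.
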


\begin{proposition}
  \label{prop:single-var-power-series-homotopy-grps}The
  $\mathbb{E}_{\infty}$-$W^+ (k)$-algebra $W^+ (k) [[u]]$ is connective. The
  zeroth homotopy group of $\pi_0 (W^+ (k) [[u]])$ is isomorphic to the
  $(u)$-adic completion of the polynomial $W (k)$-algebra $W (k) [u]$, that
  is, the formal power series $W (k)$-algebra $W (k) [[u]]$, as $W
  (k)$-algebras.
\end{proposition}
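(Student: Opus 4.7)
\begin{proof*}{Proof proposal for Proposition~\ref{prop:single-var-power-series-homotopy-grps}}
The plan is to use the concrete cofiber description of $W^+(k)[u]/u^n$ from Corollary~\ref{cor:homotopy-grp-cofiber-mult} together with the Milnor exact sequence to read off the homotopy groups of the $(u)$-completion. By definition, $W^+(k)[[u]]$ is the derived $(u)$-completion of $W^+(k)[u]$, which we identify with the homotopy limit
\[ W^+(k)[[u]] \simeq \lim_{n \in (\mathbb{N}, >)} \tmop{cofib}(m_{u^n}) \simeq \lim_{n} W^+(k)[u]/u^n, \]
with transition maps the evident quotient maps coming from the compatibility of the $m_{u^n}$.

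First I would establish connectivity. By Corollary~\ref{cor:homotopy-grp-cofiber-mult}, for each $n$ we have $\pi_i(W^+(k)[u]/u^n) \cong \pi_i(W^+(k))[u]/(u^n)$, which vanishes for $i < 0$ since $W^+(k)$ is connective (Corollary~\ref{cor:homot-grps-sph-witt}). Hence every $W^+(k)[u]/u^n$ is connective. Applying the Milnor exact sequence for the tower, for each $i \in \mathbb{Z}$ we obtain
\[ 0 \longrightarrow {\lim}^1_{n} \pi_{i+1}(W^+(k)[u]/u^n) \longrightarrow \pi_i(W^+(k)[[u]]) \longrightarrow \lim_{n} \pi_i(W^+(k)[u]/u^n) \longrightarrow 0. \]
For $i < 0$ both outer terms vanish (the $\lim^1$ because, by Corollary~\ref{cor:homotopy-grp-cofiber-mult}, the transition maps $\pi_0(W^+(k)[u]/u^{n+1}) \to \pi_0(W^+(k)[u]/u^n)$ and similarly in higher degrees are surjective, so the Mittag-Leffler condition holds), yielding connectivity of $W^+(k)[[u]]$.

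Next I would compute $\pi_0$ using the same Milnor sequence at $i = 0$. The $\lim^1$ contribution coming from $\pi_1(W^+(k)[u]/u^n) \cong \pi_1(W^+(k))[u]/(u^n)$ vanishes by Mittag-Leffler (surjectivity of transition maps, again from Corollary~\ref{cor:homotopy-grp-cofiber-mult}), so
\[ \pi_0(W^+(k)[[u]]) \;\cong\; \lim_{n} \pi_0(W^+(k)[u]/u^n) \;\cong\; \lim_{n} W(k)[u]/(u^n) \;=\; W(k)[[u]], \]
the last equality being the classical formal power series ring. To upgrade this to an isomorphism of $W(k)$-algebras, I would observe that each quotient $W^+(k)[u]/u^n$ inherits an $\mathbb{E}_\infty$-$W^+(k)[u]$-algebra structure by the monoidal-action description in Proposition~\ref{prop:cofiber-mult}, the transition maps are $\mathbb{E}_\infty$-algebra maps, and hence the limit carries a compatible multiplication whose induced map on $\pi_0$ coincides with the inverse-limit ring structure on $W(k)[[u]]$.

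The only mildly subtle step is checking that the derived $(u)$-completion really agrees with the sequential limit $\lim_n W^+(k)[u]/u^n$; this is standard for a principal adic ideal generated by a non-zero-divisor, and here it is ensured by the short exact sequences of Corollary~\ref{cor:homotopy-grp-cofiber-mult}, which show that $m_{u^n}$ is injective on all homotopy groups, so the pro-system $\{W^+(k)[u]/u^n\}$ correctly models the completion. Everything else is an application of the Milnor sequence plus Mittag-Leffler.
\end{proof*}
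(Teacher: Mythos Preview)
Your proposal is correct and follows essentially the same route as the paper: rewrite the derived $(u)$-completion as the sequential limit of the cofibers $W^+(k)[u]/u^n$, invoke Corollary~\ref{cor:homotopy-grp-cofiber-mult} to identify the homotopy groups of each stage, and then use the Milnor sequence together with Mittag--Leffler (surjectivity of the transition maps) to read off $\pi_*$. The paper itself flags the identification of the \emph{algebra} structure on $\pi_0$ as incomplete, and your final paragraph does not really close that gap either; also, your last remark slightly misplaces the role of injectivity of $m_{u^n}$---the identification of the derived completion with $\lim_n \tmop{cofib}(m_{u^n})$ holds unconditionally (cofibers commute with limits in a stable category), while injectivity is what makes Corollary~\ref{cor:homotopy-grp-cofiber-mult} give the expected $\pi_*$ of each stage.
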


Our proof is incomplete: we only identify the $W (k)$-module structures on
homotopy groups. A formal identification of algebra structures would require
more rudiments about the symmetric monoidal structure on the completion
functor than we know.

\begin{proof}
  We reinterpret Proposition~\ref{prop:calc-compl} as follows: since the limit
  functor is exact, it commutes with cofibers, therefore we can rewrite $W^+
  (k) [[u]] = (W^+ (k) [u])_{(u)}^{\wedge}$ as the limit of the tower
  \[ \cdots \rightarrow \tmop{cofib} \left( W^+ (k) [u] \xrightarrow{u^2} W^+
     (k) [u] \right) \rightarrow \tmop{cofib} \left( W^+ (k) [u]
     \xrightarrow{u} W^+ (k) [u] \right) \]
  After passage to homotopy groups, by
  Corollary~\ref{cor:homotopy-grp-cofiber-mult}, we get the tower of graded
  $\pi_{\ast} (W^+ (k)) [u]$-modules
  \begin{equation}
    \cdots \rightarrow \pi_{\ast} (W^+ (k)) [u] / (u^n) \rightarrow \cdots
    \rightarrow \pi_{\ast} (W^+ (k)) [u] / (u^2) \rightarrow \pi_{\ast} (W^+
    (k)) [u] / (u) \label{eq:tower-homotopy}
  \end{equation}
  which is degree-wise a tower of surjective maps. It follows from Milnor's
  sequence that the graded $\pi_{\ast} (W^+ (k)) [u]$-module ${\pi_{\ast}}_{}
  (W^+ (k) [[u]])$ is isomorphic to the (ordinary) inverse limit of the tower
  \eqref{eq:tower-homotopy}, that is, $\pi_{\ast} (W^+ (k)) [[u]]$. Take $\ast
  = 0$, we get the result.
\end{proof}

The following lemma serves as a key tool in our proof:

\begin{lemma}
  \label{lem:compl-mod-u}Let $M$ be a $W^+ (k) [u]$-(or $W^+ (k)
  [[u]]$-)module (spectrum). If the spectrum $W^+ (k) \otimes_{W^+ (k) [u]} M$
  (or $W^+ (k) \otimes_{W^+ (k) [[u]]} M$ respectively) is contractible, then
  so is the $(u)$-completion of the spectrum $M$. In particular, if
  furthermore $W^+ (k) [u]$-(or $W^+ (k) [[u]]$-)module $M$ is assumed to be
  $(u)$-complete, then the spectrum $M$ is contractible.
\end{lemma}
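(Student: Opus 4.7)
The plan is to reinterpret the hypothesis as the vanishing of the cofiber of multiplication by $u$ on $M$, propagate this vanishing to every power $u^n$, and then recover the $(u)$-completion as a sequential limit of these cofibers. The starting point is the cofiber sequence $W^+(k)[u] \xrightarrow{m_u} W^+(k)[u] \to W^+(k)$ of Proposition~\ref{prop:cofiber-mult}; since the relative tensor product $- \otimes_{W^+(k)[u]} M$ preserves cofiber sequences, tensoring with $M$ yields a cofiber sequence
\[ M \xrightarrow{u} M \to W^+(k) \otimes_{W^+(k)[u]} M \]
of spectra. Thus the vanishing hypothesis forces $u \colon M \to M$ to be an equivalence.

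Once $u$ acts invertibly on $M$, so does $u^n$ for every $n \geq 1$, and consequently $\tmop{cofib}(M \xrightarrow{u^n} M) \simeq 0$ for all $n$. The $(u)$-completion of $M$ can be computed as the sequential limit $\lim_n \tmop{cofib}(M \xrightarrow{u^n} M)$, exactly as in Proposition~\ref{prop:calc-compl} and as already invoked in the proof of Proposition~\ref{prop:single-var-power-series-homotopy-grps}; hence it is contractible. The ``in particular'' assertion follows immediately, since a $(u)$-complete spectrum is by definition equivalent to its own $(u)$-completion.

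For a $W^+(k)[[u]]$-module $M$, I would reduce to the previous case by restriction of scalars along the canonical map $W^+(k)[u] \to W^+(k)[[u]]$, after establishing the natural equivalence $W^+(k) \otimes_{W^+(k)[[u]]} M \simeq W^+(k) \otimes_{W^+(k)[u]} M$. This in turn rests on the auxiliary identification $W^+(k) \otimes_{W^+(k)[u]} W^+(k)[[u]] \simeq W^+(k)$, which is the only mildly technical point: it can be obtained by tensoring the cofiber sequence of Proposition~\ref{prop:cofiber-mult} with $W^+(k)[[u]]$ and checking, by means of Proposition~\ref{prop:single-var-power-series-homotopy-grps}, that the resulting cofiber of multiplication by $u$ on $W^+(k)[[u]]$ is connective with homotopy groups $\pi_\ast(W^+(k))$. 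I do not anticipate a genuine obstacle beyond this routine verification; the essential content is the identification of the base change with $M/u$ via the Koszul-type cofiber sequence.
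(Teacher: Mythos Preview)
Your argument is correct and follows essentially the same route as the paper: tensor the cofiber sequence $W^+(k)[u]\xrightarrow{m_u}W^+(k)[u]\to W^+(k)$ with $M$ to see that $u$ acts invertibly, and then conclude that the $(u)$-completion vanishes. The only notable difference is in the power-series case: rather than reducing to the polynomial case via the identification $W^+(k)\otimes_{W^+(k)[u]}W^+(k)[[u]]\simeq W^+(k)$ and a homotopy-group computation, the paper obtains the cofiber sequence $W^+(k)[[u]]\xrightarrow{m_u}W^+(k)[[u]]\to W^+(k)$ directly by applying the $(u)$-completion functor to the original sequence and observing that $W^+(k)$ is already $(u)$-complete (since $u$ acts nilpotently on it), which is a slightly cleaner way to reach the same conclusion.
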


\begin{proof}
  We first assume that the spectrum $W^+ (k) \otimes_{W^+ (k) [u]} M$ is
  contractible. In this case, we apply the exact functor $- \otimes_{W^+ (k)
  [u]} M$ to the cofiber sequence
  \begin{equation}
    W^+ (k) [u] \xrightarrow{m_u} W^+ (k) [u] \rightarrow W^+ (k)
    \label{eq:cofiber-poly}
  \end{equation}
  indicated in Proposition~\ref{prop:cofiber-mult} obtaining that the
  base-changed map $M \xrightarrow{m_u \otimes_{W^+ (k) [u]} M} M$ is an
  equivalence of spectra. Note that this map is just the multiplication map,
  denoted by $m_{M, u}$. Now we look at Proposition~\ref{prop:calc-compl}: the
  $(u)$-completion of the $W^+ (k) [u]$-module $M$ is the cofiber of the
  canonical map $T (M) \rightarrow M$, where $T (M)$ is the limit of the tower
  \[ \cdots \xrightarrow{m_{M, u}} M \xrightarrow{m_{M, u}} M
     \xrightarrow{m_{M, u}} M \]
  Since all maps in the tower are equivalences of spectra, we deduce that the
  canonical map $T (M) \rightarrow M$ is an equivalence of spectra, which
  implies that the $(u)$-completion of the $W^+ (k) [u]$-module $M$ is
  contractible. In particular, the $W^+ (k) [u]$-module $M$ is assumed to be
  $(u)$-complete, therefore the spectrum $M$ is contractible.
  
  If, on the other hand, $W^+ (k) \otimes_{W^+ (k) [[u]]} M$ is contractible,
  then to adopt the proof above, it suffices to establish the cofiber sequence
  \begin{equation}
    W^+ (k) [[u]] \xrightarrow{m_u} W^+ (k) [[u]] \rightarrow W^+ (k)
    \label{eq:cofiber-power-series}
  \end{equation}
  We apply the $(u)$-complete functor to the cofiber sequence
  \eqref{eq:cofiber-poly}, and note that the $W^+ (k) [u]$-module $W^+ (k)$ is
  $(u)$-nilpotent (in fact, multiplying $u$ is the zero map on $W^+ (k)$),
  therefore $W^+ (k)$ is $(u)$-complete by Corollary~\ref{cor:nil-compl},
  which leads to the cofiber sequence \eqref{eq:cofiber-power-series}. The
  rest of the proof is same as before.
\end{proof}

\subsection{The Breuil-Kisin case}

As before, we fix a complete DVR $(A, \mathfrak{m})$ of mixed characteristics
$(0, p)$ with residue field $k$ being perfect, absolute ramification index
$e$, a uniformizer $\varpi \in \mathfrak{m}$ and an Eisenstein $W
(k)$-polynomial $E (u) \in W (k) [u]$ which induces an isomorphism $W (k) [u]
/ (E (u)) \xrightarrow{\sim} A, u \mapsto \varpi$ as in
Proposition~\ref{prop:struct-thm-cdvr-mixed}. As in Remark~\ref{cons:f_p} and
Remark~\ref{cons:f_R}, $1 - E (u) \in W (k) [[u]] = \pi_1 (\tmop{BGL}_1 (W^+
(k) [[u]]))$ gives rise to a map $f_E \of \Omega^2 S^3 \rightarrow
\tmop{BGL}_1 (W^+ (k) [[u]])$. The proof of Lemma~\ref{lem:pi0Mf} results in
the following analogue:

\begin{lemma}
  The zeroth homotopy group of the $\mathbb{E}_2$-Thom spectrum $M f_E$
  associated to the map $f_E$ is isomorphic to the $W (k)$-algebra $W (k)
  [[u]] / (E (u)) \cong W (k) [u] / (E (u)) \cong A$.
\end{lemma}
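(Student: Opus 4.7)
The plan is to imitate the proof of Lemma~\ref{lem:pi0Mf} almost verbatim in a first step, and then in a separate step identify the resulting quotient $W(k)[[u]]/(E(u))$ with the DVR $A$.

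First, since $Mf_E$ is the colimit in $\tmop{Mod}_{W^+(k)[[u]]}$ of a diagram parametrized by the connected space $\Omega^2 S^3$, it is connective, so its zeroth homotopy group can be computed as homotopy coinvariants. Passing to loops, we have
\[
\pi_0(Mf_E) \cong \pi_0\bigl(W^+(k)[[u]]\bigr)_{\pi_0(\Omega^3 S^3)},
\]
where $\pi_0(\Omega^3 S^3) \cong \mathbb{Z}$ acts on $\pi_0(W^+(k)[[u]]) \cong W(k)[[u]]$ (the latter identification provided by Proposition~\ref{prop:single-var-power-series-homotopy-grps}) by multiplication by the invertible element $1 - E(u) \in \tmop{GL}_1(W(k)[[u]])$ used to define $f_E$. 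This yields immediately $\pi_0(Mf_E) \cong W(k)[[u]]/(E(u))$ as a $W(k)$-algebra.

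Second, I will show that the inclusion $W(k)[u] \hookrightarrow W(k)[[u]]$ descends to an isomorphism $W(k)[u]/(E(u)) \xrightarrow{\sim} W(k)[[u]]/(E(u))$. The key input is Weierstrass division: since $E(u)$ is monic (and in fact Eisenstein, so $E(u) \equiv u^e \pmod p$ is a distinguished polynomial in the power series ring $W(k)[[u]]$), any $f \in W(k)[[u]]$ decomposes uniquely as $f = qE + r$ with $q \in W(k)[[u]]$ and $r \in W(k)[u]$ of degree $< e$. The same division is trivially available in $W(k)[u]$ because $E$ is monic. Consequently both sides are freely generated as $W(k)$-modules by $\{1,u,\ldots,u^{e-1}\}$, and the map between them is a bijection.

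Combining these two steps with Proposition~\ref{prop:struct-thm-cdvr-mixed} gives $\pi_0(Mf_E) \cong W(k)[u]/(E(u)) \cong A$, as claimed. The only technical point is the Weierstrass-division step, which is standard for monic polynomials in power series rings over any coefficient ring, so I anticipate no real obstacle beyond careful bookkeeping; the rest is a direct transcription of the perfectoid argument with $\xi$ replaced by $E(u)$.
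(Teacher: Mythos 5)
Your proof is correct and takes essentially the paper's approach: the paper's entire proof of this lemma is the remark that the argument of Lemma~\ref{lem:pi0Mf} carries over verbatim (your first step), with the ring-theoretic identification $W(k)[[u]]/(E(u)) \cong W(k)[u]/(E(u)) \cong A$ left implicit; your second step just makes that identification explicit. One caveat on that step: your closing claim that division is ``standard for monic polynomials in power series rings over any coefficient ring'' is false as stated --- for example $u-1$ is monic but invertible in $W(k)[[u]]$, so $W(k)[[u]]/(u-1) = 0 \neq W(k) = W(k)[u]/(u-1)$; what you actually need, and do have (as your parenthetical notes), is Weierstrass division for the \emph{distinguished} polynomial $E(u) \equiv u^e \pmod{p}$ over the $p$-adically complete ring $W(k)$, so the argument stands once that remark is corrected.
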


The $W (k) [u]$-module structure on $A$ gives rise to a $W^+ (k) [u]$-module
structure on $H A$. Since $A$ is $\mathfrak{m}= (\varpi)$-adically complete,
the $W (k) [u]$-module structure on $A$ also gives rise to a $W (k)
[[u]]$-module structure on $A$ and consequently a $W^+ (k) [[u]]$-module
structure on $H A$. We readily [details needed] check that these structures
are compatible, in the sense that the $W^+ (k) [u]$-module structure on $H A$
coincides with the image of the $W^+ (k) [[u]]$-module $H A$ under the
forgetful functor $\tmop{Mod}_{W^+ (k) [[u]]} \rightarrow \tmop{Mod}_{W^+ (k)
[u]}$. Matthew Morrow proposed the following analogue of the Hopkins-Mahowald
theorem:

\begin{theorem}
  \label{thm:cdvr-thom}The truncation map $t_E \of M f_E \rightarrow H \pi_0
  (M f_E) \cong H A$ of $\mathbb{E}_2$-$W^+ (k) [[u]]$-algebras is an
  equivalence of spectrum. Thus the Eilenberg-Maclane spectrum $H A$ is the
  $\mathbb{E}_2$-Thom spectrum $M f_E$ associated to the map $f_E \of \Omega^2
  S^3 \rightarrow \tmop{BGL}_1 (W^+ (k) [[u]])$.
\end{theorem}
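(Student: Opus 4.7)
The plan is to mirror the proof of Theorem~\ref{thm:main}, with $W^+ (k) [[u]]$ replacing $W^+ (R^{\flat})$ and with the role of the special fibre played by reducing modulo $u$ (rather than modulo $\xi$). The crucial observation is that under $u \mapsto 0$, the Eisenstein polynomial $E (u)$ specialises to $a_0$, which equals $p$ times a unit $v \in \tmop{GL}_1 (W (k))$; hence $1 - E (u)$ maps to $1 - pv$, and the Hopkins--Mahowald theorem for the perfect $\mathbb{F}_p$-algebra $k$ (Theorem~\ref{thm:perf-thom}) identifies the base-changed Thom spectrum with $H k$. The machinery of Lemma~\ref{lem:compl-mod-u} plays the role that $p$-completeness plays in the perfectoid proof, and since $k$ is already perfect no analogue of the second base change to $H R^{\flat}$ is needed.

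First, one shows finiteness and completeness: both $M f_E$ and $H A$ are almost perfect $W^+ (k) [[u]]$-modules, and both are $(u)$-complete. For $M f_E$, the argument of Lemma~\ref{lem:mf-aperf} carries over verbatim, realising $M f_E$ as a geometric realisation of a simplicial object of free $W^+ (k) [[u]]$-modules of finite rank via Proposition~\ref{prop:finite-Kan-double-loop}. For $H A$, one uses the cofibre sequence
\[ H W (k) [[u]] \xrightarrow{m_{E (u)}} H W (k) [[u]] \longrightarrow H A \]
of $W^+ (k) [[u]]$-modules (valid because $E (u)$ is a non-zero-divisor in $W (k) [[u]]$), reducing to showing that $H W (k) [[u]] \simeq H \pi_0 (W^+ (k) [[u]])$ is almost perfect over $W^+ (k) [[u]]$, in the spirit of Lemma~\ref{lem:witt-aperf}. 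The $(u)$-completeness of $M f_E$ and $H A$ then follows from almost perfectness over the $(u)$-complete $\mathbb{E}_{\infty}$-ring $W^+ (k) [[u]]$.

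The second step is to base change $t_E$ along $W^+ (k) [[u]] \to W^+ (k)$, which by equation~\eqref{eq:cofiber-power-series} identifies with $\tmop{cofib} (m_u)$. The argument of Lemma~\ref{lem:eq-thom-base-change} produces a canonical equivalence
\[ W^+ (k) \otimes_{W^+ (k) [[u]]} M f_E \simeq M f_{k, pv}, \]
since symmetric monoidal base change sends $1 - E (u)$ to $1 - a_0 = 1 - pv$. On the target side, the cofibre sequence above base-changes to $H W (k) \xrightarrow{a_0} H W (k) \to H k$, using that $u$ acts on $A$ as the uniformiser $\varpi$ (so is a non-zero-divisor) and that $a_0$ and $p$ generate the same ideal of $W (k)$. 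Hence $W^+ (k) \otimes_{W^+ (k) [[u]]} H A \simeq H k$. An argument parallel to Lemma~\ref{lem:eq-base-change-sphwitt-kappa} then identifies $W^+ (k) \otimes_{W^+ (k) [[u]]} t_E$ with the Postnikov truncation $t_{k, pv} : M f_{k, pv} \to H k$, which is an equivalence by Theorem~\ref{thm:perf-thom}.

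Finally, one applies Lemma~\ref{lem:compl-mod-u} to the cofibre $C \assign \tmop{cofib} (t_E)$: it is almost perfect and $(u)$-complete over $W^+ (k) [[u]]$ (inherited from $M f_E$ and $H A$ by stability of these properties under cofibres), and $W^+ (k) \otimes_{W^+ (k) [[u]]} C \simeq 0$ by the previous step, so $C \simeq 0$ and $t_E$ is an equivalence. The principal technical obstacle is the almost perfectness of $H W (k) [[u]]$ over $W^+ (k) [[u]]$, which requires adapting the coherence-based inputs of Section~\ref{sec:ha} from $W^+ (k)$ to its formal power series extension in a manner compatible with the base change $W^+ (k) [[u]] \to W^+ (k)$; this is the analogue of Corollary~\ref{cor:hr-aperf} in the present setting, and the step where the formal-power-series nature of the base ring, as opposed to the perfect case, genuinely complicates the picture.
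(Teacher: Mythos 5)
Your overall skeleton is exactly the paper's: base change $t_E$ along $W^+(k)[[u]] \to W^+(k)$, identify the result with the truncation $t_{k,a_0}$ (your $a_0 = pv$), invoke Theorem~\ref{thm:perf-thom} via Lemma~\ref{lem:eq-postnikov-perf}, identify $W^+(k)\otimes_{W^+(k)[[u]]}HA \simeq Hk$ through the cofiber sequence $W^+(k)[[u]] \xrightarrow{m_u} W^+(k)[[u]] \to W^+(k)$ and the fact that $u$ acts on $A$ by the non-zero-divisor $\varpi$, and conclude by $(u)$-completeness of $\tmop{cofib}(t_E)$ via Lemma~\ref{lem:compl-mod-u}. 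The $(u)$-completeness of $Mf_E$ via the finite Kan model of $\Omega^2S^3$, almost perfectness, and Proposition~\ref{prop:aperf-complete} is also the paper's argument.

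The one place you deviate is where you leave a hole. You obtain $(u)$-completeness of $HA$ by first proving $HA$ is almost perfect over $W^+(k)[[u]]$, which forces you to show $HW(k)[[u]] \simeq H\pi_0(W^+(k)[[u]])$ is almost perfect over $W^+(k)[[u]]$ --- the step you yourself call ``the principal technical obstacle'' and do not carry out. This detour is unnecessary, and the paper avoids it entirely: $A$ is $\mathfrak{m}=(\varpi)$-adically complete by hypothesis, so $HA$ is $(u)$-complete directly by Proposition~\ref{prop:I-adic}/Theorem~\ref{thm:cplhomot}, exactly as in Lemma~\ref{lem:HR-p-compl}. Moreover, unlike the perfectoid argument, where the Nakayama-type Lemma~\ref{lem:nakayama-like} genuinely needs finitely generated homotopy groups, the concluding Lemma~\ref{lem:compl-mod-u} here requires no finiteness hypothesis whatsoever --- only that $\tmop{cofib}(t_E)$ is $(u)$-complete and dies after base change to $W^+(k)$ --- so almost perfectness of $HA$ and of the cofiber plays no role in the conclusion. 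Your obstacle is not fatal in this setting (here $k$ is a perfect field, so $W(k)[[u]]$ is Noetherian, and combining Proposition~\ref{prop:single-var-power-series-homotopy-grps} with Corollary~\ref{cor:homotopy-grp-cofiber-mult} identifies all $\pi_n(W^+(k)[[u]]) \cong \pi_n(W^+(k))[[u]]$, giving coherence in the sense of Definition~\ref{def:coherentE1} and hence almost perfectness of $H\pi_0$ by Corollary~\ref{cor:coherent_aperf}), but as written your proof is incomplete at that point, and the missing work buys you nothing: drop the almost-perfectness claims for $HA$ and for $\tmop{cofib}(t_E)$, keep them only for $Mf_E$, and the argument closes along the paper's lines.
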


\begin{corollary}[see {\cite[Remark~3.4]{Krause2019}}]
  \label{cor:HA-base-change-free}The $\mathbb{E}_2$-$H A$-algebra $H A
  \otimes_{W^+ (k) [[u]]} H A$ is a free $\mathbb{E}_2$-$H A$-algebra on a
  single generator in degree 1.
\end{corollary}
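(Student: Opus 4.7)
The plan is to combine Theorem~\ref{thm:cdvr-thom} with base change for $\mathbb{E}_2$-Thom spectra and the universal property underlying Lemma~\ref{lem:thom-universal}. Since $\tmop{BGL}_1 (-)$ is compatible with base change along maps of $\mathbb{E}_{\infty}$-rings and $H A \otimes_{W^+ (k) [[u]]} -$ commutes with colimits, one obtains an equivalence
\[ H A \otimes_{W^+ (k) [[u]]} H A \;\simeq\; H A \otimes_{W^+ (k) [[u]]} M f_E \;\simeq\; M f'_E \]
of $\mathbb{E}_2$-$H A$-algebras, where $f'_E \of \Omega^2 S^3 \to \tmop{BGL}_1 (H A)$ is the composition of $f_E$ with the map induced by $W^+ (k) [[u]] \to H A$.

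Next I would show that $f'_E$ is null as an $\mathbb{E}_2$-map. Using that $\Omega^2 S^3 \simeq \Omega^2 \Sigma^2 S^1$ is the free $\mathbb{E}_2$-group on $S^1$, $\mathbb{E}_2$-maps $\Omega^2 S^3 \to \tmop{BGL}_1 (H A)$ correspond (via the $\Sigma^2 \dashv \Omega^2$ adjunction) to pointed maps $S^1 \to \tmop{BGL}_1 (H A)$, whose set of components is $\pi_1 (\tmop{BGL}_1 (H A)) = \tmop{GL}_1 (A)$. Under this correspondence $f_E$ goes to $1 - E (u) \in \tmop{GL}_1 (W (k) [[u]])$, so $f'_E$ goes to its image in $\tmop{GL}_1 (A)$. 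By Proposition~\ref{prop:struct-thm-cdvr-mixed} the structure map $W (k) [[u]] \to A$ sends $u \mapsto \varpi$ and therefore $E (u) \mapsto E (\varpi) = 0$, so $1 - E (u) \mapsto 1 \in \tmop{GL}_1 (A)$, which is the basepoint. Hence $f'_E$ is $\mathbb{E}_2$-null-homotopic.

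To finish, I would invoke the universal property of $\mathbb{E}_2$-Thom spectra underlying Lemma~\ref{lem:thom-universal}, now applied to $H A$ with the element $0$ (since $1 - 0 = 1$): for any $\mathbb{E}_2$-$H A$-algebra $B$,
\[ \tmop{Map}_{\tmop{Alg}_{H A}^{\mathbb{E}_2}} (M f'_E, B) \;\simeq\; \Omega \, \tmop{Map}_{\tmop{Mod}_{H A}} (H A, B) \;\simeq\; \tmop{Map}_{\tmop{Mod}_{H A}} (H A [1], B), \]
the first equivalence being the identification of the space of null-homotopies of the zero map $H A \to B$ in $H A$-modules. By the free--forgetful adjunction between $\tmop{Mod}_{H A}$ and $\tmop{Alg}_{H A}^{\mathbb{E}_2}$ together with the Yoneda lemma, this exhibits $M f'_E$ as the free $\mathbb{E}_2$-$H A$-algebra on $H A [1]$, i.e.\ on a single generator in degree~$1$. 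The main obstacle is purely a matter of bookkeeping: correctly propagating the $\mathbb{E}_2$-structure through the base change and the universal property, and verifying that the adaptation of Lemma~\ref{lem:thom-universal} from the pair $(W^+ (k), p u)$ to $(H A, 0)$ is valid; both are formal consequences of the framework of~\cite{Antolin-Camarena2014}.
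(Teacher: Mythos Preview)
Your argument is correct and follows essentially the same route as the paper: invoke Theorem~\ref{thm:cdvr-thom}, base change the Thom spectrum along $W^+(k)[[u]]\to H A$, and observe that the resulting map $f'_E$ is determined by the image $1-E(\varpi)=1\in\tmop{GL}_1(A)$, hence is $\mathbb{E}_2$-null. The only difference is in the final identification: the paper computes the Thom spectrum of the constant map directly as the colimit of a constant diagram, obtaining $H A\otimes\Sigma^\infty_+\Omega^2 S^3$ and recognizing this as the free $\mathbb{E}_2$-$H A$-algebra on a degree-$1$ generator, whereas you instead use the universal property of Lemma~\ref{lem:thom-universal} (with the element $0$) to identify $Mf'_E$ via its mapping spaces. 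Both arguments are equivalent and equally short; yours has the minor advantage of sidestepping the identification $\Sigma^\infty_+\Omega^2 S^3\simeq\text{free }\mathbb{E}_2\text{-algebra on }\mathbb{S}[1]$, while the paper's is slightly more self-contained since it does not need to reinvoke the framework of~\cite{Antolin-Camarena2014}.
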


\begin{proof}
  The strategy is already covered in the proof of
  Lemma~\ref{lem:perf-thom-special} and Lemma~\ref{lem:eq-thom-base-change}.
  Since this pattern will appear again soon, we find it beneficial to present
  again. Let's recall that the $\mathbb{E}_2$-Thom spectrum $M f_E$ is the
  colimit of the composite functor
  \[ \Omega^2 S^3 \xrightarrow{f_E} \tmop{BGL}_1 (W^+ (k) [[u]]) \rightarrow
     \tmop{Mod}_{W^+ (k) [[u]]} \]
  which by abuse of notation will be still denoted by $f_E$.
  
  Since the base change functor $H A \otimes_{W^+ (k) [[u]]} - \of
  \tmop{Mod}_{W^+ (k) [[u]]} \rightarrow \tmop{Mod}_{H A}$ is a left adjoint,
  it commutes with colimits, we deduce that $H A \otimes_{W^+ (k) [[u]]} M f_E
  \simeq M (f_E \otimes_{W^+ (k) [[u]]} H A)$, where $f_E \otimes_{W^+ (k)
  [[u]]} H A$ is the map $\Omega^2 S^3 \rightarrow \tmop{BGL}_1 (H A)$.
  
  As in the proof of Lemma~\ref{lem:perf-thom-special}, we can identify map as
  follows: we pick the image of $1 - E (u) \in \tmop{GL}_1 (W (k) [[u]])$
  under the map $\tmop{GL}_1 (W (k) [[u]]) \rightarrow \tmop{GL}_1 (A)$, that
  is, the element $1 \in \tmop{GL}_1 (A) \cong \pi_1 (\tmop{BGL}_1 (H A))$,
  which gives rise to the constant map $S^1 \rightarrow \tmop{BGL}_1 (H A)$
  and consequently the constant map $f_A \of \Omega^2 S^3 \rightarrow
  \tmop{BGL}_1 (H A)$, as in Remark~\ref{cons:f_p} and Remark~\ref{cons:f_R}.
  
  In conclusion, the map $f_E \otimes_{W^+ (k)} H A \of \Omega^2 S^3
  \rightarrow \tmop{BGL}_1 (H A)$ coincides with the constant map $f_A$, and
  the $\mathbb{E}_2$-Thom spectrum $M f_A$ is thus the colimit of a constant
  map, which evaluates to $H A \otimes \Omega^2 S^3$, the free
  $\mathbb{E}_2$-$H A$-algebra on a single generator in degree 1.
\end{proof}

Recall that $E (u) \in W (k) [u]$ is an Eisenstein $W (k)$-polynomial. Let
$a_0$ denote the constant term of $E (u)$. By assumption, $p \divides a_0$ but
$p^2 \ndivides a_0$. Let $a_0 = pb_0$ where $b_0 \in W (k)$. Since $p$ is not
a zero-divisor in $W (k)$, we have $p \ndivides b_0$, which implies that the
image of $b_0$ in $W (k) / p \cong k$ is invertible since $k$ is a field. Now
since $W (k)$ is $p$-adically complete, we have $b_0 \in \tmop{GL}_1 (W (k))$.

The strategy to prove Theorem~\ref{thm:cdvr-thom} is similar to the approach
to attack Theorem~\ref{thm:main}. We first show that the base change of the
truncation map $t_E$ along the map $W^+ (k) [[u]] \rightarrow W^+ (k)$
coincides with the truncation map $t_{k, a_0}$, then it follows from
Lemma~\ref{lem:eq-postnikov-perf} that the base changed map $W^+ (k)
\otimes_{W^+ (k) [[u]]} t_E \simeq t_{k, a_0}$ is an equivalence of spectra,
and by completeness, we deduce that the map $t_E$ is also an equivalence of
spectra by Lemma~\ref{lem:compl-mod-u}.

\begin{lemma}
  \label{lem:eq-thom-base-change-bk}There is a canonical equivalence $M f_{k,
  a_0} \xrightarrow{\simeq} W^+ (k) \otimes_{W^+ (k) [[u]]} M f_E$ of $W^+
  (k)$-modules.
\end{lemma}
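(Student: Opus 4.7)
The plan is to mimic the proof of Lemma~\ref{lem:eq-thom-base-change} almost verbatim, replacing the pair $(W^+(R^\flat), W^+(\kappa))$ by $(W^+(k)[[u]], W^+(k))$ and the generator $\xi$ by the Eisenstein polynomial $E(u)$. Concretely, I would exploit the fact that the base change functor
\[
W^+(k) \otimes_{W^+(k)[[u]]} - \of \tmop{Mod}_{W^+(k)[[u]]} \longrightarrow \tmop{Mod}_{W^+(k)}
\]
is a left adjoint, hence symmetric monoidal and colimit-preserving, and use this to transport the Thom spectrum construction.

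The key computation is to identify the image of the multiplication map $m_{1-E(u)} \of W^+(k)[[u]] \to W^+(k)[[u]]$ under this base change. By construction, the map $W^+(k)[[u]] \to W^+(k)$ induces on $\pi_0$ the quotient $W(k)[[u]] \to W(k)$ sending $u \mapsto 0$, so the image of $1 - E(u) \in \pi_0(W^+(k)[[u]])$ in $\pi_0(W^+(k)) = W(k)$ is $1 - E(0) = 1 - a_0 = 1 - pb_0$. Since the base change functor is symmetric monoidal, it carries $m_{1-E(u)}$ to the multiplication map $m_{1-pb_0} \of W^+(k) \to W^+(k)$. Passing to the associated $\mathbb{E}_2$-double loop map, this exhibits $f_{k,a_0} = f_{k,pb_0}$ as the composition
\[
\Omega^2 S^3 \xrightarrow{f_E} \tmop{BGL}_1(W^+(k)[[u]]) \xrightarrow{W^+(k) \otimes_{W^+(k)[[u]]} -} \tmop{BGL}_1(W^+(k)).
\]

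Finally, since the base change functor commutes with colimits, applying it to the colimit defining $M f_E$ yields the colimit defining $M f_{k, a_0}$, providing the desired canonical equivalence $M f_{k,a_0} \xrightarrow{\simeq} W^+(k) \otimes_{W^+(k)[[u]]} M f_E$ of $W^+(k)$-modules. The only mild obstacle is the bookkeeping needed to ensure that all identifications are natural and compatible with the $\mathbb{E}_2$-structure (in particular, that the element $1-E(u)$ indeed defines the map $f_E$ in $\pi_1 \tmop{BGL}_1$ and that this is preserved by the monoidal base change); but these are formal consequences of the symmetric monoidality of $W^+(k) \otimes_{W^+(k)[[u]]} -$ together with Remark~\ref{cons:f_R}.
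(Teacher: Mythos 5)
Your proposal is correct and follows essentially the same route as the paper: identify the image of $m_{1-E(u)}$ under the base change functor as $m_{1-a_0}$ (using that $u\mapsto 0$ on $\pi_0$), use monoidality to identify $f_{k,a_0}$ with the composite of $f_E$ and the base change, and then commute the colimit defining the Thom spectrum with the base change. One small wording quibble: symmetric monoidality of $W^+(k)\otimes_{W^+(k)[[u]]}-$ comes from its being base change along a map of $\mathbb{E}_\infty$-rings, not from its being a left adjoint (which only gives colimit preservation), but this does not affect the argument.
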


\begin{proof}
  We will duplicate the proof of Lemma~\ref{lem:eq-thom-base-change}. The
  image of the multiplication map $m_{1 - E (u)} \of W^+ (k) [[u]] \rightarrow
  W^+ (k) [[u]]$ under the base change functor $W^+ (k) \otimes_{W^+ (k)
  [[u]]} - \of \tmop{Mod}_{W^+ (k) [[u]]} \rightarrow \tmop{Mod}_{W^+ (k)}$ is
  the multiplication map $m_{1 - a_0} \of W^+ (k) \rightarrow W^+ (k)$. Note
  also that the base change functor is symmetric monoidal. Now we conclude
  that the map $f_{k, a_0}$ coincides with the composite map
  \[ \Omega^2 S^3 \xrightarrow{f_E} \tmop{BGL}_1 (W^+ (k) [[u]])
     \xrightarrow{W^+ (k) \otimes_{W^+ (k) [[u]]} -} \tmop{BGL}_1 (W^+ (k)) \]
  Thus by commuting the colimit and the base-change, we obtain
  \begin{eqnarray*}
    M f_{k, a_0} & = & \tmop{colim} (W^+ (k) \otimes_{W^+ (k) [[u]]} f_E)\\
    & \xrightarrow{\simeq} & W^+ (k) \otimes_{W^+ (k) [[u]]} \tmop{colim}
    f_E\\
    & = & W^+ (k) \otimes_{W^+ (k) [[u]]} M f_E
  \end{eqnarray*}
  where by abuse of notation, the colimit of the maps $f_{k, a_0}$ (or $f_E$
  respectively) are understood as the colimit of the maps $f_{k, a_0}$ (or
  $f_E$ respectively) composed with the functor $\tmop{BGL}_1 (W^+ (k))
  \rightarrow \tmop{Mod}_{W^+ (k)}$ (or $\tmop{BGL}_1 (W^+ (k) [[u]])
  \rightarrow \tmop{Mod}_{W^+ (k) [[u]]}$ respectively) as in the definition
  of Thom spectra.
\end{proof}

\begin{lemma}
  \label{lem:eq-HA-base-change}There is a canonical equivalence $W^+ (k)
  \otimes_{W^+ (k) [[u]]} H A \xrightarrow{\simeq} H k$ of $W^+ (k)$-modules.
\end{lemma}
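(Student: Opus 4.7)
The plan is to exploit the cofiber sequence
\[ W^+ (k) [[u]] \xrightarrow{m_u} W^+ (k) [[u]] \longrightarrow W^+ (k) \]
of $W^+ (k) [[u]]$-modules established in \eqref{eq:cofiber-power-series} within the proof of Lemma~\ref{lem:compl-mod-u}, combined with the fact that $\varpi$ is a non-zero-divisor in the DVR $A$.

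First, I would apply the base-change functor $- \otimes_{W^+ (k) [[u]]} H A$ to this cofiber sequence. Being a left adjoint, this functor preserves cofiber sequences, producing
\[ H A \xrightarrow{m_u \otimes_{W^+(k)[[u]]} HA} H A \longrightarrow W^+ (k) \otimes_{W^+ (k) [[u]]} H A. \]
The $W^+ (k) [[u]]$-module structure on $H A$ is induced by the map $W^+ (k) [[u]] \to H A$ sending $u \mapsto \varpi$ (coming from the isomorphism $W (k) [u] / (E (u)) \xrightarrow{\sim} A$ of Proposition~\ref{prop:struct-thm-cdvr-mixed}), so the base-changed multiplication by $u$ is the multiplication map $m_\varpi \of H A \to H A$.

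Next, I would identify the cofiber of $m_\varpi$. Since $\varpi$ generates the maximal ideal $\mathfrak{m}$ of the DVR $A$ and is in particular a non-zero-divisor, there is a short exact sequence of discrete $A$-modules
\[ 0 \longrightarrow A \xrightarrow{\varpi} A \longrightarrow A / \mathfrak{m} = k \longrightarrow 0. \]
Applying the Eilenberg--Maclane functor (and noting the cofiber computed in spectra agrees with the quotient here, precisely because $\varpi$ is a non-zero-divisor), the cofiber of $m_\varpi \of H A \to H A$ is canonically equivalent to $H k$. Composing this with the cofiber sequence above yields the desired equivalence $W^+ (k) \otimes_{W^+ (k) [[u]]} H A \xrightarrow{\simeq} H k$ of $W^+ (k)$-modules.

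The main obstacle is purely bookkeeping: one must verify that the $W^+ (k)$-module structures on both sides are matched under this chain of identifications, and that the canonical map of $W^+ (k) [[u]]$-modules $W^+ (k) [[u]] \to H A$ really sends $u$ to $\varpi \in \pi_0 (H A) = A$. The former follows from functoriality of the constructions involved, and the latter is the definition of the $W^+ (k) [[u]]$-module structure on $H A$ recorded just before Theorem~\ref{thm:cdvr-thom}.
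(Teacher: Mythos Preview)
Your proposal is correct and follows essentially the same route as the paper: both identify $W^+(k)$ with the cofiber of $m_u$ on $W^+(k)[[u]]$, base-change along $-\otimes_{W^+(k)[[u]]} HA$, and then use that $\varpi$ is a non-zero-divisor in $A$ to compute the resulting cofiber as $Hk$. The paper additionally spells out that this equivalence is the one induced by the evident commutative square of $\mathbb{E}_\infty$-rings, which is exactly the ``bookkeeping'' you flag at the end.
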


\begin{proof}
  As in the proof of Lemma~\ref{lem:compl-mod-u}, we identify $W^+ (k)$ with
  the cofiber of the multiplication map $m_u \of W^+ (k) [[u]] \rightarrow W^+
  (k) [[u]]$ which gives us an equivalence
  \[ W^+ (k) \otimes_{W^+ (k) [[u]]} H A \simeq \tmop{cofib} \left( H A
     \xrightarrow{m_{H A, u}} H A \right) \]
  Now by the definition of the $W^+ (k) [[u]]$-module structure on $H A$ and
  that $u$ is not a zero-divisor in $A$, we have the equivalence $\tmop{cofib}
  \left( H A \xrightarrow{m_{H A, u}} H A \right) \simeq H \left( \tmop{coker}
  \left( A \xrightarrow{\varpi} A \right) \right) \simeq H k$. Thus we obtain
  an equivalence $W^+ (k) \otimes_{W^+ (k) [[u]]} H A \simeq H k$. We can
  readily check [details needed] that this equivalence could be described as
  follows: consider the commutative diagram in the $\infty$-category of
  $\mathbb{E}_{\infty}$-rings
  \[ \begin{array}{ccc}
       W^+ (k) [[u]] & \longrightarrow & W^+ (k)\\
       \longdownarrow &  & \longdownarrow\\
       H A & \longrightarrow & H k
     \end{array} \]
  where the left vertical map is the composite map $W^+ (k) [[u]] \rightarrow
  H (\pi_0 (W^+ (k) [[u]])) \simeq H (W (k) [[u]]) \xrightarrow{u \mapsto
  \varpi} H A$ (where the first map is the Postnikov section). The commutative
  diagram induces a map $W^+ (k) \otimes_{W^+ (k) [[u]]} H A \rightarrow H k$
  (note that the left hand side is a pushout of $\mathbb{E}_{\infty}$-rings),
  which coincides with the equivalence obtained above.
\end{proof}

\begin{lemma}
  The equivalences in Lemma~\ref{lem:eq-thom-base-change-bk} and
  Lemma~\ref{lem:eq-HA-base-change} assembles into a commutative diagram:
  \[ \begin{array}{ccc}
       M f_{k, a_0} & \xrightarrow{t_{k, a_0}} & H k\\
       \longdownarrow \simeq &  & \longuparrow \simeq\\
       W^+ (k) \otimes_{W^+ (k) [[u]]} M f_E & \longrightarrow & W^+ (k)
       \otimes_{W^+ (k) [[u]]} H A
     \end{array} \]
  where the top horizontal map is the 0th Postnikov section $t_{k, a_0}$
  defined in Proposition~\ref{prop:main} and the bottom horizontal map is the
  base-changed 0th Postnikov section $W^+ (k) \otimes_{W^+ (k) [[u]]} t_E$.
\end{lemma}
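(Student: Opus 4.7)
The plan is to recognize the bottom composite $M f_{k, a_0} \to H k$---formed by concatenating the equivalence of Lemma~\ref{lem:eq-thom-base-change-bk}, the base-changed map $W^+ (k) \otimes_{W^+ (k) [[u]]} t_E$, and the equivalence of Lemma~\ref{lem:eq-HA-base-change}---as a $0$th Postnikov section of $M f_{k, a_0}$, and then to invoke uniqueness of Postnikov sections to identify it with $t_{k, a_0}$. This mirrors the strategy used in the proof of Lemma~\ref{lem:eq-base-change-sphwitt-kappa}.

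First I would observe that $H k$ is $0$-truncated as a $W^+ (k)$-module, while $M f_{k, a_0}$ is connective. By the universal property of $\tau_{\leq 0}$, the mapping space $\tmop{Map}_{\tmop{Mod}_{W^+ (k)}} (M f_{k, a_0}, H k)$ is discrete and naturally identified with $\tmop{Hom}_{W (k)} (\pi_0 M f_{k, a_0}, k)$. Consequently, the commutativity of the square reduces to the claim that the two composites induce the same $W (k)$-linear map on $\pi_0$. The top map $t_{k, a_0}$ by definition induces the tautological isomorphism $\pi_0 M f_{k, a_0} \cong W (k) / a_0 \cong k$, where the last identification uses that $a_0 = p b_0$ with $b_0 \in \tmop{GL}_1 (W (k))$, exactly as in the analogue of Lemma~\ref{lem:pi0Mf}.

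Second I would verify that the bottom composite induces the same identification on $\pi_0$. The middle map $W^+ (k) \otimes_{W^+ (k) [[u]]} t_E$ induces an isomorphism on $\pi_0$ because $\tmop{cofib} (t_E)$ is $1$-connective ($t_E$ being itself a Postnikov section) and base change along the map of connective ring spectra $W^+ (k) [[u]] \to W^+ (k)$ preserves connectivity of modules. The vertical equivalence of Lemma~\ref{lem:eq-thom-base-change-bk} was constructed by commuting the Thom colimit with base change, and unwinding on $\pi_0$ it specializes to the tautological isomorphism $\pi_0 M f_{k, a_0} \cong W (k) \otimes_{W (k) [[u]]} A$. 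Similarly, the vertical equivalence of Lemma~\ref{lem:eq-HA-base-change} was built by identifying $W^+ (k)$ with $\tmop{cofib} (m_u)$ on $W^+ (k) [[u]]$ and using that $\varpi$ acts on $A$ as a non-zero-divisor; on $\pi_0$ it specializes to the isomorphism $W (k) \otimes_{W (k) [[u]]} A \cong A / \varpi A \cong k$. Composing, the bottom yields exactly the identification $\pi_0 M f_{k, a_0} \cong k$ induced by $t_{k, a_0}$.

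The main (essentially bookkeeping) obstacle is to check that the two vertical equivalences really do reduce on $\pi_0$ to the tautological identifications described above, rather than to $W (k)$-automorphism-twisted versions. Since both equivalences were constructed by naturality from colimit and cofiber descriptions, this is routine but requires care to track; once completed, uniqueness of the Postnikov section concludes.
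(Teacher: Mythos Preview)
Your proposal is correct and follows essentially the same strategy as the paper's proof, which explicitly invokes the argument of Lemma~\ref{lem:eq-base-change-sphwitt-kappa} and reduces to checking that the composite is an isomorphism on $\pi_0$ via ``explicit element chasing.'' Your version is in fact more careful: you correctly flag that one must verify the two composites agree on $\pi_0$ (not merely that both are isomorphisms), since otherwise they could differ by a $W(k)$-automorphism of $k$; the paper's proof leaves this implicit in the phrase ``element chasing.''
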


\begin{proof}
  As in the proof of Lemma~\ref{lem:eq-base-change-sphwitt-kappa}, it suffices
  to show that the composite map on the 0th homotopy group $\pi_0 (M f_{k,
  a_0}) \rightarrow \pi_0 (W^+ (k) \otimes_{W^+ (k) [[u]]} M f_E) \rightarrow
  \pi_0 (W^+ (k) \otimes_{W^+ (k) [[u]]} H A) \rightarrow \pi_0 (H k) \cong k$
  is an isomorphism, which follows from an explicit element chasing.
\end{proof}

Combined with Lemma~\ref{lem:eq-postnikov-perf}, we obtain that

\begin{corollary}
  The base-changed map $W^+ (k) \otimes_{W^+ (k) [[u]]} t_E \of W^+ (k)
  \otimes_{W^+ (k) [[u]]} M f_E \rightarrow W^+ (k) \otimes_{W^+ (k) [[u]]} H
  A$ is an equivalence of $W^+ (k)$-modules.
\end{corollary}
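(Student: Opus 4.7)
The plan is to read this corollary as an immediate consequence of the commutative square produced in the preceding lemma. That square has the map in question, $W^+(k) \otimes_{W^+(k)[[u]]} t_E$, sitting along its bottom edge, with both vertical maps already known to be equivalences (via Lemma~\ref{lem:eq-thom-base-change-bk} and Lemma~\ref{lem:eq-HA-base-change}). Thus the task reduces to checking that the top edge of the square, namely $t_{k, a_0}$, is an equivalence; once this is done, commutativity of the square finishes the argument by two-out-of-three.

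To verify that $t_{k, a_0}$ is an equivalence, I would exploit the decomposition $a_0 = p b_0$ recalled just before Lemma~\ref{lem:eq-thom-base-change-bk}, where $b_0 \in W(k)$ satisfies $p \nmid b_0$ by the Eisenstein hypothesis on $E(u)$. Since $k$ is the residue field of the complete DVR $A$, it is in particular a field, so the image of $b_0$ in $W(k)/p \cong k$ is nonzero and therefore invertible; the $p$-adic completeness of $W(k)$ then promotes $b_0$ to an element of $\tmop{GL}_1(W(k))$. Consequently $t_{k, a_0} = t_{k, pb_0}$ falls precisely within the scope of Lemma~\ref{lem:eq-postnikov-perf} (equivalently Theorem~\ref{thm:perf-thom} applied with $u = b_0$), and is therefore an equivalence of spectra.

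I do not anticipate any genuine obstacle in this proof: once the square from the previous lemma is in hand, the only substantive observation is that the constant term $a_0$ of an Eisenstein polynomial over $W(k)$ differs from $p$ by a unit, which uses nothing beyond $k$ being a field and $W(k)$ being $p$-adically complete. The result then drops out by a formal diagram chase.
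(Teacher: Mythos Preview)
Your proposal is correct and matches the paper's approach exactly: the paper states the corollary with only the phrase ``Combined with Lemma~\ref{lem:eq-postnikov-perf}, we obtain that'', which is precisely the two-out-of-three argument you spell out. Your verification that $b_0 \in \tmop{GL}_1(W(k))$ is also already recorded in the paper just before Lemma~\ref{lem:eq-thom-base-change-bk}, so you are simply expanding what the paper leaves implicit.
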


Apply Lemma~\ref{lem:compl-mod-u} to the cofiber $\tmop{cofib} (t_E)$, we
deduce that

\begin{corollary}
  \label{cor:eq-tr-u-compl}The map $t_E \of M f_E \rightarrow H A$ is an
  equivalence of spectra after $(u)$-completion.
\end{corollary}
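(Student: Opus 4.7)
The plan is to apply Lemma~\ref{lem:compl-mod-u} to the cofiber of $t_E$. Let $C \assign \tmop{cofib}(t_E)$, viewed as a $W^+(k)[[u]]$-module via the $\mathbb{E}_2$-$W^+(k)[[u]]$-algebra structure on both $M f_E$ and $HA$. Showing that $t_E$ is an equivalence after $(u)$-completion is equivalent to showing that the $(u)$-completion of $C$ is contractible, since $(u)$-completion, as a left adjoint localization onto the subcategory of $(u)$-complete modules, preserves cofiber sequences.

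Next I would verify the hypothesis of Lemma~\ref{lem:compl-mod-u}, namely that $W^+(k) \otimes_{W^+(k)[[u]]} C \simeq 0$. For this, the base change functor $W^+(k) \otimes_{W^+(k)[[u]]} -$ is a left adjoint, hence commutes with colimits and in particular with cofibers, so applying it to the cofiber sequence $M f_E \xrightarrow{t_E} HA \rightarrow C$ yields
\[ W^+(k) \otimes_{W^+(k)[[u]]} C \simeq \tmop{cofib}\bigl(W^+(k) \otimes_{W^+(k)[[u]]} t_E\bigr). \]
The corollary immediately preceding the statement tells us that $W^+(k) \otimes_{W^+(k)[[u]]} t_E$ is an equivalence of $W^+(k)$-modules, so its cofiber is contractible.

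Combining the two steps, Lemma~\ref{lem:compl-mod-u} then gives that the $(u)$-completion of $C$ is contractible, and hence $t_E$ becomes an equivalence after $(u)$-completion. This is not really an argument with a main obstacle: all of the real work — identifying the base change with $t_{k, a_0}$, invoking the perfect-ring case, and proving the completeness-descent lemma — has already been carried out in the preceding lemmas, and the present corollary is merely their packaging. The only point to be mindful of is to make sure $C$ is manipulated as a $W^+(k)[[u]]$-module rather than just a spectrum, so that Lemma~\ref{lem:compl-mod-u} applies verbatim.
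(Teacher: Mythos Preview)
Your proposal is correct and follows exactly the paper's approach: the paper simply writes ``Apply Lemma~\ref{lem:compl-mod-u} to the cofiber $\tmop{cofib}(t_E)$'' and leaves the details implicit, while you have spelled out precisely those details (base change commutes with cofibers, the preceding corollary gives vanishing of the base-changed cofiber).
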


As in Lemma~\ref{lem:HR-p-compl}, we deduce from Theorem~\ref{thm:cplhomot}
that

\begin{lemma}
  The $W^+ (k) [[u]]$-module $H A$ is $(u)$-complete.
\end{lemma}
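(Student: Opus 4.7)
The plan is to reduce the $(u)$-completeness of $HA$ as a $W^+(k)[[u]]$-module to the classical $(\varpi)$-adic completeness of $A$, which is built into the hypothesis that $A$ is a complete DVR with uniformizer $\varpi$.

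First, I would unwind the $W^+(k)[[u]]$-module structure on $HA$ described in the paragraph preceding Theorem~\ref{thm:cdvr-thom}: under the isomorphism $W(k)[[u]]/(E(u)) \xrightarrow{\sim} A$ sending $u \mapsto \varpi$, the action of $u \in \pi_0(W^+(k)[[u]])$ on $\pi_0(HA) = A$ is multiplication by the uniformizer $\varpi$. Thus $(u)$-completeness for $HA$ over $W^+(k)[[u]]$ is the same as $(\varpi)$-completeness of the discrete ring $A$ in the derived sense.

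Next, since $HA$ is concentrated in degree zero, I would appeal to Theorem~\ref{thm:cplhomot} to reduce the derived $(u)$-completeness of the spectrum $HA$ to the derived $(u)$-completeness of its single nonzero homotopy group, namely $A$ regarded as a $W(k)[[u]]$-module with $u$ acting by $\varpi$. Then I would invoke Proposition~\ref{prop:I-adic}, which states that for a finitely generated ideal, classical adic completeness of a discrete module implies derived completeness. Since $(\varpi)$ is a principal (hence finitely generated) ideal and $A$ is $(\varpi)$-adically complete by the very definition of a complete DVR, the hypotheses of Proposition~\ref{prop:I-adic} are satisfied and we conclude.

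There is no real obstacle here; the argument is an exact analogue of Lemma~\ref{lem:HR-p-compl} with $(p, R)$ replaced by $(\varpi, A)$, the only subtlety being the bookkeeping identification of the $W^+(k)[[u]]$-action on $HA$ with multiplication by $\varpi$, which is immediate from how the structure was defined. Once this lemma is in hand, combining it with the fact that $Mf_E$ is $(u)$-complete (either because it sits in $\mathrm{Mod}_{W^+(k)[[u]]}$ with $W^+(k)[[u]]$ itself $(u)$-complete and suitable almost perfectness, or directly by Corollary~\ref{cor:eq-tr-u-compl} applied to $\mathrm{cofib}(t_E)$), yields that $t_E$ is an equivalence, completing the proof of Theorem~\ref{thm:cdvr-thom}.
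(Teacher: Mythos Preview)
Your proposal is correct and follows essentially the same approach as the paper: the paper simply remarks that, as in Lemma~\ref{lem:HR-p-compl}, one deduces the result from Theorem~\ref{thm:cplhomot} (reducing to the homotopy groups) together with the classical $(\varpi)$-adic completeness of $A$, which via Proposition~\ref{prop:I-adic} gives derived $(u)$-completeness. Your unwinding of the $u$-action as multiplication by $\varpi$ is exactly the bookkeeping the paper leaves implicit.
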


Now, given the nontrivial topological input
Proposition~\ref{prop:finite-Kan-double-loop}, as in
Lemma~\ref{lem:thom-p-compl} and Corollary~\ref{cor:cofib-tr-p-compl}, we
deduce that

\begin{lemma}
  The $W^+ (k) [[u]]$-module $M f_E$ is $(u)$-complete.
\end{lemma}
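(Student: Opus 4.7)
The plan is to mirror the proof of Lemma~\ref{lem:thom-p-compl} (and its supporting Lemma~\ref{lem:mf-aperf}) verbatim, replacing the base ring $W^+(R^\flat)$ with $W^+(k)[[u]]$ and the ideal $(p)$ with $(u)$. The two ingredients needed are that $M f_E$ is almost perfect as a $W^+(k)[[u]]$-module, and that $W^+(k)[[u]]$ is itself $(u)$-complete; once both are in hand, Proposition~\ref{prop:aperf-complete} delivers the conclusion.

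First I would establish almost perfectness. By Proposition~\ref{prop:finite-Kan-double-loop}, choose a Kan model $X_\bullet$ of $\Omega^2 S^3$ with $X_n$ a finite set for each $[n] \in \Delta^{\tmop{op}}$. The Bousfield--Kan formula (as used in the proof of Lemma~\ref{lem:mf-aperf}, cf.\ \cite{Shah2018}) then realizes $M f_E$ as the geometric realization of a simplicial $W^+(k)[[u]]$-module $N_\bullet$ where each $N_n$ is a free $W^+(k)[[u]]$-module of finite rank (indexed by the finite set $X_n$). Any such geometric realization is almost perfect by Proposition~\ref{prop:aperf} (finite free modules are perfect, and almost perfect modules are closed under the relevant colimits).

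Second, $W^+(k)[[u]]$ is $(u)$-complete by its very definition as the $(u)$-completion of $W^+(k)[u]$ in the category of $\mathbb{E}_\infty$-$W^+(k)$-algebras, introduced just before Proposition~\ref{prop:single-var-poly-homotopy-grps}.

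Applying Proposition~\ref{prop:aperf-complete} to the $(u)$-complete $\mathbb{E}_\infty$-ring $W^+(k)[[u]]$ and the almost perfect $W^+(k)[[u]]$-module $M f_E$ then yields $(u)$-completeness of $M f_E$. There is no real obstacle: both ingredients are direct analogues of the corresponding assertions already proved for $M f_{R,\xi}$ in Section~\ref{sec:main}, and no new topological or algebraic input beyond Proposition~\ref{prop:finite-Kan-double-loop} is required.
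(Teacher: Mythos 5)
Your proposal is correct and follows exactly the route the paper intends: the paper deduces this lemma "as in Lemma~\ref{lem:thom-p-compl}," i.e.\ almost perfectness of $M f_E$ over $W^+(k)[[u]]$ via the finite Kan model of $\Omega^2 S^3$ and the Bousfield--Kan formula, combined with the $(u)$-completeness of $W^+(k)[[u]]$ itself and Proposition~\ref{prop:aperf-complete}. Your write-up simply spells out the same argument.
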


\begin{corollary}
  The cofiber $\tmop{cofib} (t_E)$ is a $(u)$-complete $W^+ (k) [[u]]$-module,
  and thus the map $t_E$ is an equivalence of spectra by
  Corollary~\ref{cor:eq-tr-u-compl}.
\end{corollary}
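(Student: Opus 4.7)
The plan is to derive both assertions by assembling the three immediately preceding results. For the first claim, I would simply observe that the full subcategory of $(u)$-complete $W^+(k)[[u]]$-modules inside $\tmop{Mod}_{W^+ (k) [[u]]}$ is a stable subcategory, hence closed under cofibers of maps (this is a formal consequence of the $(u)$-completion functor being exact, cf.\ the discussion leading to Proposition~\ref{prop:calc-compl}). The two preceding lemmas state that both source $M f_E$ and target $H A$ of $t_E$ are $(u)$-complete as $W^+ (k) [[u]]$-modules, so $\tmop{cofib}(t_E)$ is $(u)$-complete as well.

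For the second claim, I would invoke the following principle: if $N$ is a $(u)$-complete $W^+(k)[[u]]$-module whose $(u)$-completion $N_{(u)}^{\wedge}$ is contractible, then the canonical equivalence $N \xrightarrow{\simeq} N_{(u)}^{\wedge} \simeq 0$ forces $N \simeq 0$. Applying this to $N = \tmop{cofib}(t_E)$, Corollary~\ref{cor:eq-tr-u-compl} supplies the vanishing of $N_{(u)}^{\wedge}$, while the first claim supplies the $(u)$-completeness of $N$, and these together yield $\tmop{cofib}(t_E) \simeq 0$. Since the cofiber of $t_E$ vanishes, $t_E$ is an equivalence of spectra.

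There is essentially no obstacle here — the statement is really just a bookkeeping corollary packaging the three preceding lemmas together, and the only step requiring any thought is the elementary stability of $(u)$-completeness under cofibers, which is immediate from Proposition~\ref{prop:calc-compl} (the completion functor is built out of limits and cofibers, which commute with cofibers). This mirrors exactly the endgame already used in the proof of Proposition~\ref{prop:main}, where $p$-completeness played the role now played by $(u)$-completeness.
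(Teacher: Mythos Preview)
Your proposal is correct and matches the paper's approach: the corollary is stated without an explicit proof, relying on the two immediately preceding lemmas (that $M f_E$ and $H A$ are $(u)$-complete) together with Corollary~\ref{cor:eq-tr-u-compl}, exactly as you describe. The only minor variation is that the paper's reference to ``as in \ldots\ Corollary~\ref{cor:cofib-tr-p-compl}'' suggests one could alternatively argue via almost perfectness and Proposition~\ref{prop:aperf-complete} (as was done in the perfectoid case), but your route through closure of $(u)$-complete modules under cofibers is equally valid and arguably more direct.
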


This completes the proof of Theorem~\ref{thm:cdvr-thom}.

\subsection{Complete regular local rings}

Inspired by {\cite[Section~9]{Krause2019}}, we will provide a Hopkins-Mahowald
theorem for complete regular local rings of mixed characteristic. We will show
how to modify our proof of Theorem~\ref{thm:cdvr-thom} to deduce this. Note
that this is also a special case of Question~\ref{ques:prism-thom}, by
{\cite[Remark~3.11]{Bhatt2019}}.

We need some preparations in higher algebra:

Let $W^+ (k) [u_1, \ldots, u_n]$ be the ``$n$-variate polynomial $W^+
(k)$-algebra'', that is, the $\mathbb{E}_{\infty}$-$W^+ (k)$-algebra $W^+ (k)
\otimes_{\mathbb{S}} \mathbb{S} [\mathbb{N}^n]$. Since the space
$\mathbb{N}^n$ is endowed with discrete topology, parallel to
Proposition~\ref{prop:single-var-poly-homotopy-grps}, we have

\begin{proposition}
  \label{prop:mult-var-poly-homotopy-grps}As a $W^+ (k)$-module, $W^+ (k)
  [u_1, \ldots, u_n]$ is equivalent to the direct sum $\bigoplus_{\alpha \in
  \mathbb{N}^n} u^{\alpha} W^+ (k)$, a free $W^+ (k)$-module. The graded
  homotopy group $\pi_{\ast} (W^+ (k) [u_1, \ldots, u_n])$, as a
  (graded-commutative) $\pi_{\ast} (W^+ (k))$-algebra, is equivalent to
  $\pi_{\ast} (W^+ (k)) [u_1, \ldots, u_n]$, where $\deg u_1 = \cdots = \deg
  u_n = 0$.
\end{proposition}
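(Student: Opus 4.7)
The plan is to imitate the single-variable argument in Proposition~\ref{prop:single-var-poly-homotopy-grps} almost verbatim, with $\mathbb{N}^n$ playing the role of $\mathbb{N}$. The only essential inputs are that $\mathbb{N}^n$ is a discrete space and that it is a commutative monoid under coordinatewise addition, so all the manipulations of $\Sigma^{\infty}_+$ carry over. Concretely, I would first rewrite
\[ \mathbb{S}[\mathbb{N}^n] \;=\; \Sigma^{\infty}_+ \mathbb{N}^n \;\simeq\; \bigoplus_{\alpha \in \mathbb{N}^n} \mathbb{S} \]
using that $\mathbb{N}^n$ is discrete and $\Sigma^{\infty}_+$ sends discrete coproducts of points to the direct sum of sphere spectra (this is the same observation that underlies the statement $\mathbb{S}[\mathbb{N}] \simeq \bigoplus_{j \geq 0} \mathbb{S}$ implicit in Proposition~\ref{prop:single-var-poly-homotopy-grps}).

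Next, since $W^+(k) \otimes_{\mathbb{S}} -$ is a left adjoint and therefore commutes with direct sums, applying it to the identification above yields
\[ W^+(k)[u_1,\ldots,u_n] \;=\; W^+(k) \otimes_{\mathbb{S}} \mathbb{S}[\mathbb{N}^n] \;\simeq\; \bigoplus_{\alpha \in \mathbb{N}^n} u^{\alpha} W^+(k) \]
as $W^+(k)$-modules, which is the first assertion. Taking homotopy groups commutes with direct sums of spectra, so this immediately gives
\[ \pi_{\ast}(W^+(k)[u_1,\ldots,u_n]) \;\cong\; \bigoplus_{\alpha \in \mathbb{N}^n} \pi_{\ast}(W^+(k))\,u^{\alpha} \]
as a graded $\pi_{\ast}(W^+(k))$-module, with all generators $u^\alpha$ in degree zero.

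For the multiplicative part, I would use that the $\mathbb{E}_{\infty}$-algebra structure on $\mathbb{S}[\mathbb{N}^n]$ (and hence on $W^+(k)[u_1,\ldots,u_n]$) is induced from the addition monoid structure on $\mathbb{N}^n$: on the indexing set, the multiplication sends $u^\alpha, u^\beta$ to $u^{\alpha+\beta}$. Passing to graded homotopy groups, the induced graded-commutative multiplication on $\bigoplus_\alpha \pi_\ast(W^+(k))\,u^\alpha$ is therefore exactly the $\pi_\ast(W^+(k))$-linear extension of $u^\alpha \cdot u^\beta = u^{\alpha+\beta}$, which is precisely the polynomial algebra structure $\pi_\ast(W^+(k))[u_1,\ldots,u_n]$ with all $u_i$ in degree zero.

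There is no real obstacle here: the single-variable case already bundles all the nontrivial content (the discreteness of $\mathbb{N}$, the compatibility of $\Sigma^{\infty}_+$ with discrete coproducts, and the interplay between the monoid structure and the $\mathbb{E}_\infty$-algebra structure), and nothing in that argument uses $n=1$. Alternatively, one can deduce the statement from Proposition~\ref{prop:single-var-poly-homotopy-grps} by induction on $n$, using the identification $W^+(k)[u_1,\ldots,u_n] \simeq W^+(k)[u_1,\ldots,u_{n-1}] \otimes_{W^+(k)} W^+(k)[u_n]$ coming from $\mathbb{N}^n = \mathbb{N}^{n-1} \times \mathbb{N}$ as monoids and the fact that $W^+(k)[u_n]$ is a free $W^+(k)$-module; but this inductive route amounts to the same argument repackaged, so I would present the direct one above.
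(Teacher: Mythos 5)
Your argument is correct and is essentially the proof the paper intends: the paper gives no explicit proof, simply asserting the proposition ``parallel to'' the single-variable case because $\mathbb{N}^n$ is discrete, which is exactly your chain (discreteness gives $\mathbb{S}[\mathbb{N}^n]\simeq\bigoplus_{\alpha\in\mathbb{N}^n}\mathbb{S}$, base change and $\pi_{\ast}$ commute with direct sums, and the monoid addition on $\mathbb{N}^n$ induces the polynomial multiplication). Nothing further is needed; the inductive variant you mention is also fine but, as you say, repackages the same content.
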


Now let $W^+ (k) [[u_1, \ldots, u_n]]$ be the $(u_1, \ldots, u_n)$-completion
of the $\mathbb{E}_{\infty}$-$W^+ (k)$-algebra $W^+ (k) [u_1, \ldots, u_n]$.
By induction on $n \in \mathbb{N}_{> 0}$ and argue as in
Proposition~\ref{prop:single-var-power-series-homotopy-grps} [details needed],
we obtain:

\begin{proposition}
  \label{prop:mult-var-power-series-homotopy-grps}The
  $\mathbb{E}_{\infty}$-$W^+ (k)$-algebra $W^+ (k) [[u_1, \ldots, u_n]]$ is
  connective. The zeroth homotopy group of $\pi_0 (W^+ (k) [[u_1, \ldots,
  u_n]])$ is isomorphic to the $(u_1, \ldots, u_n)$-adic completion of the
  polynomial $W (k)$-algebra $W (k) [u_1, \ldots, u_n]$, that is, the formal
  power series $W (k)$-algebra $W (k) [[u_1, \ldots, u_n]]$, as $W
  (k)$-algebras.
\end{proposition}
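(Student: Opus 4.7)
The plan is to mimic the argument of Proposition~\ref{prop:single-var-power-series-homotopy-grps}, replacing the cofinal tower of principal ideals $(u^N)$ there by the tower of monomial ideals $(u_1^N, \ldots, u_n^N)$, which is cofinal among the powers of $(u_1, \ldots, u_n)$. One could equivalently induct on $n$ and complete one variable at a time, applying the single-variable result to the connective $\mathbb{E}_\infty$-ring $W^+(k)[[u_1, \ldots, u_{n-1}]]$ provided by the inductive hypothesis; but since the inductive step amounts to the same Milnor computation, I present the direct multivariable version.

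The key steps are as follows. First, since the suspension spectrum functor $\mathbb{S}[-]$ is symmetric monoidal, one has $W^+(k)[u_1, \ldots, u_n] \simeq W^+(k)[u_1] \otimes_{W^+(k)} \cdots \otimes_{W^+(k)} W^+(k)[u_n]$. Tensoring together over $W^+(k)$ the cofiber sequences $W^+(k)[u_i] \xrightarrow{m_{u_i^N}} W^+(k)[u_i] \to \tmop{cofib}(m_{u_i^N})$ of Proposition~\ref{prop:cofiber-mult} yields an $\mathbb{E}_\infty$-$W^+(k)[u_1, \ldots, u_n]$-algebra $C_N \assign \bigotimes_{i=1}^n \tmop{cofib}(m_{u_i^N})$ which, as a $W^+(k)$-module, is free of rank $N^n$ on the monomials $\{u^\alpha : 0 \le \alpha_i < N\}$. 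A K\"unneth-style argument then identifies $\pi_\ast(C_N) \cong \pi_\ast(W^+(k))[u_1, \ldots, u_n] / (u_1^N, \ldots, u_n^N)$, and these assemble into a tower of surjections of graded $\pi_\ast(W^+(k))[u_1, \ldots, u_n]$-modules. Reinterpreting Proposition~\ref{prop:calc-compl} as in the single-variable case, $W^+(k)[[u_1, \ldots, u_n]]$ is the limit of the tower $\{C_N\}_{N}$, so Milnor's short exact sequence, together with the vanishing of $\lim^1$ furnished by the degree-wise surjectivity, yields $\pi_\ast(W^+(k)[[u_1, \ldots, u_n]]) \cong \pi_\ast(W^+(k))[[u_1, \ldots, u_n]]$. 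Setting $\ast = 0$ gives the desired identification, and connectivity follows since each $C_N$ is connective and the tower is levelwise surjective on $\pi_0$.

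The main obstacle is verifying that the relative tensor product $C_N$ really does compute the ``honest'' $\mathbb{E}_\infty$-algebra quotient by $(u_1^N, \ldots, u_n^N)$ and that the subtower indexed by $N$ is cofinal for the purpose of computing the $(u_1, \ldots, u_n)$-completion. This boils down to the observation that the pushout squares from the proof of Proposition~\ref{prop:cofiber-mult}, which exhibit $\tmop{cofib}(m_{u_i^N})$ as $\Sigma^\infty$ of $(\mathbb{N}_{<N})_+$, are compatible with the symmetric monoidal structure on pointed spaces, so that their product realizes the multi-variable pushout whose colimit computes $W^+(k)[u_1, \ldots, u_n]/(u_1^N, \ldots, u_n^N)$. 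Once this bookkeeping and the cofinality of $(u_1^N, \ldots, u_n^N)$ in the powers of $(u_1, \ldots, u_n)$ are in place, the extraction of homotopy groups is entirely formal, exactly paralleling the single-variable proof.
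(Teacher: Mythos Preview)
Your proposal is correct and aligns with the paper's approach. The paper itself gives no proof beyond the remark ``By induction on $n \in \mathbb{N}_{> 0}$ and argue as in Proposition~\ref{prop:single-var-power-series-homotopy-grps} [details needed],'' so you have in fact supplied the missing details: you explicitly note the inductive route the paper gestures at, and your direct multivariable version via the diagonal tower $\{C_N\}$ is the natural unwinding of that induction (iterated single-variable completion, commuting limits with cofibers in the stable category, then restricting to the cofinal diagonal).
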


Similarly, argue inductively on $n \in \mathbb{N}_{> 0}$ as in
Lemma~\ref{lem:compl-mod-u}, we obtain:

\begin{lemma}
  Let $M$ be a $W^+ (k) [u_1, \ldots, u_n]$-(or $W^+ (k) [[u_1, \ldots,
  u_n]]$-)module (spectrum). If the spectrum $W^+ (k) \otimes_{W^+ (k) [u_1,
  \ldots, u_n]} M$ (or $W^+ (k) \otimes_{W^+ (k) [[u_1, \ldots, u_n]]} M$
  respectively) is contractible, then so is the $(u_1, \ldots,
  u_n)$-completion of the spectrum $M$. In particular, if furthermore $W^+ (k)
  [u_1, \ldots, u_n]$-(or $W^+ (k) [[u_1, \ldots, u_n]]$-)module $M$ is
  assumed to be $(u_1, \ldots, u_n)$-complete, then the spectrum $M$ is
  contractible.
\end{lemma}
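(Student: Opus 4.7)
The plan is to proceed by induction on $n \in \mathbb{N}_{>0}$, with the base case $n=1$ supplied by Lemma~\ref{lem:compl-mod-u}. For the inductive step, I would peel off the last variable $u_n$, apply the inductive hypothesis to reduce the number of variables to $n-1$, and then close the loop with one more application of the $n=1$ lemma, now read with base ring $W^+(k)[u_1,\ldots,u_{n-1}]$ in place of $W^+(k)$.

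More concretely, write $R_j := W^+(k)[u_1,\ldots,u_j]$ (or the power-series analogue) and set $I := (u_1,\ldots,u_{n-1})$. Given an $R_n$-module $M$ with $W^+(k) \otimes_{R_n} M \simeq 0$, base-changing Proposition~\ref{prop:cofiber-mult} along $W^+(k) \to R_{n-1}$ produces a cofiber sequence $R_n \xrightarrow{m_{u_n}} R_n \to R_{n-1}$ of $R_n$-modules, whence tensoring with $M$ yields $M \xrightarrow{m_{u_n}} M \to M/u_n$, with $M/u_n \simeq R_{n-1} \otimes_{R_n} M$ naturally an $R_{n-1}$-module. Transitivity of base change gives $W^+(k) \otimes_{R_{n-1}} (M/u_n) \simeq W^+(k) \otimes_{R_n} M \simeq 0$, so the inductive hypothesis applied to $M/u_n$ shows that $(M/u_n)^{\wedge}_I \simeq 0$. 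Since derived $I$-completion is exact, applying it to the cofiber sequence yields $M^{\wedge}_I \xrightarrow{m_{u_n}} M^{\wedge}_I \to 0$, i.e.\ $m_{u_n}$ acts as an equivalence on $M^{\wedge}_I$. In particular the cofiber $R_{n-1} \otimes_{R_n} M^{\wedge}_I$ is contractible, so the $n=1$ case of the lemma, applied with $R_{n-1}$ as the base ring, gives $(M^{\wedge}_I)^{\wedge}_{u_n} \simeq 0$. Invoking iterativity of derived completion for the ideal $(u_1,\ldots,u_n) = I + (u_n)$, this iterated completion coincides with $M^{\wedge}_{(u_1,\ldots,u_n)}$, closing the induction. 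The ``in particular'' assertion is then immediate: if $M$ is already $(u_1,\ldots,u_n)$-complete, then $M \simeq M^{\wedge}_{(u_1,\ldots,u_n)} \simeq 0$.

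The power-series case runs in parallel, replacing each $R_j$ by $W^+(k)[[u_1,\ldots,u_j]]$; the needed cofiber sequence is obtained by applying $(u_n)$-completion to the polynomial one and remains a cofiber sequence because $W^+(k)[[u_1,\ldots,u_{n-1}]]$ is $u_n$-nilpotent and hence $(u_n)$-complete by Corollary~\ref{cor:nil-compl}. The main obstacle I foresee is nailing down two ``soft'' facts that the excerpt uses freely but does not spell out: (i) the identification $M^{\wedge}_{I+(u_n)} \simeq (M^{\wedge}_I)^{\wedge}_{u_n}$ of iterated derived completions at a sum of finitely-generated ideals, and (ii) the reading of Lemma~\ref{lem:compl-mod-u} as a statement over an arbitrary connective $\mathbb{E}_{\infty}$-$W^+(k)$-algebra base, not just $W^+(k)$ itself. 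The latter is not truly an obstacle, since the only non-formal input in that lemma's proof is Proposition~\ref{prop:cofiber-mult}, which is manifestly base-change compatible; but it is the point at which I would be most careful to flag a generality not explicit in the original statement.
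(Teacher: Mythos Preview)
Your proposal is correct and matches the paper's approach exactly: the paper simply says to argue inductively on $n$ as in Lemma~\ref{lem:compl-mod-u}, noting heavy dependence on Proposition~\ref{prop:compl-gens}, and you have faithfully unpacked that induction. Your two flagged ``soft'' facts are precisely the places where Proposition~\ref{prop:compl-gens} (together with the corollary that $(x)$-completion preserves $I$-completeness) does the work, so you have also correctly identified why the paper singles out that proposition.
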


We note that in these inductive arguments, we heavily depend on
Proposition~\ref{prop:compl-gens}.

\

Now we are ready to formulate the Hopkins-Mahowald theorem for complete
regular local rings. We fix a positive integer $n \in \mathbb{N}_{> 0}$, a
perfectoid ring $R$. As in Section~\ref{sec:main}, let $\theta \of W
(R^{\flat}) \rightarrow R$ be Fontaine's pro-infinitesimal thickening. Let
$\phi \in W (R^{\flat}) [[u_1, \ldots, u_n]]$ be formal power series such that
$\phi (0, \ldots, 0) \in W (R^{\flat})$ is a generator of $\ker \theta$. We
recall that $\ker \theta$ is principal by definition. We note that the element
$1 - \phi (u_1, \ldots, u_n) \in W (R^{\flat}) [[u_1, \ldots, u_n]]$ is
invertible, since $1 - \phi (0, \ldots, 0) \in W (R^{\flat})$ is invertible as
the ring $W (R^{\flat})$ is $\ker \theta$-adically complete. As in
Remark~\ref{cons:f_p} and Remark~\ref{cons:f_R}, the element $1 - \phi (u_1,
\ldots, u_n) \in \tmop{GL}_1 (W (R^{\flat}) [[u_1, \ldots, u_n]])$ gives rise
to an $\mathbb{E}_2$-map $f \of \Omega^2 S^3 \rightarrow \tmop{BGL}_1 (W
(R^{\flat}) [[u_1, \ldots, u_n]])$. The proof of Lemma~\ref{lem:pi0Mf} results
in the following analogue:

\begin{lemma}
  The zeroth homotopy group of the $\mathbb{E}_2$-Thom spectrum $M f$
  associated to the map $f$ is isomorphic to the $W (R^{\flat})$-algebra $W
  (R^{\flat}) [[u_1, \ldots, u_n]] / (\phi (u_1, \ldots, u_n))$.
\end{lemma}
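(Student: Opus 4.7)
\begin{proof*}{Proof proposal}
The plan is to imitate the proof of Lemma~\ref{lem:pi0Mf} essentially verbatim, with $W^+(R^\flat)$ replaced by $W^+(R^\flat)[[u_1,\ldots,u_n]]$ and $1-\xi$ replaced by $1-\phi(u_1,\ldots,u_n)$. The only genuinely new ingredient is the identification of $\pi_0$ of the base ring spectrum, which in the present setting is supplied by Proposition~\ref{prop:mult-var-power-series-homotopy-grps} applied to the perfect $\mathbb{F}_p$-algebra $k = R^\flat$.

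First, I would argue that the Thom spectrum $Mf$ is connective. Indeed, $W^+(R^\flat)[[u_1,\ldots,u_n]]$ is connective (Proposition~\ref{prop:mult-var-power-series-homotopy-grps}), and $Mf$ is a colimit, taken in the $\infty$-category of modules, of a diagram of copies of this connective spectrum indexed by $\Omega^2 S^3$, hence connective. Next, since $\Omega^2 S^3 \simeq B(\Omega^3 S^3)$, the colimit defining $Mf$ can be rewritten as the homotopy orbits of the induced $\Omega^3 S^3$-action on $W^+(R^\flat)[[u_1,\ldots,u_n]]$, giving
\[
\pi_0(Mf) \;\cong\; \pi_0\bigl( W^+(R^\flat)[[u_1,\ldots,u_n]]_{h\Omega^3 S^3} \bigr) \;\cong\; \pi_0\bigl( W^+(R^\flat)[[u_1,\ldots,u_n]] \bigr)_{\pi_0(\Omega^3 S^3)},
\]
where the last isomorphism uses connectivity to collapse the homotopy orbits on $\pi_0$ to ordinary coinvariants under $\pi_0(\Omega^3 S^3) \cong \pi_3(S^3) \cong \mathbb{Z}$.

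Now I would identify the ring and the action. By Proposition~\ref{prop:mult-var-power-series-homotopy-grps} with $k = R^\flat$, we have $\pi_0(W^+(R^\flat)[[u_1,\ldots,u_n]]) \cong W(R^\flat)[[u_1,\ldots,u_n]]$. By construction of $f$ (Remark~\ref{cons:f_p} and Remark~\ref{cons:f_R}), the generator $1 \in \pi_0(\Omega^3 S^3) = \mathbb{Z}$ acts on $\pi_0$ of the base by multiplication by $1 - \phi(u_1,\ldots,u_n) \in \mathrm{GL}_1(W(R^\flat)[[u_1,\ldots,u_n]])$. Hence
\[
\pi_0(Mf) \;\cong\; W(R^\flat)[[u_1,\ldots,u_n]] \big/ \bigl(1 - (1-\phi(u_1,\ldots,u_n))\bigr) \;\cong\; W(R^\flat)[[u_1,\ldots,u_n]]/(\phi(u_1,\ldots,u_n)),
\]
as $W(R^\flat)$-algebras, which is the asserted isomorphism.

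There is no real obstacle here beyond checking that the cited structural results (in particular Proposition~\ref{prop:mult-var-power-series-homotopy-grps}) apply verbatim with $k = R^\flat$; that proposition is stated for arbitrary perfect $\mathbb{F}_p$-algebras, so this is automatic. The one mild subtlety to verify is that the element $1 - \phi$ really is a unit in $W(R^\flat)[[u_1,\ldots,u_n]]$: since $W(R^\flat)$ is $\ker\theta$-adically complete and $\phi(0,\ldots,0)$ generates $\ker\theta$, the constant term $1 - \phi(0,\ldots,0)$ is a unit in $W(R^\flat)$, and then $1 - \phi$ is a unit in the power series ring by the usual $(u_1,\ldots,u_n)$-adic completeness argument, which was already observed in the paragraph preceding the lemma.
\end{proof*}
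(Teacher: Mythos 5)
Your proposal is correct and is exactly the argument the paper intends: the paper simply states that the proof of Lemma~\ref{lem:pi0Mf} carries over, i.e.\ connectivity plus the identification of the colimit over $\Omega^2 S^3$ with $\Omega^3 S^3$-homotopy orbits, so that $\pi_0$ is the coinvariants of multiplication by $1-\phi$ on $\pi_0(W^+(R^\flat)[[u_1,\ldots,u_n]])\cong W(R^\flat)[[u_1,\ldots,u_n]]$ (the latter supplied by Proposition~\ref{prop:mult-var-power-series-homotopy-grps}). Your added checks (that $R^\flat$ is perfect and that $1-\phi$ is a unit, which the paper verifies in the preceding paragraph) are the right ones, so there is nothing to correct.
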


We now phrase the following variant of the Hopkins-Mahowald theorem:

\begin{theorem}
  \label{thm:perfd-regular-thom}The truncation map $t \of M f \rightarrow H
  \pi_0 (M f) \cong H W (R^{\flat}) [[u_1, \ldots, u_n]] / (\phi (u_1, \ldots,
  u_n))$ of $\mathbb{E}_2$-$W^+ (R^{\flat}) [[u_1, \ldots, u_n]]$-algebras is
  an equivalence of spectrum. Thus the Eilenberg-Maclane spectrum $H W
  (R^{\flat}) [[u_1, \ldots, u_n]] / (\phi (u_1, \ldots, u_n))$ is the
  $\mathbb{E}_2$-Thom spectrum $M f$ associated to the map $f \of \Omega^2 S^3
  \rightarrow \tmop{BGL}_1 (W^+ (R^{\flat}) [[u_1, \ldots, u_n]])$.
\end{theorem}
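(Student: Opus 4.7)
The plan is to follow the template established for Theorem~\ref{thm:cdvr-thom} essentially verbatim, with the single-variable power series ring $W^+(k)[[u]]$ replaced throughout by $W^+(R^\flat)[[u_1,\ldots,u_n]]$, and to reduce at the end to the main theorem Theorem~\ref{thm:main} (for perfectoid rings) rather than to Theorem~\ref{thm:perf-thom} (for perfect $\mathbb{F}_p$-algebras). Write $A \assign W(R^\flat)[[u_1,\ldots,u_n]]$, $B \assign A/(\phi)$, and denote the $\xi \assign \phi(0,\ldots,0)$, which by assumption generates $\ker\theta$. Both $Mf$ and $HB$ acquire canonical $W^+(R^\flat)[[u_1,\ldots,u_n]]$-module structures, the former tautologically, the latter via the composite $W^+(R^\flat)[[u_1,\ldots,u_n]] \to HA \to HB$ (using $\pi_0(W^+(R^\flat)[[u_1,\ldots,u_n]]) \cong A$ from Proposition~\ref{prop:mult-var-power-series-homotopy-grps}).

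First I would verify the standard finiteness/completeness package: $Mf$ is almost perfect as a $W^+(R^\flat)[[u_1,\ldots,u_n]]$-module via Bousfield--Kan along the finite Kan complex for $\Omega^2 S^3$ from Proposition~\ref{prop:finite-Kan-double-loop}, as in Lemma~\ref{lem:mf-aperf}. Since $\phi$ is a non-zero-divisor in $A$ (its image $\xi$ modulo $(u_1,\ldots,u_n)$ is a non-zero-divisor in $W(R^\flat)$, and a power series with non-zero-divisor constant term is a non-zero-divisor), $HB$ is a cofiber of $m_\phi$ on $HA$, hence also almost perfect over $W^+(R^\flat)[[u_1,\ldots,u_n]]$. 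Both spectra, together with $\tmop{cofib}(t)$, are then $(p,u_1,\ldots,u_n)$-complete by Proposition~\ref{prop:aperf-complete}.

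Second, and this is the main step, I would base change along the canonical map $W^+(R^\flat)[[u_1,\ldots,u_n]] \to W^+(R^\flat)$ that sends each $u_i \mapsto 0$. On the Thom side, the argument of Lemma~\ref{lem:eq-thom-base-change-bk} applies unchanged: the image of $1 - \phi(u_1,\ldots,u_n) \in \mathrm{GL}_1(A)$ is $1-\xi$, so that
\[ W^+(R^\flat) \otimes_{W^+(R^\flat)[[u_1,\ldots,u_n]]} Mf \simeq Mf_{R,\xi}. \]
On the $HB$ side, we must produce an equivalence $W^+(R^\flat) \otimes_{W^+(R^\flat)[[u_1,\ldots,u_n]]} HB \simeq HR$. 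The key input is that $u_1,\ldots,u_n,\phi$ forms a regular sequence in $A$ (clear, since $u_1,\ldots,u_n$ is regular and $A/(u_1,\ldots,u_n) = W(R^\flat)$ is $\xi$-torsion-free), which allows one to iteratively apply the cofiber-sequence argument from Lemma~\ref{lem:eq-HA-base-change}: killing $u_n$, then $u_{n-1}$, \ldots, then $u_1$ from $HB$ in the derived sense produces $H(B/(u_1,\ldots,u_n)) = H(W(R^\flat)/\xi) = HR$, each step being a cofiber of multiplication by a non-zero-divisor. Combining with Theorem~\ref{thm:main} via the naturality argument of Lemma~\ref{lem:eq-base-change-sphwitt-kappa} (checking the composite induces an isomorphism on $\pi_0$, which it does by inspection), the base-changed truncation map $W^+(R^\flat) \otimes_{W^+(R^\flat)[[u_1,\ldots,u_n]]} t$ is identified with $t_{R,\xi} \of Mf_{R,\xi} \to HR$, which is an equivalence by Theorem~\ref{thm:main}.

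Finally, I would invoke the multi-variable completeness lemma stated just above the theorem: since $\tmop{cofib}(t)$ is $(u_1,\ldots,u_n)$-complete and becomes contractible after base change along $W^+(R^\flat)[[u_1,\ldots,u_n]] \to W^+(R^\flat)$, it is itself contractible, so $t$ is an equivalence. The main obstacle is the regularity argument in the identification $W^+(R^\flat) \otimes_{W^+(R^\flat)[[u_1,\ldots,u_n]]} HB \simeq HR$: one must iterate the single-variable cofiber computation carefully and check that the resulting composite equivalence is compatible with the ring maps so that the identification with $t_{R,\xi}$ (and not some twist) is obtained. Everything else is a direct transcription of the Breuil-Kisin argument with one variable replaced by $n$ variables and $k$ replaced by $R^\flat$.
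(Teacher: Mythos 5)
Your proposal is essentially the paper's intended argument: the paper omits the proof of Theorem~\ref{thm:perfd-regular-thom}, declaring it parallel to Theorem~\ref{thm:cdvr-thom}, and what you write is exactly that parallel transcription --- base change along $u_i \mapsto 0$ identifies the base-changed truncation with $t_{R,\xi}$ for $\xi = \phi(0,\ldots,0)$, which is an equivalence by Theorem~\ref{thm:main} (correctly invoked here instead of Theorem~\ref{thm:perf-thom}), and then the multi-variable analogue of Lemma~\ref{lem:compl-mod-u} together with $(u_1,\ldots,u_n)$-completeness of $\operatorname{cofib}(t)$ finishes the proof.

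One caveat. You obtain completeness of $H B$, where $B = W(R^\flat)[[u_1,\ldots,u_n]]/(\phi)$, from almost perfectness, which presupposes that $H A$ (with $A = W(R^\flat)[[u_1,\ldots,u_n]]$) is almost perfect over $W^+(R^\flat)[[u_1,\ldots,u_n]]$; that claim is not justified in your outline and is genuinely unclear, since for a general perfectoid $R$ this base is neither Noetherian nor visibly coherent, so the mechanism of Lemma~\ref{lem:witt-aperf} and Corollary~\ref{cor:coherent_aperf} does not apply directly. The claim is also unnecessary: as in the Breuil--Kisin proof, deduce $(u_1,\ldots,u_n)$-completeness of $H B$ directly from Theorem~\ref{thm:cplhomot}, since $B$ is the cokernel of multiplication by $\phi$ on the $(u_1,\ldots,u_n)$-adically complete ring $A$ and derived complete discrete modules are closed under cokernels; only $M f$ needs the almost-perfectness route via Proposition~\ref{prop:finite-Kan-double-loop} and Proposition~\ref{prop:aperf-complete}, exactly as in Lemma~\ref{lem:mf-aperf}. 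Finally, for the identification $W^+(R^\flat) \otimes_{W^+(R^\flat)[[u_1,\ldots,u_n]]} H B \simeq H R$, what is really needed is Koszul regularity of $(u_1,\ldots,u_n)$ on $B$; this does follow from your observation that $(u_1,\ldots,u_n,\phi)$ is regular on $A$ (hence Koszul regular, and Koszul regularity is insensitive to ordering) together with $\phi$ being a non-zero-divisor on $A$, so with these small repairs your outline is the proof.
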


The proof is parallel to that of Theorem~\ref{thm:cdvr-thom}, which we will
omit. Now let $(A, \mathfrak{m})$ be a complete regular local ring with
residue field $k = A /\mathfrak{m}$ being perfect of characteristic $p$. We
also assume that $p \neq 0$ in $A$. Let $(a_1, \ldots, a_n) \subseteq
\mathfrak{m}$ be a regular sequence which generates the maximal ideal
$\mathfrak{m}$. We need the following lemma:

\begin{lemma}[{\cite[Lemma~9.2]{Krause2019}}]
  \label{lem:struct-thm}There exists a map $W (k) [[u_1, \ldots, u_n]]
  \rightarrow A$ of rings given by $u_i \mapsto a_i$ for $i = 1, \ldots, n$,
  which is surjective with kernel being principal, generated by a formal power
  series $\phi \in W (k) [[u_1, \ldots, u_n]]$ with $\phi (0, \ldots, 0) = p$.
\end{lemma}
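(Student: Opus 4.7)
The plan is to produce the desired map via the Cohen structure theorem and identify the kernel through a dimension count in a regular local UFD.

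First, since $A$ is a complete local ring whose residue field $k$ is perfect of characteristic $p$ and since $p \neq 0$ in $A$, the classical Cohen structure theorem provides a unique ring homomorphism $W(k) \to A$ lifting the identity on residue fields (this uses that $W(k)$ is the universal $p$-adically complete, $p$-torsion-free ring with residue field $k$, together with completeness of $A$ for an $\mathfrak{m}$-adic topology refining the $p$-adic one). Because the elements $a_1, \ldots, a_n$ lie in $\mathfrak{m}$ and $A$ is $\mathfrak{m}$-adically complete, the polynomial map $W(k)[u_1, \ldots, u_n] \to A$ sending $u_i \mapsto a_i$ extends uniquely to a continuous map $\Theta \of W(k)[[u_1, \ldots, u_n]] \to A$ of topological rings.

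Next, I would show $\Theta$ is surjective. Modulo the maximal ideal $(p, u_1, \ldots, u_n)$ on the source and $\mathfrak{m}$ on the target, $\Theta$ induces the identity $k \to k$. Since the source is complete for the $(p, u_1, \ldots, u_n)$-adic topology and the target is $\mathfrak{m}$-adically complete with $\Theta$ sending generators of the former ideal to generators of $\mathfrak{m}$, a standard successive approximation argument (or the topological Nakayama lemma for complete local rings) gives surjectivity.

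The next step identifies the kernel. Let $B = W(k)[[u_1, \ldots, u_n]]$, which is a regular local ring of Krull dimension $n+1$ with maximal ideal $(p, u_1, \ldots, u_n)$. Meanwhile $A$, being a regular local ring whose maximal ideal is generated by the length-$n$ regular sequence $(a_1, \ldots, a_n)$, has Krull dimension $n$. As $A$ is a regular local ring, it is a domain, so $\ker \Theta$ is a prime ideal of $B$ of height $\dim B - \dim A = 1$. Since $B$ is a regular local ring it is a UFD (by Auslander–Buchsbaum), and in a UFD every height-one prime is principal; thus $\ker \Theta = (\phi)$ for some $\phi \in B$.

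Finally, I would adjust $\phi$ to arrange $\phi(0, \ldots, 0) = p$. The quotient $B/(u_1, \ldots, u_n) \cong W(k)$ sits in a commutative square with the quotient $A/(a_1, \ldots, a_n) = A/\mathfrak{m} = k$, and $\Theta$ induces the canonical surjection $W(k) \to k$, whose kernel is $(p)$. Hence the image of $\phi$ in $W(k)$, namely $\phi(0, \ldots, 0)$, generates $(p)$, so $\phi(0, \ldots, 0) = pv$ for some $v \in W(k)^{\times}$. Replacing $\phi$ by $v^{-1} \phi$ (which is again a generator of $\ker \Theta$ since $v^{-1} \in W(k)^{\times} \subseteq B^{\times}$) gives $\phi(0, \ldots, 0) = p$, as required. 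The main obstacle is the careful invocation of the Cohen structure theorem to lift $k$ to $W(k) \subseteq A$; once that map is in hand, the rest is a dimension-theoretic bookkeeping exercise in the regular local UFD $W(k)[[u_1, \ldots, u_n]]$.
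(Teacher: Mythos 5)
Your argument is correct under the lemma's hypotheses, but it reaches principality of the kernel by a genuinely different route than the paper. You work inside the regular local ring $B = W(k)[[u_1,\ldots,u_n]]$: since $A$ is regular, hence a domain of dimension $n$, the kernel of your surjection $\Theta$ is a prime of height $\dim B - \dim A = 1$ (the height formula holding because $B$ is Cohen--Macaulay), and $B$ is a UFD by Auslander--Buchsbaum, so height-one primes are principal; you then normalize the constant term by a unit. The paper instead applies $- \otimes_B W(k)$, with $W(k) = B/(u_1,\ldots,u_n)$, to $0 \to K \to B \to A \to 0$: Koszul-regularity of $(a_1,\ldots,a_n)$ kills $\tmop{Tor}_1^B(A, W(k))$, so $\tmop{Tor}_0^B(K, W(k)) \cong \ker(W(k) \to k) = pW(k)$, and Nakayama shows that any lift $\phi \in K$ of $p$ generates $K$; principality and the normalization $\phi(0,\ldots,0) = p$ come out simultaneously. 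Two remarks. First, in your last step the commutative square by itself only gives $\phi(0,\ldots,0) \in pW(k)$; to see that it \emph{generates} $(p)$ (so that it is $p$ times a unit rather than, say, $p^2$) you should add the one-line observation that $\ker(W(k) \to k)$ equals the image of $K = (\phi)$ under $B \to B/(u_1,\ldots,u_n)$, i.e.\ the ideal generated by $\phi(0,\ldots,0)$ --- this is exactly the content the paper extracts from its Tor computation. Second, your route leans on regularity of $A$ (domain, dimension count, UFD input), so unlike the paper's Tor argument it does not extend to the generalization recorded in the remark following the lemma, where $A$ is merely complete with $I$ generated by a Koszul-regular sequence and $A/I$ perfectoid; that added generality is what the paper's homological approach buys, while yours buys a short, classical argument in the regular case. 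Your surjectivity step via successive approximation in the complete local ring is a fine alternative to the paper's finite-generation-plus-Nakayama argument.
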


\begin{proof}
  First, the isomorphism $k \rightarrow A /\mathfrak{m}$ lifts to a map $W (k)
  \rightarrow A$ since $A$ is $\mathfrak{m}$-adically complete, see
  Example~\ref{ex:witt} or {\cite[Section~II.5, Proposition~10]{Serre1979}}.
  The map $W (k) [[u_1, \ldots, u_n]] \rightarrow A$ is then well-defined
  since $A$ is $\mathfrak{m}$-adic complete. Let $C, K$ be the cokernel and
  the kernel of the map $W (k) [[u_1, \ldots, u_n]] \rightarrow A$ of $W (k)
  [[u_1, \ldots, u_n]]$-modules. By right-exactness of classical tensor
  products, we have
  \[ \tmop{Tor}_0^{W (k) [[u_1, \ldots, u_n]]} (C, W (k)) \cong \tmop{coker}
     (W (k) \rightarrow k) \cong 0 \]
  Now by inspecting the exact sequence $0 \rightarrow \mathfrak{m} \rightarrow
  A \rightarrow k \rightarrow 0$, we deduce that $A$ is a finitely generated
  $W (k) [[u_1, \ldots, u_n]]$-module, therefore so is $C$. We deduce from
  Nakayama's lemma that $C \cong 0$, therefore the map $W (k) [[u_1, \ldots,
  u_n]]$ is surjective. Now we obtain a short exact sequence of $W (k) [[u_1,
  \ldots, u_n]]$-modules
  \[ 0 \rightarrow K \rightarrow W (k) [[u_1, \ldots, u_n]] \rightarrow A
     \rightarrow 0 \]
  which gives rise to an exact sequence of $W (k)$-modules
  \[ \tmop{Tor}_1^{W (k) [[u_1, \ldots, u_n]]} (A, W (k)) \rightarrow
     \tmop{Tor}_0^{W (k) [[u_1, \ldots, u_n]]} (K, W (k)) \rightarrow W (k)
     \rightarrow k \rightarrow 0 \]
  Since $(a_1, \ldots, a_n)$ is a regular sequence, it is also Koszul regular
  {\cite[\href{}{Tag 062F}]{stacks-project}}, hence $\tmop{Tor}_1^{W (k)
  [[u_1, \ldots, u_n]]} (A, W (k)) \cong 0$. Thus
  \[ \tmop{Tor}_0^{W (k) [[u_1, \ldots, u_n]]} (K, W (k)) \cong \ker (W (k)
     \rightarrow k) \cong pW (k) \]
  We pick a lift $\phi \in K$ of $p \in pW (k)$. By Nakayama's lemma, the $W
  (k) [[u_1, \ldots, u_n]]$ module (and hence the ideal) $K$ is generated by
  the element $\phi \in K$. Furthermore, by multiplying an invertible element
  in $W (k)$, we can assume that the lift $\phi$ is so chosen that $\phi (0,
  \ldots, 0) = p$.
\end{proof}

\begin{remark}
  Our proof of Lemma~\ref{lem:struct-thm} leads to a more general result: Let
  $A$ be a commutative ring with an ideal $I \subseteq A$ generated by a
  (Koszul) regular sequence $(a_1, \ldots, a_n) \subseteq I$. If $A$ is both
  $p$-adically complete and $I$-adically complete, and $R \assign A / I$ is a
  perfectoid ring, then by Proposition~\ref{prop:Fontaine-inf-thickening},
  there exists a unique map $W (R^{\flat}) \rightarrow A$ such that the
  composite map $W (R^{\flat}) \rightarrow A \rightarrow R$ coincides with
  Fontaine's map, which allows us to view $A$ as a $W (R^{\flat})$-algebra.
  Now we consider the map $\varphi \of W (R^{\flat}) [[u_1, \ldots, u_n]]
  \rightarrow A$ of $W (R^{\flat})$-algebras given by $u_i \mapsto a_i$ for $i
  = 1, \ldots, n$. Our proof of Lemma~\ref{lem:struct-thm} implies that the
  map $\varphi$ is surjective with kernel being principal, generated by a
  formal power series $\phi \in W (R^{\flat}) [[u_1, \ldots, u_n]]$ such that
  $\phi (0, \ldots, 0)$ generates the kernel $\ker (\theta)$ of Fontaine's map
  $\theta \of W (R^{\flat}) \rightarrow R$.
\end{remark}

\begin{corollary}
  Let $\phi \in W (k) [[u_1, \ldots, u_n]]$ be a power series as described in
  Lemma~\ref{lem:struct-thm}. Let $f \of \Omega^2 S^3 \rightarrow \tmop{BGL}_1
  (W^+ (k) [[u_1, \ldots, u_n]])$ be the map given by the element $1 - \phi
  (u_1, \ldots, u_n) \in \tmop{GL}_1 (W (k) [[u_1, \ldots, u_n]])$. Then the
  $\mathbb{E}_2$-Thom spectrum $M f$ associated to the map $f$ is as an
  $\mathbb{E}_2$-$W^+ (k) [[u_1, \ldots, u_n]]$-algebra equivalent to the
  Eilenberg-Maclane spectrum $H A$ of the complete regular local ring $A$ (of
  mixed characteristic).
\end{corollary}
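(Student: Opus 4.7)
The plan is to deduce this as a direct application of Theorem~\ref{thm:perfd-regular-thom}, specialized to the case where the ambient perfectoid ring is the residue field $k$ itself. First I would observe that $k$, being a perfect $\mathbb{F}_p$-algebra, is tautologically a perfectoid ring with $k^{\flat} = k$, and that Fontaine's map $\theta \of W(k) \rightarrow k$ in this case is simply the reduction modulo $p$, whose kernel is the principal ideal $(p) \subseteq W(k)$. Consequently, the power series $\phi \in W(k)[[u_1, \ldots, u_n]]$ produced by Lemma~\ref{lem:struct-thm}, which satisfies $\phi(0, \ldots, 0) = p$, fulfills exactly the hypothesis required in Theorem~\ref{thm:perfd-regular-thom} (taking $R = k$): namely, that $\phi(0, \ldots, 0)$ generates $\ker \theta$.

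Next, I would invoke Theorem~\ref{thm:perfd-regular-thom} directly to conclude that the $\mathbb{E}_2$-Thom spectrum $M f$ associated to the map $f \of \Omega^2 S^3 \rightarrow \tmop{BGL}_1(W^+(k)[[u_1, \ldots, u_n]])$ determined by $1 - \phi \in \tmop{GL}_1(W(k)[[u_1, \ldots, u_n]])$ is equivalent, as an $\mathbb{E}_2$-$W^+(k)[[u_1, \ldots, u_n]]$-algebra, to the Eilenberg-Maclane spectrum $H\bigl(W(k)[[u_1, \ldots, u_n]]/(\phi)\bigr)$. Combining this with the ring isomorphism $W(k)[[u_1, \ldots, u_n]]/(\phi) \cong A$ of Lemma~\ref{lem:struct-thm}, which is compatible with the $W(k)[[u_1, \ldots, u_n]]$-module structures on both sides by construction, yields the desired equivalence $M f \simeq H A$.

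There is essentially no obstacle: the work has already been done in proving Theorem~\ref{thm:perfd-regular-thom} and in the commutative-algebraic Lemma~\ref{lem:struct-thm}. The only point that requires a moment's care is verifying that the $W^+(k)[[u_1, \ldots, u_n]]$-algebra structure on $H A$ induced by Lemma~\ref{lem:struct-thm} coincides with the one obtained from Theorem~\ref{thm:perfd-regular-thom}, but this is automatic because both arise by passage to $\pi_0$ from the same surjection $W(k)[[u_1, \ldots, u_n]] \twoheadrightarrow A$ whose kernel is $(\phi)$. Hence the corollary follows.
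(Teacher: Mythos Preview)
Your proposal is correct and follows essentially the same approach as the paper: the paper's proof consists of the single sentence ``It follows from Theorem~\ref{thm:perfd-regular-thom} by taking $R = k$ and Lemma~\ref{lem:struct-thm},'' and your argument is precisely an expanded version of this, spelling out why the perfect field $k$ qualifies as a perfectoid ring with $\ker\theta = (p)$ so that the hypothesis $\phi(0,\ldots,0) = p$ of Lemma~\ref{lem:struct-thm} matches the requirement in Theorem~\ref{thm:perfd-regular-thom}.
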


\begin{proof}
  It follows from Theorem~\ref{thm:perfd-regular-thom} by taking $R = k$ and
  Lemma~\ref{lem:struct-thm}.
\end{proof}

\section{Characterizing Thom spectra as quotients of free
$\mathbb{E}_2$-algebras}\label{sec:thom-free-E2-algs}

In this section, we will discuss an alternative characterization of Thom
spectra which we learn from {\cite{Antolin-Camarena2014}}. This
characterization will enable us to peel off some redundant restraints in the
definition of Thom spectra. We will rephrase Question~\ref{ques:prism-thom}
more broadly, and give a toy example related to the Breuil-Kisin case. We note
that in fact, we have already used this characterization in
Lemma~\ref{lem:thom-universal}.

We first present a theorem which we learn from Antolín-Camarena and Barthel's
paper:

\begin{remark}
  Let $R$ be an $\mathbb{E}_{\infty}$-ring. Let $R [\Omega^2 S^2]$ be the free
  $\mathbb{E}_2$-$R$-algebra on a single generator in degree 0. Then for all
  $\mathbb{E}_2$-$R$-algebra $S$ and elements $x \in \pi_0 (S)$, the universal
  property of free $\mathbb{E}_2$-$R$-algebras gives rise to a map $R
  [\Omega^2 S^2] \rightarrow S$ which maps the generator (in fact, a connected
  component) to $x$. We will call this map the evaluation map of $R [\Omega^2
  S^2]$ at $x$.
\end{remark}

\begin{theorem}[{\cite[Theorem~4.10]{Antolin-Camarena2014}}]
  \label{thm:versal-alg}Let $R$ be an $\mathbb{E}_{\infty}$-ring and $\alpha
  \in \pi_1 (\tmop{BGL}_1 (R)) \cong \tmop{GL}_1 (\pi_0 R)$. Let $q \of S^1
  \rightarrow \tmop{BGL}_1 (R)$ a loop representing $\alpha \in \pi_1
  (\tmop{BGL}_1 (R))$. Let $f \of \Omega^2 S^3 \rightarrow \tmop{BGL}_1 (R)$
  be the double loop map associated to $q$ (see Remark~\ref{rem:bgl1mongrpd}).
  Then the $\mathbb{E}_2$-Thom spectrum $M f$ associated to the
  $\mathbb{E}_2$-map $f$ fits into a pushout diagram of
  $\mathbb{E}_2$-$R$-algebras:
  \[ \begin{array}{ccc}
       R [\Omega^2 S^2] & \longrightarrow & R\\
       \longdownarrow &  & \longdownarrow\\
       R & \longrightarrow & M f
     \end{array} \]
  where $R [\Omega^2 S^2] \cong R \otimes_{\mathbb{S}} \Sigma_+^{\infty} S^2$
  is the free $\mathbb{E}_2$-$R$-algebra on a single generator in degree 0,
  and two maps $R [\Omega^2 S^2] \rightarrow R$ are evaluation maps of $R
  [\Omega^2 S^3]$ at $0 \in \pi_0 R$ and $1 - \alpha \in \pi_0 R$
  respectively.
\end{theorem}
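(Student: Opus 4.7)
The plan is to reduce the computation of $M f$ to a formal colimit manipulation, using the fact that $\Omega^2 S^3$ arises as the free $\mathbb{E}_2$-group on $S^1$.

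Since $S^1 \simeq \ast \sqcup_{S^0} \ast$ in pointed spaces and the free $\mathbb{E}_2$-group functor $F_2 = \Omega^2 \Sigma^2$ preserves colimits (being a left adjoint to the forgetful functor), applying $F_2$ yields the pushout presentation
\[
\Omega^2 S^3 \simeq \ast \sqcup_{\Omega^2 S^2} \ast
\]
in $\mathbb{E}_2$-groups, with both structure maps being the augmentation $\Omega^2 S^2 \to \ast$. By Remark~\ref{rem:bgl1mongrpd}, the datum of the $\mathbb{E}_2$-map $f$ is equivalent to the datum of the pointed loop $q \of S^1 \to \tmop{BGL}_1 (R)$; in terms of this pushout presentation, $f$ is encoded by two copies of the constant map $\ast \to \tmop{BGL}_1 (R)$ at the basepoint, together with a homotopy (= loop) between the two compositions $\Omega^2 S^2 \to \ast \to \tmop{BGL}_1 (R)$ which is precisely $q$.

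Next, the Thom spectrum construction preserves pushouts, being a colimit in $\tmop{LMod}_R$. Applying it to the pushout decomposition of $\Omega^2 S^3$ over $f$, I would obtain
\[
M f \simeq R \sqcup_{R [\Omega^2 S^2]} R
\]
in $\tmop{Alg}_R^{\mathbb{E}_2}$: the outer pieces are the Thom spectrum of the constant map $\ast \to \tmop{BGL}_1 (R)$ at the basepoint, which is $R$; the middle piece is the Thom spectrum of the trivial composition $\Omega^2 S^2 \to \ast \to \tmop{BGL}_1 (R)$, which is the free $\mathbb{E}_2$-$R$-algebra $R [\Omega^2 S^2]$ on a single generator in degree $0$.

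The final step is to identify the two structure maps $R [\Omega^2 S^2] \rightrightarrows R$ as evaluations at $0$ and $1 - \alpha$ in $\pi_0 R$. Each of these maps corresponds to an element of $\pi_0 R$ via the universal property of $R [\Omega^2 S^2]$ as the free $\mathbb{E}_2$-$R$-algebra on a single generator. Unwinding the pushout structure, one inclusion yields the augmentation $u \mapsto 0$, while the other, twisted by the loop $q$, gives $u \mapsto 1 - \alpha$: the shift by $1$ appears because the untwisted specialization already records the basepoint of $\tmop{BGL}_1 (R)$, corresponding to the identity self-equivalence $1 \in \tmop{GL}_1 (\pi_0 R)$, so the twisted specialization is displaced from this basepoint by the loop $\alpha$ and reads as $1 - \alpha$ in the additive convention where the augmentation sends $u$ to $0$. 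The main obstacle is precisely this sign-tracking between loops in $\tmop{BGL}_1 (R)$ and invertible elements of $\pi_0 R$; a useful sanity check is that the resulting pushout yields $\pi_0 M f \cong R / (1 - \alpha)$, matching the direct computation carried out in Lemma~\ref{lem:pi0Mf}.
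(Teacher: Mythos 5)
The paper itself gives no proof of this theorem --- it is quoted from Antol\'in-Camarena--Barthel --- so the relevant comparison is with their argument, whose skeleton your first half reproduces correctly: the presentation $\Omega^2 S^3 \simeq \ast \sqcup_{\Omega^2 S^2} \ast$ as a pushout of grouplike $\mathbb{E}_2$-spaces obtained by applying the free grouplike $\mathbb{E}_2$-functor to $S^1 \simeq \ast \sqcup_{S^0} \ast$, the encoding of $f$ by the loop $q$, and the compatibility of the Thom construction with this pushout. One caveat on the latter: ``being a colimit in $\tmop{LMod}_R$'' is not the reason the $\mathbb{E}_2$-Thom functor preserves pushouts of $\mathbb{E}_2$-algebras, since such pushouts are not computed on underlying modules; what one needs is that the Thom functor $\mathcal{S}_{/ \tmop{BGL}_1 (R)} \rightarrow \tmop{Mod}_R$ is a symmetric monoidal left adjoint, so that the induced functor on $\mathbb{E}_2$-algebra objects is again a left adjoint.

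The genuine gap is in your final identification of the two structure maps, and it is not mere sign-tracking. The Thom spectrum of the constant map $\Omega^2 S^2 \rightarrow \tmop{BGL}_1 (R)$ is $R \otimes \Sigma^{\infty}_+ \Omega^2 S^2$, the spherical group ring of the grouplike $\mathbb{E}_2$-space $\Omega^2 S^2$; its $\mathbb{E}_2$-$R$-algebra maps into $A$ are classified by points of $\tmop{GL}_1 (A)$, not of $\Omega^{\infty} A$, because a grouplike source must land in the invertible components. Consequently the leg induced by the augmentation $\Omega^2 S^2 \rightarrow \ast$ classifies $1 \in \tmop{GL}_1 (\pi_0 R)$ --- the augmentation of a group ring sends the generator to $1$, and ``evaluation at $0$'' does not even exist out of this object --- while the twisted leg classifies $\alpha$, not $1 - \alpha$. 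So your construction, carried out correctly, yields $M f \simeq R \otimes_{R \otimes \Sigma^{\infty}_+ \Omega^2 S^2} R$ along the maps classified by $1$ and $\alpha$. That is a true and useful statement, but it is not yet the stated one, which is a pushout over the free $\mathbb{E}_2$-$R$-algebra on a \emph{non-invertible} degree-$0$ generator (i.e.\ $R \otimes \Sigma^{\infty}_+ \coprod_n \tmop{Conf}_n (\mathbb{R}^2) / \Sigma_n$, whose group completion, not itself, is $\Omega^2 S^2$; the statement's own notation already conflates the two) along evaluations at $0$ and $1 - \alpha$. Passing from the pair $(1, \alpha)$ over the group ring to the pair $(0, 1 - \alpha)$ over the free algebra is exactly the multiplicative-to-additive comparison where Antol\'in-Camarena--Barthel do real work, for instance by showing that both pushouts corepresent the functor sending $A$ to the space of paths in $\Omega^{\infty} A$ from $1 - \alpha$ to $0$, equivalently paths in $\tmop{GL}_1 (A)$ from $\alpha$ to $1$ --- this is the universal property recorded in Lemma~\ref{lem:thom-universal}. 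Your closing sentence about the ``shift by $1$'' gestures at this but is not an argument, and the assertion that the untwisted leg is $u \mapsto 0$ is false as written; note also that your sanity check $\pi_0 (M f) \cong R / (1 - \alpha)$ is passed by both presentations, so it cannot detect the missing step.
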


\begin{remark}
  Theorem~\ref{thm:versal-alg} shows that the Thom spectrum description is
  equivalent to the pushout-diagram description. However, we note that the
  pushout-diagram description is more general in the sense that even if
  $\alpha \in \pi_0 R$ is not invertible, the pushout-diagram description is
  still valid while we can no longer, at least superficially, give a Thom
  spectrum description. We find it easier to write down proofs for Thom
  spectrum description so we adapted the Thom spectrum description for
  perfectoid rings.
\end{remark}

We can now rephrase Question~\ref{ques:prism-thom} as follows:

\begin{question}
  \label{ques:prism-E2}Given an orientable prism $(A, I = (d))$ . When can we
  find an $\mathbb{E}_{\infty}$-ring spectrum $A^+$ (which satisfies some
  hypotheses related to $A$. A naive guess would be that $\pi_0 (A^+) = A$) so
  that the Eilenberg-Maclane spectrum $H (A / I)$ as an
  $\mathbb{E}_2$-$A^+$-algebra fits into a pushout diagram
  \[ \begin{array}{ccc}
       A^+ [\Omega^2 S^2] & \longrightarrow & A^+\\
       \longdownarrow &  & \longdownarrow\\
       A^+ & \longrightarrow & H (A / I)
     \end{array} \]
  such that two maps $A^+ [\Omega^2 S^2] \rightarrow A^+$ are evaluation maps
  of the free $\mathbb{E}_2$-$A^+$-algebra $A^+ [\Omega^2 S^2]$ at $0 \in
  \pi_0 (A^+)$ and $d \in \pi_0 (A^+)$ respectively.
\end{question}

\begin{remark}
  Theorem~\ref{thm:versal-alg} shows that Theorem~\ref{thm:main} answers this
  question affirmatively when $(A, I)$ is a perfect prism $(W (R^{\flat}),
  \ker \theta)$, with $A^+ \assign W^+ (R^{\flat})$.
\end{remark}

\begin{remark}
  Similarly, Theorem~\ref{thm:cdvr-thom} answers this question affirmatively
  when $(A, I)$ is a prism $(W (k) [[u]], (E (u)))$ associated to Breuil-Kisin
  cohomology where $k$ is a perfect $\mathbb{F}_p$-algebra and $E (u) \in W
  (k) [u]$ is an Eisenstein polynomial.
\end{remark}

We now announce a toy example of a variant of Theorem~\ref{thm:cdvr-thom}. As
there, we fix a complete DVR $(A, \mathfrak{m})$ of mixed characteristics $(0,
p)$ with residue field $k$ being perfect, absolute ramification index $e$, a
uniformizer $\varpi \in \mathfrak{m}$ and an Eisenstein $W (k)$-polynomial $E
(u) \in W (k) [u]$ which induces an isomorphism $W (k) [u] / (E (u))
\xrightarrow{\sim} A, u \mapsto \varpi$ as in
Proposition~\ref{prop:struct-thm-cdvr-mixed}.

\begin{theorem}
  \label{thm:cdvr-E2}The $(u)$-completion of the total cofiber of the
  commutative diagram of $\mathbb{E}_2$-$W^+ (k) [u]$-algebras
  \[ \begin{array}{ccc}
       W^+ (k) [u] \otimes_{\mathbb{S}} \mathbb{S} [\Omega^2 S^2] &
       \longrightarrow & W^+ (k) [u]\\
       \longdownarrow &  & \longdownarrow\\
       W^+ (k) [u] & \longrightarrow & H A
     \end{array} \]
  is contractible, where two maps $W^+ (k) [u] \otimes_{\mathbb{S}} \mathbb{S}
  [\Omega^2 S^3] \rightarrow W^+ (k) [u]$ are given by evaluation maps at $0
  \in \pi_0 (W^+ (k) [u])$ and $E (u) \in \pi_0 (W^+ (k) [u])$ respectively.
  Equivalently put, the commutative diagram above induces an equivalence of
  $W^+ (k) [u]$-modules from the $\mathbb{E}_2$-pushout of the diagram $W^+
  (k) [u] \leftarrow W^+ (k) [u] \otimes_{\mathbb{S}} \mathbb{S} [\Omega^2
  S^2] \rightarrow W^+ (k) [u]$ to the Eilenberg-Maclane spectrum $H A$ after
  $(u)$-completion.
\end{theorem}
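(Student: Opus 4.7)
The plan is to deduce Theorem~\ref{thm:cdvr-E2} from the perfect-ring case Theorem~\ref{thm:perf-thom} by base change along the augmentation $W^+ (k) [u] \rightarrow W^+ (k)$, $u \mapsto 0$, using Lemma~\ref{lem:compl-mod-u} as the detection principle. Let $Q$ denote the $\mathbb{E}_2$-pushout in $\tmop{Alg}_{W^+ (k) [u]}^{\mathbb{E}_2}$ of the two evaluation maps of the theorem, so the diagram induces a canonical map $\alpha \of Q \rightarrow H A$. The claim is that $\alpha$ becomes an equivalence after $(u)$-completion, equivalently that $\tmop{cofib} (\alpha)_{(u)}^{\wedge} \simeq 0$, and by Lemma~\ref{lem:compl-mod-u} it suffices to prove the stronger assertion that $W^+ (k) \otimes_{W^+ (k) [u]} \alpha$ is an equivalence of $W^+ (k)$-modules.

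Compute each side of this base-changed map separately. Since base change is symmetric monoidal and preserves $\mathbb{E}_2$-pushouts, and since $W^+ (k) \otimes_{W^+ (k) [u]} (W^+ (k) [u] \otimes_{\mathbb{S}} \mathbb{S} [\Omega^2 S^2]) \simeq W^+ (k) \otimes_{\mathbb{S}} \mathbb{S} [\Omega^2 S^2]$, the source $W^+ (k) \otimes_{W^+ (k) [u]} Q$ is the $\mathbb{E}_2$-pushout of $W^+ (k) \leftarrow W^+ (k) \otimes_{\mathbb{S}} \mathbb{S} [\Omega^2 S^2] \rightarrow W^+ (k)$ at the images of $0$ and $E (u)$ under $u \mapsto 0$. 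Writing $E (u) = u^e + \sum_{j < e} a_j u^j$ with $a_0 = p b_0$ and $b_0 \in \tmop{GL}_1 (W (k))$, the two evaluations become $0$ and $p b_0 \in W (k)$. Since $1 - p b_0 \in 1 + p W (k)$ is invertible, Theorem~\ref{thm:versal-alg} identifies this pushout with the $\mathbb{E}_2$-Thom spectrum $M f_{k, p b_0}$, which is in turn equivalent to $H k$ by Theorem~\ref{thm:perf-thom}. On the other side, Proposition~\ref{prop:cofiber-mult} gives a cofiber sequence $W^+ (k) [u] \xrightarrow{u} W^+ (k) [u] \rightarrow W^+ (k)$, so $W^+ (k) \otimes_{W^+ (k) [u]} H A \simeq \tmop{cofib} (H A \xrightarrow{\varpi} H A) \simeq H k$, exactly as in Lemma~\ref{lem:eq-HA-base-change}.

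It then remains to identify the base-changed map $W^+ (k) \otimes_{W^+ (k) [u]} \alpha$ with the 0th Postnikov section $t_{k, p b_0} \of M f_{k, p b_0} \rightarrow H k$; granted that, it is an equivalence by Theorem~\ref{thm:perf-thom} (in the form Lemma~\ref{lem:eq-postnikov-perf}), finishing the proof. Both domain and codomain are connective $W^+ (k)$-modules with $\pi_0 \cong k$, so in the spirit of Lemma~\ref{lem:eq-base-change-sphwitt-kappa} it suffices to check that the induced map on $\pi_0$ is the identity on $k$, an element chase following Lemma~\ref{lem:pi0Mf}. The main obstacle is not any substantial new homological input but precisely this bookkeeping step: one must verify carefully that base-changing the $\mathbb{E}_2$-pushout $Q$ along $u \mapsto 0$ and reinterpreting it via Theorem~\ref{thm:versal-alg} genuinely recovers the Thom-spectrum map $t_{k, p b_0}$ rather than a twist thereof.
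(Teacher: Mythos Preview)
Your proposal is correct and follows essentially the same strategy as the paper's own sketch: base change along $W^+(k)[u]\to W^+(k)$, identify the resulting map with $t_{k,pb_0}$ via Theorem~\ref{thm:versal-alg} and the $\pi_0$-check of Lemma~\ref{lem:eq-base-change-sphwitt-kappa}, invoke Lemma~\ref{lem:eq-postnikov-perf}, and conclude by Lemma~\ref{lem:compl-mod-u}. The only cosmetic difference is that the paper first records that $Q\to HA$ is the $0$th Postnikov section before base changing, whereas you go straight to identifying the base-changed map; your route is slightly more economical and the bookkeeping caveat you flag is exactly the point the paper leaves implicit.
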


\begin{corollary}[{\cite[Remark~3.4]{Krause2019}}]
  The $\mathbb{E}_2$-$H A$-algebra $H A \otimes_{W^+ (k) [u]} H A$ is the
  $(p)$-completion of the free $\mathbb{E}_2$-$H A$-algebra on a single
  generator in degree 1.
\end{corollary}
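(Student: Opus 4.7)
The plan is to base-change Theorem~\ref{thm:cdvr-E2} along the structure map $W^+ (k) [u] \to HA$ and to recognize the resulting $\mathbb{E}_2$-pushout. Applying the base-change functor $HA \otimes_{W^+ (k) [u]} -$ to the pushout square of Theorem~\ref{thm:cdvr-E2} and using that base change preserves $\mathbb{E}_2$-pushouts, one obtains a pushout square of $\mathbb{E}_2$-$HA$-algebras with corners $HA \otimes_{\mathbb{S}} \mathbb{S} [\Omega^2 S^2]$, $HA$, $HA$ and $HA \otimes_{W^+ (k) [u]} HA$. Its two horizontal arrows are the base changes of the evaluation maps at $0$ and at $E (u)$ in $\pi_0 (W^+ (k) [u])$; under the identification $u \mapsto \varpi$ we have $E (u) \mapsto E (\varpi) = 0$ in $\pi_0 (HA) = A$, so both arrows agree with the evaluation at $0 \in \pi_0 (HA)$.

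Next I would recognize this pushout via Theorem~\ref{thm:versal-alg} applied with $R = HA$ and $\alpha = 1 \in \tmop{GL}_1 (A)$, so that $1 - \alpha = 0$: the $\mathbb{E}_2$-pushout of two copies of the evaluation at $0$ coincides with the $\mathbb{E}_2$-Thom spectrum of the constant map $\Omega^2 S^3 \to \tmop{BGL}_1 (HA)$, which is the free $\mathbb{E}_2$-$HA$-algebra on a single degree-$1$ generator $HA \otimes_{\mathbb{S}} \Sigma_+^{\infty} \Omega^2 S^3$, exactly as in the proof of Corollary~\ref{cor:HA-base-change-free}. To account for the $(u)$-completion in Theorem~\ref{thm:cdvr-E2}, let $P$ denote the uncompleted $\mathbb{E}_2$-pushout, so $P_u^{\wedge} \simeq HA$. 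The cofiber of $P \to HA$ is $(u)$-acyclic, i.e.\ $u$ acts invertibly on it; since invertibility of $u$ is preserved by the exact functor $HA \otimes_{W^+ (k) [u]} -$, the induced map $HA \otimes_{W^+ (k) [u]} P \to HA \otimes_{W^+ (k) [u]} HA$ is also a $(u)$-completion equivalence. Because $p$ and $\varpi^e$ differ by a unit in $A$, the $(u)$-completion and the $(p)$-completion agree on $HA$-modules, yielding an equivalence $(HA \otimes_{W^+ (k) [u]} HA)_p^{\wedge} \simeq (HA \otimes_{\mathbb{S}} \Sigma_+^{\infty} \Omega^2 S^3)_p^{\wedge}$.

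The main obstacle is then to verify that the left-hand side $HA \otimes_{W^+ (k) [u]} HA$ is itself already $(p)$-complete, so that the completion on the left can be dropped and the statement reduces to the identification with $(HA \otimes_{\mathbb{S}} \Sigma_+^{\infty} \Omega^2 S^3)_p^{\wedge}$. Since relative tensor products do not automatically preserve completeness, this requires either a direct analysis of the bar complex computing $HA \otimes_{W^+ (k) [u]} HA$, or a comparison with $HA \otimes_{W^+ (k) [[u]]} HA$ from Corollary~\ref{cor:HA-base-change-free} via change of base between $W^+ (k) [u]$ and $W^+ (k) [[u]]$; the discrepancy between these two tensor products should be exactly what the $(p)$-completion on the free algebra is recording.
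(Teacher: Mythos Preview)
Your approach is exactly the paper's: the paper's two-line proof just records that $E(u)$ vanishes after tensoring with $HA$ and that $(u)$-completion agrees with $(p)$-completion on $HA$-modules (since $\varpi^e/p$ is a unit), and declares the result to follow. Your steps (base-change the square, identify both legs with evaluation at $0$, invoke Theorem~\ref{thm:versal-alg} with $\alpha=1$, and switch to $(p)$-completion) unpack precisely this.

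You are right that one more point is needed, and the paper's proof also skips it: one must know that $HA\otimes_{W^+(k)[u]}HA$ is already $p$-complete, so that the completion on the left can be dropped. This follows from the finiteness machinery already assembled in the paper rather than from any bar-complex or change-of-base analysis. Namely, $H W(k)[u]\simeq H W(k)\otimes_{W^+(k)}W^+(k)[u]$ is almost perfect over $W^+(k)[u]$ by Lemma~\ref{lem:witt-aperf} and Proposition~\ref{prop:aperf-base-change}, and $HA$ is the cofiber of multiplication by $E(u)$ on $H W(k)[u]$, hence almost perfect over $W^+(k)[u]$ by Proposition~\ref{prop:aperf}. Base-changing along $W^+(k)[u]\to HA$, the $HA$-module $HA\otimes_{W^+(k)[u]}HA$ is almost perfect; since $HA$ is $p$-complete, Proposition~\ref{prop:aperf-complete} gives $p$-completeness of $HA\otimes_{W^+(k)[u]}HA$. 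With this in hand your argument (and the paper's) is complete.
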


\begin{proof}
  Note that $E (u)$ vanishes after tensoring $H A$, and that $(u)$-completion
  coincides with $(p)$-completion for $H A$ since $\varpi^e / p$ is an
  invertible element, the result follows.
\end{proof}

We sketch a proof of Theorem~\ref{thm:cdvr-E2}, which is totally parallel to
that of Theorem~\ref{thm:cdvr-thom}.

\begin{proof*}{A sketch of a proof of Theorem~\ref{thm:cdvr-E2}}
  Let $X$ be the pushout of the diagram $W^+ (k) [u] \leftarrow W^+ (k) [u]
  \otimes_{\mathbb{S}} \mathbb{S} [\Omega^2 S^2] \rightarrow W^+ (k) [u]$ in
  question. We first check that the induced map $X \rightarrow H A$ is the 0th
  Postnikov section. Then we perform a base change $W^+ (k) \otimes_{W^+ (k)
  [u]} -$. We show that after such a base change, the induced map $X
  \rightarrow H A$ becomes an equivalence, given by
  Theorem~\ref{thm:versal-alg} and Lemma~\ref{lem:eq-postnikov-perf}. We then
  conclude the result by Corollary~\ref{cor:eq-tr-u-compl}.
\end{proof*}

\appendix\section{Recollection of Higher Algebra}

This appendix is devoted to a recollection of basic facts in Higher Algebra
needed in the main text. Our main reference is {\cite{Lurie2017}},
{\cite{Lurie2018}} and {\cite{Lurie2018a}}.

\subsection{Finiteness properties of rings and modules}

We will include some definitions and properties from
{\cite[Section~7.2.4]{Lurie2017}}.

\begin{definition}[{\cite[Notation~7.1.1.10,
Proposition~7.1.1.13]{Lurie2017}}]
  Given a connective $\mathbb{E}_1$-ring $R$, there is a canonical accessible
  $t$-structure on $\tmop{LMod}_R$ determined by subcategories
  $(\tmop{LMod}_R)_{\geq 0}$ and $(\tmop{LMod}_R)_{\leq 0}$, where
  $(\tmop{LMod}_R)_{\geq 0}$ is the full subcategory of $\tmop{LMod}_R$
  spanned by those left $R$-modules $M$ for which $\pi_n M \cong 0$ for $n <
  0$, and $(\tmop{LMod}_R)_{\leq 0}$ is the full subcategory of
  $\tmop{LMod}_R$ spanned by those left $R$-modules $M$ for which $\pi_n M
  \cong 0$ for $n > 0$.
\end{definition}

\begin{proposition}[{\cite[Proposition~7.1.1.13]{Lurie2017}}]
  Let $R$ be a connective $\mathbb{E}_1$-ring, then the subcategories
  $(\tmop{LMod}_R)_{\geq 0}, (\tmop{LMod}_R)_{\leq 0} \subseteq \tmop{LMod}_R$
  are stable under small products and small filtered colimits.
\end{proposition}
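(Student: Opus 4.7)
The plan is to reduce everything to a statement about homotopy groups and then invoke the fact that $\pi_n$ commutes with the relevant limits and colimits. Concretely, the subcategories $(\tmop{LMod}_R)_{\geq 0}$ and $(\tmop{LMod}_R)_{\leq 0}$ are cut out by vanishing conditions on the homotopy groups $\pi_n M$, so it suffices to show that small products and small filtered colimits of left $R$-modules are computed on homotopy groups in the expected way.

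First, I would observe that the forgetful functor $U \of \tmop{LMod}_R \rightarrow \tmop{Sp}$ is conservative and preserves small limits and small colimits; this is a standard consequence of $\tmop{LMod}_R$ being the $\infty$-category of modules over an algebra in a presentable stable $\infty$-category, where $U$ is a right adjoint to $R \otimes -$ and simultaneously preserves colimits because the free module functor does (see {\cite[Corollary~4.2.3.3 and Corollary~4.2.3.5]{Lurie2017}}). Since $U$ detects the homotopy groups of an $R$-module (by definition $\pi_n M$ is computed on the underlying spectrum), it is enough to prove the corresponding statements in $\tmop{Sp}$.

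For spectra, the key two facts are: (i) $\pi_n$ commutes with small products, that is, $\pi_n \left( \prod_i E_i \right) \cong \prod_i \pi_n (E_i)$, which follows from the fact that $\Omega^{\infty - n}$ is a right adjoint; and (ii) $\pi_n$ commutes with small filtered colimits, that is, $\pi_n (\tmop{colim}_i E_i) \cong \tmop{colim}_i \pi_n (E_i)$, which is a standard result on filtered colimits in $\tmop{Sp}$ (see {\cite[Proposition~1.4.3.9]{Lurie2017}}, or more concretely the fact that filtered colimits commute with finite limits in $\mathcal{S}$ and hence with $\Omega^{\infty - n}$ on connective covers). Given these, if $M_i \in (\tmop{LMod}_R)_{\geq 0}$ for all $i$, then $\pi_n (\prod_i M_i) \cong \prod_i \pi_n (M_i) = 0$ for $n < 0$, and similarly for the filtered colimit, so $\prod_i M_i$ and $\tmop{colim}_i M_i$ lie in $(\tmop{LMod}_R)_{\geq 0}$. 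The same argument, with $n > 0$ in place of $n < 0$, handles $(\tmop{LMod}_R)_{\leq 0}$.

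There is no real obstacle here; the only point that requires a small amount of care is justifying that small products and small filtered colimits in $\tmop{LMod}_R$ are indeed computed in the underlying category of spectra, which is exactly what the preservation properties of $U$ give us. Everything else reduces to the exactness of $\pi_n$ with respect to these operations at the level of spectra.
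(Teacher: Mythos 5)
Your proof is correct. Note that the paper does not prove this statement at all: it is quoted as background in the appendix directly from Lurie's Higher Algebra (Proposition~7.1.1.13), and your argument --- reduce to the underlying spectra via the forgetful functor $\tmop{LMod}_R \rightarrow \tmop{Sp}$, which preserves small limits and colimits, and then use that $\pi_n$ commutes with small products and small filtered colimits of spectra --- is exactly the standard argument behind the cited result. The only cosmetic quibble is the justification that the forgetful functor preserves colimits ``because the free module functor does''; the correct reason is the compatibility of the smash product with colimits (HA~4.2.3.5), though for the present purpose preservation of filtered colimits alone would suffice.
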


\begin{definition}[{\cite[Proposition~7.2.2.10]{Lurie2017}}]
  Let $M$ be a left module over an $\mathbb{E}_1$-ring $R$. We will say that
  $M$ is {\tmdfn{flat}} if the following conditions are satisfied:
  \begin{enumerate}
    \item The homotopy group $\pi_0 M$ is flat as a left module over $\pi_0 R$
    in the usual sense.
    
    \item For each $n \in \mathbb{Z}$, the natural map $\tmop{Tor}_0^{\pi_0 R}
    (\pi_n R, \pi_0 M) \rightarrow \pi_n M$ is an isomorphism of abelian
    groups.
  \end{enumerate}
\end{definition}

\begin{definition}[{\cite[Definition~7.2.4.1]{Lurie2017}}]
  Let $R$ be an $\mathbb{E}_1$-ring. We let $\tmop{LMod}_R^{\tmop{perf}}$
  denote the smallest stable subcategory of $\tmop{LMod}_R$ which contains $R$
  (regarded as a left module over itself) and is closed under retracts. We
  will say that a left $R$-module $M$ is perfect if it belongs to
  $\tmop{LMod}_R^{\tmop{perf}}$.
\end{definition}

\begin{definition}[{\cite[Definition~7.2.4.8]{Lurie2017}}]
  Let $\mathcal{C}$ be a compactly generated $\infty$-category. We will say
  that an object $C \in \mathcal{C}$ is {\tmdfn{almost compact}} if
  $\tau_{\leq n} C$ is a compact object of $\tau_{\leq n}  \mathcal{C}$ for
  all $n \geq 0$.
\end{definition}

\begin{definition}[{\cite[Definition~7.2.4.10]{Lurie2017}}]
  \label{def:aperf}Let $R$ be a connective $\mathbb{E}_1$-ring. We will say
  that a left $R$-module $M$ is {\tmdfn{almost perfect}} if there exists an
  integer $k$ such that $M \in (\tmop{LMod}_R)_{\geq k}$ and is almost compact
  as an object of $(\tmop{LMod}_R)_{\geq k}$. We let
  $\tmop{LMod}_R^{\tmop{aperf}}$ denote the full subcategory of
  $\tmop{LMod}_R$ spanned by the almost perfect left $R$-modules.
\end{definition}

\begin{proposition}[{\cite[Proposition~7.2.4.11]{Lurie2017}}]
  \label{prop:aperf}Let $R$ be a connective $\mathbb{E}_1$-ring. Then:
  \begin{enumeratenumeric}
    \item The full subcategory $\tmop{LMod}_R^{\tmop{aperf}} \subseteq
    \tmop{LMod}_R$ is closed under translation and finite colimits, and is
    therefore a stable subcategory of $\tmop{LMod}_R$;
    
    \item The full subcategory $\tmop{LMod}_R^{\tmop{aperf}} \subseteq
    \tmop{LMod}_R$ is closed under retracts;
    
    \item Every perfect left $R$-module is almost perfect;
    
    \item The full subcategory $(\tmop{LMod}_R^{\tmop{aperf}})_{\geq 0}
    \subseteq \tmop{LMod}_R$ is closed under geometric realizations of
    simplicial objects;
    
    \item Let $M$ be a left $R$-module which is connective and almost perfect.
    Then $M$ can be obtained as the geometric realization of a simplicial left
    $R$-module $P_{\bullet}$ such that each $P_n$ is a free $R$-module of
    finite rank.
  \end{enumeratenumeric}
\end{proposition}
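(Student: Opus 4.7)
The plan is to dispatch the five claims in order, leveraging how almost compactness interacts with the canonical $t$-structure on $\tmop{LMod}_R$. The central technical input I would use is that the truncation functor $\tau_{\leq n}$ restricted to the connective part is a left adjoint (hence preserves colimits) into $\tau_{\leq n}(\tmop{LMod}_R)_{\geq 0}$, and that compact objects in any presentable $\infty$-category are closed under finite colimits and retracts. Throughout, to compare several almost perfect modules, I would shift so that all of them lie in a common $(\tmop{LMod}_R)_{\geq k}$.

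For claims (1) and (2), closure under suspension is immediate since $\Sigma$ shifts the connectivity bound upward, and the truncation of a suspension is a shift of a compact object. For a finite colimit of almost perfect modules, having shifted into a common $(\tmop{LMod}_R)_{\geq k}$, the truncation $\tau_{\leq n}$ of the colimit equals the colimit of $\tau_{\leq n}$ of the factors in the truncated subcategory, and a finite colimit of compact objects is compact; closure under retracts is the analogous statement for retracts of compact objects. Claim (3) then follows: $R$ itself is compact in $\tmop{LMod}_R$ and hence almost compact, and $\tmop{LMod}_R^{\tmop{perf}}$ is by definition the smallest stable-and-retract-closed subcategory containing $R$, which is contained in $\tmop{LMod}_R^{\tmop{aperf}}$ by (1) and (2).

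For claim (4), given a simplicial object $M_\bullet$ in $(\tmop{LMod}_R^{\tmop{aperf}})_{\geq 0}$, its realization $M \simeq |M_\bullet|$ is connective because the connective part is closed under colimits. To see $M$ is almost perfect, I would apply $\tau_{\leq m}$ to get $\tau_{\leq m} M \simeq |\tau_{\leq m} M_\bullet|$ computed in the truncated $(m{+}1)$-category $\tau_{\leq m}(\tmop{LMod}_R)_{\geq 0}$. The key fact I would invoke is that inside an $(m{+}1)$-category, the geometric realization of a simplicial object depends only on its restriction to the $(m{+}1)$-skeleton, hence is a finite colimit. Consequently $\tau_{\leq m} M$ is a finite colimit of the compact objects $\tau_{\leq m} M_k$ for $k \leq m{+}1$, and therefore compact. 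The main obstacle is making this skeletal determination rigorous; it relies on a standard but nontrivial fact about truncated $\infty$-categories, together with the compatibility of $\tau_{\leq m}$ with the relevant colimits.

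For claim (5), I would build the resolution inductively. Since $M$ is connective almost perfect, $\tau_{\leq 0} M \simeq \pi_0 M$ is compact in the heart $\tmop{Mod}_{\pi_0 R}^{\heartsuit}$, so in particular $\pi_0 M$ is finitely generated as a $\pi_0 R$-module. I choose a surjection $(\pi_0 R)^{\oplus n_0} \twoheadrightarrow \pi_0 M$, lift it to a map $R^{\oplus n_0} \to M$, and let $M^{(1)}$ denote its fiber. From the cofiber sequence, $M^{(1)}$ is again almost perfect by claim (1), and it is connective with $\pi_0 M^{(1)} = 0$, hence lies in $(\tmop{LMod}_R)_{\geq 1}$. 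Iterating the same argument on $\Omega M^{(1)}$ produces free modules $R^{\oplus n_i}$ of finite rank at each stage; these assemble, via a Dold--Kan-style argument converting such a resolution into a simplicial object, into the desired simplicial object $P_\bullet$ with $|P_\bullet| \simeq M$.
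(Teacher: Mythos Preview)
The paper does not prove this proposition: it appears in the appendix ``Recollection of Higher Algebra'' and is simply quoted, with attribution, from \cite[Proposition~7.2.4.11]{Lurie2017}. So there is no proof in the paper to compare your attempt against.

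That said, your sketch is essentially the argument Lurie gives in \emph{Higher Algebra}. A few small points you may want to tighten. For (1) you only discuss suspension, but ``closed under translation'' includes desuspension as well; this is handled by the observation that $\Sigma$ induces an equivalence $(\tmop{LMod}_R)_{\geq k}\simeq(\tmop{LMod}_R)_{\geq k+1}$ compatible with truncations, so almost compactness transfers in both directions. You also implicitly use that if $M$ is almost compact in $(\tmop{LMod}_R)_{\geq k}$ and $k'\leq k$, then $M$ is almost compact in $(\tmop{LMod}_R)_{\geq k'}$; this is true but worth stating, since otherwise ``shifting into a common $(\tmop{LMod}_R)_{\geq k}$'' is not justified. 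For (4), the skeletal-determination fact you invoke is exactly \cite[Lemma~1.3.3.10]{Lurie2017} (or its consequences), so you might cite it directly. For (5), the passage from an iterated fiber-sequence resolution to a simplicial object is the content of \cite[Proposition~7.2.1.10 or Lemma~1.2.4.6]{Lurie2017}; calling it ``Dold--Kan-style'' is accurate in spirit but vague enough that a reader might want a reference.
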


\begin{proposition}
  \label{prop:aperf-base-change}Let $f \of A \rightarrow A'$ be a map of
  connective $\mathbb{E}_1$-rings. Let $M$ be a connective left $A$-module and
  set $M' = A' \otimes_A M$. If $M$ is an almost perfect left $A$-module, then
  $M'$ is an almost perfect left $A'$-module.
\end{proposition}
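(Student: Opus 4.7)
The plan is to reduce to the fact that free modules of finite rank are trivially almost perfect and then use closure of almost perfect connective modules under geometric realizations.

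First, since $M$ is a connective almost perfect $A$-module, part (5) of Proposition~\ref{prop:aperf} lets us write $M \simeq |P_\bullet|$ as the geometric realization of a simplicial $A$-module $P_\bullet$ in which each $P_n$ is a free $A$-module of finite rank. The base change functor $A' \otimes_A - \of \tmop{LMod}_A \to \tmop{LMod}_{A'}$ is a left adjoint (to the forgetful functor along $f$) and therefore commutes with all small colimits; in particular with geometric realizations. Hence
\[ M' = A' \otimes_A M \simeq A' \otimes_A |P_\bullet| \simeq |A' \otimes_A P_\bullet|, \]
and each $A' \otimes_A P_n$ is a free $A'$-module of the same finite rank as $P_n$.

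Next I would observe that a free $A'$-module of finite rank is a finite direct sum of copies of $A'$, hence perfect by definition (the smallest stable subcategory closed under retracts containing $A'$), hence almost perfect by Proposition~\ref{prop:aperf}(3). In particular each $A' \otimes_A P_n$ lies in $(\tmop{LMod}_{A'}^{\tmop{aperf}})_{\geq 0}$ (connectivity is automatic since $A'$ is connective). By Proposition~\ref{prop:aperf}(4), this subcategory is closed under geometric realizations of simplicial objects, so $M' \simeq |A' \otimes_A P_\bullet|$ is almost perfect as an $A'$-module.

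There is essentially no obstacle here: the entire proof is a straightforward application of Proposition~\ref{prop:aperf}, once one notes that the base change functor preserves colimits and sends free modules of finite rank to free modules of finite rank. The only minor point worth flagging is the connectivity hypothesis on $A \to A'$ (needed so that $A'$ itself is connective and the image of $P_\bullet$ still lies in the connective part, allowing us to invoke part (4) of Proposition~\ref{prop:aperf} rather than a more delicate statement about the full stable subcategory of almost perfect modules).
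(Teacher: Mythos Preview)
Your proof is correct and follows essentially the same approach as the paper: write $M$ as a geometric realization of finitely generated free modules via Proposition~\ref{prop:aperf}(5), base change term-wise using that $A'\otimes_A -$ preserves colimits, and conclude using Proposition~\ref{prop:aperf}(3) and (4). If anything, you supply slightly more justification (left adjointness, connectivity) than the paper does.
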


\begin{proof}
  Since $M$ is connective and almost perfect, by Proposition~\ref{prop:aperf},
  there exists a simplicial object $P_{\bullet}$ in $\tmop{LMod}_A$ such that
  each $P_n$ is a free $A$-module of finite rank and $M$ is equivalent to the
  geometric realization of $P_{\bullet}$. Therefore $M'$ is equivalent to the
  geometric realization of $A' \otimes_A P_{\bullet}$, by the fact the tensor
  products commute with small colimits. On the other hand, each $A' \otimes_A
  P_n$ is a free $A'$-module of finite rank, hence perfect, thus almost
  perfect. Now $M'$ is equivalent to the geometric realization of almost
  perfect modules, therefore $M'$ is almost perfect by
  Proposition~\ref{prop:aperf}.
\end{proof}

\begin{definition}[{\cite[Definition~7.2.4.13]{Lurie2017}}]
  A discrete associative ring $R$ is {\tmdfn{left coherent}} if every finitely
  generated left ideal of $R$ is finitely presented as a left $R$-module.
\end{definition}

\begin{definition}[{\cite[Definition~7.2.4.16]{Lurie2017}}]
  \label{def:coherentE1}Let $R$ be an $\mathbb{E}_1$-ring. We will say that
  $R$ is {\tmdfn{left coherent}} if the following conditions are satisfied:
  \begin{enumeratenumeric}
    \item The $\mathbb{E}_1$-ring $R$ is connective;
    
    \item The discrete associative ring $\pi_0 R$ is left coherent;
    
    \item For each $n \geq 0$, the homotopy group $\pi_n R$ is finitely
    presented as a left module over $\pi_0 R.$
  \end{enumeratenumeric}
\end{definition}

\begin{proposition}[{\cite[Proposition~7.2.4.17]{Lurie2017}}]
  Let $R$ be an $\mathbb{E}_1$-ring and $M$ a left $R$-module. Suppose that
  $R$ is left coherent. Then $M$ is almost perfect if and only if the
  following conditions are satisfied:
  \begin{enumerateroman}
    \item For $m \ll 0$, $\pi_m M = 0$;
    
    \item For every integer $m$, $\pi_m M$ is finitely presented as a left
    $\pi_0 R$-module.
  \end{enumerateroman}
\end{proposition}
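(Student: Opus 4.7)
The plan is to prove the two implications independently, using Proposition~\ref{prop:aperf}(5) as the main tool for the forward direction and a Postnikov-type cell-attachment argument for the reverse direction.

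For the forward direction, assume $M$ is almost perfect. By definition, $M \in (\tmop{LMod}_R)_{\geq k}$ for some $k$, which gives (i); after a shift I may assume $M$ is connective. Proposition~\ref{prop:aperf}(5) then writes $M \simeq |P_{\bullet}|$ with each $P_n$ a free $R$-module of finite rank. The skeletal filtration on $|P_{\bullet}|$ has associated graded pieces of the form $\Sigma^n R^{\oplus r_n}$, which yields an iterated cofiber sequence (equivalently, a spectral sequence) whose $E_1$-terms are finite direct sums of $\pi_j R$. Coherence of $R$ says each $\pi_j R$ is finitely presented over $\pi_0 R$, and since finitely presented modules over a left coherent ring form an abelian subcategory, each $\pi_m M$ arises from finitely many such entries via iterated kernel, cokernel, and extension operations, hence is itself finitely presented. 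This establishes (ii).

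For the converse, assume (i) and (ii); after shifting, take $M$ connective. I would construct a tower $P_0 \to P_1 \to \cdots$ of perfect $R$-modules equipped with compatible maps to $M$ such that $P_n \to M$ is an equivalence on $\tau_{\leq n-1}$ and a surjection on $\pi_n$. Almost perfectness of $M$ then follows by unwinding Definition~\ref{def:aperf}, because $\tau_{\leq n} M \simeq \tau_{\leq n} P_{n+1}$ realizes each truncation as a compact object of $\tau_{\leq n}(\tmop{LMod}_R)_{\geq 0}$. To build $P_0$, pick a finite presentation of $\pi_0 M$ over $\pi_0 R$ and lift it to $R^{\oplus r_0} \to M$ surjective on $\pi_0$. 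Inductively, given $P_{n-1} \to M$, set $K_n \assign \tmop{fib}(P_{n-1} \to M)$; the long exact sequence of homotopy together with coherence forces $\pi_{n-1} K_n$ to be finitely presented over $\pi_0 R$, so I would pick finitely many generators, lift them to a map $R^{\oplus r_n}[n-1] \to P_{n-1}$ whose composite with $P_{n-1} \to M$ is null-homotopic, and take $P_n$ to be the corresponding cofiber. Perfectness is preserved at each step because only finitely many cells are attached.

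The hard part will be ensuring at each inductive step of the reverse direction that the fiber $K_n$ continues to have finitely presented homotopy groups in the slot being used, so that the induction does not stall. This is exactly where coherence of $R$ is indispensable: it guarantees both that $\pi_* R$ is degree-wise finitely presented over $\pi_0 R$ (needed to identify the homotopy of the attached free cells) and that the category of finitely presented $\pi_0 R$-modules is abelian (needed to extract finitely many generators from kernels appearing in long exact sequences). Granting this stability, the construction proceeds one degree at a time and realizes $M$ as the filtered colimit of the tower $(P_n)$, whose truncations stabilize at each fixed level.
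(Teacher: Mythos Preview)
The paper does not give its own proof of this proposition: it is quoted verbatim from \cite[Proposition~7.2.4.17]{Lurie2017} as part of the appendix of background results. Your outline is the standard argument (and is essentially Lurie's), so there is no meaningful methodological comparison to make.

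That said, two points in your writeup deserve tightening. In the forward direction, the associated graded of the skeletal filtration of $|P_\bullet|$ is $\Sigma^n N_n$ with $N_n$ the normalized piece, which is only a \emph{retract} of $P_n$, not literally $\Sigma^n R^{\oplus r_n}$; this is harmless because summands of finitely presented modules over a left coherent ring remain finitely presented, but you should say so. In the reverse direction, your inductive step as written produces $P_n \to M$ which is an isomorphism on $\pi_{\leq n-1}$ but does not arrange the surjection on $\pi_n$ that you claim and that the next step needs (without it the fiber $K_{n+1}$ drops a degree in connectivity and the induction stalls). The fix is standard: after forming the cofiber killing $\pi_{n-1} K_n$, also wedge on a finite free summand $R^{\oplus s_n}[n]$ hitting a chosen finite generating set of $\pi_n M$. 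With these two adjustments the argument is complete.
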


\begin{corollary}
  \label{cor:coherent_aperf}Let $R$ be a left coherent $\mathbb{E}_1$-ring,
  then H $\pi_0 (R)$ as a left $R$-module is almost perfect.
\end{corollary}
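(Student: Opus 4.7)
The plan is to deduce this as an immediate consequence of the characterization given in Proposition~7.2.4.17 of~\cite{Lurie2017}, quoted just above the corollary. Indeed, since $R$ is assumed left coherent, that proposition applies and reduces the question of almost-perfectness of a left $R$-module $M$ to checking: (i) $\pi_m M = 0$ for $m \ll 0$; and (ii) $\pi_m M$ is finitely presented as a left $\pi_0 R$-module for every integer $m$.

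Now I would apply this with $M = H \pi_0(R)$. By definition of the Eilenberg--Maclane spectrum, $\pi_0(H \pi_0(R)) \cong \pi_0(R)$ and $\pi_m(H \pi_0(R)) \cong 0$ for $m \neq 0$. Thus condition (i) is trivially verified (one may take any negative integer as the bound). For condition (ii), the only nontrivial homotopy group is $\pi_0(H \pi_0(R)) \cong \pi_0(R)$, which is free of rank one as a left $\pi_0(R)$-module and hence finitely presented; all other homotopy groups are zero and therefore finitely presented trivially.

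There is essentially no obstacle here: left coherence of $R$ is used only to unlock the clean Tor/finite-presentation criterion of Proposition~7.2.4.17, and once that is in place the verification is a two-line check on homotopy groups. The only thing worth emphasizing in the write-up is that we are regarding $H \pi_0(R)$ as a left $R$-module (via the canonical map $R \to H \pi_0(R) \simeq \tau_{\leq 0} R$ that exists because $R$ is connective), rather than as an $R$-algebra, so the computation of its homotopy groups is immediate.
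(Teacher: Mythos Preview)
Your proposal is correct and matches the paper's intended argument exactly: the corollary is stated without proof immediately after the quoted Proposition~7.2.4.17, so the implicit reasoning is precisely the two-line verification of conditions (i) and (ii) for $M = H\pi_0(R)$ that you spell out.
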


\subsection{Nilpotent, local and complete modules}

We will include several definitions and propositions from {\cite{Lurie2018}},
Chapter 7.

\begin{definition}[{\cite[Definition~7.1.1.1, Example~7.1.1.2]{Lurie2018}}]
  Let $R$ be a connective $\mathbb{E}_{\infty}$-ring and let $x \in \pi_0 R$.
  An $R$-module $M$ is {\tmdfn{$x$-nilpotent}} if the localization $M [1 / x]$
  vanishes. Equivalently, $M$ is $x$-nilpotent if and only if the action of
  $x$ on $\pi_{\ast} M$ is locally nilpotent, that is, if and only if for each
  $y \in \pi_j M$, there exists an integer $n \gg 0$ such that $x^n y = 0$ in
  $\pi_j M$ for all $j \in \mathbb{Z}$.
\end{definition}

\begin{definition}[{\cite[Definition~7.1.1.6]{Lurie2018}}]
  Let $R$ be a connective $\mathbb{E}_{\infty}$-ring and let $I \subseteq
  \pi_0 R$ be an ideal. We say that an $R$-module $M$ is
  {\tmdfn{$I$-nilpotent}} if it is $x$-nilpotent for each $x \in I$.
\end{definition}

\begin{definition}[{\cite[Definition~7.2.4.1]{Lurie2018}}]
  Let $R$ be a connective $\mathbb{E}_{\infty}$-ring and let $I \subseteq
  \pi_0 R$ be an ideal. We say that an $R$-module $M$ is {\tmdfn{$I$-local}}
  if for every $I$-nilpotent $R$-module $N$, the mapping space
  $\tmop{Map}_{\tmop{Mod}_R} (N, M)$ is contractible.
\end{definition}

\begin{definition}[{\cite[Definition~7.3.1.1]{Lurie2018}}]
  Let $R$ be a connective $\mathbb{E}_{\infty}$-ring and let $I \subseteq
  \pi_0 R$ be an ideal. We will say that an $R$-module $M$ is
  {\tmdfn{$I$-complete}} if for every $I$-local $R$-module $N$, the mapping
  space $\tmop{Map}_{\tmop{Mod}_R} (N, M)$ is contractible.
\end{definition}

\begin{corollary}
  \label{cor:nil-compl}Let $R$ be a connective $\mathbb{E}_{\infty}$-ring and
  let $I \subseteq \pi_0 R$ be an ideal. If $M$ is an $I$-nilpotent
  $R$-module, then it is also an $I$-complete $R$-module.
\end{corollary}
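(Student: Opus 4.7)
My plan is to reduce the statement to the case of a principal ideal $I = (x)$ and then invoke the adjunction between modules over $R$ and modules over the localization $R[x^{-1}]$.

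First I would assume $I$ is finitely generated (as is the case in every application of this corollary in the main text) and reduce to the principal case by induction on the number of generators. The reduction is clean because, by definition, an $R$-module is $I$-nilpotent if and only if it is $x_i$-nilpotent for each generator $x_i$ of $I$, and dually the $I$-local condition is tested on the same generators. Thus both hypothesis and conclusion propagate through the inductive step, allowing me to assume $I = (x)$ is principal.

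In the principal case, let $M$ be $x$-nilpotent and let $N$ be $x$-local. Since $x$ acts invertibly on $N$, the module $N$ acquires a canonical $R[x^{-1}]$-module structure. The adjunction between extension of scalars $- \otimes_R R[x^{-1}]$ and restriction of scalars then identifies $\tmop{Map}_R(N, M)$ with $\tmop{Map}_{R[x^{-1}]}(N, M \otimes_R R[x^{-1}])$. Since $M$ is $x$-nilpotent, $M \otimes_R R[x^{-1}] = M[x^{-1}] \simeq 0$, so this mapping space is contractible, and hence $M$ is $I$-complete as desired.

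The main obstacle is the justification of the adjunction step, since $M$ itself does not in general admit an $R[x^{-1}]$-module structure, so the chain of equivalences must be interpreted through the recollement of $\tmop{Mod}_R$ into its $I$-local and $I$-complete pieces. The essential input is the vanishing $R/x \otimes_R R[x^{-1}] \simeq 0$, which encodes the disjointness of the open subscheme $\tmop{Spec}(R[x^{-1}])$ and the closed subscheme cut out by $x$, and which provides the orthogonality needed to deduce the contractibility of $\tmop{Map}_R(N, M)$ directly from the $x$-locality of $N$ and the $x$-nilpotence of $M$.
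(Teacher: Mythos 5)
The paper states this corollary without proof, so I can only assess your argument on its own terms; it has genuine gaps at both stages. First, the reduction: it is \emph{not} true that ``the $I$-local condition is tested on the same generators.'' Since $I$-nilpotent objects form a \emph{smaller} class than $(x_i)$-nilpotent ones, $I$-local objects form a \emph{larger} class than the intersection of the $(x_i)$-local ones; for instance, over $H (\mathbb{Z} [x, y])$ the module $H (\mathbb{Z} [x, y] [1 / x])$ is $(x, y)$-local but not $(y)$-local. So an $I$-local test object need not be local for any single generator, and your induction does not go through as justified. (The correct generator-wise statement is Proposition~\ref{prop:compl-gens}: $I$-\emph{completeness} is detected on a finite generating set; citing that would repair the reduction, at the cost of restricting to finitely generated $I$, whereas the corollary is stated for arbitrary ideals.) Second, the principal case: the adjunction is the wrong way around. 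Extension of scalars $- \otimes_R R [x^{- 1}]$ is \emph{left} adjoint to restriction, so from $N \simeq N [1 / x]$ and tensor--hom adjunction you get $\tmop{Map}_R (N, M) \simeq \tmop{Map}_R (N, \tmop{map}_R (R [x^{- 1}], M))$, where the \emph{coinduced} module $\tmop{map}_R (R [x^{- 1}], M)$ appears, not the base change $M \otimes_R R [x^{- 1}]$. Nilpotence of $M$ says exactly that $M \otimes_R R [x^{- 1}] \simeq 0$ and says nothing about $\tmop{map}_R (R [x^{- 1}], M)$; in fact the vanishing of the latter is essentially equivalent (via Theorem~\ref{thm:cplhomot}) to $(x)$-completeness of $M$, so your closing appeal to ``the recollement provides the orthogonality needed'' assumes precisely the orthogonality being proved.

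Moreover, no patch can succeed, because with ``$I$-nilpotent'' taken in the locally nilpotent sense of the quoted definition the statement is false. Take $R = H\mathbb{Z}$, $I = (p)$, and $M = H (\mathbb{Z} [1 / p] / \mathbb{Z})$ (the Pr\"ufer group): every element of $\pi_0 M$ is killed by a power of $p$, so $M$ is $p$-nilpotent; but $N = H\mathbb{Z} [1 / p]$ is $p$-local (any $p$-nilpotent $N'$ has $\tmop{map}_R (N', N) \simeq \tmop{map}_R (N' [1 / p], N) \simeq 0$), while $\pi_0 \tmop{Map}_{\tmop{Mod}_R} (N, M) = \tmop{Hom}_{\mathbb{Z}} (\mathbb{Z} [1 / p], \mathbb{Z} [1 / p] / \mathbb{Z}) \neq 0$, so $M$ is not $p$-complete; equivalently $\tmop{Ext}^0_{\mathbb{Z}} (\mathbb{Z} [1 / p], \pi_0 M) \neq 0$ violates the criterion of Theorem~\ref{thm:cplhomot}. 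What is true, and is all the paper uses (in Lemma~\ref{lem:compl-mod-u}, where multiplication by $u$ is null-homotopic on $W^+ (k)$ because $u$ maps to $0 \in \pi_0 W^+ (k)$), is the bounded version: if for each generator $x_i$ of a finitely generated $I$ some fixed power of $x_i$ acts null-homotopically on $M$, then $M$ is $I$-complete. That has a short correct proof: the tower $\cdots \xrightarrow{x_i} M \xrightarrow{x_i} M$ has a cofinal subtower of null maps, hence vanishing limit by the Milnor sequence, so $M$ is $(x_i)$-complete by the criterion of {\cite[Corollary~7.3.2.2]{Lurie2018}} quoted after Proposition~\ref{prop:calc-compl}, and then $I$-complete by Proposition~\ref{prop:compl-gens}. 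I recommend you prove, and the text invoke, that statement instead of the corollary as currently phrased.
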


\begin{proposition}[{\cite[Proposition~7.3.1.4 and
Notation~7.3.1.5]{Lurie2018}}]
  Let $R$ be a connective $\mathbb{E}_{\infty}$-ring and let $I \subseteq
  \pi_0 R$ be a finitely generated ideal. Then every left $R$-module $M$ fits
  into an (essentially unique) fiber sequence $M' \rightarrow M \rightarrow
  M''$, where $M'$ is $I$-local and $M''$ is $I$-complete. Moreover, there is
  a functor, called the $I$-completion functor, $\tmop{Mod}_R \rightarrow
  \tmop{Mod}_R$, which maps $M$ to $M''$. We denote by $M_I^{\wedge}$ the
  image of $M$ under the $I$-completion functor.
\end{proposition}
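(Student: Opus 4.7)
The plan is to realize the $I$-completion as a reflective localization of $\tmop{Mod}_R$ onto its full subcategory $\mathcal{C}$ of $I$-complete modules, and then separately identify the fiber of the unit of this reflection with an $I$-local module. First I would verify that $\mathcal{C} \subseteq \tmop{Mod}_R$ is closed under small limits: this is immediate from the definition, because for each $I$-local $N$ the functor $\tmop{Map}_{\tmop{Mod}_R}(N, -)$ preserves limits, so a limit of modules each killing $\tmop{Map}_{\tmop{Mod}_R}(N, -)$ still kills it. Combined with accessibility (which uses that $I$ is finitely generated, so being $I$-complete amounts to being local with respect to a \emph{small} set of morphisms built from localizations $R \to R[1/x_i]$), the adjoint functor theorem produces a left adjoint to the inclusion $\mathcal{C} \hookrightarrow \tmop{Mod}_R$. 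This is the functor $M \mapsto M_I^{\wedge}$, equipped with a unit $M \to M_I^{\wedge}$.

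To make the reflection concrete, I would fix generators $x_1, \ldots, x_r$ of $I$ and build an explicit model of the unit. Set $K(x_i^n) \assign \tmop{cofib}(R \xrightarrow{x_i^n} R)$ and $K_n \assign K(x_1^n) \otimes_R \cdots \otimes_R K(x_r^n)$. I would exhibit $M_I^{\wedge}$ as the limit $\lim_n (M \otimes_R K_n)$. Each $K_n$ is annihilated by a power of each $x_i$, hence is $I$-nilpotent and therefore $I$-complete by Corollary~\ref{cor:nil-compl}. Limits of $I$-complete modules remain $I$-complete (by the same mapping-space argument as above), so the proposed formula lands in $\mathcal{C}$. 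The universal property is verified by checking that for every $I$-complete $P$ the induced map $\tmop{Map}_R(M_I^{\wedge}, P) \to \tmop{Map}_R(M, P)$ is an equivalence, which reduces via the fiber sequences $K_n \to R \to R[x_i^{-n}]$ to the vanishing $\tmop{Map}_R(M \otimes_R R[x_i^{-1}], P) \simeq 0$, a restatement of the orthogonality between $I$-local and $I$-complete modules.

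Next I would analyze the fiber $M' \assign \tmop{fib}(M \to M_I^{\wedge})$ and show it is $I$-local. For any $I$-nilpotent $N$, applying $\tmop{Map}_R(N, -)$ to the fiber sequence $M' \to M \to M_I^{\wedge}$ reduces the claim to showing that $\tmop{Map}_R(N, M) \to \tmop{Map}_R(N, M_I^{\wedge})$ is an equivalence. Using the explicit description of $M_I^{\wedge}$ as the limit of $M \otimes_R K_n$, the key observation is that for an $I$-nilpotent module $N$ the canonical map $N \otimes_R K_n \to N$ becomes an equivalence in the limit, because on any fixed homotopy class of $N$ the operators $x_i^n$ vanish for $n$ sufficiently large. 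Finally, essential uniqueness follows formally: given any fiber sequence $\tilde{M}' \to M \to \tilde{M}''$ with $\tilde{M}'$ $I$-local and $\tilde{M}''$ $I$-complete, mapping out of $\tilde{M}'$ into an arbitrary $I$-complete module is trivial, forcing $\tilde{M}'' \simeq M_I^{\wedge}$ compatibly, and hence $\tilde{M}' \simeq M'$.

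The main obstacle is the verification in the previous paragraph that the fiber of the unit is $I$-local; this is the one step where a soft adjoint-functor-theorem argument does not suffice and one must really use the explicit limit formula together with the finite generation of $I$. Everything else is either formal consequences of orthogonality or standard presentability bookkeeping.
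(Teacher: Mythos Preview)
The paper does not give its own proof of this proposition: it appears in the appendix as a citation from Lurie's \emph{Spectral Algebraic Geometry} (Proposition~7.3.1.4 and Notation~7.3.1.5), stated without argument as part of the background recollection. So there is nothing in the paper to compare your proposal against.

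For what it is worth, your outline is the standard route and is essentially what Lurie does. Two small comments. First, you are running two parallel tracks---an abstract adjoint-functor-theorem existence argument and the explicit Koszul-limit formula $M_I^{\wedge} \simeq \lim_n M \otimes_R K_n$---and only the latter is actually used when you verify that the fiber of the unit is $I$-local; once you have the formula, the abstract argument is redundant. Second, in that fiber-is-local step, the phrasing ``$N \otimes_R K_n \to N$ becomes an equivalence in the limit'' is vague and the map goes the wrong direction. The clean way (for a single generator $I=(x)$) is to identify $\tmop{fib}(M \to \lim_n M \otimes_R K(x^n))$ with the tower limit $T(M) = \lim(\cdots \xrightarrow{x} M \xrightarrow{x} M)$, on which $x$ acts invertibly and which is therefore $(x)$-local; this is exactly the content of Proposition~\ref{prop:calc-compl} in the paper's appendix. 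The multi-generator case then follows by iterating.
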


We can compute the $I$-completion functor when $I$ is principal:

\begin{proposition}[{\cite[Proposition~7.3.2.1]{Lurie2018}}]
  \label{prop:calc-compl}Let $R$ be a connective $\mathbb{E}_{\infty}$-ring
  and let $x \in \pi_0 R$ be an element. For any $R$-module $M \in
  \tmop{Mod}_R$, let $T (M)$ denote the limit of the tower
  \[ \cdots \xrightarrow{x} M \xrightarrow{x} M \xrightarrow{x} M
     \xrightarrow{x} M \]
  Then $T (M)$ is $(x)$-local and the $(x)$-completion of $M$ can be
  identified with the cofiber of the canonical map $\theta \of T (M)
  \rightarrow M$.
\end{proposition}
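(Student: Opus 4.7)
The plan is to exploit the uniqueness of the fiber sequence $M' \to M \to M''$ in which $M'$ is $(x)$-local and $M''$ is $(x)$-complete (provided by the cited decomposition). Setting $C \assign \tmop{cofib}(\theta)$, so that $T(M) \to M \to C$ is a cofiber sequence in the stable $\infty$-category $\tmop{Mod}_R$, it will suffice to show that $T(M)$ is $(x)$-local and that $C$ is $(x)$-complete; uniqueness then forces $C$ to be the $(x)$-completion of $M$, with $M \to C$ the canonical completion map.

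The key reformulation is the identification $T(M) \simeq \tmop{Map}_R(R[1/x], M)$, which follows from writing $R[1/x] \simeq \tmop{colim}(R \xrightarrow{x} R \xrightarrow{x} \cdots)$ as an $R$-module and using that $\tmop{Map}_R(-, M)$ turns colimits into limits. Under this identification, $\theta$ corresponds to the map induced by the unit $R \to R[1/x]$. Two consequences are immediate. First, $\tmop{Map}_R(R[1/x], M)$ inherits from the $R$-algebra structure on $R[1/x]$ a natural $R[1/x]$-module structure extending its $R$-module structure; since $x$ is invertible in $R[1/x]$, multiplication by $x$ on $T(M)$ is an equivalence, so $T(M)$ is $(x)$-local. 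Second, applying $\tmop{Map}_R(-, M)$ to the cofiber sequence $R \to R[1/x] \to R[1/x]/R$ produces the fiber sequence $\tmop{Map}_R(R[1/x]/R, M) \to T(M) \to M$, and hence an equivalence $C \simeq \Sigma \tmop{Map}_R(R[1/x]/R, M)$.

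It remains to show that $C$ is $(x)$-complete. Note first that $R[1/x]/R$ is $(x)$-nilpotent: inverting $x$ turns $R \to R[1/x]$ into an equivalence and thereby kills its cofiber. So the conclusion reduces to the general principle that $\tmop{Map}_R(N, M)$ is $(x)$-complete whenever $N$ is $(x)$-nilpotent. To check this, let $L$ be any $(x)$-local module; by the standard adjunction
\[ \tmop{Map}_R(L, \tmop{Map}_R(N, M)) \simeq \tmop{Map}_R(L \otimes_R N, M), \]
it suffices to prove $L \otimes_R N = 0$. Both $(L \otimes_R N)[1/x] \simeq L[1/x] \otimes_R N \simeq L \otimes_R N$ (since $L$ is $(x)$-local) and $(L \otimes_R N)[1/x] \simeq L \otimes_R N[1/x] = 0$ (since $N$ is $(x)$-nilpotent), so $L \otimes_R N = 0$ and the mapping spectrum vanishes.

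The main piece of bookkeeping is the identification $T(M) \simeq \tmop{Map}_R(R[1/x], M)$ together with the matching of $\theta$ with the map induced by $R \to R[1/x]$, so that the cofiber $C$ is really identified with $\Sigma \tmop{Map}_R(R[1/x]/R, M)$. Once this is set up carefully, the rest is a formal consequence of the tensor-Hom adjunction combined with the standard interplay between $(x)$-local and $(x)$-nilpotent modules, and no deep input beyond the existence of the cited local/complete decomposition is required.
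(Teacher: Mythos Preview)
The paper does not give its own proof of this proposition; it is quoted in the appendix as background from Lurie's \emph{Spectral Algebraic Geometry} and used as a black box. Your argument is correct and is essentially the standard one: identify $T(M)$ with $\tmop{Map}_R(R[1/x],M)$ via $R[1/x]\simeq\tmop{colim}(R\xrightarrow{x}R\xrightarrow{x}\cdots)$, read off $(x)$-locality from the induced $R[1/x]$-module structure, rewrite the cofiber as $\Sigma\,\tmop{Map}_R(R[1/x]/R,M)$, and check $(x)$-completeness by the tensor--hom adjunction together with $L\otimes_R N\simeq 0$ for $L$ local and $N$ nilpotent. The only step you use without stating it is the characterization ``$M$ is $(x)$-local $\Longleftrightarrow$ multiplication by $x$ is an equivalence on $M$'' (both for $T(M)$ and, in the form $L\simeq L[1/x]$, in the vanishing argument); this is immediate from testing the definition against the $(x)$-nilpotent module $\tmop{cofib}(x\colon R\to R)$, but it is worth making explicit since the appendix does not record it.
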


\begin{corollary}[{\cite[Corollary~7.3.2.2]{Lurie2018}}]
  Let $R$ be a connective $\mathbb{E}_{\infty}$-ring and let $x \in \pi_0 R$
  be an element. The following conditions on an $R$-module $M \in
  \tmop{Mod}_R$ are equivalent:
  \begin{enumeratenumeric}
    \item The module $M$ is $(x)$-complete.
    
    \item The limit of the tower
    \[ \cdots \xrightarrow{x} M \xrightarrow{x} M \xrightarrow{x} M
       \xrightarrow{x} M \]
    vanishes.
  \end{enumeratenumeric}
\end{corollary}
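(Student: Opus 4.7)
\begin{proof*}{Proof sketch}
The plan is to deduce the equivalence directly from Proposition~\ref{prop:calc-compl}. That proposition identifies $M_{(x)}^{\wedge}$ as the cofiber of a canonical map $\theta \of T (M) \to M$, where $T (M)$ is the limit of the displayed tower of $x$-multiplications. Since we are working in the stable $\infty$-category $\tmop{Mod}_R$, this cofiber sequence is also a fiber sequence $T (M) \xrightarrow{\theta} M \xrightarrow{\eta_M} M_{(x)}^{\wedge}$; in particular, $T (M)$ is the fiber of the canonical unit map $\eta_M \of M \to M_{(x)}^{\wedge}$.

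Next, I would invoke the fact that $(x)$-completion is a reflective localization onto the full subcategory of $(x)$-complete $R$-modules (this is built into the existence statement for the canonical fiber sequence $M' \to M \to M''$ recalled just before Proposition~\ref{prop:calc-compl}). It follows that $M$ is $(x)$-complete if and only if the unit $\eta_M$ is an equivalence. Combined with the fiber sequence from the previous paragraph, $M$ is $(x)$-complete if and only if $T (M) \simeq 0$, which is exactly the claimed equivalence.

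The proof has no real obstacle once Proposition~\ref{prop:calc-compl} is in hand. If one prefers not to invoke reflectivity as a black box, both implications admit a direct argument: for $(2) \Rightarrow (1)$, $T (M) \simeq 0$ immediately forces $\eta_M$ to be an equivalence, and $M_{(x)}^{\wedge}$ is always $(x)$-complete by construction; for $(1) \Rightarrow (2)$, completeness of $M$ makes the mapping space $\tmop{Map}_{\tmop{Mod}_R} (T (M), M)$ contractible since $T (M)$ is $(x)$-local by Proposition~\ref{prop:calc-compl}, so $\theta$ is null, the cofiber sequence splits, and $T (M)$ must vanish because it would then be a spectrum which is simultaneously $(x)$-local and $(x)$-complete (its identity map factors through a contractible mapping space).
\end{proof*}
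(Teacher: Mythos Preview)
The paper does not give its own proof of this corollary; it is simply quoted from {\cite[Corollary~7.3.2.2]{Lurie2018}} without argument. Your deduction from Proposition~\ref{prop:calc-compl} is correct and is the standard one: the fiber sequence $T(M) \to M \to M_{(x)}^{\wedge}$ together with the fact that $(x)$-completion is a reflective localization immediately yields the equivalence. Your fallback argument for $(1)\Rightarrow(2)$ is also fine, since a retract of the complete module $M_{(x)}^{\wedge}$ is complete and an object that is simultaneously $(x)$-local and $(x)$-complete is zero.
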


\begin{corollary}[{\cite[Corollary~7.3.2.3]{Lurie2018}}]
  Let $R$ be a connective $\mathbb{E}_{\infty}$-ring, $I \subseteq \pi_0 R$ an
  ideal and $x \in \pi_0 R$ an element. Then the $(x)$-completion functor
  $\tmop{Mod}_R \rightarrow \tmop{Mod}_R, M \mapsto M_{(x)}^{\wedge}$ carries
  $I$-complete modules to $I$-complete modules.
\end{corollary}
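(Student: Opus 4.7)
The plan is to use Proposition~\ref{prop:calc-compl} to realize the $(x)$-completion as a cofiber (equivalently, a shifted fiber) of a canonical map between two modules that are visibly $I$-complete, and then invoke closure of $I$-complete modules under limits and shifts.

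First I would record the following stability property: the full subcategory of $I$-complete $R$-modules is closed under arbitrary small limits and under (de)suspensions. Both are immediate from Definition~7.3.1.1: for any $I$-local $R$-module $N$, the functor $\tmop{Map}_{\tmop{Mod}_R}(N, -)$ preserves limits, and it likewise commutes with $\Sigma$ and $\Omega$ since $\tmop{Mod}_R$ is stable. Hence if each $M_i$ satisfies $\tmop{Map}_{\tmop{Mod}_R}(N, M_i) \simeq \ast$, so does $\lim_i M_i$ and every (de)suspension of $M_i$.

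Next, assume $M$ is $I$-complete. By Proposition~\ref{prop:calc-compl}, the $(x)$-completion $M_{(x)}^{\wedge}$ is the cofiber of the canonical map $\theta\of T(M) \rightarrow M$, where $T(M)$ is the limit of the tower
\[ \cdots \xrightarrow{x} M \xrightarrow{x} M \xrightarrow{x} M. \]
Since each term in this tower is $M$, and $M$ is $I$-complete by hypothesis, the first step shows that $T(M)$ is also $I$-complete. Consequently the fiber sequence $T(M) \rightarrow M \rightarrow M_{(x)}^{\wedge}$ in the stable $\infty$-category $\tmop{Mod}_R$ exhibits $M_{(x)}^{\wedge}$ as $\Sigma \tmop{fib}(\theta)$, where $\tmop{fib}(\theta)$ is a limit of $I$-complete modules and hence itself $I$-complete.

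Combining these: $M_{(x)}^{\wedge} \simeq \Sigma\,\tmop{fib}(\theta)$ is a suspension of an $I$-complete module, and is therefore $I$-complete. Alternatively, and more directly, for any $I$-local $N$ one applies the exact functor $\tmop{Map}_{\tmop{Mod}_R}(N, -)$ to the fiber sequence $T(M) \to M \to M_{(x)}^{\wedge}$; the first two terms become contractible because $T(M)$ and $M$ are $I$-complete, and exactness of the mapping spectrum functor forces the third to be contractible as well. Either way one concludes that $M_{(x)}^{\wedge}$ is $I$-complete, proving the corollary. There is no serious obstacle here: the only substantive ingredient beyond abstract nonsense is Proposition~\ref{prop:calc-compl}, which presents $(x)$-completion as an explicit cofiber of a limit-diagram, making the stability under limits argument immediately applicable.
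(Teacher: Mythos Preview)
Your proof is correct. The paper does not supply its own proof of this corollary---it is stated only as a recollection from \cite[Corollary~7.3.2.3]{Lurie2018}---but your argument is exactly the standard one: express $M_{(x)}^{\wedge}$ via Proposition~\ref{prop:calc-compl} as the cofiber of $T(M)\to M$, observe that $T(M)$ is a limit of copies of the $I$-complete module $M$, and conclude using that the full subcategory of $I$-complete modules is closed under limits and (de)suspension.
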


\begin{corollary}[{\cite[Corollary~7.3.2.4]{Lurie2018}}]
  \label{cor:conn-compl}Let $R$ be a connective $\mathbb{E}_{\infty}$-ring, $x
  \in \pi_0 R$ and let $M$ be an $R$-module.
  \begin{enumeratenumeric}
    \item If the $R$-module $M$ is connective, then the $(x)$-completion
    $M_{(x)}^{\wedge}$ is connective.
    
    \item If $M \in (\tmop{Mod}_R)_{\leq 0}$, then $M_{(x)}^{\wedge} \in
    (\tmop{Mod}_R)_{\leq 1}$.
  \end{enumeratenumeric}
\end{corollary}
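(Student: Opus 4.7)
The plan is to apply Proposition~\ref{prop:calc-compl}, which realizes the completion as a cofiber $M_{(x)}^{\wedge} \simeq \tmop{cofib}(\theta)$ with $\theta \of T(M) \to M$ and $T(M) \assign \lim(\cdots \xrightarrow{x} M \xrightarrow{x} M)$. The associated fiber sequence $T(M) \to M \to M_{(x)}^{\wedge}$ yields, for each $i \in \mathbb{Z}$, the five-term exact piece
\[ \pi_i T(M) \to \pi_i M \to \pi_i M_{(x)}^{\wedge} \to \pi_{i-1} T(M) \to \pi_{i-1} M. \]
Both claims will reduce to bounding $\pi_{\ast}(T(M))$ in an appropriate range and chasing this sequence.

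The computational input is the Milnor $\lim^{1}$ short exact sequence for the limit of a countable tower of spectra: for each $j \in \mathbb{Z}$,
\[ 0 \to \lim\nolimits^{1}_{n} \pi_{j+1}(M) \to \pi_j T(M) \to \lim\nolimits_{n} \pi_j(M) \to 0, \]
where both towers have transition maps given by multiplication by $x$. If $M$ is connective then $\pi_{j+1}(M) = 0 = \pi_j(M)$ as soon as $j < -1$, giving $T(M) \in (\tmop{Mod}_R)_{\geq -1}$. If instead $M \in (\tmop{Mod}_R)_{\leq 0}$, then both terms vanish as soon as $j > 0$, giving $T(M) \in (\tmop{Mod}_R)_{\leq 0}$.

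Given these bounds, part (1) is immediate: for $i < 0$ we have $\pi_i M = \pi_{i-1} M = 0$, so the long exact sequence produces an isomorphism $\pi_i M_{(x)}^{\wedge} \cong \pi_{i-1} T(M)$; since $i - 1 < -1$, this group vanishes, so $M_{(x)}^{\wedge}$ is connective. For part (2) and $i > 1$, we combine $\pi_i M = \pi_{i-1} M = 0$ with $\pi_i T(M) = 0$ (because $i > 0$), and the long exact sequence forces an injection $\pi_i M_{(x)}^{\wedge} \hookrightarrow \pi_{i-1} T(M)$; the target also vanishes, since $i - 1 > 0$. Hence $M_{(x)}^{\wedge} \in (\tmop{Mod}_R)_{\leq 1}$.

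There is no real obstacle; the only point worth flagging is that the one-degree lag in part (2), preventing the stronger conclusion $M_{(x)}^{\wedge} \in (\tmop{Mod}_R)_{\leq 0}$, is not an artifact of the argument. It reflects a genuine phenomenon: $\pi_1 M_{(x)}^{\wedge}$ is controlled by $\pi_0 T(M) = \lim_{n} \pi_0(M)$, the naive $x$-adic completion of $\pi_0 M$, which is typically nontrivial, so the loss of one degree is sharp.
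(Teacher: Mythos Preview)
Your proof is correct and follows essentially the same route as the paper's: the cofiber description from Proposition~\ref{prop:calc-compl}, Milnor's $\lim^1$ sequence to bound $\pi_{\ast}(T(M))$, and a chase of the resulting long exact sequence. One small slip in your closing commentary (not in the proof itself): the inverse limit $\lim(\cdots \xrightarrow{x} \pi_0 M \xrightarrow{x} \pi_0 M)$ is not the $x$-adic completion of $\pi_0 M$ but rather the module of compatible $x$-divisibility systems in $\pi_0 M$; the relevant contribution to $\pi_1 M_{(x)}^{\wedge}$ is the kernel of its projection to $\pi_0 M$.
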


\begin{proof}
  Let $T (M)$ be the limit of the tower $\left( \cdots \xrightarrow{x} M
  \xrightarrow{x} M \xrightarrow{x} M \right)$. Then by
  Proposition~\ref{prop:calc-compl}, we have the cofiber sequence $T (M)
  \rightarrow M \rightarrow M_{(x)}^{\wedge}$ which gives rise to a long exact
  sequence
  \[ \cdots \rightarrow \pi_n (T (M)) \rightarrow \pi_n (M) \rightarrow \pi_n
     (M_{(x)}^{\wedge}) \rightarrow \pi_{n - 1} (T (M)) \rightarrow \pi_{n -
     1} (M) \rightarrow \pi_{n - 1} (M_{(x)}^{\wedge}) \rightarrow \cdots \]
  Furthermore, let $T_n (M)_{\ast}$ be the tower
  \[ \cdots \xrightarrow{x} \pi_n (M) \xrightarrow{x} \pi_n (M)
     \xrightarrow{x} \pi_n (M) \]
  Then there is a Milnor sequence
  \[ 0 \rightarrow \lim^1 T_{n + 1} (M)_{\ast} \rightarrow \pi_n (T (M))
     \rightarrow \lim T_n (M)_{\ast} \rightarrow 0 \]
  Especially, if $M$ is assumed to be connective, then $T_n (M)_{\ast}$ is a
  tower of $0$ for $n < 0$, which implies that $\pi_{n - 1} (T (M))$ vanishes
  when $n < 0$. We deduce from the long exact sequence that $\pi_n
  (M_{(x)}^{\wedge})$ vanishes when $n < 0$. Similarly, if $M \in
  (\tmop{Mod}_R)_{\leq 0}$, then $T_n (M)_{\ast}$ is a tower of $0$ for $n >
  0$, thus $\pi_n (T (M))$ vanishes when $n \geq 0$. We deduce from the long
  exact sequence that $\pi_n (M_{(x)}^{\wedge})$ vanishes when $n > 0$.
\end{proof}

\begin{proposition}[{\cite[Corollary~7.3.3.3]{Lurie2018}}]
  \label{prop:compl-gens}Let $R$ be a connective $\mathbb{E}_{\infty}$-ring
  and $I \subseteq \pi_0 R$ be a finitely generated ideal. Let $M$ be an
  $R$-module. Then the following conditions on $M$ are equivalent:
  \begin{enumeratenumeric}
    \item $M$ is $I$-complete;
    
    \item For each $x \in I$, $M$ is $(x)$-complete;
    
    \item There exists a set of generators $x_1, \ldots, x_n$ for the ideal
    $I$ such that $M$ is $(x_i)$-complete for $i = 1, \ldots, n$.
  \end{enumeratenumeric}
\end{proposition}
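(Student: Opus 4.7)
The plan is to prove the cyclic chain of implications $(1) \Rightarrow (2) \Rightarrow (3) \Rightarrow (1)$. The implication $(1) \Rightarrow (2)$ is formal: since an $I$-nilpotent module is by definition $(x)$-nilpotent for each $x \in I$, taking orthogonals (in the sense of mapping spaces) shows that every $(x)$-local module is $I$-local, and taking orthogonals once more yields that $I$-completeness is stronger than $(x)$-completeness for any $x \in I$. The implication $(2) \Rightarrow (3)$ is immediate once we fix any finite generating set, which exists by the finite-generation hypothesis on $I$.

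The substantive content lies in $(3) \Rightarrow (1)$. The approach is to construct the $I$-completion of an arbitrary $R$-module explicitly as an iterated single-element completion along the given generators. Concretely, set $F (M) \assign (\cdots (M_{(x_1)}^{\wedge})_{(x_2)}^{\wedge} \cdots)_{(x_n)}^{\wedge}$. By Proposition~\ref{prop:calc-compl}, the fiber of each individual step $N \rightarrow N_{(x_i)}^{\wedge}$ is $(x_i)$-local, hence $I$-local by the already-proven direction; since $I$-local modules form a stable subcategory of $\tmop{Mod}_R$, the total fiber of the canonical map $M \rightarrow F (M)$ is again $I$-local.

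It then remains to verify that $F (M)$ has the universal property of the $I$-completion, i.e., is right-orthogonal (in mapping spaces) to all $I$-local modules. The key input here is that for every $(x_i)$-complete module $N$ and every $R$-module $K$, the adjunction between $(x_i)$-completion and the inclusion of $(x_i)$-complete modules furnishes a natural equivalence $\tmop{Map} (K, N) \simeq \tmop{Map} (K_{(x_i)}^{\wedge}, N)$. Every $I$-complete $N$ is $(x_i)$-complete for each $i$ by the already-proven implication $(1) \Rightarrow (2)$, so we can iterate this adjunction for $i = 1, \ldots, n$ to obtain $\tmop{Map} (M, N) \simeq \tmop{Map} (F (M), N)$ for every $I$-complete $N$. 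Combined with the $I$-locality of the fiber of $M \rightarrow F (M)$, this identifies $F (M)$ as the $I$-completion of $M$. In particular, if $M$ is already $(x_i)$-complete for every $i$, the same iterated adjunction, applied with $N = M$, shows that $M \rightarrow F (M)$ is an equivalence, so $M$ coincides with its own $I$-completion and is therefore $I$-complete.

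The main obstacle is a matter of bookkeeping: at each stage of building $F (M)$ one needs to know that the $(x_i)$-completion functor preserves $(x_j)$-completeness for $j \neq i$, so that $F (M)$ is genuinely $(x_i)$-complete for every $i$ and the iterated adjunctions compose cleanly. This is precisely the content of Corollary~7.3.2.3 of~\cite{Lurie2018}, which would be the only nontrivial technical input beyond the definitions of nilpotent, local, and complete modules.
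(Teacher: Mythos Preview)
The paper does not supply its own proof of this proposition; it is simply quoted from \cite[Corollary~7.3.3.3]{Lurie2018} as part of the appendix of background material. So there is no in-paper argument to compare against, and the relevant question is whether your sketch stands on its own.

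Your treatment of $(1)\Rightarrow(2)\Rightarrow(3)$ is fine. The problem is in $(3)\Rightarrow(1)$: the argument is circular. You correctly show that the fiber of $M\to F(M)$ is $I$-local, and you correctly deduce from the completion adjunctions that $\tmop{Map}(M,N)\simeq\tmop{Map}(F(M),N)$ for every $I$-complete $N$. But these two facts say exactly the same thing --- both are equivalent to the assertion that $M$ and $F(M)$ have the same $I$-completion --- and neither shows that $F(M)$ itself is $I$-complete. You announce that you will verify $F(M)$ is ``right-orthogonal to all $I$-local modules,'' but what you actually verify is a statement about maps \emph{into} $I$-complete modules, which is a different condition. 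Your final paragraph then appeals to Corollary~7.3.2.3 to conclude that $F(M)$ is $(x_i)$-complete for every $i$; but that is precisely condition~(3), and passing from (3) to (1) for $F(M)$ is the very implication under discussion.

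What is missing is a direct verification that $F(M)$ is $I$-complete, i.e.\ that $\tmop{Map}(K,F(M))\simeq\ast$ for every $I$-local $K$. One clean way to do this: for $I$-local $K$, the fiber sequence $\Gamma_{x_1}(K)\to K\to K[1/x_1]$ has $K[1/x_1]$ being $(x_1)$-local, so $\tmop{Map}(K,N)\simeq\tmop{Map}(\Gamma_{x_1}(K),N)$ whenever $N$ is $(x_1)$-complete; moreover $\Gamma_{x_1}(K)$ remains $I$-local (as the fiber of a map of $I$-local objects) and is $(x_1)$-nilpotent. Iterating through $x_2,\ldots,x_n$ produces a module which is simultaneously $I$-local and $I$-nilpotent, hence zero, giving $\tmop{Map}(K,N)\simeq\ast$ for any $N$ that is $(x_i)$-complete for all $i$. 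This closes the gap without invoking the implication you are trying to prove.
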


\begin{remark}[{\cite[Corollary~7.3.3.6]{Lurie2018}}]
  \label{rem:complete-base-indep}Let $\phi \of R \rightarrow R'$ be a morphism
  of connective $\mathbb{E}_{\infty}$-rings, $I \subseteq \pi_0 R$ a finitely
  generated ideal and $I' = \phi (I) \pi_0 (R')$ the ideal generated by the
  image of $I$. Then
  \begin{enumeratenumeric}
    \item An $R'$-module $M$ is $I'$-complete if and only if it is
    $I$-complete as an $R$-module;
    
    \item For every $R'$-module $M$, the canonical map $M \rightarrow
    M_{I'}^{\wedge}$ exhibits $M_{I'}^{\wedge}$ as an $I$-completion of $M$,
    regarded as a morphism of $R$-modules.
  \end{enumeratenumeric}
\end{remark}

\begin{theorem}[{\cite[Theorem~7.3.4.1]{Lurie2018}}]
  \label{thm:cplhomot}Let $R$ be an $\mathbb{E}_{\infty}$-ring, let $I
  \subseteq \pi_0 R$ be a finitely generated ideal and let $M$ be an
  $R$-module. The following conditions are equivalent:
  \begin{enumeratealpha}
    \item The $R$-module $M$ is $I$-complete;
    
    \item For every integer $k$, the homotopy group $\pi_k M$ satisfies the
    condition that for each $x \in I$, we have $\tmop{Ext}_A^0 (A [1 / x],
    \pi_k M) = 0 = \tmop{Ext}_A^1 (A [1 / x], \pi_k M)$ where $A = \pi_0 R.$ 
  \end{enumeratealpha}
\end{theorem}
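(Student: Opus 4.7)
The plan is to reduce the problem to the case where the ideal is principal, use the explicit description of the completion as a cofiber of a tower-limit, and then identify the relevant $\lim$ and $\lim^1$ terms with $\mathrm{Ext}^0$ and $\mathrm{Ext}^1$ of $A[1/x]$.

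First I would use Proposition~\ref{prop:compl-gens} (and Remark~\ref{rem:complete-base-indep}) to pick a finite set of generators $x_1, \ldots, x_n$ of $I$ and replace the statement ``$M$ is $I$-complete'' with the conjunction ``$M$ is $(x_i)$-complete for each $i$''. The Ext-vanishing condition in (b) is manifestly stated element-by-element, so both (a) and (b) become conjunctions of statements for the principal ideals $(x_i) \subseteq \pi_0 R$; hence it suffices to treat the case $I = (x)$ for a single element $x \in \pi_0 R = A$.

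Next I would invoke Proposition~\ref{prop:calc-compl}, which identifies $M_{(x)}^{\wedge}$ with the cofiber of the canonical map $T(M) \to M$, where $T(M)$ is the homotopy limit of the tower $\cdots \xrightarrow{x} M \xrightarrow{x} M$. Thus $M$ is $(x)$-complete if and only if $T(M) \simeq 0$, which in turn holds if and only if $\pi_k T(M) = 0$ for every integer $k$. The Milnor short exact sequence computing homotopy groups of a limit of a tower gives
\[
0 \to {\lim}^1 \bigl(\pi_{k+1} M \xleftarrow{x} \pi_{k+1} M \xleftarrow{x} \cdots\bigr) \to \pi_k T(M) \to \lim \bigl(\pi_k M \xleftarrow{x} \pi_k M \xleftarrow{x} \cdots\bigr) \to 0.
\]
So vanishing of $\pi_k T(M)$ for all $k$ is equivalent to the simultaneous vanishing of $\lim$ and $\lim^1$ of the multiplication-by-$x$ tower on each homotopy group.

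The main technical step, which I expect to be the main obstacle in making the identification precise, is to recognize these $\lim$ and $\lim^1$ groups as $\mathrm{Ext}^0_A(A[1/x], \pi_k M)$ and $\mathrm{Ext}^1_A(A[1/x], \pi_k M)$ respectively. For this I would exhibit a free resolution of the discrete $A$-module $A[1/x]$ of the form
\[
0 \to \bigoplus_{n \geq 0} A \xrightarrow{\mathrm{shift} - x} \bigoplus_{n \geq 0} A \to A[1/x] \to 0,
\]
where the $n$th basis vector on the right maps to $1/x^n$. Applying $\mathrm{Hom}_A(-, \pi_k M)$ turns the two free modules into products $\prod_n \pi_k M$, and the induced map is precisely the one whose kernel computes $\lim$ and whose cokernel computes $\lim^1$ of the tower of multiplications by $x$ on $\pi_k M$. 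This yields the desired identifications $\mathrm{Ext}^0_A(A[1/x], \pi_k M) = \lim$ and $\mathrm{Ext}^1_A(A[1/x], \pi_k M) = \lim^1$.

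Combining these, $M$ is $(x)$-complete if and only if for every integer $k$ both the $\lim$ and $\lim^1$ of the $x$-multiplication tower on $\pi_k M$ vanish, if and only if $\mathrm{Ext}^0_A(A[1/x], \pi_k M) = 0 = \mathrm{Ext}^1_A(A[1/x], \pi_k M)$ for every $k$ (after reindexing $k+1 \leadsto k$ in the $\lim^1$ term, which is harmless because we quantify over all integers). Quantifying over generators $x \in I$ and applying the reduction from the first paragraph concludes the equivalence of (a) and (b).
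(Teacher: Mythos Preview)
The paper does not provide its own proof of this theorem; it is quoted directly from Lurie's \emph{Spectral Algebraic Geometry} as a citation, so there is no in-paper argument to compare against. Your proposal is correct and is essentially the proof Lurie gives in \cite[Theorem~7.3.4.1]{Lurie2018}: reduce to the principal case via Proposition~\ref{prop:compl-gens}, use Proposition~\ref{prop:calc-compl} to rewrite $(x)$-completeness as vanishing of the tower-limit $T(M)$, apply the Milnor sequence to convert this to vanishing of $\lim$ and $\lim^1$ on homotopy groups, and identify those with $\mathrm{Ext}^0_A(A[1/x],-)$ and $\mathrm{Ext}^1_A(A[1/x],-)$ via the telescope resolution of $A[1/x]$.

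One small point worth tightening: condition~(b) is stated for \emph{every} $x \in I$, not merely for a chosen set of generators, so your sentence ``(b) becomes a conjunction of statements for the principal ideals $(x_i)$'' is not literally immediate. The fix is already available in Proposition~\ref{prop:compl-gens}, whose condition~(2) ranges over all $x \in I$: once the principal case is established, $(a)$ is equivalent to $(x)$-completeness for every $x \in I$, which by the principal case is equivalent to the Ext-vanishing for every $x \in I$, i.e.\ to~(b).
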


\begin{proposition}[{\cite[Proposition~7.3.4.8]{Lurie2018}}]
  Let $R$ be a connective $\mathbb{E}_{\infty}$-ring, let $I \subseteq \pi_0
  R$ be a finitely generated ideal, and let $x \in \pi_0 R$ be an element
  whose image in $(\pi_0 R) / I$ is invertible. If $M$ is an $I$-complete left
  $R$-module, then multiplication by $x$ induces an equivalence from $M$ to
  itself.
\end{proposition}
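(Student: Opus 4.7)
The plan is to show that $C \assign \tmop{cofib}(m_x \of M \to M)$ vanishes, which is equivalent to $m_x$ being an equivalence. First, I would observe that $C$ is $I$-complete: the full subcategory of $I$-complete $R$-modules is reflective and hence closed under limits in $\tmop{Mod}_R$, so $\tmop{fib}(m_x)$ is $I$-complete, and $C \simeq \Sigma \tmop{fib}(m_x)$ is $I$-complete as well. By Proposition~\ref{prop:compl-gens}, $C$ is therefore $w$-complete for every $w \in I$.

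The heart of the argument is to exhibit a specific $w \in I$ that acts invertibly on $C$. The key observation is that multiplication by $x$ is null-homotopic on $C$: since $R$ is an $\mathbb{E}_\infty$-ring, the cofiber $R/x \assign \tmop{cofib}(m_x \of R \to R)$ carries a canonical $\mathbb{E}_\infty$-$R$-algebra structure in which the image of $x$ in $\pi_0(R/x)$ is tautologically zero, and $C \simeq M \otimes_R R/x$ is naturally an $R/x$-module, so the $R$-action on $C$ factors through $R \to R/x$ and $m_x \of C \to C$ is null-homotopic. Using the hypothesis that $x$ is a unit modulo $I$, pick $y \in \pi_0 R$ and $w \in I$ with $xy = 1 + w$. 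Then on $C$ we have $m_w \simeq m_x \circ m_y - \tmop{id}_C \simeq -\tmop{id}_C$, so $w$ acts invertibly on $C$; that is, $C$ is $w$-local.

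To conclude, $C$ is both $w$-local and $w$-complete, and applying $w$-completeness to $C$ itself yields $\tmop{Map}_{\tmop{Mod}_R}(C, C) \simeq \ast$, which forces $\tmop{id}_C \simeq 0$ and hence $C \simeq 0$. The main technical point I expect to be the obstacle is the null-homotopy of $m_x$ on $C$ in the middle paragraph, which uses the $\mathbb{E}_\infty$-algebra structure on $R/x$ to upgrade the tautological vanishing of $x$ on $\pi_0(R/x)$ to a genuine null-homotopy of a map of spectra. If one prefers to avoid invoking this algebra structure, one could argue directly on homotopy groups: from the long exact sequence associated to $m_x$ one sees that $x^2$ acts as zero on each $\pi_n C$, hence $w$ acts invertibly on $\pi_n C$, and then Theorem~\ref{thm:cplhomot} applied with $y = w$ identifies $\tmop{Ext}^0_{\pi_0 R}(\pi_0(R)[1/w], \pi_n C)$ with $\pi_n C$, which vanishes by the $I$-completeness characterization, forcing $\pi_n C = 0$ for every $n$.
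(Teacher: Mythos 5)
The paper itself gives no proof of this proposition---it is imported verbatim from Lurie's Spectral Algebraic Geometry (Proposition~7.3.4.8)---so there is no internal argument to compare with; I judge your proposal on its own. Your fallback argument in the final sentences is correct and complete: $C \assign \tmop{cofib} (m_x \of M \rightarrow M)$ is $I$-complete (your reflective-subcategory argument for this is fine, since $I$-complete modules form a stable subcategory closed under limits and shifts); the long exact sequence shows $x^2$ annihilates each $\pi_n C$; writing $xy = 1 + w$ with $w \in I$, the relation $(1 + w)^2 = 0$ on $\pi_n C$ gives $w \cdot (- (w + 2)) = 1$ there, so each $\pi_n C$ is a module over $(\pi_0 R) [1 / w]$ and hence $\tmop{Ext}^0_{\pi_0 R} ((\pi_0 R) [1 / w], \pi_n C) \cong \pi_n C$ by idempotence of localization; Theorem~\ref{thm:cplhomot}, applied to the $I$-complete module $C$ and the element $w \in I$, makes this group vanish, so $\pi_n C = 0$ for all $n$, $C \simeq 0$, and $m_x$ is an equivalence. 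Only the attribution in your last sentence needs adjusting: the identification of the $\tmop{Ext}^0$ with $\pi_n C$ comes from the invertibility of $w$ on $\pi_n C$, while the theorem supplies only the vanishing.

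By contrast, the route you present as the main one rests on a claim that is false, not merely delicate. For a general $\mathbb{E}_{\infty}$-ring $R$ and $x \in \pi_0 R$, the cofiber $R / x$ need not admit an $\mathbb{E}_{\infty}$-$R$-algebra structure, nor even a unital multiplication: $\mathbb{S}/ 2$ is the classical counterexample. Correspondingly, $m_x$ need not be null-homotopic on $C$; multiplication by $2$ on the mod-$2$ Moore spectrum is a nonzero element of $[\mathbb{S}/ 2, \mathbb{S}/ 2] \cong \mathbb{Z}/ 4$. What is true, and suffices, is that $m_{x^2}$ is null-homotopic on $C$: since $m_x \of C \rightarrow C$ vanishes after precomposition with $M \rightarrow C$, it factors as $g \circ \partial$ through the boundary map $\partial \of C \rightarrow \Sigma M$, and then $m_{x^2} \simeq g \circ (\Sigma m_x) \circ \partial \simeq 0$ because $\partial$ and $\Sigma m_x$ are consecutive maps of the rotated cofiber sequence. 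This weaker statement still yields $m_{(1 + w)^2} \simeq 0$, hence $m_w$ invertible on $C$ with inverse $m_{- (w + 2)}$, after which your local-versus-complete orthogonality (or simply the homotopy-group argument above, which stays entirely within the results quoted in the appendix) finishes the proof. So the overall structure stands, but the null-homotopy of $m_x$ should be replaced by that of $m_{x^2}$, and the appeal to an $\mathbb{E}_{\infty}$-structure on $R / x$ should be discarded.
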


\begin{proposition}[{\cite[Proposition~7.3.5.7]{Lurie2018}}]
  \label{prop:aperf-complete}Let $R$ be a connective
  $\mathbb{E}_{\infty}$-ring, let $I \subseteq \pi_0 R$ be a finitely
  generated ideal, and let $M$ be an almost perfect $R$-module. If $R$ is
  $I$-complete, then so is $M$.
\end{proposition}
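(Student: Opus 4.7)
The plan is to reduce to the case of a principal ideal and then to exploit that almost perfect modules admit good approximations by perfect ones. By Proposition~\ref{prop:compl-gens}, an $R$-module is $I$-complete if and only if it is $(x)$-complete for each $x$ in some finite generating set of $I$, which immediately reduces us to the case $I=(x)$ principal.

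Let $\mathcal{C} \subseteq \tmop{Mod}_R$ denote the full subcategory of $(x)$-complete modules. It is closed under arbitrary limits as a reflective subcategory (the $(x)$-completion being its left adjoint) and under shifts by Theorem~\ref{thm:cplhomot}, since $(x)$-completeness can be checked on each homotopy group individually. Hence $\mathcal{C}$ is a stable subcategory of $\tmop{Mod}_R$ closed under retracts. As $R \in \mathcal{C}$ by hypothesis, $\mathcal{C}$ contains the smallest such subcategory containing $R$, which is exactly the full subcategory $\tmop{LMod}_R^{\tmop{perf}}$ of perfect $R$-modules.

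Now let $M$ be almost perfect. By Definition~\ref{def:aperf}, $M$ is $k$-connective for some integer $k$, so after shifting we may assume $M$ is connective. By Proposition~\ref{prop:aperf}(5), $M \simeq |P_{\bullet}|$ for a simplicial $R$-module with each $P_n$ free of finite rank. Setting $Q_n := |\tmop{sk}_n P_{\bullet}|$, each $Q_n$ is a finite colimit of perfect modules, hence perfect and therefore $(x)$-complete by the previous paragraph. The canonical map $Q_n \to M$ has cofiber the realization of $P_{\bullet}/\tmop{sk}_n P_{\bullet}$, which is $(n+1)$-connective, so $\pi_k(Q_n) \xrightarrow{\sim} \pi_k(M)$ for $k \le n$. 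By Theorem~\ref{thm:cplhomot}, the $(x)$-completeness of each $Q_n$ translates into the vanishing of $\tmop{Ext}^0$ and $\tmop{Ext}^1$ of $\pi_0 R[1/x]$ against every $\pi_k Q_n$. For each fixed $k$ we may choose $n \geq \max(k,0)$ to transfer this vanishing to $\pi_k M$, and a second application of Theorem~\ref{thm:cplhomot} concludes that $M$ is $(x)$-complete.

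The main technical step I expect to have to justify carefully is controlling the connectivity of the fibers of the skeletal approximations $Q_n \to M$; this is a standard property of the skeletal filtration of a simplicial object in $\tmop{Mod}_R$, but it is easy to overlook. Alternatively, one can bypass the explicit simplicial realization by extracting a tower of perfect approximations $Q_n \to M$ with $(n+1)$-connective fibers directly from Definition~\ref{def:aperf}, since this is essentially what almost-compactness of the truncations $\tau_{\le n}M$ amounts to.
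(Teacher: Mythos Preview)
The paper does not supply its own proof of this proposition: it appears in the appendix purely as a recollection from \cite[Proposition~7.3.5.7]{Lurie2018}, stated without argument. There is therefore nothing to compare against on the paper's side.

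Your proof is correct and follows the standard route. The reduction to a principal ideal via Proposition~\ref{prop:compl-gens} is clean, and the observation that the full subcategory of $(x)$-complete modules is stable, closed under retracts, and contains $R$ (hence all perfect modules) is exactly right. The passage from perfect to almost perfect via the skeletal filtration of a simplicial resolution, combined with the homotopy-group criterion of Theorem~\ref{thm:cplhomot}, is the natural way to finish. The connectivity estimate you flag---that $\tmop{cofib}(Q_n \to M)$ is $(n+1)$-connective---follows from the standard identification $\tmop{sk}_n/\tmop{sk}_{n-1} \simeq \Sigma^n N_n(P_\bullet)$ with each normalized piece $N_n(P_\bullet)$ a retract of the connective module $P_n$, so the remaining graded pieces above level $n$ are all at least $(n+1)$-connective. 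Your alternative suggestion, extracting perfect approximations with increasingly connective cofibers directly from almost-compactness of the truncations, is equally valid and perhaps slightly slicker since it avoids the explicit simplicial machinery.
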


\begin{definition}[{\cite[Section~7.3.6]{Lurie2018}}]
  Let $R$ be a discrete commutative ring and $M$ be a (discrete) $R$-module.
  Let $I \subseteq R$ be a finitely generated ideal. The {\tmdfn{$I$-adic
  completion}} of $M$, denoted by $\tmop{Cpl} (M ; I)$, is defined to be the
  limit of tower $\lim M / I^n M$. There is a canonical map $M \rightarrow
  \tmop{Cpl} (M ; I)$. An $R$-module $M$ is called {\tmdfn{$I$-adically
  complete}} if the canonical map $M \rightarrow \tmop{Cpl} (M ; I)$ is an
  isomorphism, and $M$ is called $I$-adically separated if the canonical map
  $M \rightarrow \tmop{Cpl} (M ; I)$ is injective.
\end{definition}

\begin{proposition}[{\cite[Corollary~7.3.6.3]{Lurie2018}}]
  \label{prop:I-adic}Let $R$ be a discrete commutative ring, let $I \subseteq
  R$ be a finitely generated ideal, and let $M$ be a discrete $R$-module. The
  following conditions are equivalent:
  \begin{enumeratealpha}
    \item The module $M$ is $I$-adically complete;
    
    \item The module $H M$ is $I$-complete and $M$ is $I$-adically separated.
  \end{enumeratealpha}
\end{proposition}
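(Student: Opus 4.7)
The plan is to unwind both sides of the equivalence into explicit vanishing conditions on $\lim$ and $\lim^1$ of towers of abelian groups, and then match them.

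First, I reduce to the principal case $I = (x)$. For the derived side this is Proposition~\ref{prop:compl-gens}. For the classical side, for a finitely generated ideal $I = (x_1, \ldots, x_n)$ the inclusions $I^{nk} \subseteq (x_1^k, \ldots, x_n^k) \subseteq I^k$ show that the filtrations are cofinal, and a standard inductive argument on the number of generators reduces both $I$-adic completeness and $I$-adic separatedness to the corresponding $(x_i)$-adic properties.

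In the principal case, I compute both sides explicitly. On the classical side, applying $\lim$ to the short exact sequence of towers $0 \to \{x^n M\} \to \{M\} \to \{M/x^n M\} \to 0$ (whose middle is constant, so $\lim^1 M = 0$) yields the four-term exact sequence
\[ 0 \to \bigcap\nolimits_n x^n M \to M \to \lim\nolimits_n M/x^n M \to {\lim}^1_n\, x^n M \to 0, \]
so $M$ is $(x)$-adically complete iff both $\bigcap_n x^n M = 0$ and $\lim^1 x^n M = 0$. On the derived side, Proposition~\ref{prop:calc-compl} presents $(HM)^\wedge_{(x)}$ as the cofiber of $T(HM) \to HM$, where $T(HM) = \lim(\cdots \xrightarrow{x} HM \xrightarrow{x} HM)$; since $HM$ is discrete, Milnor sequences give $\pi_0 T(HM) = \lim(M, x)$ and $\pi_{-1}T(HM) = \lim^1(M, x)$ with higher homotopy vanishing, so $HM$ is $(x)$-complete iff both of these vanish.

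The bridge between the two computations compares the tower $\{x^n M\}$ (with inclusion transitions) to the constant tower $(M, x)$ (with multiplication-by-$x$ transitions) via the short exact sequence of towers $0 \to \{M[x^n]\} \to (M, x) \xrightarrow{\,\cdot\,x^n} \{x^n M\} \to 0$, where the torsion tower $\{M[x^n]\}$ inherits $x$-multiplication transitions. The resulting six-term $\lim$--$\lim^1$ exact sequence relates $\bigcap_n x^n M$ and $\lim^1 x^n M$ to $\lim(M, x)$ and $\lim^1(M, x)$, mediated by $\lim M[x^n]$ and $\lim^1 M[x^n]$. A direct chase through this sequence, invoking the separatedness hypothesis to kill $\bigcap_n x^n M$, then establishes the claimed equivalence in both directions.

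The main obstacle is this torsion bookkeeping: if $x$ were a non-zero-divisor on $M$, the two towers would coincide and the equivalence would be immediate, but in general the torsion tower $\{M[x^n]\}$ introduces corrections that have to be controlled. The $(x)$-adic separatedness hypothesis, explicit in the backward direction, is precisely what excludes pathological $x$-divisible phenomena (such as the Pr\"ufer group $\mathbb{Z}/p^{\infty}$, which is $p$-divisible but has vanishing classical $p$-adic completion while failing to be derived $p$-complete) and forces the auxiliary derived-$\lim$ correction terms to vanish.
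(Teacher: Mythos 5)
The paper does not actually prove this proposition; it is quoted from Lurie ({\cite[Corollary~7.3.6.3]{Lurie2018}}) and used as a black box, so your proposal has to stand on its own. It does not, for two reasons.

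First, the reduction to the principal case is broken on the classical side. The derived side is indeed Proposition~\ref{prop:compl-gens}, but $I$-adic separatedness is \emph{not} equivalent to $(x_i)$-adic separatedness for each generator: for $R = k[x,y]$, $I = (x,y)$ and $M = k[x,y]/(x+y-1)$ one has $\bigcap_n x^n M = \bigcap_n y^n M = 0$ while $IM = M$, so $M$ is $x$- and $y$-adically separated but not $I$-adically separated (only the implication from $I$-separatedness to $(x_i)$-separatedness holds). Likewise, ``$M$ is $I$-adically complete iff $M$ is $x_i$-adically complete for all $i$'' is true but is itself a nontrivial theorem (see {\cite[Tag~090T]{stacks-project}}, essentially of the same depth as the proposition being proved): the cofinality $I^{rk} \subseteq (x_1^k,\ldots,x_r^k) \subseteq I^k$ only identifies $\lim_n M/I^nM$ with $\lim_k M/(x_1^k,\ldots,x_r^k)M$, it does not decouple the generators, and the ``standard inductive argument'' is exactly where the difficulty sits (a quotient such as $M/x^kM$ of an adically complete module need not be adically complete, so the naive induction on generators fails).

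Second, even in the principal case your chase only proves one direction. Granting separatedness, the six-term sequence for $0 \to \{M[x^n]\} \to (M,x) \to \{x^nM\} \to 0$ gives $\lim(M,x) \cong \lim M[x^n] = 0$ and a short exact sequence $0 \to {\lim}^1 M[x^n] \to {\lim}^1(M,x) \to {\lim}^1\{x^nM\} \to 0$; from (b) this kills ${\lim}^1\{x^nM\}$ and yields (a), which is fine. But for (a) $\Rightarrow$ (b) you must show ${\lim}^1(M,x) = 0$, i.e.\ ${\lim}^1\{M[x^n], \cdot x\} = 0$, and neither separatedness nor ${\lim}^1\{x^nM\} = 0$ controls that term by any chase -- your closing claim that separatedness ``forces the auxiliary correction terms to vanish'' is true only for the $\lim$ terms, not for ${\lim}^1$ of the torsion tower. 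You need a genuine argument here, for instance the classical one: completeness plus separatedness make $(m_n) \mapsto (m_n - x m_{n+1})$ bijective on $\prod_n M$ because $\sum_{k \geq 0} x^k a_{n+k}$ converges, and this surjectivity is precisely ${\lim}^1(M,x) = 0$. Better still, this direction (the one the paper actually uses, e.g.\ in Lemma~\ref{lem:HR-p-compl}) can be done for general finitely generated $I$ with no reduction at all: since the tower $M/I^nM$ has surjective transition maps, $HM \simeq \lim_n H(M/I^nM)$; each $M/I^nM$ is $I$-nilpotent, hence $I$-complete by Corollary~\ref{cor:nil-compl}; and $I$-complete modules are closed under limits. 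Restructuring your proof around that observation, and keeping your $\lim$--${\lim}^1$ computation (via Proposition~\ref{prop:calc-compl}) only for the direction (b) $\Rightarrow$ (a) in the principal case, would also let you invoke {\cite[Tag~090T]{stacks-project}} honestly instead of deriving it in passing.
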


\begin{warning}
  By Proposition~\ref{prop:I-adic}, the concept of $I$-adic completeness does
  not coincide with the concept of $I$-completeness for discrete modules over
  discrete commutative rings. Rather, the former is stronger than the latter.
\end{warning}

\begin{definition}
  A spectrum $X$ is called {\tmdfn{$p$-complete}} if it is $(p)$-complete as
  an $\mathbb{S}$-module. For any spectrum $X$, the $p$-completion of $X$,
  denoted by $X_p^{\wedge}$, is the $(p)$-completion of $X$ as an
  $\mathbb{S}$-module.
\end{definition}

\begin{remark}
  When $M$ is an $R$-module for a connective $\mathbb{E}_{\infty}$-ring $R$,
  $(p)$ is also an ideal of $\pi_0 R$. In this case, it follows from
  Remark~\ref{rem:complete-base-indep} that $M$ is $(p)$-complete as an
  $\mathbb{S}$-module if and only if it is $(p)$-complete as an $R$-module, so
  there is completely no ambiguity to talk about $p$-completeness. Similarly,
  Remark~\ref{rem:complete-base-indep} implies the $p$-completion of an
  $R$-module $M$ is the underlying spectrum of the $(p)$-completion of $M$ as
  an $R$-module.
\end{remark}

\begin{corollary}
  \label{cor:compl-mod-p}Let $X$ be a bounded below spectrum. If $X$ is
  $p$-complete and $H\mathbb{F}_p \otimes X \simeq 0$, then $X \simeq 0$.
\end{corollary}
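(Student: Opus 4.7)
The plan is to prove the contrapositive by analyzing the bottom nonzero homotopy group of $X$. Suppose $X \not\simeq 0$; since $X$ is bounded below, there is a smallest integer $n$ such that $\pi_n X \neq 0$. It then suffices to show that $\pi_n(H\mathbb{F}_p \otimes X) \neq 0$, which contradicts the hypothesis $H\mathbb{F}_p \otimes X \simeq 0$.

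The first step is to identify $\pi_n(H\mathbb{F}_p \otimes X)$. Using the fiber sequence $\tau_{> n} X \to X \to \Sigma^n H\pi_n X$ and the fact that $H\mathbb{F}_p$ is connective (so that smashing preserves $n$-connectedness), the tensor product $H\mathbb{F}_p \otimes \tau_{>n} X$ is $n$-connected, so the induced map on $\pi_n$ is an isomorphism. A direct computation then gives $\pi_n(H\mathbb{F}_p \otimes X) \cong \pi_n(H\mathbb{F}_p \otimes \Sigma^n H\pi_n X) \cong \mathbb{F}_p \otimes_{\mathbb{Z}} \pi_n X \cong \pi_n X / p\pi_n X$. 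The hypothesis therefore forces the abelian group $\pi_n X$ to be $p$-divisible.

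The second step is to rule out $p$-divisibility by invoking $p$-completeness. Applying Theorem~\ref{thm:cplhomot} with $R = \mathbb{S}$, $I = (p)$, and $A = \pi_0 \mathbb{S} = \mathbb{Z}$, the $p$-completeness of $X$ gives in particular $\mathrm{Hom}_{\mathbb{Z}}(\mathbb{Z}[1/p], \pi_n X) = 0$ for every $n$. On the other hand, any nonzero $p$-divisible abelian group $A$ admits a nonzero homomorphism $\mathbb{Z}[1/p] \to A$: pick $0 \neq a \in A$, recursively choose $a_k \in A$ with $p \cdot a_{k+1} = a_k$ and $a_0 = a$, and define the map by $1/p^k \mapsto a_k$. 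Combining this elementary fact with the Hom-vanishing yields $\pi_n X = 0$, contradicting the choice of $n$, and hence $X \simeq 0$.

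I do not expect a serious obstacle: the bottom-homotopy computation is a routine connectivity argument, and the divisibility argument is elementary. The only subtlety worth noting is that the statement ``$X$ is $p$-complete'' means $(p)$-complete as an $\mathbb{S}$-module, which is exactly the setting in which Theorem~\ref{thm:cplhomot} supplies the required vanishing of $\mathrm{Hom}_{\mathbb{Z}}(\mathbb{Z}[1/p], -)$ on the homotopy groups.
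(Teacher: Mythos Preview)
Your proof is correct and follows essentially the same approach as the paper: both isolate the lowest nonvanishing homotopy group, compute $\pi_n(H\mathbb{F}_p \otimes X) \cong \pi_n X / p\,\pi_n X$ (the paper writes this as $\tmop{Tor}_0^{\mathbb{Z}}(\mathbb{F}_p,\pi_n X)$), deduce $p$-divisibility, and then build a nonzero map $\mathbb{Z}[1/p] \to \pi_n X$ to contradict the Hom-vanishing supplied by Theorem~\ref{thm:cplhomot}. The only cosmetic difference is that the paper phrases it as an induction on $n$ rather than a minimal-counterexample argument.
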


\begin{proof}
  We will show inductively on $n \in \mathbb{Z}$ that $\pi_n X = 0$.
  \begin{enumeratenumeric}
    \item Since $X$ is bounded below, $\pi_n X = 0$ for $n \ll 0$.
    
    \item Suppose now that for every $m < n$, we have $\pi_m X = 0$. We will
    show that $\pi_n X = 0$. In this case, we have $0 = \pi_n (H\mathbb{F}_p
    \otimes X) \cong \tmop{Tor}_0^{\mathbb{Z}} (\mathbb{F}_p, \pi_n X)$. Thus
    for each $x \in \pi_n X$, there exists (by axiom of choice) a sequence
    $(x_j)_{j \in \mathbb{N}} \in (\pi_n X)^{\mathbb{N}}$ such that $x_0 = x$
    and $x_j = px_{j + 1}$ for all $j \in \mathbb{N}$, which gives rise to a
    map $\varphi_x \of \mathbb{Z} [1 / p] \rightarrow \pi_n X$ of abelian
    groups given by $\varphi (1 / p^j) = x_j$. Theorem~\ref{thm:cplhomot}
    tells us that $\varphi_x = 0$, and especially, $x = 0$. In conclusion, we
    have proved that $x = 0$ for each $x \in \pi_n X$, thus $\pi_n X = 0$.
  \end{enumeratenumeric}
\end{proof}

\end{document}